\documentclass[11pt]{report}
\usepackage[left=1.2in,right=1.2in,tmargin = 25mm,bmargin = 25mm]{geometry}
\usepackage[dvipsnames]{xcolor}
\usepackage[hypertexnames = false,
            colorlinks = true,
            linkcolor = MidnightBlue,
            urlcolor  = NavyBlue,
            citecolor = RedViolet,
            anchorcolor = JungleGreen]{hyperref}
\usepackage{amsthm}
\usepackage{amssymb}
\usepackage{amsmath}
\usepackage{thmtools}
\usepackage{cleveref}
\usepackage{xeCJK} % Korean
\usepackage{tgpagella} % nice font
\usepackage[utf8x]{inputenc}
\usepackage[T1]{fontenc}
\usepackage{graphicx} % Required for inserting images
\usepackage{bm}
\usepackage{wrapfig} % for wrapfigure environment
\usepackage{pgf} 
\usepackage{tikz-cd}
\usetikzlibrary{patterns}
\usepackage{bbm} % for id etc.
\usepackage{fancyvrb} % Verbatim environment
\usepackage{bm}
\usepackage{stmaryrd} % for \llbracket and \rrbracket
\usepackage{comment}
\usepackage{ bbold }
\usepackage{enumitem}
\usepackage{bussproofs}
\usepackage{datetime}
\usepackage{perpage} % make footnote counting per page
\usepackage{setspace} % for 1.5 line spacing
\usetikzlibrary{graphs,decorations.pathmorphing,decorations.markings}

\MakePerPage{footnote}

\theoremstyle{definition}
\newtheorem{definition}{Definition}[section]
\newtheorem{remark}[definition]{Remark}

\newtheorem{example}[definition]{Example}

\newtheorem{axiom}[definition]{Axiom}

\theoremstyle{plain}
\newtheorem{proposition}[definition]{Proposition}
\newtheorem{lemma}[definition]{Lemma}
\newtheorem{theorem}[definition]{Theorem}
\newtheorem{corollary}[definition]{Corollary}

\newtheorem*{theorem*}{Theorem}

\renewcommand{\AA}{\mathcal{A}}
\newcommand{\BB}{\mathcal{B}}
\newcommand{\CC}{\mathcal{C}}
\newcommand{\DD}{\mathcal{D}}
\newcommand{\EE}{\mathcal{E}}
\newcommand{\FF}{\mathcal{F}}
\newcommand{\GG}{\mathcal{G}}
\newcommand{\HH}{\mathcal{H}}
\newcommand{\II}{\mathcal{I}}

\newcommand{\KK}{\mathcal{K}}

\newcommand{\MM}{\mathcal{M}}

\newcommand{\OO}{\mathcal{O}}
\newcommand{\PP}{\mathcal{P}}

\newcommand{\RR}{\mathcal{R}}
\renewcommand{\SS}{\mathcal{S}}

\newcommand{\N}{\mathbb{N}}

\newcommand{\st}{\,|\,}

\newcommand{\Si}{\Sigma}

\newcommand{\unit}{\mathsf{unit}}
\newcommand{\opext}{\mathrel{.}}
\newcommand{\ext}{\mathrel{\dot{\,}}}

\newcommand{\Hom}{\mathsf{Hom}}
\newcommand{\Set}{\mathsf{Set}}
\newcommand{\Psh}{\mathsf{Psh}}
\newcommand{\sSet}{\mathsf{sSet}}
\newcommand{\cSet}{\mathsf{cSet}}
\newcommand{\iso}{\cong}
\newcommand{\id}{\mathsf{id}}
\newcommand{\op}{\mathsf{op}}
\newcommand{\Cat}{\mathsf{Cat}}
\newcommand{\tcat}{\mathbb{C}\mathsf{at}}
\newcommand{\tCat}{\mathbb{C}\mathsf{AT}}
\newcommand{\Grpd}{\mathsf{Grpd}}
\newcommand{\PGrpd}{\mathsf{PGrpd}}
\newcommand{\Top}{\mathsf{Top}}

\newcommand{\cat}{\mathsf{cat}}
\newcommand{\pcat}{\mathsf{cat}_\bullet}
\newcommand{\grpd}{\mathsf{grpd}}
\newcommand{\pgrpd}{\mathsf{grpd}_\bullet}
\newcommand{\set}{\mathsf{set}}
\newcommand{\pset}{\mathsf{set}_\bullet}
\newcommand{\catp}{\mathsf{cat}^\bullet}
\newcommand{\alg}[1]{\mathsf{alg}(#1)}
\newcommand{\coalg}[1]{\mathsf{coalg}(#1)}
\newcommand{\lari}{\mathsf{lari}}

\newcommand{\two}{{\mathbb{2}}}
\newcommand{\three}{{\mathbb{3}}}

\newcommand{\opfibcart}{{\OO}}
\newcommand{\opfib}{{\overline{\opfibcart}}}
\newcommand{\smlopfibcart}{\opfibcart_{\cat}}
\newcommand{\smlopfib}{\opfib_{\cat}}
\newcommand{\fibcart}{{\FF}}
\newcommand{\fib}{{\overline{\fibcart}}}
\newcommand{\smlfib}{\fib_{\cat}}

\newcommand{\Tw}[1][]{\mathop{\mathsf{Tw}} #1}
\newcommand{\vTw}[2][]{\mathop{\mathsf{tw}_{#1}} #2}

\newcommand{\rTw}[1][]{\mathop{\mathsf{rTw}} #1}
\newcommand{\rtw}[2][]{\mathop{\mathsf{rtw}_{#1}} #2}
\newcommand{\lTw}[1][]{\mathop{\mathsf{lTw}} #1}
\newcommand{\ltw}[2][]{\mathop{\mathsf{ltw}_{#1}} #2}
\newcommand{\Op}[1][]{\mathop{\mathsf{Op}} #1}
\newcommand{\vOp}[2][]{\mathop{\mathsf{op}_{#1}} #2}
\newcommand{\Core}[1]{\mathop{\mathsf{Core}} #1}
\newcommand{\vCore}[2][]{\mathop{\mathsf{core}_{#1}} #2}

\newcommand{\src}{\mathop{\mathsf{src}}}
\newcommand{\trg}{\mathop{\mathsf{trg}}}
\newcommand{\rfl}{\mathop{\mathsf{rfl}}}
\newcommand{\inv}{\mathop{\mathsf{inv}}}
\newcommand{\inc}{\mathop{\mathsf{inc}}}

\newcommand{\U}{U}
\newcommand{\Uupp}{{\U^{\bullet}}}
\newcommand{\Ulow}{{\U_\bullet}}
\newcommand{\uupp}{{u^{\bullet}}}
\newcommand{\ulow}{{u_\bullet}}
\newcommand{\Usim}{{\U_\sim}}
\newcommand{\usim}{{u_\sim}}
\newcommand{\V}{V}
\newcommand{\Vupp}{{\V_\bullet}}
\newcommand{\Vlow}{{\V^\bullet}}
\newcommand{\vupp}{{v_\bullet}}
\newcommand{\vlow}{{v^\bullet}}
\newcommand{\Vsim}{{\V_\sim}}
\newcommand{\vsim}{{v^\sim}}
\newcommand{\Utw}{{\U_\mathsf{tw}}}
\newcommand{\Urtw}{{\U_\mathsf{rtw}}}
\newcommand{\utw}{{u_\mathsf{tw}}}
\newcommand{\urtw}{{u_\mathsf{rtw}}}
\newcommand{\uu}{{u}}
\newcommand{\vv}{{v}}
\newcommand{\pp}{{w}}

\newcommand{\Dir}{{\mathsf{Hom}}}
\newcommand{\dir}{{\mathsf{hom}}}
\newcommand{\arr}[1][]{\mathop{\mathbb{P}_{#1}}}
\newcommand{\cyl}[1][]{\mathop{\mathbb{C}_{#1}}}
\newcommand{\sym}{\mathop{\mathsf{sym}}}
\newcommand{\Path}{\mathop{\mathsf{Path}}}
\renewcommand{\path}{\mathop{\mathsf{path}}}
\newcommand{\unpath}{\mathop{\mathsf{unpath}}}
\newcommand{\Mod}{\mathop{\mathsf{Mod}}}
\renewcommand{\mod}{\mathop{\mathsf{mod}}}

\newcommand{\wCC}{\widehat\CC}
\newcommand{\wRR}{\widehat\RR}
\newcommand{\Exp}{\mathsf{Exp}}

\newcommand{\Poly}{\mathsf{Poly}}
\newcommand{\pcomp}{\triangleleft}
\newcommand{\fstProj}{\fst}
\newcommand{\sndProj}{\snd}
\renewcommand{\vert}{\mathsf{vert}}
\newcommand{\cart}{\mathsf{cart}}

\newcommand{\Unit}{\mathsf{Unit}}

\newcommand{\Ty}{\mathsf{Ty}}
\newcommand{\lam}{\mathop{\mathsf{lam}}}
\newcommand{\unlam}{\mathop{\mathsf{unlam}}}
\newcommand{\pair}{\mathsf{pair}}
\newcommand{\fst}{\mathop{\mathsf{fst}}}
\newcommand{\snd}{\mathop{\mathsf{snd}}}
\newcommand{\refl}{\mathop{\mathsf{rfl}}}
\newcommand{\Id}{\mathsf{Id}}

\newcommand{\var}{\mathsf{var}}

\renewcommand{\gg}{\mathbin{\text{\normalfont{»}}}}
\newcommand{\done}[1]{\href{#1}{$\boxbar$}}

\newcommand{\pbcorner}{
  \arrow[dr, phantom, "\scalebox{1.5}{$\lrcorner$}" , very near start]
}
\newcommand{\Sub}{\mathsf{Sub}}
\newcommand{\Sh}{\mathsf{Sh}}
\newcommand{\Topos}{\mathsf{Topos}}

\begin{document}

\begin{titlepage}
	\centering
  {\, \par}
	\vspace{2cm}
	{\Huge {\bf Internal Algebraic Type Theory}\par}
	\vspace{2cm}
	% {\huge\bfseries Pigeons love doves\par}
	% \vspace{2cm}
	{\Large Joseph Hua\par}
	\vspace{1.5cm}
  {\Large {\bf Thesis committee:} \par
    \vspace{0.4cm}
    Mathieu Anel \par
    Jeremy Avigad \par
    Steve Awodey (chair) \par
    Nicola Gambino \par }
	\vfill
	{ \large \itshape
    A dissertation submitted in fulfilment of the \par
    requirements for the degree of Doctor of Philosophy \par
    in Pure and Applied Logic.\par}
	\vfill
  { \large
    Department of Philosophy \par
    Carnegie Mellon University \par
    Pittsburgh, PA 15213 \par
    USA \par}
  \vspace{2cm}
	{\large \today \par}
% Bottom of the page
\end{titlepage}

\newpage
\vspace*{8cm}
{
  \begin{center} 
    {수빈에게}
    % To Soobin.
  \end{center}
}

\tableofcontents

\newpage

\section*{Acknowledgements}

First and foremost, I must thank Steve Awodey, who has been not only a
great mentor, but also a supportive and generous friend.
I will remember his emphasis on delivering a good talk,
and the many hours he spent helping me practice them.
Mathieu Anel and Reid Barton have both helped me tremendously,
always providing clarifying insight,
as well as forcing me to consider example after example.
I extend that gratitude towards all the committee members,
including Nicola Gambino and Jeremy Avigad,
for taking the time to read and provide feedback on this work.
I would also like to thank Emily Riehl, Bob Harper, Jonas Frey, Andrew Swan, 
Christina Bjorndahl, Jonathan Weinberger, Sina Hazrapour,
Niels van der Weide, Calum Hughes, Fernando Chu and Wojciech Nawrocki
for teaching me all sorts of things.
Finally, a thank you to my family and non-academic friends,
because I depend on them in all sorts of ways.

This material is based upon work supported by the
Air Force Office of Scientific Research under award number FA9550-21-1-0009.

\newpage
\section*{Authorship}

Much of this thesis is adapted from joint work with various coauthors.

\Cref{sec:exponentiable} is drawn from unpublished joint work with Reid Barton.
\Cref{sec:polynomials} is partly based on joint work with Yiming Xu,
which is available as a preprint \cite{hua2026}.
\Cref{sec:hottlean} is drawn from two pieces of coauthored work;
the first is from a preprint coauthored with Yi\-ming Xu \cite{hua2026},
and the second is from a long abstract submitted to the conference HoTT/UF 2026
\cite{hottuf2026} with coauthors Steve Awodey, Yiming Xu, Mario Carneiro, Sina Hazratpour,
Wojciech Nawrocki and Spencer Woolfson.
\Cref{sec:path-types} is an adaptation of joint work with Steve Awodey,
which is available as a preprint \cite{awodey2026}.
\Cref{sec:cat-as-types} is from unpublished joint work with Steve Awodey and Mathieu Anel.
% Most of the theorems about $\Cat$ are from the literature,
% and should not be credited to us.
% The axiomatization is my 

\newpage
\section*{Introduction}
This work brings us closer to applying computer-assisted, internal,
type-theoretic reasoning to a category,
such as in the category of cubical sets,
the category of groupoids, and the category of categories.
Roughly speaking, we would like to
provide efficient \emph{syntactic} proofs about \emph{semantic} objects
by writing a proof in a {domain-specific language},
and have that proof type-checked and translated
into a model of our choosing (which is also formalised in a theorem prover).
The steps we make towards this general goal are both in 
furthering the type theoretic analysis of certain categories
(\Cref{sec:exponentiable,sec:polynomials,sec:path-types,sec:cat-as-types}),
as well as implementing computer-assisted
syntax-semantic reasoning (\Cref{sec:hottlean}).

\subsection*{Background}

A \emph{domain specific language} (DSL) in our context
will mean a computer language implementing a type theory that is
tailored towards reasoning about specific models of interest.
By using a domain-specific language,
we can reason about models in a way that not only allows us to identify
when the same construction appears in different contexts,
but also to hide away details that are irrelevant to the construction being made,
elucidating the essence of a proof.

\emph{Martin-L\"of type theory} (MLTT) was developed
as a new intuitionistic foundation for mathematics \cite{martin1975,martin1998}.
It admits various extensions,
including Extensional Type Theory and Homotopy Type Theory (HoTT),
and has models in presheaves \cite{hofmann1997, awodey2024},
groupoids \cite{hofmann1995},
simplicial sets \cite{kapulkin2021} and cubical sets
\cite{bezem2014, cohen2015, awodey2026cartesian}.
One component of the HoTTLean project \cite{hua2025}
is the design of a domain-specific language implementing MLTT.

A model of MLTT necessarily admits certain categorical structure:
the presence of $\Sigma$-types in the type theory 
corresponds to a compositional structure on type families;
$\Pi$-types correspond to right adjoints of pullback functors along type families;
$\Id$-types correspond to functorial factorisation and lifting conditions.
However, as a target for interpreting the syntax of type theory,
these categorical descriptions are only stable under substitution (or pullback)
up-to-isomorphism.
This is known as the \emph{coherence problem}.

One solution to the coherence problem is to
provide and check explicit coherence equations.
This is the approach of \emph{categories with families} (CwF) \cite{clairambault2011}.
\emph{Natural models} \cite{awodey2018}
offer a simplification of CwFs
by capturing each type former as a \emph{universal} or \emph{generic}
construction on a classifier of type families,
by employing the powerful machinery of polynomial functors.

We will refer to the natural models style of semantics as \emph{algebraic}
(as in \emph{algebraic type theory} \cite{awodey2025}),
and use the word \emph{unalgebraic} in contrast.
This thesis will make the case for using both unalgebraic and algebraic semantics
(mainly for the sake of formalisation in a proof assistant):
an unalgebraic model is well-suited for writing an interpretation function
from the syntax to the model,
whereas an algebraic model is easier to construct.
Translating between the two is straightforward,
making this approach fairly practical.

In a natural model,
the classifier of all type families is necessarily \emph{external},
meaning that the object that classifies types lives in the category of presheaves
on contexts, rather than the category of contexts itself.
Unlike natural models, our approach will be \emph{internal},
meaning that we replace universal constructions on an external classifier
of all type families (in presheaves) with universal constructions
on classifiers of small type families (in the category itself).
Type theoretically,
this means that all our types are classified by a \emph{universe}.
This creates certain complications:
the category we work in is not necessarily locally Cartesian closed;
it may not even have all finite limits.
Fortunately, we demonstrate in \Cref{sec:exponentiable,sec:polynomials}
that these complications can be dealt with,
and in a way that allows us to extend our type theoretic analysis
to the category of categories $\Cat$ and the category of topological spaces $\Top$.

% For example, Lawvere algebraic theories are used to reason about
% equationally axiomatised mathematical objects such as 
% groups, rings, and modules \cite{lawvere1963}.
% Toposes admit internal reasoning in geometric logic \cite{maclane2012}.
% Simply typed $\lambda$-calculus is an internal language for
% Cartesian closed categories \cite{lambek1988}.

\subsection*{Overview of the thesis}

A significant portion of this work was motivated by
developments in HoTTLean project \cite{hua2025}.
At the very surface,
HoTTLean is a proof assistant, 
implemented as a domain-specific language (DSL) in Lean
using Lean's extensible metaprogramming features.
This DSL uses a deep embedding of MLTT in Lean,
which is then interpreted into a generic model of MLTT.
\Cref{sec:hottlean} details our contribution to this project
in formalising both the general semantics of MLTT,
as well as the Hofmann-Streicher groupoid model \cite{hofmann1995}
as a particular target for the interpretation.
This ongoing project aims to eventually fully implement
the idea of using MLTT for internal language reasoning,
with automation handling the burden of translating internal language proofs
into the model.

One outcome of HoTTLean was Steve Awodey's
cubically-inspired idea of using ``path types''
to simplify the construction of identity types in the groupoid model
(and other models as well) \cite{awodey2026}.
Instead of constructing identity types by hand,
it is enough to have path types
(which can be deduced from a closure condition on fibrations),
and a ``Hurewicz'' path-lifting condition on fibrations.
These conditions are often fairly easy to check in a model.
As a result,
identity types in the groupoid model were soon fully
formalised using path types in HoTTLean.
The path types conditions can be rephrased in a style
reminiscent of modal type theory \cite{licata2016,licata2017},
which we present in \Cref{sec:path-types}.
The main theorem of \Cref{sec:path-types} follows.
\begin{theorem*}[\labelcref{thm:id-elim}]
  Suppose $(\CC,\RR)$ is a $\pi$-preclan (\labelcref{def:pi-preclan}).
  If $\uu : \Ulow \to \U$ is an $\RR$-algebraic universe (\labelcref{def:algebraic-universe}) that
  admits $\Path$-types (\labelcref{def:mltt-alg-path-types-2})
  and a normal Hurewicz structure (\labelcref{def:hurewicz-defs}),
  then the universe also admits $\Id$-types (\labelcref{def:algebraic-id-types}).
\end{theorem*}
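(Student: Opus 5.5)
The strategy is the usual ``path types plus transport'' construction of identity types, made algebraic so that everything is stable under substitution. Given a small type $A$ in context $\Gamma$ (a map $\Gamma \to \U$), I take $\Id_A(a,b)$ to be the path type $\Path_A(a,b)$ provided by \labelcref{def:mltt-alg-path-types-2}. Reflexivity is the constant path $\rfl_a$. The formation and introduction rules then come directly from the $\Path$-type structure, so these parts of \labelcref{def:algebraic-id-types} need no further work. All the content lies in the eliminator $J$ and its computation rule.

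For elimination, consider a motive $C(b,p)$ over $b:A$, $p:\Path_A(a,b)$, with a term $c : C(a,\rfl_a)$. I proceed in three steps.
\begin{enumerate}
\item \emph{Contraction.} Using the path structure (connections, or the path-lifting provided by path types), I build a path in the total type $\Sigma_{b:A}\Path_A(a,b)$ from $(a,\rfl_a)$ to $(b,p)$. If the path types come with a ``$\sym$/path-of-paths'' operation as in the paper's modal presentation, this is the canonical contraction $\lambda i.\,(p\,i, \text{restriction of } p \text{ to } [0,i])$.
\item \emph{Transport.} I use the Hurewicz structure (\labelcref{def:hurewicz-defs}) on the family $C$, pulled back along that contraction path. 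This lifts $c$ at the start to an element over $(b,p)$, and the result is $J(c,b,p)$.
\item \emph{Naturality.} Because the Hurewicz structure is given as algebraic data on the universe $\uu$, i.e.\ as a universal lift for the generic family, its reindexing along substitutions is strict. Hence $J$ is stable under substitution, as \labelcref{def:algebraic-id-types} requires.
\end{enumerate}

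The computation rule $J(c,a,\rfl_a)=c$ is exactly where normality is used. At $(a,\rfl_a)$ the contraction path should be the constant path: this should follow from the computation rules of $\Path$-types, i.e.\ the reflexivity path degenerates. Normality of the Hurewicz structure says that lifting along a constant path returns the starting point on the nose. Together these give the judgemental $\beta$-rule.

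I expect the main obstacle to be step 1, together with its strict degeneracy at $\rfl_a$. The contraction must be expressed algebraically as a single universal map over the generic path type, and it must reduce judgementally to the constant path at reflexivity, not merely up to a further path. My first attempt is to derive it from the $\Path$ eliminator/structure in the $\pi$-preclan $(\CC,\RR)$, using $\Pi$-types to internalise the motive as a map into $\U$. I would then verify the degeneracy by unfolding the universal property at the generic point. If that fails, the fallback is to define $J$ by a Hurewicz lift directly in the family $b,p \mapsto C(b,p)$, viewed over the path space, so that normality applies immediately.
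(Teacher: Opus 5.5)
Your architecture matches the paper's: $\Id := \Path$ with $\rfl$ the constant path, a contraction of the path space, a Hurewicz lift in the motive along that contraction starting at $r$, and normality for the $\beta$-rule. But Step 1, which you rightly flag as the crux, is never carried out, and the routes you suggest for it are not available from the hypotheses.

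\begin{itemize}
\item $\Path$-types (\Cref{def:mltt-alg-path-types-2}) consist only of a pullback square classifying $(\src,\trg)$, i.e.\ formation plus the $\path/\unpath$ bijection. They have no eliminator and supply no path lifting, so ``deriving the contraction from the $\Path$ eliminator'' is circular.
\item The ``canonical'' contraction $\lambda i.(p\,i,\,p|_{[0,i]})$ needs a connection (a meet $I\times I\to I$) on the interval. The abstract cylinder structure ($\delta_0,\delta_1,\pi,\psi$) does not provide one; $\sym$ only transposes the two cylinder coordinates.
\item Your fallback has the same problem. A Hurewicz lift in $d : C\to\arr A$ that yields a section through $\rfl$ needs a homotopy $\cyl[\Gamma]\arr A\to\arr A$ from $\rfl\circ\src$ to $\id$, and that is exactly the contraction.
\item The strict degeneracy at reflexivity does not come from the $\path/\unpath$ rules.
\end{itemize}

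The missing idea is the paper's \Cref{lem:connection}: apply the normal Hurewicz structure to the path-object projection itself.
\begin{enumerate}
\item Because the universe has $\Path$-types, $(\src,\trg):\arr A\to A\times_\Gamma A$ is a small fibration for every small $A\to\Gamma$. It is therefore normal Hurewicz over $\Gamma$: pull the structure back from $\uu$ and localise it to the slice via the Frobenius identity (\Cref{cor:small-fibration-hurewicz}).
\item Fill the open box with $\rfl x$ on two sides and $p$ on the third. Formally, lift the map $\arr A\to P$ given by $(\rfl\circ\src,\id):\arr A\to\arr A\times_\Gamma\arr A\cong\arr(A\times_\Gamma A)$ together with $\rfl\circ\src$.
\item Apply $\sym$. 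This gives $\chi:\arr A\to\arr\arr A$ with $\src\circ\chi=\rfl\circ\src$ and $\trg\circ\chi=\id$.
\item The identity $\chi\circ\rfl=\rfl\circ\rfl$ follows from normality of that lift (the box is degenerate when $p=\rfl x$) together with the compatibility of $\sym$ with $\rfl$.
\end{enumerate}
The fourth side of the box is uncontrolled, so this is not your canonical contraction, but it does not need to be.

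With $\chi$ in hand, your Step 2 and the $\beta$-rule go through as $j=\trg\circ l\circ(\chi,r\circ\src)$, using normality of both $\chi$ and $l$. Naturality needs no separate argument, and neither do $\Pi$-types (which are not assumed): by \Cref{prop:id-elim-equiv} it suffices to build this single lift in the universal case over $L$.
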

 
After various experiments in HoTTLean,
it became apparent that a model of MLTT (with a hierarchy of universes)
could be formulated in a manner that was internal
to the category of contexts $\CC$ itself \cite{hua2026},
rather than in the category of presheaves on contexts,
as is more typical (see \cite{clairambault2011, awodey2018}).
Roughly, this meant that we replaced the presheaf classifying all types
$\Ty : \CC^\op \to \Set$
with a hierarchy of representable presheaves $\Ty_0 \to \Ty_1 \to \Ty_2 \to \dots$,
or better yet, just with the objects (contexts)
$U_0 \to U_1 \to U_2 \to \dots$ that represent them.
This approach,
in part inspired by Uemura's notion of a representable map category
\cite{uemura2023},
proved to significantly simplify the construction of a model,
but necessitated a more general theory of polynomial functors,
since this simplification relied on an \emph{algebraic} construction
(in the sense of \cite{awodey2025}, as explained above).

Following a doctrine of working both \emph{internally} and \emph{algebraically}
has naturally led to various other results.
This includes a more general theory of exponentiability
with respect to a class of maps (a ``preclan''),
presented in \Cref{sec:exponentiable},
as well as the ensuing theory of polynomial functors,
presented in \Cref{sec:polynomials}.
Reid Barton was quick to point out a host of examples
in which the theory of exponentiability could be applied.
Insights from those examples paved the way for the rest of
the theory's development.
The main theorem, providing two equivalent definitions of
a map being \emph{$\RR$-exponentiabile} is given below.

\begin{theorem*}[\labelcref{theorem:exponentiable}]
  Let $(\CC,\RR)$ be a preclan (\labelcref{def:preclan})
  and $f : Y \to X$ a morphism in $\CC$.
  The following are equivalent.
  \begin{enumerate}
  \item
    The Beck--Chevalley condition (\labelcref{def:beck-chevalley}) holds at every pullback of
    $f$ in the preclan $(\CC,\RR)$
    (whenever such pullbacks exist).
  \item
    The pushforward in presheaves $\Pi_f : (\wCC/Y, \wRR) \to (\wCC/X, \wRR)$
    is a preclan morphism (\labelcref{def:preclan}).
  \end{enumerate}
\end{theorem*}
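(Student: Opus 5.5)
The plan is to compare everything with the presheaf topos $\wCC$, where pushforwards always exist and satisfy Beck--Chevalley. The Yoneda embedding then carries statements about $\RR$-maps in $\CC$ back and forth.

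\emph{Preliminary reduction.} I use the unfolding of \labelcref{def:preclan} as I read it. A map in $\wCC$ lies in $\wRR$ exactly when its pullback along every representable $y\Gamma \to B$ is representable and lies in $\RR$. A functor of preclans must preserve the distinguished maps, together with whatever limits the definition requires: pullbacks of distinguished maps and the terminal object.

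\begin{itemize}
\item Since $\Pi_f$ is a right adjoint, preservation of limits is automatic. So (2) reduces to one claim: for every $p : A \to Y$ in $\wRR$, the map $\Pi_f p \to X$ is again in $\wRR$.
\item Because $\wRR$ is defined fibrewise over representables, this in turn reduces to a second claim. For every $x : y\Gamma \to X$, the pullback $x^*\Pi_f p$ must be representable by an $\RR$-map into $\Gamma$.
\end{itemize}

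\emph{Direction (2)$\Rightarrow$(1).} Take a pullback $f' : Y' \to X'$ of $f$ along some $g : X' \to X$ in $\CC$, when it exists, and an $\RR$-map $r : Z \to Y'$.

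\begin{itemize}
\item In $\wCC$ the presheaf Beck--Chevalley isomorphism gives $g^*\Pi_f \cong \Pi_{f'} g'^*$.
\item Writing $r$ as a pullback of a map over $Y$, or directly applying (2) to the pullback square, shows that $\Pi_{f'}(yr)$ lies in $\wRR$ over the representable $yX'$.
\item Pulling back along the identity of $yX'$ shows $\Pi_{f'}(yr)$ is itself representable, by some $\RR$-map $P \to X'$.
\item Full faithfulness of Yoneda then shows that $P$ has the universal property of the pushforward in $\CC$.
\item The Beck--Chevalley comparison map in $\CC$ is sent by Yoneda to the presheaf one, which is invertible. Since Yoneda reflects isomorphisms, the comparison map in $\CC$ is invertible too.
\end{itemize}

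\emph{Direction (1)$\Rightarrow$(2).} Fix $p \in \wRR$ over $Y$ and $x : y\Gamma \to X$.

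\begin{itemize}
\item Presheaf Beck--Chevalley gives $x^*\Pi_f p \cong \Pi_{f_x}(x'^*p)$, where $f_x : y\Gamma \times_X Y \to y\Gamma$ is the pulled-back map.
\item When $y\Gamma\times_X Y$ is representable, say by $\Gamma'$, the map $x'^*p$ is the Yoneda image of an $\RR$-map over $\Gamma'$.
\item Hypothesis (1), applied at this pullback of $f$, gives an $\RR$-pushforward in $\CC$. Its Yoneda image agrees with the presheaf pushforward: both represent the same functor, because Yoneda preserves the relevant pullbacks and is fully faithful.
\end{itemize}

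\emph{Expected main obstacle.} The hard case is when the pullback $\Gamma\times_X Y$ does not exist in $\CC$, since (1) only speaks of pullbacks ``whenever such pullbacks exist''.

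\begin{itemize}
\item First, I would check whether \labelcref{def:beck-chevalley} is already phrased so that it covers this situation, for instance via the presheaf pushforward restricted to representables.
\item Otherwise, I would compute the sections of $\Pi_{f_x}(x'^*p)$ at each $\Delta \to \Gamma$ by Yoneda. I would then argue that only pullbacks of $f$ along maps which admit them enter, by writing $y\Gamma\times_X Y$ as a colimit of representables over which $p$ restricts to $\RR$-maps.
\item The aim is to assemble the required representing object from instances of (1) along those maps, using the Beck--Chevalley isomorphisms to glue them coherently.
\end{itemize}
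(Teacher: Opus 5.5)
Your proposal has a genuine gap, in two places.

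\textbf{The reduction of (2) drops the content of the condition.} A preclan morphism $(\wCC/Y,\wRR)\to(\wCC/X,\wRR)$ must send \emph{every} $\wRR$-map $h\colon A\to B$ of the slice to an $\wRR$-map, where $B$ is an arbitrary presheaf over $Y$. You only treat $B=Y$.

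Since $\wRR(Y)\simeq\RR(Y)$, your reduced claim is exactly the partial right adjoint condition at $f$, i.e.\ Beck--Chevalley at $f$ alone (\Cref{prop:pra-defs,prop:beck-chevalley-defs}). That is strictly weaker than both (1) and (2). For open embeddings in $\Top$ and $f\colon\N\to 1$ (\Cref{ex:top-opens}) it holds, yet (2) fails because some $X\times\N\to X$ is not closed. It is also why your ``main obstacle'' never arises for the claim you reduced to: $\Pi_f p$ is already represented by an $\RR$-map over $X$, and its pullbacks along $x$ exist.

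The idea you are missing is the paper's distributivity step. By \Cref{lem:pullback-morphism-action}, $\Pi_f h\cong\Pi_{f'}(\epsilon^*h)$, where $f'$ is the pullback of $f$ along $\Pi_f B\to X$, typically a non-representable presheaf. So (2) is equivalent to Beck--Chevalley at all pullbacks of $f$ in $(\wCC,\wRR)$ (\Cref{prop:lex-exponentiable}).

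The same gap affects (2)$\Rightarrow$(1). The isomorphism $g^*\Pi_f\cong\Pi_{f'}g'^*$ only reaches objects pulled back from $Y$. What you actually need is $\Pi_{f'}\cong\eta^*\circ\Pi_f/Y'$ (\Cref{lem:pushforward-along-pullback}), applied to $r$ viewed as an $\wRR$-map over $Y$. This uses preservation of maps, not just objects.

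\textbf{The obstacle you flagged is real, and gluing cannot remove it.} The remaining step reduces presheaf pullbacks to pullbacks along representables $u\colon\widetilde X\to X'$ (\Cref{lem:BC-iff-BC-in-presheaves}). This is exactly where your obstacle lives. $\CC$ has no colimits from which to assemble a representing object, and in fact (1)$\Rightarrow$(2) fails when those pullbacks are missing.

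Here is a counterexample.
\begin{itemize}
\item Let $\CC$ be the full subcategory of $\Top$ on countable discrete spaces and the Cantor space $K=2^{\N}$. Let $\RR$ be the clopen embeddings; this is a preclan, because clopen subspaces stay in $\CC$. Let $f\colon\N\to 1$.
\item No product $K\times\N$ exists in $\CC$. Its points would be $K\times\N$, so it would be a copy of $K$, which is compact yet would surject onto $\N$.
\item So the only pullbacks of $f$ are $W\times\N\to W$ with $W$ discrete. There Beck--Chevalley holds, since every subset is clopen. Hence (1) holds.
\item Now take $g\colon\N\to\N$, $k\mapsto\lfloor k/2\rfloor$, and let $h$ be the clopen embedding of the even numbers into $(\N,g)$.
\item One computes $\Pi_f(\N,g)\cong\prod_n y\{2n,2n+1\}$, and $\Pi_f h$ is the point $(2n)_n$.
\item Its pullback along $yK\to\prod_n y\{2n,2n+1\}$, $s\mapsto(2n+s_n)_n$, is represented by $0^\infty\colon 1\to K$. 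This map is not clopen, so $\Pi_f$ is not a preclan morphism.
\end{itemize}

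The paper's proof of \Cref{lem:BC-iff-BC-in-presheaves} passes over this point by asserting $\widetilde Y\in\CC$. That amounts to tacitly assuming that $f$ has pullbacks along all maps of $\CC$. Under that hypothesis, your representable case combined with the distributivity step above is the whole proof.
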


A new kind of dependent type theory emerges out of
the considerations of \Cref{sec:exponentiable},
which involves two different kinds of types and
which differs from MLTT in the behaviour of $\Pi$-types.
A fairly well-studied example of this is in the category of categories $\Cat$.
In short, to form a $\Pi$-type $\Pi_A B$ in $\Cat$,
$B$ must be a covariant type if $A$ is a contravariant type,
resulting in a covariant type $\Pi_A B$ (and dually).
In $\Cat$, $\Sigma$-types work in the same way as usual.
However, MLTT $\Id$-types (intensional identity types) are replaced with
$\Hom$-types that have two elimination rules,
originally due to North \cite{north2019},
relating to twisted arrow categories and various algebraic weak factorisation systems on $\Cat$.
We develop this new kind of algebraic model in \Cref{sec:cat-as-types}.
We summarise this model in the following statement.

\begin{theorem*}[\labelcref{ex:ACM-of-opfibrations}]
  There is an algebraic categorical model (\labelcref{def:ACM}) based in $\Cat$
  with a universe (\labelcref{def:ACM-universe}) classifying split opfibrations with
  $\Unit$-types, $\Sigma$-types, $\Pi$-types,
  and $\Hom$-types (\labelcref{def:ACM-universe-sigma,def:ACM-universe-pi,def:ACM-universe-hom}).
  Dually, the same holds for the universe classifying split fibrations.
\end{theorem*}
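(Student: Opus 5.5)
The statement bundles several pieces of structure, so I would build the model in layers, checking at each layer only what the definition of an algebraic categorical model (\labelcref{def:ACM}) and its universe (\labelcref{def:ACM-universe}) ask for. It suffices to treat split opfibrations: the fibration case follows by applying $(-)^\op$ to $\Cat$, which swaps split fibrations and split opfibrations and preserves all the constructions below.

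\textbf{Base model and universe.} I take $\CC = \Cat$ with two classes of display maps: split opfibrations (covariant types) and split fibrations (contravariant types). Both contain identities, are closed under composition, and are stable under chosen pullback. This gives the preclan-like data of \labelcref{def:ACM}. For the universe, let $\U$ be the category of small categories and $\Ulow$ the category of pointed small categories, $\Ulow \to \U$ the forgetful functor. This functor is the generic split opfibration: a functor $A : \Gamma \to \U$ classifies its Grothendieck construction $\int A \to \Gamma$, and $\int A$ is exactly the pullback of $\Ulow \to \U$ along $A$. This strict correspondence between split opfibrations over $\Gamma$ and functors into $\U$, natural in $\Gamma$, is what the universe definition requires.

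\textbf{$\Unit$ and $\Sigma$.} $\Unit$ is the terminal functor $1 \to \U$. For $\Sigma$ I would use the classifier of composable pairs, the polynomial $\Sigma$ of $\Ulow \to \U$ with itself as in \Cref{sec:polynomials}. Composites of split opfibrations are split opfibrations, with the chosen lifts composed. I would realise the resulting map $\Sigma\U \to \U$ explicitly by sending $(A, B : A \to \U)$ to $\int B$. I need to check that this is strictly stable under reindexing, which holds because the Grothendieck construction is strictly natural.

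\textbf{$\Pi$.} Here I would use \Cref{theorem:exponentiable}. The aim is to show that pushforward along a split fibration $p$ sends split opfibrations to split opfibrations. These are the Conduché-type exponentiability facts, and they are exactly the Beck--Chevalley condition of \labelcref{def:beck-chevalley} for the preclan of opfibrations. The variance swap in the statement reflects this: a covariant family over a contravariant base yields a covariant $\Pi$-type. Explicitly, $\Pi_A B$ over $\gamma$ is the category of sections of $B$ over the fibre $A_\gamma$. Reindexing along $\gamma \to \gamma'$ uses the cartesian lifts of $A$ in one direction and the opcartesian lifts of $B$ in the other. Splitness of both makes this strictly functorial, so the pushforward is a genuine split opfibration. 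I would then package it as a universal map out of the polynomial built from $\U^{\op}$-style and $\U$ classifiers, following \labelcref{def:ACM-universe-pi}.

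\textbf{$\Hom$-types.} I expect this to be the main obstacle. For a covariant $A$ over $\Gamma$, define $\Hom_A$ over $A\times_\Gamma A$ through the twisted arrow construction. Its fibre over $(a, b)$ is the set of morphisms $a \to b$ in the fibre $A_\gamma$, and it is a split opfibration in the appropriate variance: contravariant in the first argument and covariant in the second, matching $\Tw$. The $\mathsf{rfl}$ term is the identity. The two elimination rules in North's style come from lifting properties against the two algebraic weak factorisation systems on $\Cat$, with left maps the inclusions of initial and terminal objects into $\Tw$. I would prove the lifting properties by sending $(a, f, b)$ to transport along $f$ via the chosen opcartesian lifts, and verify the computation rules on $\mathsf{rfl}$. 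The stated difficulty lies in making these eliminators strictly stable under substitution, which requires the eliminators to be built from the splittings alone and not from choices made on objects.
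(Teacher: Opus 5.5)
Your treatment of the universe $\ulow:\pcat\to\cat$, of $\Sigma$ (small split opfibrations are closed under composition, and $\Sigma(A,B)=\int B$), and of $\Pi$ (small split opfibrations push forward along small split fibrations, as in \Cref{prop:pushforward-in-cat}) follows the paper. One caveat: the polynomial $\ulow\pcomp\ulow$ must be formed in $(\Cat,\fib)$, because opfibrations are not $\opfib$-exponentiable. The genuine gap is in the $\Hom$-types, where you never introduce the vertical core.

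The hom-type must live over $\vOp[\Gamma]{A}\times_\Gamma A$, not over $A\times_\Gamma A$: $\Tw{A}$ does not even map to $A\times A$, and $A^{\two}\to A\times A$ is only a two-sided fibration. Nothing maps diagonally into $\vOp[\Gamma]{A}\times_\Gamma A$ from $A$:
\begin{itemize}
\item in general there is no functor $A_\gamma\to A_\gamma^{\op}\times A_\gamma$ that is the diagonal on objects;
\item a term of $A$ induces a term of $\vOp[\Gamma]{A}$ only when its transport maps are invertible;
\item $x\mapsto\id_x$ is functorial into $\Tw{A}$ only along isomorphisms $f$, via $(f^{-1},f)$.
\end{itemize}
So ``$\rfl$ is the identity'' yields no introduction rule. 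What you need is the structure of \Cref{ax:core} and \Cref{ax:twist}: a vertical core $\vCore[\Gamma]{A}$ with $\inv$ and $\inc$, and $\rfl:\vCore[\Gamma]{A}\to\vTw[\Gamma]{A}$ over $(\inv,\inc)$. Then $\rfl a$ exists only for core terms $a$. The eliminators then live over the right- and left-twisted arrows $\rtw[L]{A}$ and $\ltw[L]{A}$, the pullbacks along $\inv\times_L A$ and $\vOp[L]{A}\times_L\inc$, not over $\vTw{A}$ itself. Relatedly, ``two classes of display maps'' underdescribes \Cref{def:ACM}. The operations $\vOp$, $\vCore$, $\vTw$ come from straightening and are functorial only on opcartesian maps, which is why $\opfibcart$ is part of the data. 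You also have to check \Cref{ax:vertical-opposite-small}.

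Your elimination step would also fail as stated. In $\Tw{A}$ the identity is initial in \emph{both} endpoint fibres: fixing the source gives $x/A$, and fixing the target gives $(A/y)^{\op}$. Hence both $\rfl:\vCore[L]{A}\to\rtw[L]{A}$ and $\rfl:\vCore[L]{A}\to\ltw[L]{A}$ are lari (\Cref{prop:right-twisted-factorisation-equiv}, \Cref{prop:rtw-equiv-K}). The paper obtains both eliminators from the single (lari, split opfibration) AWFS of \Cref{thm:lari-opfib-awfs}. A terminal-object inclusion is instead rari, and it lifts against split fibrations, not opfibrations. For example, $1\to\two$ picking $1$ has no lift against the discrete opfibration $\{1\}\hookrightarrow\two$. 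Both motives are covariant, classified by $\ulow$, so a ``terminal-object'' eliminator cannot be obtained your way.

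Your explicit recipe is the AWFS lift, once it is stated over $\rtw$. That recipe transports $r(x)$ along the counit $(\id_x,h):\rfl x\to h$ by opcartesian lifts, with normality giving the computation rule.

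Finally, the obstacle you single out, strict stability of the eliminators under substitution, does not arise here. By \Cref{prop:hom-elimination-equiv} it suffices to give one diagonal lift in the universal square over the pullback $L$ of \Cref{ax:pullback-L}. Every other instance is obtained by precomposing with a map into $L$, so stability is automatic.
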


\subsection*{Structure of the thesis}
\Cref{sec:exponentiable} presents exponentiability conditions with
respect to a class of maps.
\Cref{sec:polynomials} develops polynomial functors,
using the definitions from \Cref{sec:exponentiable}.
\Cref{sec:hottlean} summarises the formalisation of the groupoid model in the HoTTLean project,
and introduces unalgebraic and algebraic models of MLTT.
The formalisation uses the theory of polynomials in $\pi$-clans,
which is an application of \Cref{sec:polynomials}.
\Cref{sec:path-types} presents an alternative construction of
algebraic identity types (from \Cref{sec:hottlean}) via path types.
Finally, \Cref{sec:cat-as-types} develops a new kind of algebraic model
in the category of categories,
providing a case study of an algebraic model based on
a $\tau$-clan (from \Cref{sec:exponentiable}).

\chapter{Exponentiable morphisms in preclans}\label{sec:exponentiable}
% This chapter presents unpublished joint work with Reid Barton \cite{barton2026}.
Many categories, such as $\Top$ and $\Cat$,
are not locally Cartesian closed,
but still have certain exponentiable objects and morphisms
\cite{niefield1982, giraud1964}.
\begin{definition}[Exponentiable map]\label{def:exp-map}
  Let $\CC$ be a category with pullbacks.
  A morphism $f : Y \to X$ in $\CC$ is \emph{exponentiable}
  when the pullback functor $f^* : \CC / X \to \CC / Y$
  has a right adjoint $\Pi_f : \CC / Y \to \CC / X$.
  The functor $\Pi_f$ is called the \emph{pushforward along $f$}.
\end{definition}
However, for analysing models of type theory,
it is important to have a more fine-grained condition of being exponentiable
relative to a class of maps $\RR$ in a category $\CC$.

For models of MLTT,
such conditions have been considered in \cite{taylor1987} and \cite{joyal2017},
in the form of $\pi$-clans (using the terminology of \cite{joyal2017}).
Uemura introduced categories with representable maps (CwRs) \cite{uemura2023}
for models of MLTT in a lex category (finite limit category).
Exponentiability conditions for $\pi$-clans are in some ways more restrictive than the
theory of exponentiable morphisms --
the example of $\Cat$ and $\Top$ are lost.
At the same time,
it introduces certain closure conditions on pushforwards --
that pushforwards preserve morphisms in the $\pi$-clan,
key to identifying a class of type families in a model.
On the other hand, the above definition of exponentiability (used in CwRs)
captures when a class of maps is exponentiable with respect to \emph{all maps},
which can recover some of examples in $\Cat$ and $\Top$,
but does not subsume the theory of $\pi$-clans
(since not all $\pi$-clans are lex)
This notion of exponentiability says nothing about the
preservation of certain morphisms upon application
of the pushforward functor.

Here, we consider morphisms that are only exponentiable with respect to
a class of maps.
Our setting not only subsumes the
definitions of exponentiability in lex categories and $\pi$-clans,
but also provides a new framework for understanding exponentiability conditions for (op)fibrations in $\Cat$,
open and closed embeddings in $\Top$,
tidy and \'etale maps of topological spaces and toposes,
and Kan fibrations in simplicial sets $\sSet$,
to name but a few examples.
This work will focus on applying this theory to
produce a study of $\Cat$ in terms of \emph{algebraic type theory},
which is the focus of \Cref{sec:cat-as-types}.
% From these considerations,
% we can glean a new kind of dependent type theory
% with models in $\Cat$, $\Top$ and so on.
% This is the focus 

\section{Preclans}

The class of maps with respect to which we consider exponentiability conditions
are required to satisfy certain basic conditions,
which have been considered in various different contexts.
Paul Taylor introduced the notion of \emph{display maps}
for the semantics of Martin-L\"of Type theory,
which could be required to satisfy various extra conditions,
each corresponding to a particular type-theoretic notion
\cite{taylor1987}.
Andr\'e Joyal called a particular combination of those conditions
a \emph{clan} \cite{joyal2017}.
For the sake of having a short name,
we introduce another abbreviation,
consistent with the ``clan'' nomenclature.

\begin{definition}\label{def:preclan}
  A \emph{preclan} $(\CC,\RR)$ consists of a category $\CC$,
  and a class of morphisms $\RR$ in $\CC$
  -- which we call $\RR$-maps -- such that 
  \begin{itemize}
    \item $\RR$ is stable under pullback,
      i.e. pullbacks of $\RR$-maps along all morphisms exist and are $\RR$-maps.
    \item All isomorphisms are in $\RR$.
    \item $\RR$ is closed under composition.
  \end{itemize}
  A \emph{preclan morphism} 
  $F : (\CC, \RR) \to (\CC', \RR')$ is a functor
  $F : \CC \to \CC'$ that
  \begin{itemize}
    \item sends $\RR$-maps to $\RR'$-maps and
    \item preserves pullbacks of $\RR$-maps,
    meaning that for any $\RR$-map $A \to X$
    and any map $Y \to X$,
    the canonical comparison map
    $F (A \times_X Y) \to F A \times_{F X} F Y$
    is an isomorphism.
  \end{itemize}
\end{definition}
Preclans, preclan morphisms,
and natural transformations naturally form a 2-category.

If we also require that $\CC$ has a terminal object
and that maps to terminal objects belong to $\RR$,
then $(\CC,\RR)$ is a \emph{clan}.
A \emph{clan morphism} is a preclan morphism between clans
that also preserves the terminal object.

The full subcategory of $\RR$-objects in a slice
-- objects whose underlying map is an $\RR$-map -- is denoted by
$\RR(X) \subseteq \CC / X$.

Like locally Cartesian closed categories,
preclans are meant to be somewhat type theoretic.
We should therefore expect that each slice
of a preclan is a preclan.
There are in fact two candidates for slicing over an object $X$,
one is $\CC / X$ and the other is $\RR(X)$.

\begin{definition}
  Let $(\CC, \RR)$ be a preclan.
  \begin{itemize}
  \item Given an object $X \in \CC$, the slice category $\CC / X$
    has a preclan structure where the chosen class of maps consists of
    morphisms in $\CC / X$ with an underlying morphism in $\RR$.
    By slight abuse of notation, we denote this preclan by $(\CC/X, \RR)$.
  \item The restriction of the above preclan structure to $\RR(X) \subseteq \CC/X$
    defines a clan $(\RR(X), \RR)$.
  \item
    The category $\wCC$ of presheaves has a preclan structure $(\wCC, \wRR)$
    \cite[Proposition 1.6.5]{joyal2017}.
    Here, $\wRR$ consists of $\RR$-representable natural transformations:
    natural transformations $A \to X$
    such that for any representable object $Y \in \CC$ and map $f : Y \to X$,
    there is an $\RR$-map $A' \to Y$ in $\CC$ representing the pullback
    \[\begin{tikzcd}
        {A'} \pbcorner & A \\
        Y & X
        \arrow[from=1-1, to=1-2]
        \arrow["{\RR\text{-map}}"', from=1-1, to=2-1]
        \arrow[from=1-2, to=2-2]
        \arrow["f"', from=2-1, to=2-2]
      \end{tikzcd}\]
  \end{itemize}
  For an object $X \in \CC$,
  there are inclusions
  \[
    \RR(X) \,\subseteq\, \CC/X \,\subseteq\, \widehat{\CC/X} \simeq \wCC/X
  \]
  These form \emph{preclan embeddings}
  $(\RR(X),\RR) \hookrightarrow \cdots \hookrightarrow (\wCC/X, \wRR)$
  in that they are fully faithful preclan morphisms that reflect $\RR$-maps.
  Furthermore, the Yoneda embedding induces an equivalence $\RR(X) \simeq \wRR(X)$,
  as a representable natural transformation with representable codomain
  also has representable domain.
\end{definition}

\begin{remark}
  For $(\RR(X), \RR)$ to be a preclan, we needed that $\RR$ is closed under composition.
  Pullbacks in $\CC / X$ are computed as pullbacks of the underlying maps in $\CC$.
  Since the inclusion $\RR(X) \to \CC / X$
  reflects pullbacks and $\RR$ is closed under composition,
  the same is also true for $\RR(X)$.
  Moreover, for $(\CC,\RR)$ satisfying only conditions
  $(1)$ and $(2)$ from the definition of preclans,
  $\RR$ is closed under composition if and only if
  $(\RR(X),\RR)$ satisfies $(1)$.
\end{remark}

\begin{definition}\label{def:partial-right-adjoint}
  A \emph{partial right adjoint} of a functor $L : \CC \to \DD$
  consists of two full subcategories
  $\partial\CC \hookrightarrow \CC$
  and $\partial\DD \hookrightarrow \DD$,
  a functor $\partial R : \partial \DD \to \partial \CC$,
  and a hom-set bijection for all
  $X \in \CC$ and $Y \in \partial\DD$
  \[ \DD(L (X), Y) \iso \CC(X,\partial R (Y))\]
  natural in both $X$ and $Y$.
  We depict such a partial right adjoint in the following manner:
  \[\begin{tikzcd}
	\CC & {\partial\CC} \\
	\DD & {\partial \DD}
	\arrow[""{name=0, anchor=center, inner sep=0}, "L"', from=1-1, to=2-1]
	\arrow[hook', from=1-2, to=1-1]
	\arrow[""{name=1, anchor=center, inner sep=0}, "{\partial R}"', from=2-2, to=1-2]
	\arrow[hook', from=2-2, to=2-1]
	\arrow["{\dashv_\partial}"{description}, draw=none, from=0, to=1]
  \end{tikzcd}\]
\end{definition}

For any $\RR$-map $f : Y \to X$ in a preclan $(\CC,\RR)$,
we have a functor $f_! : \CC / Y \to \CC / X$
between the slice categories given by composition with $f$,
with pullback as a partial right adjoint
\[\begin{tikzcd}
{\CC / Y} & {\RR(Y)} \\
{\CC / X} & {\RR(X)}
\arrow[""{name=0, anchor=center, inner sep=0}, "{f_!}"', from=1-1, to=2-1]
\arrow[hook', from=1-2, to=1-1]
\arrow[""{name=1, anchor=center, inner sep=0}, "{f^*}"', from=2-2, to=1-2]
\arrow[hook', from=2-2, to=2-1]
\arrow["{\dashv_\partial}"{description}, draw=none, from=0, to=1]
\end{tikzcd}\]

\begin{proposition}\label{prop:beck-chevalley-left}
  Let $f : Y \to X$ be an $\RR$-map in a preclan $(\CC,\RR)$.
  Then $f_! : \CC / Y \to \CC / X$ restricts to a functor
  $f_! : \RR(Y) \to \RR(X)$.
  Consider the following pullback square.
  \[
  \begin{tikzcd}
    {Y'} & Y \\
    {X'} & X
    \arrow["{s'}", from=1-1, to=1-2]
    \arrow["{f'}"', from=1-1, to=2-1]
    \arrow["\lrcorner"{anchor=center, pos=0.125}, draw=none, from=1-1, to=2-2]
    \arrow["f", from=1-2, to=2-2]
    \arrow["s"', from=2-1, to=2-2]
  \end{tikzcd}
  \]
  The associated Beck--Chevalley transformation
  $(f')_! (s')^* \to s^* f_!$ is a natural isomorphism.
  \[
  \begin{tikzcd}
    \RR(Y') \ar[d, "f'_!"'] \ar[dr, phantom, "\cong"]  &
    \RR(Y) \ar[l, "(s')^*"'] \ar[d, "f_!"] \\
    \RR(X') & \RR(X) \ar[l, "s^*"']
  \end{tikzcd}
  \]
\end{proposition}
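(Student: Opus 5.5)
The first claim is immediate from the closure of $\RR$ under composition in \Cref{def:preclan}. If $a : A \to Y$ lies in $\RR(Y)$, then $f_!(a) = f \circ a$ is a composite of two $\RR$-maps, so it is an $\RR$-map. Hence $f_!$ sends $\RR(Y)$ into $\RR(X)$, and since $\RR(X)$ is a full subcategory, the action on morphisms restricts as well. The same argument applies to $f'$, which is an $\RR$-map because $\RR$ is stable under pullback. All the pullback functors $s^*$ and $(s')^*$ appearing in the square are defined on the relevant $\RR$-slices and land in them, again by pullback stability.

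For the Beck--Chevalley isomorphism, I would first make the transformation explicit. It is the mate of the identity $f_! (s')_! = s_! (f')_!$ (both equal composition with $s f' = f s'$), taken with respect to the partial adjunctions $f'_! \dashv_\partial (f')^*$ and $s_! \dashv_\partial s^*$. Concretely, fix $a : A \to Y$ in $\RR(Y)$ and form $(s')^* a$, namely $a' : A' \to Y'$ with $A' = Y' \times_Y A$. Then $f'_!(s')^* a$ is the composite $f' a' : A' \to X'$. On the other side, $s^* f_! a$ is the projection $X' \times_X A \to X'$, where $A$ maps to $X$ via $f a$. The component of the transformation at $a$ is the canonical map $A' \to X' \times_X A$ induced by $f' a'$ and the projection $A' \to A$. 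This map is well defined because $s f' a' = f s' a' = f a \circ (A' \to A)$.

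The key step is to show this comparison map is an isomorphism, and I would do this by pasting pullbacks. Consider the diagram with $A' \to A$ on top over $Y' \to Y$, which is a pullback by construction, stacked over the given pullback square $Y' \to Y$ over $X' \to X$. By the pasting lemma, the outer rectangle, with $A' \to A$ over $X' \to X$ via $f' a'$ and $f a$, is a pullback. It is therefore uniquely isomorphic to the chosen pullback $X' \times_X A$ over $X'$, and that unique isomorphism is exactly the comparison map described above.

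Naturality in $a$ follows from the universal property of pullbacks, or simply from the fact that mates of natural transformations are natural. The only mild care needed is to check that the mate formed from the partial adjunctions really is this canonical map. This amounts to unwinding the hom-set bijection, which is given by pulling back and composing with the counit projection. I expect this identification of the transformation, rather than the pasting argument itself, to be the step that requires the most bookkeeping.
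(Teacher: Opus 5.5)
Your proof is correct. The paper states this proposition without proof, and your argument (restriction by closure of $\RR$ under composition, then pasting the pullback of $a$ along $s'$ onto the given square) is the standard argument it relies on. The one slip is in naming the adjunctions for the mate: the identity $f_!(s')_! = s_!(f')_!$ should be transposed along $(s')_! \dashv_\partial (s')^*$ and $s_! \dashv_\partial s^*$ (via the hom-set bijection, since the unit of the latter need not exist), or equivalently one takes the mate of $(s')^* f^* \cong (f')^* s^*$ along the genuine adjunctions $f'_! \dashv (f')^*$ and $f_! \dashv f^*$; either way it is the canonical map $A' \to X' \times_X A$ you wrote down.
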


\section{The Beck--Chevalley condition}

Variations of the following ``Beck--Chevalley condition''
are typically considered when requiring a right adjoint to pullback.
% In lex categories the ``Beck--Chevalley condition''
% holds whenever the right adjoint to pullback exists,
% making it a theorem, rather than a condition.
% In models of MLTT with $\Pi$-types,
% the ``Beck--Chevalley condition'' is automatically stable under pullback.
% We will see that for us,
% not all right adjoints satisfy the ``Beck--Chevalley condition'',
% and sometimes Beck--Chevalley holds for $f$,
% but not its pullbacks.

\begin{definition} \label{def:beck-chevalley}
  Let $f : Y \to X$ be a morphism in a preclan $(\CC,\RR)$.
  We will say that the \emph{(pushforward) Beck--Chevalley condition}
  holds at $f : Y \to X$ if for any pullback square
   \[ \begin{tikzcd}
        Y' \ar[r, "s'"] \ar[d, "f'"'] \pbcorner & Y \ar[d, "f"] \\
        X' \ar[r, "s"] & X
      \end{tikzcd} \]
    the pullback functor
    $(f')^* : \RR(X') \to \RR(Y')$
    has a right adjoint $f'_* : \RR(Y') \to \RR(X')$,
    and the associated Beck--Chevalley transformation
    $s^* f_* \to (f')_* (s')^*$
    is a natural isomorphism:
    \[
      \begin{tikzcd}
        \RR(Y') \ar[d, "f'_*"'] \ar[dr, phantom, "\cong"]  &
        \RR(Y) \ar[l, "(s')^*"'] \ar[d, "f_*"] \\
        \RR(X') & \RR(X) \ar[l, "s^*"']
      \end{tikzcd}
    \]
\end{definition}

Unlike in locally Cartesian closed categories,
the pushforward Beck--Chevalley condition does not follow
from a Beck--Chevalley isomorphism $(s')_! (f')^* \iso f^* s_!$
(like in \Cref{prop:beck-chevalley-left}) between left adjoints,
because the left adjoint to pullback
$s_! : \RR(X') \to \RR(X)$ may not even exist.
Indeed, we will see in \Cref{ex:topos-etale} that the
pushforward pushforward Beck--Chevalley condition
does not hold for all maps, 
even though we always have \Cref{prop:beck-chevalley-left}.

% We provide an equivalent formulation
% of the Beck--Chevalley condition
% (henceforth we drop the word ``pushforward'').

\begin{proposition} \label{prop:pra-defs}
  Let $f : Y \to X$ be a morphism in a preclan $(\CC,\RR)$.
  % (In particular $f^* : \RR(X) \to \RR(Y)$
  % has a right adjoint $f_* : \RR(Y) \to \RR(X)$.)
  The following conditions are equivalent
  \begin{itemize}
  \item
    The pullback functor
    $f^* : \wCC / X \to \wCC / Y$ between slices in the category of presheaves
    has a partial right adjoint (\Cref{def:partial-right-adjoint})
    $f_* : \RR(Y) \to \RR(X)$.
    \[\begin{tikzcd}
      \wCC / X & {\RR(X)} \\
      \wCC / Y & {\RR(Y)}
      \arrow[""{name=0, anchor=center, inner sep=0}, "f^*"', from=1-1, to=2-1]
      \arrow[hook', from=1-2, to=1-1]
      \arrow[""{name=1, anchor=center, inner sep=0}, "{f_*}"', dashed, from=2-2, to=1-2]
      \arrow[hook', from=2-2, to=2-1]
      \arrow["{\dashv_\partial}"{description}, draw=none, from=0, to=1]
    \end{tikzcd}\]
    
    % This means that we have
    % \[ \Hom_{\CC / Y}(f^* X', A) \iso \Hom_{\CC / X}(X', f_* A) \]
    % natural in $X' \in (\CC / X)^\op$ and $A \in \RR(Y)$.
    % In short, $f_* A$
    % is universal among all objects in the slice $\CC / X$,
    % rather than just the objects in $\RR(X)$.
  \item
    Using the Yoneda embedding $\CC \subseteq \wCC$,
    the right adjoint $\Pi_f : \wCC/Y \to \wCC/X$
    to pullback in presheaves $f^* : \wCC/X \to \wCC/Y$
    sends $\RR(Y) \subseteq \wCC/Y$
    into $\RR(X) \subseteq \wCC/X$.
    \[\begin{tikzcd}
      {\wCC / X} & {\RR(X)} \\
      {\wCC / Y} & {\RR(Y)}
      \arrow[hook', from=1-2, to=1-1]
      \arrow["{\Pi_f}", from=2-1, to=1-1]
      \arrow["{f_*}"', dashed, from=2-2, to=1-2]
      \arrow[hook', from=2-2, to=2-1]
    \end{tikzcd}\]
  \end{itemize}
\end{proposition}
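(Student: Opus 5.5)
The argument is essentially a uniqueness-of-adjoints comparison carried out inside $\wCC/X$. Throughout, regard $\RR(X)\subseteq\CC/X\subseteq\wCC/X$ via the Yoneda embedding; this inclusion is fully faithful, so hom-sets in $\RR(X)$ coincide with hom-sets in $\wCC/X$. Recall also that $\wCC$ is a presheaf topos, so it is locally Cartesian closed and the full right adjoint $\Pi_f:\wCC/Y\to\wCC/X$ to $f^*:\wCC/X\to\wCC/Y$ always exists.

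For (2)$\Rightarrow$(1), assume $\Pi_f$ sends $\RR(Y)$ into $\RR(X)$. Define $f_*:\RR(Y)\to\RR(X)$ to be the restriction of $\Pi_f$, which lands in $\RR(X)$ by hypothesis and is a functor since $\RR(X)$ is full. For any $Z\in\wCC/X$ and $A\in\RR(Y)$, the adjunction $f^*\dashv\Pi_f$ gives
\[ (\wCC/Y)(f^*Z,A)\iso(\wCC/X)(Z,\Pi_f A)=(\wCC/X)(Z,f_*A). \]
This bijection is natural in $Z$ and in $A$, because it is the restriction of a natural bijection to the full subcategory $\RR(Y)$. That is exactly the data of a partial right adjoint in the sense of \Cref{def:partial-right-adjoint}.

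For (1)$\Rightarrow$(2), suppose $f_*$ is a partial right adjoint. For $A\in\RR(Y)$, both $f_*A$ and $\Pi_f A$ represent the functor $(\wCC/Y)(f^*(-),A)$ on $(\wCC/X)^\op$. By the Yoneda lemma in $\wCC/X$ there is therefore an isomorphism $f_*A\iso\Pi_f A$, natural in $A$. Since $f_*A$ lies in $\RR(X)$, so does $\Pi_f A$. Here I would use that $\RR(X)$ inside $\wCC/X$ is taken to be replete, i.e.\ identified with $\wRR(X)$ via the noted equivalence $\RR(X)\simeq\wRR(X)$. Membership in $\wRR(X)$ is isomorphism-invariant, since $\wRR$ contains all isomorphisms and is closed under composition. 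This shows that $\Pi_f$ sends $\RR(Y)$ into $\RR(X)$, and the square in (2) commutes up to the canonical isomorphism.

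The main obstacle is only bookkeeping. First, one must interpret ``sends into $\RR(X)$'' up to isomorphism, that is, via repleteness, which is handled by the $\RR(X)\simeq\wRR(X)$ remark above. Second, one must check that the restriction bijection in (2)$\Rightarrow$(1) keeps its naturality. Neither point involves any genuine computation.
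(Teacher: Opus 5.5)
Your proposal is correct and matches the paper's argument. Both proofs rest on the same observation: a value $f_*A$ of a partial right adjoint is exactly an object of $\RR(X)$ representing $\wCC/Y(f^*(-),A)$, and by Yoneda such an object exists if and only if $\Pi_f A$ is isomorphic to an object of $\RR(X)$. Your separate treatment of the two directions, and your appeal to repleteness via $\RR(X)\simeq\wRR(X)$, just make explicit the ``up to isomorphism'' reading that the paper's chain of equivalences uses implicitly.
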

\begin{proof}
  The partial right adjoint exists if and only if
  for any $A \in \RR(Y)$ there exists an object $B \in \RR(X)$
  and a natural isomorphism
  \[ \wCC/Y (f^*(-),A) \iso \wCC/X((-),B)\]
  between functors of type $(\wCC / X)^\op \to \Set$.  
  This holds if and only if
  there exists an object $B \in \RR(X)$ such that $\Pi_f A \iso B$,
  if and only if $\Pi_f$ preserves $\RR$-objects.
\end{proof}

\begin{definition} 
  We say the \emph{partial right adjoint condition}
  holds at $f : Y \to X$ if
  $f$ satisfies the equivalent conditions of
  \Cref{prop:pra-defs}.
\end{definition}

If $\CC$ is lex then the partial right adjoint condition
is equivalent to saying that the pullback
$f^* : \CC / X \to \CC / Y$
in $\CC$ has a partial right adjoint
$f_* : \RR(Y) \to \RR(X)$.
If, additionally, $f$ is exponentiable
(\Cref{def:exp-map})
then the partial right adjoint condition is also equivalent to saying that
the right adjoint $\Pi_f : \CC/Y \to \CC/X$ sends $\RR(Y)$ into $\RR(X)$.
The point is that although the functors
$f^* : \CC / X \to \CC / Y$ and $\Pi_f : \CC/Y \to \CC/X$
may not always exist,
when they do exist,
we can use them instead of their counterparts in $\wCC$.
\[
  \begin{tikzcd}
	{\RR(Y)} & {\CC / Y} & {\wCC / Y} \\
	{\RR(X)} & {\CC / X} & {\wCC / X}
	\arrow[hook, from=1-1, to=1-2]
	\arrow[""{name=0, anchor=center, inner sep=0}, "{f^*}"', shift right=3, from=1-1, to=2-1]
	\arrow[hook, from=1-2, to=1-3]
	\arrow[""{name=1, anchor=center, inner sep=0}, "{f^*}"', shift right=3, dashed, from=1-2, to=2-2]
	\arrow[""{name=2, anchor=center, inner sep=0}, "{f^*}"', shift right=3, from=1-3, to=2-3]
	\arrow[""{name=3, anchor=center, inner sep=0}, "{f_*}"', shift right=3, from=2-1, to=1-1]
	\arrow[hook, from=2-1, to=2-2]
	\arrow[""{name=4, anchor=center, inner sep=0}, "{\Pi_f}"', shift right=3, dashed, from=2-2, to=1-2]
	\arrow[hook, from=2-2, to=2-3]
	\arrow[""{name=5, anchor=center, inner sep=0}, "{\Pi_f}"', shift right=3, from=2-3, to=1-3]
	\arrow["\dashv"{anchor=center}, draw=none, from=0, to=3]
	\arrow["\dashv"{anchor=center}, draw=none, from=1, to=4]
	\arrow["\dashv"{anchor=center}, draw=none, from=2, to=5]
\end{tikzcd}\]

Next, we show that the Beck--Chevalley condition is equivalent
to the partial right adjoint condition.
This was originally observed by Thomas Streicher,
and can be found in \cite[Theorem 1.28]{streicher2012semantics}.
However,
our setting is different to Streicher's in a very subtle way.
Streicher, following Paul Taylor,
requires that $f$ itself is an $\RR$-map, whereas we do not.
We will see that the Beck--Chevalley condition in our setting
is \emph{not necessarily stable under pullback},
unlike in Taylor's setting.
The examples and \Cref{theorem:exponentiable}
will show that throwing in pullback-stability for Beck--Chevalley
actually produces a very natural notion of exponentiability.

\begin{lemma} \label{lem:pullback-global-sections}
  For $s : X' \to X$ and $A \in \RR(X)$, we have
  $\Hom_{\CC/X}(X', A) \cong \Hom_{\RR(X')}(1_{X'}, s^* A)$.
\end{lemma}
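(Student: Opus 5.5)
The approach is to use the universal property of the pullback $s^*A = X' \times_X A$, which exists in $\CC$ because $A \to X$ is an $\RR$-map and $\RR$ is stable under pullback. Here $\Hom_{\CC/X}(X', A)$ means maps in $\CC/X$ from $(X', s)$ to $(A, a)$, where $a : A \to X$ is the structure map. The terminal object of $\RR(X')$ is the identity $1_{X'} : X' \to X'$, which lies in $\RR(X')$ because isomorphisms are $\RR$-maps.

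First, I will unwind both sides.

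\begin{itemize}
\item An element of $\Hom_{\CC/X}(X', A)$ is a morphism $g : X' \to A$ with $a g = s$.
\item Write $a' : s^*A \to X'$ and $s'' : s^*A \to A$ for the pullback projections. An element of $\Hom_{\RR(X')}(1_{X'}, s^*A)$ is a morphism $h : X' \to s^*A$ with $a' h = \id_{X'}$, i.e.\ a section of $a'$. Since $\RR(X')$ is a full subcategory of $\CC/X'$, these are exactly the maps in the slice.
\end{itemize}

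Next, I will define the two directions.

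\begin{itemize}
\item Given $g$, the pair $(\id_{X'}, g)$ satisfies $s \circ \id_{X'} = a \circ g$. The universal property of the pullback then gives a unique $h : X' \to s^*A$ with $a' h = \id_{X'}$ and $s'' h = g$.
\item Given a section $h$, set $g = s'' h$. Then $a g = a s'' h = s a' h = s$, so $g$ is a map over $X$.
\end{itemize}

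Finally, I will check that the two assignments are mutually inverse.

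\begin{itemize}
\item Starting from $g$, we get $s'' h = g$ by construction, so the round trip returns $g$.
\item Starting from $h$, the map built from $(\id_{X'}, s'' h)$ agrees with $h$ on both pullback projections. By the uniqueness part of the universal property, it equals $h$.
\end{itemize}

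This is the standard correspondence between maps over $X$ and sections of the pulled-back family. If naturality in $X'$ (along maps over $X$) or in $A$ is needed later, it follows from uniqueness in the pullback's universal property in the same way.

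There is no real obstacle. The only point needing care is that the pullback $s^*A$ actually exists in $\CC$ and is an object of $\RR(X')$. This holds by pullback-stability of $\RR$, even though $s$ is arbitrary and $\CC$ need not have all pullbacks.
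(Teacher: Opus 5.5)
Your proof is correct and follows the same route as the paper: you identify the left side with maps $X' \to A$ over $X$ and the right side with sections of $s^*A \to X'$, and match them via the universal property of the pullback. As in the paper, both $1_{X'}$ and $s^*A \to X'$ lie in $\RR(X')$ because identities and pullbacks of $\RR$-maps are $\RR$-maps; you just write out the bijection and its inverse explicitly, which the paper leaves implicit.
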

\begin{proof}
  Write $A = (A \xrightarrow{p} X)$.
  The left side consists of maps $u : X' \to A$ with $pu = s$,
  while the right side consists of sections of the pullback $s^* p : s^* A \to X'$.
  It makes sense to write $\RR(X')$ on the right hand side
  because both $1_{X'} : X' \to X'$ and $s^* p : s^* A \to X'$ are $\RR$-maps.
\end{proof}

\begin{proposition} \label{prop:beck-chevalley-defs}
  Let $f : Y \to X$ be a morphism in a preclan $(\CC,\RR)$.
  Then the Beck--Chevalley condition holds at $f$ if and only if
  the partial right adjoint condition holds at $f$.
\end{proposition}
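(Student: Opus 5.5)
The plan is to prove both directions by comparing hom-sets out of representable objects over $X$, using \Cref{lem:pullback-global-sections} to turn maps into $f_*A$ into global sections of its pullbacks. Throughout I use that $\wCC$ is locally Cartesian closed: $\Pi_f$ and $\Pi_{f'}$ always exist there, and the presheaf Beck--Chevalley isomorphism $s^*\Pi_f \iso \Pi_{f'}(s')^*$ holds for every pullback square.

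\emph{($\Rightarrow$).} Assume the Beck--Chevalley condition at $f$. Applying it to the trivial square ($s = 1_X$) gives a right adjoint $f_* : \RR(Y) \to \RR(X)$ of $f^*$. Fix $A \in \RR(Y)$. I want $\Pi_f A \iso f_*A$ as presheaves over $X$, and it suffices to compare them on representables $s : X' \to X$. The value of $\Pi_f A$ at $s$ is $\Hom_{\wCC/Y}(f^*X', A)$.
\begin{itemize}
\item Suppose the pullback $Y' = f^*X'$ exists in $\CC$. Then I would chain
\[ \Hom_{\CC/Y}(Y',A) \iso \Hom_{\RR(Y')}(1_{Y'},(s')^*A) \iso \Hom_{\RR(X')}(1_{X'}, f'_*(s')^*A) \iso \Hom_{\RR(X')}(1_{X'}, s^*f_*A) \iso \Hom_{\CC/X}(X', f_*A).\]
The steps are, in order: \Cref{lem:pullback-global-sections}, the adjunction $f'^*\dashv f'_*$ (using $f'^*1_{X'} = 1_{Y'}$), the Beck--Chevalley isomorphism, and \Cref{lem:pullback-global-sections} again.
\item For naturality in $X'$, I would note that the whole chain is induced by the single map "pull back along $f$, then compose with the counit $\varepsilon : f^*f_*A \to A$". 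The object $f^*f_*A$ exists in $\CC$ because $f_*A$ is an $\RR$-map.
\end{itemize}

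\emph{Main obstacle.} Representables $X'$ for which $f^*X'$ does not exist in $\CC$ need separate treatment. The comparison map $\Hom_{\wCC/X}(W, f_*A) \to \Hom_{\wCC/Y}(f^*W, A)$, defined via pullback and $\varepsilon$, makes sense for every presheaf $W$. To show it is bijective at such an $X'$, I would first try the following.
\begin{itemize}
\item Write $f^*X'$ as the colimit of representables $Z \to Y$ whose composite $Z \to X$ factors through $X'$.
\item Use the squares that do exist in $\CC$ to transport the bijection. The key square is the pullback of $f$ along the $\RR$-map $f_*A \to X$, which exists.
\end{itemize}
If this fails, the fallback is to read "any pullback square" as ranging over pullbacks in $\wCC$ whose vertices are representable, matching the "whenever such pullbacks exist" caveat.

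\emph{($\Leftarrow$).} Assume $\Pi_f$ sends $\RR(Y)$ into $\RR(X)$. Take a pullback square in $\CC$ as in \Cref{def:beck-chevalley}.
\begin{itemize}
\item The presheaf Beck--Chevalley isomorphism shows that $\Pi_{f'}$ sends every object of the form $(s')^*A$ into $\RR(X')$. On those objects it then gives the required isomorphism $s^*f_* \iso f'_*(s')^*$, where $f_*$ and $f'_*$ are restrictions of $\Pi_f$ and $\Pi_{f'}$.
\item The harder point is that $f'_*$ must be defined on all of $\RR(Y')$, not only on pulled-back objects. For a general $B \in \RR(Y')$ I would try to show that $\Pi_{f'}B$ is representable and an $\RR$-map. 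The approach is to evaluate at representables $Z \to X'$, giving $\Hom_{\wCC/Y'}(Z\times_{X}Y, B)$, and to realise this as a hom into $\Pi_f$ of a suitable $\RR$-object over $Y$.
\item For that last step I would first look for a way to present $B$ over $Y'$ as a pullback of something in $\RR(Y)$, or to use that $\wRR$-maps are detected after pullback to representables.
\end{itemize}
I expect this extension from pulled-back objects to arbitrary $B$ to be the main technical point of this direction.
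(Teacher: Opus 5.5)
You reproduce the paper's argument wherever yours is complete. Your $(\Rightarrow)$ chain is exactly the paper's: \Cref{lem:pullback-global-sections} twice, $(f')^*1_{X'}=1_{Y'}$ with $(f')^*\dashv f'_*$, and the Beck--Chevalley isomorphism. On objects $(s')^*A$, your appeal to the presheaf isomorphism $s^*\Pi_f\iso\Pi_{f'}(s')^*$ is what the paper verifies by hand via $(s')_!\dashv_\partial(s')^*$, the two-pullbacks lemma, $f^*\dashv_\partial f_*$ and $s_!\dashv_\partial s^*$.

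The two steps you leave open, however, are genuine gaps, and neither of your suggested repairs can work: the statement as literally worded fails at exactly those points. The paper's proof passes over them too. It asserts that Beck--Chevalley in $(\CC,\RR)$ is equivalent to Beck--Chevalley in $(\wCC,\wRR)$ and then argues as if $\CC$ were lex. In $(\Leftarrow)$ it only shows that $s^*f_*A$ represents $\Hom_{\CC/Y'}((f')^*(-),(s')^*A)$, i.e.\ it produces $f'_*$ only on the objects $(s')^*A$.

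\textbf{Counterexample to $(\Leftarrow)$.} Let $\CC$ be the finite meet-semilattice on $\top,Y,X',V_1,V_2,Y',\bot$ with $\bot$ least, $\top$ greatest, $Y'<Y$, $Y'<X'$ and $V_1,V_2<X'$. Let $\RR$ consist of the identities, $V_1\le X'$, $V_2\le X'$, and $\bot\le c$ for $c\in\{X',V_1,V_2,Y'\}$. This is a preclan, and $\CC$ is lex. For $f:Y\to\top$ we have $\RR(Y)=\{Y\}$ and $\Pi_f(\id_Y)=\id_\top\in\RR(\top)$, so the partial right adjoint condition holds. But for the pullback $f':Y'\to X'$, the functor $(f')^*:\RR(X')=\{X',V_1,V_2,\bot\}\to\RR(Y')=\{Y',\bot\}$ has no right adjoint, since $\{V\in\RR(X'): V\wedge Y'\le\bot\}=\{V_1,V_2,\bot\}$ has no largest element. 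Moreover $\bot\in\RR(Y')$ is not a pullback of anything in $\RR(Y)$, so your idea of presenting $B$ as such a pullback has nothing to work with.

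\textbf{Counterexample to $(\Rightarrow)$.} Take the poset on $\top,Y,W,P,A,M,Z_1,Z_2$ with $\top$ greatest, $A<Y$, $A<P$, $M<W$, $M<P$, and $Z_1,Z_2$ below both $A$ and $M$. Let $\RR$ be the identities together with $P\le\top$, $A\le Y$ and $M\le W$. Since $P\wedge Y=A$ and $P\wedge W=M$, one checks this is a preclan, while $W\wedge Y$ and $M\wedge Y$ do not exist. For $f:Y\to\top$ one gets $\RR(Y)=\{Y,A\}$, $\RR(\top)=\{\top,P\}$, $f_*Y=\top$ and $f_*A=P$. Checking the pullbacks of $f$ that do exist (along $\top,Y,P,A,Z_1,Z_2$) shows that the Beck--Chevalley condition holds at $f$. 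Yet $\Pi_fA$ is the sieve of those $V$ with ${\downarrow}V\cap{\downarrow}Y\subseteq{\downarrow}A$. This sieve contains the incomparable elements $P$ and $W$ but not $\top$, so it is not representable and the partial right adjoint condition fails. The same example shows that the paper's transfer to $(\wCC,\wRR)$ fails as well. Your fallback changes nothing, since pullbacks in $\wCC$ with representable vertices are exactly the pullbacks in $\CC$. Your colimit approach cannot work either, because the conclusion is false here.

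\textbf{What is provable.} If $f$ has all pullbacks in $\CC$ (e.g.\ $\CC$ lex), then $f^*(yX')\iso y(f^*X')$ for every representable $X'$, and your $(\Rightarrow)$ chain is a complete proof. The direction $(\Leftarrow)$ holds in either of two forms. One is to weaken the Beck--Chevalley condition so that it only demands $f'_*$ and the isomorphism on the objects $(s')^*A$. The other is to assume the partial right adjoint condition at every pullback of $f$, which is the pullback-stable form that \Cref{theorem:exponentiable} relies on.
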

\begin{proof}
  Since $\RR(X) \simeq \wRR(X)$,
  the Beck--Chevalley condition holds at $f : Y \to X$
  in $(\CC,\RR)$ if and only if the Beck--Chevalley condition holds at
  $f : Y \to X$ in $(\wCC,\wRR)$.
  Furthermore, since $\wCC$ is lex,
  the partial right adjoint condition holds at $f : Y \to X$
  in $(\CC,\RR)$
  if and only if it holds at $\Pi_f : \wCC / X \to \wCC / Y$
  has a partial right adjoint $f_* : \wRR(Y) \to \wRR(X)$.
  Thus, in the following, we assume that $\CC$ is lex,
  and prove that the Beck--Chevalley condition holds at $f$
  if and only if $f^* : \CC / X \to \CC / Y$
  has a partial right adjoint $f_* : \RR(Y) \to \RR(X)$.

  ($\Rightarrow$)
  We need to establish an isomorphism
  $\Hom_{\CC/X}(X', f_* A) \cong \Hom_{\CC/Y}(f^* X', A)$
  for any $(X' \xrightarrow{s} X) \in \CC/X$ and $(A \to Y) \in \RR(Y)$.
  For $Z \in \CC$, write $\Gamma : \RR(Z) \to \Set$
  for the ``global sections'' functor $\Gamma := \Hom_{\RR(Z)}(1_Z, -)$.
  Let $(Y' \xrightarrow{s'} Y) := f^* (X' \xrightarrow{s} X) \in \CC/Y$,
  with $f' : Y' \to X'$ the induced map,
  so there is a pullback square as in (1).
  Using \Cref{lem:pullback-global-sections},
  \[
    \Hom_{\CC/X}(X', f_* A)
    = \Hom_{\RR(X')}(1_{X'}, s^* f_* A)
    = \Gamma(s^* f_* A)
  \]
  and likewise
  \begin{align*}
    \Hom_{\CC/Y}(f^* X', A)
    & = \Hom_{\RR(Y')}(1_{Y'}, (s')^* A) \\
    & = \Hom_{\RR(Y')}((f')^* 1_{X'}, (s')^* A) \\
    & = \Hom_{\RR(X')}(1_{X'}, f'_* (s')^* A) \\
    & = \Gamma(f'_* (s')^* A)
  \end{align*}
  Condition (1) says that the natural map
  $s^* f_* A \to f'_* (s')^* A$ is an isomorphism,
  so we get an induced isomorphism
  \[
    \Hom_{\CC/X}(X', f_* A)
    = \Gamma(s^* f_* A)
    \cong \Gamma(f'_* (s')^* A)
    = \Hom_{\CC/Y}(f^* X', A)
  \]

  ($\Leftarrow$)
  Consider a pullback square of $f$ as shown below.
  \[
    \begin{tikzcd}
      Y' \ar[r, "s'"] \ar[d, "f'"'] \pbcorner & Y \ar[d, "f"] \\
      X' \ar[r, "s"] & X
    \end{tikzcd}
  \]
  Let $A \in \RR(Y)$.
  We must give a natural isomorphism $s^* f_* A \cong (f')_* (s')^* A \in \RR(X')$.
  We will do this by showing that $s^* f_* A$ represents
  the functor $\Hom_{\CC/Y'}((f')^*(-), (s')^*A)$ on $\CC/X'$.
  Indeed,
  \begin{align*}
    \Hom_{\CC/Y'}((f')^*(-), (s')^*A)
    & \cong \Hom_{\CC/Y}((s')_!(f')^*(-), A) \\
    & \cong \Hom_{\CC/Y}(f^* s_!(-), A) \\
    & \cong \Hom_{\CC/X}(s_!(-), f_* A) \\
    & \cong \Hom_{\CC/X'}({-}, s^* f_* A)
  \end{align*}
  In the second step, we used the two pullbacks lemma
  ($f$ has all pullbacks).
  In the remaining steps, we used the facts that
  $(s')^*$, $f_*$, $s^*$ are partial right adjoints
  to $s'_!$, $f^*$, $s_!$ respectively.
\end{proof}

By the proposition, the Beck--Chevalley condition and the partial right adjoint condition
are equivalent.
However, we will continue to use two different names
because it is often easier
to check or apply one condition over the other.

\begin{remark}
  For the Beck--Chevalley condition,
  we have not needed that $\RR$ is closed under composition,
  since we have not needed the preclan structure $(\RR(X),\RR)$.
\end{remark}

\section{Exponentiability}
\begin{theorem} \label{theorem:exponentiable}
  Let $(\CC,\RR)$ be a preclan and $f : Y \to X$ a morphism in $\CC$.
  The following are equivalent.
  \begin{enumerate}
  \item
    The Beck--Chevalley condition (\labelcref{def:beck-chevalley})
    holds at every pullback of $f$ in the preclan $(\CC,\RR)$
    (whenever such pullbacks exist).
  % \item 
  %   The Beck--Chevalley condition holds at every pullback of
  %   (the image of) $f$ in the preclan ($\wCC,\wRR$).
  \item
    The slice pushforward in presheaves $\Pi_f : (\wCC/Y, \wRR) \to (\wCC/X, \wRR)$
    is a preclan morphism.
  \end{enumerate}
  
  If $\CC$ is lex and $f$ is an exponentiable morphism
  (\Cref{def:exp-map}),
  (2) can be rephrased as saying that
  the slice pushforward $\Pi_f : (\CC/Y, \RR) \to (\CC/X, \RR)$ is a preclan morphism.
\end{theorem}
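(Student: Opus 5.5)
We unpack condition (2): $\Pi_f : (\wCC/Y,\wRR) \to (\wCC/X,\wRR)$ is a preclan morphism if and only if (a) $\Pi_f$ sends $\wRR$-maps to $\wRR$-maps, and (b) $\Pi_f$ preserves pullbacks of $\wRR$-maps. Since $\Pi_f$ is a right adjoint on the lex category $\wCC/Y$, it preserves all limits, so (b) is automatic. The whole content is therefore (a): for every $\wRR$-map $q : A \to B$ in $\wCC/Y$, the map $\Pi_f q : \Pi_f A \to \Pi_f B$ is $\wRR$-representable. The plan is to show that (a) is exactly the partial right adjoint condition at every pullback of $f$, and then invoke \Cref{prop:beck-chevalley-defs}.

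The key step is to test representability of $\Pi_f q$ along an arbitrary generalized element. Let $Z \in \CC$ and $z : Z \to \Pi_f B$ over $X$, with $x : Z \to X$ the underlying map. The pullback $f_Z : Z\times_X Y \to Z$ of $f$ along $x$ is taken in presheaves and may fail to be representable; this is why (1) says ``whenever such pullbacks exist''. By adjunction, $z$ corresponds to $\bar z : x^*Y \to B$ over $Y$. Using the Beck--Chevalley isomorphism in the lcc category $\wCC$, namely $x^*\Pi_f \iso \Pi_{f_Z} (x')^*$, we get
\[ Z\times_{\Pi_f B}\Pi_f A \;\iso\; \Pi_{f_Z}\bigl(\bar z^* A\bigr) \quad\text{in } \wCC/Z, \]
where $\bar z^*A \to x^*Y$ is the pullback of $q$ along $\bar z$, and hence lies in $\wRR(x^*Y)$. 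So (a) holds if and only if, for all such $Z,x,\bar z$, the presheaf $\Pi_{f_Z}$ applied to this $\wRR$-object is in $\RR(Z)$. Conversely, every $A' \in \wRR(x^*Y)$ arises this way by taking $B = x^*Y$ (or rather $B=\Pi_f$-target built from $Y$): choose $q := (A' \to x^*Y \to Y)$ composed suitably, and $B = x_! x^* Y$ viewed over $Y$. More cleanly, take $B := x^*Y \to Y$ and $A := A'$, which is $\wRR$ over $B$, and take $z$ to be the unit $Z \to \Pi_f(x^*Y)$. Thus (a) is equivalent to the following statement: for every map $x : Z \to X$ from a representable $Z$, the functor $\Pi_{f_Z}$ sends $\wRR(x^*Y)$ into $\RR(Z)$. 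By \Cref{prop:pra-defs}, applied in $(\wCC,\wRR)$ and using $\RR(Z)\simeq\wRR(Z)$, this says that the partial right adjoint condition holds at every pullback of $f$ along maps from representables.

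The remaining gap is between pullbacks along representables, which may be presheaves, and pullbacks existing in $\CC$. For (2)$\Rightarrow$(1), a pullback $f' : Y'\to X'$ existing in $\CC$ is a pullback of $f$ along the representable $X'$. We have just shown that the partial right adjoint condition holds there, and \Cref{prop:beck-chevalley-defs} converts it into Beck--Chevalley. For (1)$\Rightarrow$(2), we need the statement for every pullback of $f$ in $(\wCC,\wRR)$ along a representable $Z$, including non-representable $x^*Y$. This is the main obstacle. I would handle it by noting that \Cref{prop:beck-chevalley-defs} is really proved in $\wCC$: the ($\Rightarrow$) direction shows that Beck--Chevalley at $f$ gives the partial right adjoint condition, and the latter is inherited by pullbacks via the ($\Leftarrow$) computation, which only uses $s_!\dashv s^*$ in $\wCC$. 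So Beck--Chevalley at $f$ itself, read in $\wCC$ via $\RR(X)\simeq\wRR(X)$, already yields $\Pi_{f_Z}x'^{*}\iso x^*\Pi_f$ restricted to $\RR$-objects, with values in $\RR(Z)$. One must check that every $\wRR$-object over $x^*Y$ is locally a pullback of one over $Y$. If this fails, I would instead restrict to $Z$ with representable pullback and argue that representability of $\Pi_f q$ can be tested on a smaller class of generalized elements.

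Finally, if $\CC$ is lex and $f$ is exponentiable, $\Pi_f$ on $\CC$ agrees with the presheaf $\Pi_f$ on representables. This follows from full faithfulness of Yoneda together with preservation of pullbacks, so the restatement of (2) in terms of $\Pi_f : (\CC/Y,\RR) \to (\CC/X,\RR)$ follows by restriction and reflection of $\RR$-maps.
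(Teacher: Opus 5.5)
\textbf{The gap is in (1)$\Rightarrow$(2), at exactly the point you flag, and it cannot be closed as the theorem is stated.} Your reformulation of (2) is correct. By the presheaf Beck--Chevalley isomorphism, $\Pi_f$ preserves $\wRR$-maps if and only if, for every representable $Z$ and every $x : Z \to X$, the functor $\Pi_{f_Z}$ sends $\wRR(Z\times_X Y)$ into $\RR(Z)$. From this, (2)$\Rightarrow$(1) and the lex rephrasing follow. But (1)$\Rightarrow$(2) is false when $f$ lacks pullbacks.

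Here is a counterexample. Let $\CC$ be the poset $0 < a,b < Y,Z < X$, with $a,b$ incomparable and $Y,Z$ incomparable. Let $\RR$ consist of the identities together with all maps $0 \le c$; this is a preclan. Let $f$ be $Y \le X$.
\begin{itemize}
\item The only pullbacks of $f$ that exist in $\CC$ are $f$ itself and identities. Every $f'^*$ involved is an isomorphism; for instance $f^* : \RR(X) \to \RR(Y)$ is $\{0<X\}\to\{0<Y\}$. So all Beck--Chevalley maps are isomorphisms and (1) holds.
\item However, $S := y(Z)\times_{y(X)} y(Y)$ is the non-representable sieve $\{0,a,b\}$.
\item The inclusion $q : y(a) \hookrightarrow S$ over $y(Y)$ lies in $\wRR$.
\item One computes $\Pi_f S \iso y(Z)$ and $\Pi_f\, y(a) \iso y(a)$, so $\Pi_f q$ is $a \le Z$, which is not in $\RR$.
\end{itemize}

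Both of your fallbacks fail on this example. First, $y(a) \to S$ restricts over $a$ and over $b$ to pullbacks of objects of $\RR(Y)$, yet it is not globally such a pullback, and $\Pi_{f_Z}$ does not glue along the non-representable $S$. Second, pulling $\Pi_f q$ back along $0, a, b \to Z$ (the points over which $f$ does have a representable pullback) gives $\RR$-maps. So no smaller class of test elements detects the failure.

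The paper's proof gets past this point in \Cref{lem:BC-iff-BC-in-presheaves}. There it asserts that for representable $\widetilde X$, the pullback $\widetilde f$ of $f$ along $su$ is a pullback in $\CC$, so that $\widetilde Y \in \CC$. That is precisely what fails above, with $\widetilde X = Z$.

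The assertion, and with it the theorem, holds under the extra hypothesis that $f$ has all pullbacks in $\CC$ (for instance, $\CC$ lex or $f \in \RR$). In that case your argument closes immediately:
\begin{itemize}
\item each $f_Z$ is a pullback in $\CC$;
\item (1) together with \Cref{prop:beck-chevalley-defs} gives the partial right adjoint condition at $f_Z$;
\item your reformulation then yields (2).
\end{itemize}
In that setting, your pointwise test via the presheaf Beck--Chevalley isomorphism does the same job as the paper's route. The paper goes through \Cref{prop:lex-exponentiable} and the isomorphism $\Pi_f/Z \iso \Pi_{f'}\circ\epsilon^*$ of \Cref{lem:pullback-morphism-action}; your route is somewhat more direct.
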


Before proving the theorem,
we provide our main definitions,
as well as some lemmas and remarks.

\begin{definition} \label{def:exponentiable}
  We say $f$ is \emph{$\RR$-exponentiable}
  if it satisfies the equivalent conditions in \Cref{theorem:exponentiable}.
  We write $\Exp_\RR$ for the class of $\RR$-exponentiable maps in $\CC$.
\end{definition}

By condition (1), $\RR$-exponentiable morphisms
are stable under pullback.
Condition (2) says that $\Pi_f : (\wCC/Y, \wRR) \to (\wCC/X, \wRR)$
not only sends $\RR(Y) \subseteq \wCC / Y$ into $\RR(X) \subseteq \wCC / X$,
as in the statement of \Cref{prop:beck-chevalley-defs},
but also preserves all $\RR$-maps in
$\RR(Y)$ and $\CC / Y$, which are subcategories of $\wCC/Y$.
Hence, assuming condition (2),
we have that the pushforward $f_* : (\RR(Y), \RR) \to (\RR(X), \RR)$
is a preclan morphism,
making it a partial right adjoint 
to pullback $f^* : (\wCC/X, \RR) \to (\wCC /Y, \RR)$
in the $2$-category of preclans.
  \[\begin{tikzcd}
    (\wCC / X, \wRR) & {(\RR(X), \RR)} \\
    (\wCC / Y, \wRR) & {(\RR(Y), \RR)}
    \arrow[""{name=0, anchor=center, inner sep=0}, "f^*"', from=1-1, to=2-1]
    \arrow[hook', from=1-2, to=1-1]
    \arrow[""{name=1, anchor=center, inner sep=0}, "{f_*}"', from=2-2, to=1-2]
    \arrow[hook', from=2-2, to=2-1]
    \arrow["{\dashv_\partial}"{description}, draw=none, from=0, to=1]
  \end{tikzcd}\]

As we explain in \Cref{sec:example},
condition~(1) is closely related to the
various notions of a map being ``proper''.
Anel and Weinberger \cite{anel2024} consider such conditions
in a different setting where the ``coefficients'' may not be given by
a full subcategory $\RR(-)$ of the slice $\CC/(-)$
(and when $\CC$ is lex, though this is minor).
In the language of comprehension categories \cite{jacobs1993},
their $\RR$ is a comprehension category over $\CC$
whereas ours is a \emph{full} one.
When both settings coincide,
their ``proper maps'' are our $\RR$-exponentiable maps,
stated via a pullback stable Beck-Chevalley condition.
The equivalence (1) $\Leftrightarrow$ (3) in \Cref{prop:beck-chevalley-defs} 
says that when both settings coincide,
their ``strict proper maps'' are the same as ``proper maps''.
% It seems that at least some of this theory works without the assumption
% of being fully faithful;
% we leave such considerations to future work.

\begin{definition}\label{def:double-preclan}
  Let $(\CC,\RR)$ be a preclan
  and $\HH$ a class of maps in $\CC$.
  We say that $\HH$ is $\RR$-exponentiable when all the
  maps in $\HH$ are $\RR$-exponentiable.

  If $(\CC,\HH)$ is also a preclan, $\HH$ is $\RR$-exponentiable
  and $\RR$ is $\HH$-exponentiable,
  then we will say that the combination
  $(\CC,\RR,\HH)$ is a $\tau$-\emph{preclan}.
\end{definition}

Naturally, if both $(\CC,\RR)$ and $(\CC,\HH)$
are clans then we would call the
combination a $\tau$-\emph{clan}.
Examples of $\tau$-preclans can be found in \Cref{ex:top-opens} and
\Cref{ex:categories-and-fibrations}.

\begin{definition}\label{def:pi-preclan}
  We call $(\CC, \RR)$ a $\pi$-\emph{preclan}
  if all $\RR$-maps are $\RR$-exponentiable.
\end{definition}

This is close to Paul Taylor's definition of a
display map category with dependent products \cite{taylor1987},
and recovers what Andr\'e Joyal calls a $\pi$-clan \cite{joyal2017}
when $(\CC,\RR)$ is a clan.

\begin{remark}[Counterexamples]\label{rmk:counterexample} 
  Exponentiable maps in a lex category
  need not be $\RR$-exponentiable
  (see \Cref{ex:categories-and-fibrations}).
  Conversely, $\RR$-exponentiable maps need not be exponentiable
  with respect to all maps, either.
  Consider the $\pi$-clan $(\CC,\RR)$, where 
  $\CC$ is the category of contexts
  (the syntactic category)
  for Martin-L\"of Type Theory with
  $\Unit$-types, $\Sigma$-types, and $\Pi$-types
  and $\RR$ is the class of type families.
  In fact, in this example, the category $\CC$ is not even lex,
  so that the class of all maps does not even produce a clan.
  
  One might wonder why the definition of an $\RR$-exponentiable morphism
  requires the Beck--Chevalley condition for all pullbacks of $f$,
  rather than just for $f$ itself.
  % Experience with examples and applications shows that
  % forcing the class of $\RR$-exponentiable maps to be pullback-stable
  % leads to a more useful notion.
  We have the following implications:
  \[
    \begin{tikzcd}
      \text{$f : Y \to X$ is $\RR$-exponentiable}
      \ar[d, Rightarrow, "\text{\,(\textit{i})}"] \\
      {\begin{array}{c}
        \text{$f$ satisfies Beck--Chevalley and $f_* : (\RR(Y), \RR) \to (\RR(X), \RR)$ is a preclan morphism}
      \end{array}}
      \ar[d, Rightarrow, "\text{\,(\textit{ii})}"] \\
      \text{$f$ satisfies Beck--Chevalley}
      \ar[d, Rightarrow, "\text{\,(\textit{iii})}"] \\
      \text{$f^* : \RR(X) \to \RR(Y)$ has a right adjoint $f_* : \RR(Y) \to \RR(X)$}
    \end{tikzcd}
  \]
  In general, none of these implications is reversible.
  We provide counterexamples for (\textit{i}), (\textit{ii}) and (\textit{iii})
  respectively in \Cref{ex:top-opens}, \Cref{ex:top-open-maps}
  and \Cref{ex:topos-etale}.
\end{remark}

We now turn to the proof of \Cref{theorem:exponentiable}.
In the proof,
we will need to analyse the functorial action of $\Pi_f : \wCC/Y \to \wCC/X$ on morphisms.
To do so, we will reintroduce \cite[Lemmas 2.3.1 and 2.3.2]{joyal2017}
in the following.
First, recall that for any functor
$F : \CC \to \DD$ and object $X \in \CC$,
there is an induced functor between slices, denoted by
\[ F / X : \CC / X \to \DD / F X\]
which acts on objects and morphisms on the slice by
applying $F$ to the underlying morphisms in $\CC$.

\begin{lemma}[{\cite[Lemma~2.3.1 + dual]{joyal2017}}] \label{lem:adjoints-of-slice-functors}
  Suppose we have an adjunction $L \dashv R : \CC \to \DD$
  and an object $X \in \CC$.
  We denote the component of the counit at $X$ as
  $\epsilon : L R X \to X$.
  Then there is an adjunction $\epsilon_! \circ L / R X \dashv R / X$,
  depicted below.

  On the other hand, suppose we have an object $Y \in \DD$
  and the component $\eta : Y \to R L Y$ of the unit at $Y$
  has pullbacks along all maps,
  so that we have a functor $\eta^* : \DD / R L Y \to \DD / Y$.
  Then there is a dual adjunction $L / Y \dashv \eta^* \circ R / L Y$.
  \[ \begin{tikzcd}
    \CC &&& {\CC / X} && {\CC / L Y} \\
    && {\CC / L R X} &&&& {\DD / R L Y} \\
    \DD &&& {\DD / R X} && {\DD / Y}
    \arrow[""{name=0, anchor=center, inner sep=0}, "R", shift left=3, from=1-1, to=3-1]
    \arrow[""{name=1, anchor=center, inner sep=0}, "{{R / X}}", from=1-4, to=3-4]
    \arrow["{R / L Y}", from=1-6, to=2-7]
    \arrow["{{\epsilon_!}}", from=2-3, to=1-4]
    \arrow["{\eta^*}", from=2-7, to=3-6]
    \arrow[""{name=2, anchor=center, inner sep=0}, "L", shift left=3, from=3-1, to=1-1]
    \arrow["{{L / R X}}", from=3-4, to=2-3]
    \arrow[""{name=3, anchor=center, inner sep=0}, "{L / Y}", from=3-6, to=1-6]
    \arrow["\dashv"{description}, draw=none, from=2-3, to=1]
    \arrow["\dashv"{description}, draw=none, from=2, to=0]
    \arrow["\dashv"{description}, draw=none, from=3, to=2-7]
  \end{tikzcd} \]
\end{lemma}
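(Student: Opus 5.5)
Both adjunctions come from restricting the hom-set bijection of $L \dashv R$ to morphisms lying over a fixed base. Write $\varphi : \CC(LA, B) \iso \DD(A, RB)$ for that bijection, natural in $A \in \DD$ and $B \in \CC$. The only fact about $\varphi$ I will use is its naturality in the second variable:
\[ \varphi(b \circ g) = Rb \circ \varphi(g). \]
I also use the standard identities $\varphi(\epsilon_X) = \id_{RX}$ and $\varphi(Lh) = \eta \circ h$.

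\emph{First adjunction.} Fix $(a : A \to RX) \in \DD/RX$ and $(b : B \to X) \in \CC/X$. The functor $\epsilon_! \circ L/RX$ sends $a$ to $\epsilon \circ La : LA \to X$. A morphism $(\epsilon \circ La) \to b$ in $\CC/X$ is therefore a map $g : LA \to B$ with $b g = \epsilon \circ La$. Applying $\varphi$ to both sides of this equation gives:
\begin{itemize}
\item on the left, $\varphi(bg) = Rb \circ \varphi(g)$;
\item on the right, $\varphi(\epsilon \circ La) = \varphi(\epsilon) \circ a = a$, using naturality of $\varphi$ in the first variable.
\end{itemize}
Since $\varphi$ is a bijection, $g$ satisfies $bg = \epsilon \circ La$ exactly when $\varphi(g)$ satisfies $Rb \circ \varphi(g) = a$. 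The latter says that $\varphi(g)$ is a morphism $a \to R/X(b)$ in $\DD/RX$. So $\varphi$ restricts to a bijection
\[ \CC/X(\epsilon_! L a, b) \iso \DD/RX(a, R b). \]
Naturality in $a$ and $b$ is inherited from the naturality of $\varphi$, because morphisms in the slices are just morphisms of $\CC$ and $\DD$ commuting with the structure maps.

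\emph{Dual adjunction.} Fix $(a : A \to Y) \in \DD/Y$ and $(c : C \to LY) \in \CC/LY$. A morphism $L/Y(a) \to c$ in $\CC/LY$ is a map $h : LA \to C$ with $ch = La$. Applying $\varphi$ as before, this is equivalent to the condition $Rc \circ \varphi(h) = \varphi(La) = \eta \circ a$ on $\varphi(h) : A \to RC$.

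On the other side, $\eta^*(Rc)$ is the pullback $Y \times_{RLY} RC \to Y$. By the universal property of this pullback, a morphism $a \to \eta^*(Rc)$ in $\DD/Y$ is the same as a map $k : A \to RC$ with $Rc \circ k = \eta \circ a$. Here the first component of the induced map into the pullback is forced to be $a$, because the map must lie over $Y$.

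Hence $\varphi$ again restricts to a bijection
\[ \CC/LY(La, c) \iso \DD/Y(a, \eta^* R c), \]
natural in $a$ and $c$.

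No step is a genuine obstacle. The point needing the most care is the dual case, where the existence and universal property of the pullback along $\eta$ must be invoked explicitly; this is exactly what the hypothesis that $\eta$ has pullbacks along all maps provides.
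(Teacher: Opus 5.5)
Your proof is correct and is essentially the paper's argument: the paper likewise transposes the square $b \circ t = \epsilon \circ La$ (respectively $c \circ t = La$) along $L \dashv R$ to $Rb \circ \tilde{t} = a$ (respectively $Rc \circ \tilde{t} = \eta \circ a$), and reads the result off as a morphism in the relevant slice. The only difference is cosmetic: for the dual adjunction the paper invokes the bijection $\DD/RLY(\eta_! A, R/LY(c)) \iso \DD/Y(A, \eta^* R/LY(c))$ from $\eta_! \dashv \eta^*$, which is exactly the pullback universal property you unpack by hand. One small inaccuracy in your write-up: despite your opening remark that you use only naturality of $\varphi$ in the second variable, you also use naturality in the first variable, which is of course fine.
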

\begin{proof}
  For the first adjunction,
  let $(a : A \to R X) \in \DD / R X$ and $(b : B \to X) \in \CC / X$.
  A map $t$ in $\CC / X (\epsilon_! \circ L / R X (A), B)$
  corresponds to a diagram in $\CC$
  \[ \begin{tikzcd}
    {L A} & B \\
    {L R X} & X
    \arrow["t", dashed, from=1-1, to=1-2]
    \arrow["{L a}"', from=1-1, to=2-1]
    \arrow["b", from=1-2, to=2-2]
    \arrow["\epsilon"', from=2-1, to=2-2]
  \end{tikzcd} \]
  which transposes along the adjunction $L \dashv R$
  to a diagram in $\DD$
  \[ \begin{tikzcd}
    A & RB \\
    {R X} & {R X}
    \arrow["{\tilde{t}}", dashed, from=1-1, to=1-2]
    \arrow["a"', from=1-1, to=2-1]
    \arrow["Rb", from=1-2, to=2-2]
    \arrow[equals, from=2-1, to=2-2]
  \end{tikzcd} \]
  This is precisely a morphism
  $\tilde{t}$ in $\DD / R X (A, R / Z (B))$.
  One can verify that this bijection is natural in both $A$ and $B$.
  
  For the second adjunction,
  let $A \in \DD / Y$ and $B \in \CC / L Y$.
  A map $t$ in $\CC / L Y (L / Y (A), B)$
  corresponds to a diagram in $\CC$,
  depicted below with its transpose along $L \dashv R$ in $\DD$.
  \[ \begin{tikzcd}
    {L A} & B && A & RB \\
    {L Y} & {L Y} && Y & {R L Y}
    \arrow["t", dashed, from=1-1, to=1-2]
    \arrow["{L a}"', from=1-1, to=2-1]
    \arrow["b", from=1-2, to=2-2]
    \arrow["{\tilde{t}}", dashed, from=1-4, to=1-5]
    \arrow["a"', from=1-4, to=2-4]
    \arrow["Rb", from=1-5, to=2-5]
    \arrow[equals, from=2-1, to=2-2]
    \arrow["\eta"', from=2-4, to=2-5]
  \end{tikzcd} \]
  The transpose is a morphism in
  \[\DD / R L Y (\eta_! (A), R / L Y (B)) \cong \DD / Y (A, \eta^* \circ R / L Y)\]
  This provides the required natural bijection.
\end{proof}

It is well known that
exponentiable morphisms are stable under pullback 
(see \cite[Section 2.1]{weber2015} for example,
which uses Dubuc's adjoint triangle theorem \cite{dubuc2006}).
In the following,
we construct the pushforward functor $\Pi_{f'}$ associated
with a pullback $f'$ of an exponentiable morphism~$f$.

\begin{lemma} \label{lem:pushforward-along-pullback}
  Consider an exponentiable morphism $f : Y \to X$
  in a lex category $\CC$.
  Consider a morphism $s : X' \to X$ and its pullback along $f$.
  \[\begin{tikzcd}
    {Y'} & {X'} & {\Pi_f Y'} \\
    Y & X
    \arrow["{{{f'}}}", from=1-1, to=1-2]
    \arrow["{{{s'}}}"', from=1-1, to=2-1]
    \arrow["\lrcorner"{anchor=center, pos=0.125}, draw=none, from=1-1, to=2-2]
    \arrow["{{{\eta}}}", from=1-2, to=1-3]
    \arrow["s"', from=1-2, to=2-2]
    \arrow["{{{\Pi_f s'}}}", from=1-3, to=2-2]
    \arrow["f"', from=2-1, to=2-2]
  \end{tikzcd}\]
  where $\eta : X' \to \Pi_f Y'$ denotes the unit of the adjunction
  $f^* \dashv \Pi_f$ at $(s : X' \to X)$.
  Then the composition $\Pi_{f'} := \eta^* \circ \Pi_f / Y'$
  is right adjoint to the pullback functor
  $(f')^* : \CC / X' \to \CC / Y'$.
  \[ \begin{tikzcd}
    {\CC / Y'} & {\CC / \Pi_f Y'} & {\CC / X'}
    \arrow["{\Pi_f / Y'}", from=1-1, to=1-2]
    \arrow["{\Pi_{f'}}"', bend right, from=1-1, to=1-3]
    \arrow["{\eta^*}", from=1-2, to=1-3]
  \end{tikzcd} \]
\end{lemma}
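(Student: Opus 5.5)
The plan is to obtain $\Pi_{f'}$ by composing two adjunctions, using the second (dual) half of \Cref{lem:adjoints-of-slice-functors}. Apply that lemma to the adjunction $f^* \dashv \Pi_f : \CC/Y \to \CC/X$. Here $L = f^*$, $R = \Pi_f$, and the object of the domain of $L$ is $(s : X' \to X) \in \CC/X$. Then $L(s) = f^* s = (s' : Y' \to Y)$, and the unit component is $\eta : s \to \Pi_f(s')$ in $\CC/X$, i.e.\ the map $X' \to \Pi_f Y'$ over $X$. Since $\CC$ is lex, $\eta$ has pullbacks along all maps (pullbacks in $\CC/X$ are computed in $\CC$). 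The lemma therefore gives an adjunction
\[ f^*/s \dashv \eta^* \circ \Pi_f/(s') : (\CC/Y)/(s') \to (\CC/X)/s . \]

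Next I will identify the iterated slices with ordinary slices: $(\CC/Y)/(s') \simeq \CC/Y'$ and $(\CC/X)/s \simeq \CC/X'$, and similarly $(\CC/X)/\Pi_f(s') \simeq \CC/\Pi_f Y'$. Under these identifications the right adjoint is exactly $\eta^* \circ \Pi_f/Y'$ as in the statement, with $\eta^*$ the pullback in $\CC$ along $\eta : X' \to \Pi_f Y'$. It remains to check that the left adjoint $f^*/s$ corresponds to $(f')^* : \CC/X' \to \CC/Y'$. An object of $(\CC/X)/s$ is a map $a : A \to X'$, regarded over $X$ via $s a$. The functor $f^*/s$ sends it to $f^*(sa) \to f^*(s) = Y'$. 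By the two pullbacks lemma, $f^*(sa) = Y \times_X A \cong Y' \times_{X'} A = (f')^* a$, compatibly with the maps to $Y'$, and this holds naturally in $a$.

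Since adjoints transfer along isomorphisms of left adjoints, $\Pi_{f'} := \eta^* \circ \Pi_f/Y'$ is right adjoint to $(f')^*$. The only real bookkeeping is this last identification: verifying that the slice functor $f^*/s$ agrees, up to natural isomorphism, with $(f')^*$ under the equivalences of iterated slices. The equivalences themselves are routine. I expect no genuine obstacle, only care with chosen pullbacks.
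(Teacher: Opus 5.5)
Your proposal is correct and matches the paper's proof: the paper likewise applies the dual half of \Cref{lem:adjoints-of-slice-functors} to $f^* \dashv \Pi_f$ at the object $s$, obtaining $f^*/X' \dashv \eta^* \circ \Pi_f/Y'$, and then observes $(f')^* \cong f^*/X'$. The only difference is that you write out the iterated-slice identifications and the two-pullbacks check, which the paper dismisses as ``clearly''.
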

\begin{proof}
  The left adjoint of $\eta^* \circ \Pi_f / Y'$
  is $f^* / X' : \CC / X' \to \CC / Y'$,
  by \Cref{lem:adjoints-of-slice-functors}.
  Clearly $(f')^* \cong f^* / X'$, as required. 
\end{proof}

Now suppose that we also have $g : Z \to Y$.
Then the pushforward $\Pi_f$ induces a functor
$\Pi_f / Z : \CC / Z \to \CC / \Pi_f Z$ on slices,
which we can calculate using the following lemma.

\begin{lemma}[{\cite[Lemma~2.3.2]{joyal2017}}] \label{lem:pullback-morphism-action}
  Suppose $f : Y \to X$ is an exponentiable morphism
  in a lex category $\CC$,
  and $g : Z \to Y$.
  Consider the diagram
  \[\begin{tikzcd}
    Z & {Y \times_X \Pi_f Z} & {\Pi_f Z} \\
    & Y & X
    \arrow["g"', from=1-1, to=2-2]
    \arrow["{\epsilon}"', from=1-2, to=1-1]
    \arrow["{f'}", from=1-2, to=1-3]
    \arrow[from=1-2, to=2-2]
    \arrow["\lrcorner"{anchor=center, pos=0.125}, draw=none, from=1-2, to=2-3]
    \arrow["{\Pi_f g}", from=1-3, to=2-3]
    \arrow["f"', from=2-2, to=2-3]
  \end{tikzcd}\]
  where $\epsilon : Y \times_X \Pi_f Z \to Z$
  is the counit of the adjunction $f^* \dashv \Pi_f$ at $(g : Z \to Y)$.
  Then there is a canonical natural isomorphism
  $\Pi_f/Z \iso \Pi_{f'} \circ \epsilon^*$.
  \begin{equation} \label{diag:pullback-morphism-action}
    \begin{tikzcd}
      {\CC / Z} & {\CC / \big( Y \times_X \Pi_f Z \big)} & {\CC / \Pi_f Z}
      \arrow["{{\epsilon^*}}", from=1-1, to=1-2]
      \arrow["{{\Pi_{f'}}}", from=1-2, to=1-3]
      \arrow["{{\Pi_f/Z}}"', from=1-1, to=1-3, bend right]
    \end{tikzcd}
  \end{equation}
\end{lemma}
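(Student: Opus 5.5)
The plan is to obtain the isomorphism by uniqueness of adjoints: I will show that both $\Pi_f/Z$ and $\Pi_{f'} \circ \epsilon^*$ are right adjoint to the same functor $\CC/\Pi_f Z \to \CC/Z$. For $\Pi_f/Z$, the first half of \Cref{lem:adjoints-of-slice-functors} applies directly. Take $L = f^*$, $R = \Pi_f$, and the object $(g : Z \to Y) \in \CC/Y$. A slice of the slice is the ordinary slice: $(\CC/Y)/(g) \simeq \CC/Z$, and similarly $(\CC/X)/(\Pi_f g) \simeq \CC/\Pi_f Z$. Under these identifications, $\Pi_f/Z$ has left adjoint $\epsilon_! \circ f^*/\Pi_f Z$. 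This functor sends $(c : C \to \Pi_f Z)$ to the composite $Y \times_X C \to Y \times_X \Pi_f Z \xrightarrow{\epsilon} Z$.

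For the other side, \Cref{lem:pushforward-along-pullback} applied to $s = \Pi_f g : \Pi_f Z \to X$ shows that $\Pi_{f'}$ exists and is right adjoint to $(f')^* : \CC/\Pi_f Z \to \CC/(Y\times_X \Pi_f Z)$. Since $\CC$ is lex, $\epsilon^*$ has left adjoint $\epsilon_!$. Composing adjunctions, $\Pi_{f'}\circ\epsilon^*$ is right adjoint to $\epsilon_! \circ (f')^*$. This sends $(c : C \to \Pi_f Z)$ to $Y\times_X \Pi_f Z \times_{\Pi_f Z} C \to Y\times_X\Pi_f Z \xrightarrow{\epsilon} Z$.

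It remains to give a natural isomorphism $(f')^* \cong f^*/\Pi_f Z$ of functors $\CC/\Pi_f Z \to \CC/(Y\times_X\Pi_f Z)$. This follows from the two pullbacks lemma: $(Y\times_X\Pi_f Z)\times_{\Pi_f Z} C \cong Y\times_X C$ canonically, over $Y\times_X \Pi_fZ$. Whiskering with $\epsilon_!$ gives $\epsilon_!\circ(f')^* \cong \epsilon_!\circ f^*/\Pi_f Z$. Uniqueness of right adjoints up to canonical isomorphism then yields $\Pi_f/Z \cong \Pi_{f'}\circ\epsilon^*$.

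The main obstacle I expect is bookkeeping rather than substance. One must check that the equivalences $(\CC/Y)/g \simeq \CC/Z$ and $(\CC/X)/\Pi_f g\simeq \CC/\Pi_f Z$ transport the functor $R/X$ of \Cref{lem:adjoints-of-slice-functors} to exactly $\Pi_f/Z$ as defined in the paper, namely $\Pi_f$ applied to underlying maps. One must also check that the counit component there is the map $\epsilon$ in the diagram. Both checks are direct unwindings of definitions, after which the remaining steps are formal.
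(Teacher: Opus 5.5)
Your proposal is correct and is essentially the paper's proof: both identify the left adjoint of $\Pi_f/Z$ as $\epsilon_! \circ f^*/\Pi_f Z$ via \Cref{lem:adjoints-of-slice-functors}, observe $f^*/\Pi_f Z \cong (f')^*$, and compose $\epsilon_! \dashv \epsilon^*$ with $(f')^* \dashv \Pi_{f'}$ to conclude by uniqueness of adjoints. Your extra steps spell out what the paper leaves implicit: invoking \Cref{lem:pushforward-along-pullback} for the existence of $\Pi_{f'}$, the two pullbacks lemma for $(f')^* \cong f^*/\Pi_f Z$, and the slice-of-slice identifications.
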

\begin{proof}
  It suffices to construct an isomorphism between their left adjoints.
  The left adjoint of $\Pi_f / Z$ is computed as
  $\epsilon_! \circ f^* / \Pi_f Z$ by \Cref{lem:adjoints-of-slice-functors}.
  By definition,
  \[f^* / \Pi_f Z \iso (f')^* : \CC / \Pi_f Z \to \CC / \big( Y \times_X \Pi_f Z \big)\]
  By composing adjunctions $\epsilon_! \dashv \epsilon^*$ and $(f')^* \dashv \Pi_{f'}$
  we see that $\epsilon_! \circ (f')^* \dashv \Pi_{f'} \circ \epsilon^*$.
\end{proof}

It follows from \Cref{lem:pullback-morphism-action} that we have a canonical isomorphism
\begin{equation} \label{eq:type-theoretic-axiom-of-choice}
  f_* \circ A_! \iso (f_* A)_! \circ f'_* \circ \epsilon^* 
\end{equation}
This is called the \emph{distributivity law} in \cite{gambino2013}
(for $\Pi$-types and $\Sigma$-types).
It could also be called the \emph{type theoretic axiom of choice}.
In an informal type theoretic syntax, one could render it as follows.
\[
  \prod_{x : X} \sum_{y : Y(x)} A(y) \iso \sum_{f : \Pi_{X} Y} \prod_{x : X} A(f x)
\]

\begin{lemma}\label{lem:BC-iff-BC-in-presheaves}
  Let $(\CC,\RR)$ be a preclan and $f : Y \to X$ a morphism in $\CC$.
  The following are equivalent.
  \begin{itemize}
  \item
    The Beck--Chevalley condition holds at every pullback of
    $f$ in the preclan $(\CC,\RR)$
    (whenever such pullbacks exist).
  \item 
    The Beck--Chevalley condition holds at every pullback of
    (the image of) $f$ in the preclan ($\wCC,\wRR$).
  \end{itemize}
\end{lemma}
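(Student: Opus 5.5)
The plan is to pass everything through the partial right adjoint reformulation (\Cref{prop:pra-defs}, \Cref{prop:beck-chevalley-defs}), using three facts. The Yoneda embedding $\CC \hookrightarrow \wCC$ preserves whatever pullbacks exist in $\CC$. It identifies $\RR(X) \simeq \wRR(X)$ for representable $X$. And $\wCC$ is locally Cartesian closed, so in presheaves the ordinary Beck--Chevalley isomorphism $s^*\Pi_g \iso \Pi_{g'}(s')^*$ holds for every pullback square of presheaves. With these, the Beck--Chevalley condition at a map $g$ of $(\wCC,\wRR)$ is equivalent to the statement that $\Pi_g$ sends $\wRR$-objects to $\wRR$-objects.

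For (2)$\Rightarrow$(1), take a pullback $f' : Y' \to X'$ of $f$ along $s : X' \to X$ that exists in $\CC$. Its Yoneda image is a pullback of $f$ in $\wCC$, so by hypothesis the Beck--Chevalley condition holds there in $(\wCC,\wRR)$. Since $X'$ and $Y'$ are representable, $\RR(X') \simeq \wRR(X')$ and $\RR(Y') \simeq \wRR(Y')$, and pullbacks of $\RR$-maps agree in $\CC$ and $\wCC$. Hence the right adjoints and the Beck--Chevalley comparison maps transfer verbatim, which is the argument already used at the start of the proof of \Cref{prop:beck-chevalley-defs}.

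For (1)$\Rightarrow$(2), let $P \to X$ be any presheaf map and $g : P\times_X Y \to P$ the pullback of $f$. I must show that $\Pi_g A$ lies in $\wRR(P)$ for every $A \in \wRR(P\times_X Y)$. Being in $\wRR(P)$ is tested by pulling back along representables $c \to P$. By the presheaf Beck--Chevalley isomorphism, $c^*\Pi_g A \iso \Pi_{g_c}(A|_{c\times_X Y})$, where $g_c : c\times_X Y \to c$ is the pullback of $f$ along the composite $c \to P \to X$. This reduces the problem to the case $P = c$ representable.

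When $c\times_X Y$ is representable, the pullback exists in $\CC$, so $g_c$ is a pullback of $f$ in $\CC$. Hypothesis (1) together with \Cref{prop:beck-chevalley-defs} then says that $\Pi_{g_c}$ sends $\RR(c\times_X Y) \simeq \wRR(c\times_X Y)$ into $\RR(c)$, and we are done.

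The main obstacle is the remaining case, where $c \times_X Y$ exists only as a non-representable presheaf. Then (1) says nothing directly, and $A$ need not be pulled back from $Y$. My first attempt is to use the condition at $f$ itself through the distributivity law \eqref{eq:type-theoretic-axiom-of-choice}. The idea is to compose $A$ with $s' : c\times_X Y \to Y$ to get an object over $Y$ that lies in $\wRR(Y)$ only if $s'$ is an $\RR$-map. Failing that, I would write $\Pi_{g_c} A$ as a presheaf over $c$ and check directly that it is represented. This is done by computing, for each $d \to c$, the set $\Hom(d \times_X Y, A)$ and comparing it with $\Hom(d, f_*(\cdot))$ via \Cref{lem:pullback-global-sections} and \Cref{lem:pushforward-along-pullback}. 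If neither approach works, the fix is to read ``every pullback of $f$'' in (1) as ranging over pullbacks along maps out of arbitrary objects in $\wRR$-fibred form. I expect this representability check to be the delicate step.
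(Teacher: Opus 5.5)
Your (2)$\Rightarrow$(1), and your reduction of (1)$\Rightarrow$(2) to a representable base (pull $\Pi_g A$ back along a representable $u:\widetilde X\to P$ and use that $u^*$ commutes with pushforwards in $\wCC$), are exactly the paper's argument. The case you leave open, where $\widetilde X\times_X Y$ is not representable, is a genuine gap. None of your fallback strategies can close it, because with ``whenever such pullbacks exist'' the implication (1)$\Rightarrow$(2) is false. The paper's proof does not close it either. It simply asserts that $\widetilde f$ is a pullback of $f$ in $\CC$ ``(in particular, $\widetilde Y\in\CC$)'', and that is only justified when $f$ has pullbacks along all maps of $\CC$.

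Here is a counterexample. Let $\CC$ be the poset with bottom $0$ and top $1$, in which $d_1,d_2$ are incomparable, $c,Y$ are incomparable, and each $d_i$ lies below both $c$ and $Y$. Then $c\wedge Y$ does not exist. Let $\RR$ consist of the identities together with all maps $0\to z$; this is a preclan because $0$ is the bottom. Take $f:Y\to 1$.

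The pullbacks of $f$ that exist in $\CC$ are $f$ itself and the identities of $Y,d_1,d_2,0$. We have $\RR(Y)=\{Y,0\}$, with $\Pi_f Y=y(1)$ and $\Pi_f 0=y(0)$, both in $\RR(1)$. One checks directly that Beck--Chevalley holds at each of these pullbacks, so (1) holds.

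In $\wCC$, however, the pullback of $f$ along $c\to 1$ is the inclusion $g:S\hookrightarrow y(c)$ of the non-representable sieve $S=\{0,d_1,d_2\}$. The subpresheaf $A=\{0,d_1\}\subseteq S$ lies in $\wRR(S)$: its pullbacks along $y(d_1)$, $y(d_2)$, $y(0)$ are $\id_{d_1}$, $0\to d_2$, $\id_0$. But $\Pi_g A=\{e\le c \mid {\downarrow}e\cap S\subseteq A\}=y(d_1)$, and $d_1\to c$ is not in $\RR$. Equivalently, the Beck--Chevalley map for $g$ along $y(d_1)\to y(c)$ is $y(0)\to y(d_1)$, which is not an isomorphism. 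So (2) fails.

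The repair is therefore an extra hypothesis, not a cleverer argument. Assume $f$ has pullbacks along all maps of $\CC$, as holds when $\CC$ is lex or when $f$ lies in a pullback-stable class such as $\RR$. Then $\widetilde X\times_X Y$ is always representable, your problematic case never arises, and your proof is complete.
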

\begin{proof}
  ($\Rightarrow$)
  Suppose the partial right adjoint condition
  holds at every pullback of $f$ in $(\CC,\RR)$.
  Consider a pullback of $f$ in $\wCC$
  \[
    \begin{tikzcd}
      Y' \ar[r, "t"] \ar[d, "f'"'] \pbcorner & Y \ar[d, "f"] \\
      X' \ar[r, "s"] & X \rlap{\, .}
    \end{tikzcd}
  \]
  Note that $X'$ and $Y'$ in $\wCC$ may not be representable.
  Since $\wCC$ is locally Cartesian closed,
  it suffices to show that $\Pi_{f'} : \wCC / Y' \to \wCC / X'$
  sends $\wRR(Y')$ into $\wRR(X')$
  (this is a simplification of condition (3) in \Cref{prop:beck-chevalley-defs}).
  Let $q : A \to Y'$ be in $\wRR(Y')$
  and write $r := \Pi_{f'}(q)$.
  To check that $r$ belongs to $\wRR$,
  let $u : \widetilde X \to X'$ be a map with $\widetilde X \in \CC$;
  we must check that $u^*(r)$ is a morphism of $\CC$ that belongs to $\RR$.

  \[
    \begin{tikzcd}[row sep=small, column sep=small]
      %JH: with =small it looked like \tilde{f} is a label for another arrow
      %RB: I decided we didn't really need those "back" arrows anyways
      u^* A \ar[rd, "u^*(q)"'] & & A \ar[rd, "q"'] \\
      & \widetilde Y \ar[rr] \ar[dd, "\tilde f"'] \pbcorner & &
      Y' \ar[rr, "t"] \ar[dd, "f'"'] \pbcorner & & Y \ar[dd, "f"] \\
      u^*(\Pi_{f'} A) \ar[rd, "u^*(r)"'] & &
      \Pi_{f'} A \ar[rd, "r"'] & & \hphantom{B} \\
      & \widetilde X \ar[rr, "u"] & &
      X' \ar[rr, "s"] & & X
    \end{tikzcd}
  \]

  As shown above, write $\widetilde f : \widetilde Y \to \widetilde X$
  for the pullback $u^*(f')$.
  Since $u^* : \wCC / X' \to \wCC / \widetilde X$ preserves pushforwards,
  the morphism $u^*(r)$ is isomorphic to $\Pi_{\tilde f}(u^*(q))$.
  Now as $q \in \wRR$ and $\widetilde Y \in \CC$,
  the map $u^*(q)$ belongs to $\RR$.
  The composition $su : \widetilde X \to X$ is a morphism of $\CC$,
  so the morphism $\widetilde f : \widetilde Y \to \widetilde X$
  is a pullback of $f$ in $\CC$ (in particular, $\widetilde Y \in \CC$).
  By the hypothesis on $f$, then,
  the object $u^*(r) \cong \Pi_{\tilde f}(u^*(q))$ of $\wCC/\widetilde X$
  belongs to $\RR(\widetilde X)$.
  Thus, every pullback of $r$ to a representable $\widetilde X$
  produces a morphism in $\RR$, so $r \in \wRR$.
  
  ($\Leftarrow$)
  Now suppose the partial right adjoint condition
  holds at every pullback of $f$ in $(\wCC,\wRR)$.
  Consider a pullback
  \[
    \begin{tikzcd}
      Y' \ar[r, "t"] \ar[d, "f'"'] \pbcorner & Y \ar[d, "f"] \\
      X' \ar[r, "s"] & X
    \end{tikzcd}
  \]
  of $f$ in $\CC$.
  We must show that $\Pi_{f'} : \wCC / Y' \to \wCC / X'$
  sends $\RR(Y')$ into $\RR(X')$
  (condition (3) in \Cref{prop:beck-chevalley-defs}).
  Since the image of the given pullback
  is still a pullback in $\wCC$,
  our assumption says that $\Pi_{f'} : \wCC / Y' \to \wCC / X'$
  sends $\wRR(Y')$ into $\wRR(X')$.
  We conclude by recalling that
  $\RR(Z) \simeq \wRR(Z)$ for any $Z$ in $\CC$.
\end{proof}

\begin{proposition}\label{prop:lex-exponentiable}
  Let $(\CC,\RR)$ be a preclan such that
  $\CC$ is lex and $f : Y \to X$
  an exponentiable morphism in $\CC$.
  The following are equivalent
  \begin{itemize}
  \item
    The Beck--Chevalley condition holds at every pullback of
    $f$ in the preclan $(\CC,\RR)$.
  \item
    The slice pushforward in presheaves $\Pi_f : (\CC/Y, \RR) \to (\CC/X, \RR)$
    is a preclan morphism.
  \end{itemize}
\end{proposition}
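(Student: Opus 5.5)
We prove both implications directly in $\CC$. Since $\CC$ is lex and $f$ is exponentiable, every pullback $f'$ of $f$ is exponentiable, and its pushforward $\Pi_{f'} = \eta^* \circ \Pi_f/Y'$ is given by \Cref{lem:pushforward-along-pullback}. By the remarks after \Cref{prop:pra-defs}, the Beck--Chevalley condition at $f'$ holds exactly when $\Pi_{f'} : \CC/Y' \to \CC/X'$ sends $\RR(Y')$ into $\RR(X')$; this uses \Cref{prop:beck-chevalley-defs}. The proof is therefore a translation between two properties of $\Pi_f$: sending $\RR$-maps of $\CC/Y$ to $\RR$-maps of $\CC/X$, and every $\Pi_{f'}$ preserving $\RR$-objects. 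As a right adjoint, $\Pi_f$ preserves all limits of $\CC/Y$. Hence it automatically preserves pullbacks of $\RR$-maps, because those pullbacks exist in the lex category $\CC$. So condition (2) reduces to $\Pi_f$ sending $\RR$-maps to $\RR$-maps.

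For (1)$\Rightarrow$(2), take an $\RR$-map $h : W \to Z$ in $\CC/Y$, where $g : Z \to Y$ and $W = (gh : W \to Y)$. By \Cref{lem:pullback-morphism-action}, $\Pi_f h$ viewed as an object of $\CC/\Pi_f Z$ is $\Pi_f/Z(h) \iso \Pi_{f'}(\epsilon^* h)$. Here $f' : Y \times_X \Pi_f Z \to \Pi_f Z$ is the pullback of $f$ along $\Pi_f g$, and $\epsilon$ is the counit. Pullback stability of $\RR$ gives $\epsilon^* h \in \RR(Y\times_X \Pi_f Z)$. The Beck--Chevalley condition at $f'$, rephrased as the partial right adjoint condition, then gives $\Pi_{f'}(\epsilon^* h) \in \RR(\Pi_f Z)$. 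Thus $\Pi_f h$ is an $\RR$-map.

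For (2)$\Rightarrow$(1), take a pullback $f'$ of $f$ along $s : X' \to X$ and some $q \in \RR(Y')$. By \Cref{lem:pushforward-along-pullback}, $\Pi_{f'} q = \eta^*(\Pi_f/Y'(q))$. Here $\Pi_f/Y'(q)$ is $\Pi_f$ applied to the morphism $q$ of $\CC/Y$ (from $s'q$ to $s'$), so by (2) it lies in $\RR$. Its pullback along $\eta$ is again in $\RR$ by pullback stability, so $\Pi_{f'} q \in \RR(X')$. Hence the partial right adjoint condition, and therefore Beck--Chevalley, holds at $f'$.

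The main obstacle is the bookkeeping. We must confirm that the functor $\Pi_f/Z$ agrees on underlying morphisms with $\Pi_f$ acting on morphisms of the slice. We must also confirm that the lex, exponentiable reformulation of the partial right adjoint condition applies to each pullback $f'$, and not only to $f$. Both points are immediate from the definitions, but the isomorphisms of \Cref{lem:pullback-morphism-action} and \Cref{lem:pushforward-along-pullback} must be over the correct bases so that membership in $\RR$, which is invariant under isomorphism, transfers.
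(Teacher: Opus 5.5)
Your proposal is correct and follows the paper's proof essentially step for step. For the forward direction, you both use \Cref{lem:pullback-morphism-action} to identify $\Pi_f h \iso \Pi_{f'}\epsilon^* h$ and then apply the partial right adjoint condition at $f'$. For the converse, you both write $\Pi_{f'} = \eta^* \circ \Pi_f/Y'$ via \Cref{lem:pushforward-along-pullback} and conclude by pullback stability of $\RR$.
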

\begin{proof}
  % We use
  % the partial right adjoint condition
  % rather than the Beck--Chevalley condition.

  ($\Rightarrow$)
  Suppose $\CC$ is lex, $f : Y \to X$ is exponentiable, and
  the partial right adjoint condition holds at every pullback of $f$.
  We must show that $\Pi_f : \CC / Y \to \CC / X$ is a preclan morphism.
  Note that $\Pi_f : \CC / Y \to \CC / X$ is a right adjoint,
  thus it preserves all pullbacks.
  It remains to show that for any $\RR$-map $h : A \to Z$ over $Y$,
  \[\begin{tikzcd}
    A & Z & Y & X
    \arrow["h", from=1-1, to=1-2]
    \arrow["g", from=1-2, to=1-3]
    \arrow["f", from=1-3, to=1-4]
  \end{tikzcd}\]
  the pushforward $\Pi_f h : \Pi_f A \to \Pi_f Z$
  is also an $\RR$-map.

  By \Cref{lem:pullback-morphism-action},
  we can analyse $\Pi_f h$ by viewing $h : A \to Z$ as an object in $\CC / Z$.
  In our case we can further view $h : A \to Z$ as an object in $\RR(Z)$
  and use that $\Pi_{f'}$ restricts to $f'_*$,
  extending \cref{diag:pullback-morphism-action}.
  \[\begin{tikzcd}
    {\RR(Z)} & {\RR(Y \times_X \Pi_f Z)} & {\RR(\Pi_f Z)} \\
    {\CC / Z} & {\CC / \big( Y \times_X \Pi_f Z \big)} & {\CC / \Pi_f Z}
    \arrow["{\epsilon^*}", from=1-1, to=1-2]
    \arrow[from=1-1, to=2-1]
    \arrow["{f'_*}", from=1-2, to=1-3]
    \arrow[from=1-2, to=2-2]
    \arrow[from=1-3, to=2-3]
    \arrow["{\epsilon^*}", from=2-1, to=2-2]
    \arrow["{\Pi_{f'}}", from=2-2, to=2-3]
    \arrow["{\Pi_f/Z}", from=2-1, to=2-3, bend right]
  \end{tikzcd}\]
  Hence $\Pi_f h$ can be viewed as an object in $\RR(\Pi_f Z)$,
  which is an $\RR$-map in $\CC$:
  \[
    \Pi_f h
    \,\iso\, \Pi_{f'} \, \epsilon^* \, h
    \,\iso\, f'_* \, \epsilon^* \, h \,.
  \]

  ($\Leftarrow$)
  Suppose $\CC$ is lex, $f : Y \to X$ is exponentiable,
  and $\Pi_f : (\CC/Y,\RR) \to (\CC/X,\RR)$ is a preclan morphism.
  Let $f' : Y' \to X'$ be a pullback of $f$
  \[
    \begin{tikzcd}
      Y' \ar[r, "s'"] \ar[d, "f'"'] \pbcorner & Y \ar[d, "f"] \\
      X' \ar[r, "s"] & X
    \end{tikzcd}
  \]
  By \Cref{lem:pushforward-along-pullback} there is
  a pullback functor
  $\Pi_{f'} = \eta^* \circ \Pi_f / Y' : \CC / Y' \to \CC / X'$.
  Since $\Pi_f$ is a preclan morphism,
  $\Pi_f / Y'$ sends $\RR(X')$ into $\RR(\Pi_f Y')$.
  Pullbacks always preserve $\RR$-maps,
  so $\eta^*$ sends $\RR(\Pi_f Y')$ into $\RR(Y')$.
  Hence $\Pi_{f'}$ sends $\RR(Y')$ into $\RR(X')$
  and so the partial right adjoint condition holds at $f'$.
\end{proof}

We can now prove \Cref{theorem:exponentiable},
by combining the previous propositions.

\begin{proof}[Proof of \Cref{theorem:exponentiable}.]
  Let $(\CC,\RR)$ be a preclan and $f : Y \to X$ a morphism in $\CC$.
  The Beck--Chevalley condition holds at every pullback of
  $f$ in the preclan $(\CC,\RR)$
  if and only if 
  it holds at every pullback of $f$ in $(\wCC,\wRR)$,
  by \Cref{lem:BC-iff-BC-in-presheaves}.
  The latter holds if and only if
  the slice pushforward in presheaves $\Pi_f : (\wCC/Y, \wRR) \to (\wCC/X, \wRR)$
  is a preclan morphism,
  by \Cref{prop:lex-exponentiable}
  and the fact that $\wCC$ is locally Cartesian closed.
\end{proof}

The usual definition of a $\pi$-(pre)clan follows.

\begin{corollary} \label{cor:pi-preclan}
  Let $(\CC, \RR)$ be a preclan.
  The following are equivalent
  \begin{enumerate}
  \item
    All $\RR$-maps are $\RR$-exponentiable,
    i.e. $(\CC, \RR)$ is a $\pi$-preclan.
  \item
    The Beck--Chevalley condition holds at all $\RR$-maps.
  \end{enumerate}
  In this case, $\Pi_f : (\wCC / X, \RR) \to (\wCC / Y, \RR)$
  is a preclan morphism for every $\RR$-map $f$,
  and so is $\Pi_f : (\CC / X, \RR) \to (\CC / Y, \RR)$
  when $\CC$ is lex and $f$ is exponentiable.
\end{corollary}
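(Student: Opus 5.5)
The equivalence (1)$\Leftrightarrow$(2) is essentially a relabelling of \Cref{theorem:exponentiable}, applied to every $f\in\RR$ at once. I would prove the two implications separately and then derive the closing remark about pushforwards.

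For (1)$\Rightarrow$(2), take an $\RR$-map $f$. By \Cref{def:exponentiable} it is $\RR$-exponentiable, so condition~(1) of \Cref{theorem:exponentiable} holds. That condition says the Beck--Chevalley condition holds at every pullback of $f$, and in particular at $f$ itself, viewed as its pullback along the identity.

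For (2)$\Rightarrow$(1), let $f : Y \to X$ be an $\RR$-map. We must check that the Beck--Chevalley condition holds at every pullback $f' : Y' \to X'$ of $f$ along a map $s : X' \to X$. Since $\RR$ is stable under pullback (\Cref{def:preclan}), such a pullback exists and is itself an $\RR$-map. Hypothesis~(2) then gives the Beck--Chevalley condition at $f'$ directly. So condition~(1) of \Cref{theorem:exponentiable} holds, and $f$ is $\RR$-exponentiable. The only point needing care is the phrase ``whenever such pullbacks exist'': for $\RR$-maps the pullbacks always exist and remain in $\RR$. This pullback-stability is exactly what removes the gap, noted in \Cref{rmk:counterexample}, between ``Beck--Chevalley at $f$'' and ``Beck--Chevalley at all pullbacks of $f$''.

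For the final claim, each $\RR$-map $f$ is now $\RR$-exponentiable. The implication (1)$\Rightarrow$(2) of \Cref{theorem:exponentiable} then makes $\Pi_f$ on presheaf slices a preclan morphism. If moreover $\CC$ is lex and $f$ is exponentiable, \Cref{prop:lex-exponentiable} gives the same conclusion for $\Pi_f$ on the slices of $\CC$ itself. I would also correct the evident typo in the direction of $\Pi_f$ in the statement: it should go from the slice over $Y$ to the slice over $X$. I do not expect any real obstacle, since the content is entirely carried by the earlier theorem together with pullback-stability of $\RR$.
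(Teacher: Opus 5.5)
Your proposal is correct and follows the same route as the paper. The paper's one-line proof is exactly your observation that, because $\RR$ is stable under pullback, the Beck--Chevalley condition holding at all $\RR$-maps is the same as it holding at all pullbacks of $\RR$-maps; the closing claim about $\Pi_f$ is then read off from \Cref{theorem:exponentiable} and \Cref{prop:lex-exponentiable} just as you do, and you are right that $\Pi_f$ should go from the slice over $Y$ to the slice over $X$.
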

\begin{proof}
  Since $\RR$-maps are stable under pullback,
  a fact about all morphisms in $\RR$ is true if and only if
  it is true for all pullbacks of morphisms in $\RR$.
\end{proof}

\begin{proposition} \label{prop:exp-clan}
  If $(\CC,\RR)$ is a preclan
  then $(\CC,\Exp_\RR)$ is also a preclan.
\end{proposition}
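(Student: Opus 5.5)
The plan is to check the three clauses of \Cref{def:preclan} for $\Exp_\RR$ one at a time. For each clause I will use whichever of the two equivalent characterisations in \Cref{theorem:exponentiable} is most convenient. Condition~(1) handles pullback-stability. Condition~(2), that $\Pi_f : (\wCC/Y,\wRR) \to (\wCC/X,\wRR)$ is a preclan morphism, handles isomorphisms and composition, because it is phrased in the locally Cartesian closed category $\wCC$ where pushforwards always exist.

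\emph{Isomorphisms.} If $f : Y \to X$ is an isomorphism, then $f^* : \wCC/X \to \wCC/Y$ is an equivalence. Its right adjoint $\Pi_f$ is isomorphic to postcomposition $f_!$, which is also an equivalence of slice categories. An equivalence preserves pullbacks. It also preserves $\wRR$-maps, since a morphism in $\wCC/Y$ is sent to the same underlying morphism of $\wCC$. Hence $\Pi_f$ is a preclan morphism and $f \in \Exp_\RR$ by condition~(2).

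\emph{Composition.} Let $f : Y \to X$ and $g : Z \to Y$ be $\RR$-exponentiable. In $\wCC$ we have $(fg)^* \iso g^* f^*$, so by uniqueness of adjoints $\Pi_{fg} \iso \Pi_f \circ \Pi_g$. By condition~(2) both $\Pi_g : (\wCC/Z,\wRR) \to (\wCC/Y,\wRR)$ and $\Pi_f$ are preclan morphisms. Preclan morphisms are closed under composition: $\RR$-maps are sent to $\RR$-maps, and the comparison map for a pullback of an $\RR$-map factors as the image under $\Pi_f$ of the comparison for $\Pi_g$, followed by the comparison for $\Pi_f$ applied to the $\RR$-map $\Pi_g(h)$. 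Both are isomorphisms. A functor isomorphic to a preclan morphism is again one, so $fg \in \Exp_\RR$.

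\emph{Pullback-stability.} Let $f \in \Exp_\RR$ and let $f' : Y' \to X'$ be a pullback of $f$ along $s : X' \to X$. Any pullback of $f'$ along $u : X'' \to X'$ is, by the two-pullbacks lemma, a pullback of $f$ along $su$. Condition~(1) for $f$ therefore gives condition~(1) for $f'$, so $f' \in \Exp_\RR$.

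The step I expect to be the main obstacle is the \emph{existence} of these pullbacks in $\CC$, which \Cref{def:preclan} requires. Condition~(1) only speaks of pullbacks of $f$ "whenever such pullbacks exist". For instance, if $\RR$ consists only of isomorphisms, every map is $\RR$-exponentiable, yet $\CC$ need not have pullbacks. I would first try to show that the presheaf pullback $Y \times_X X'$ is representable, using $\Pi_f$ applied to identities and the representability of $f_*(1_Y)$; I expect this to fail in general. In that case I would state the result with the convention already implicit in condition~(1): $\Exp_\RR$ is stable under all pullbacks that exist in $\CC$, and is in particular stable under pullbacks along $\RR$-maps and in the lex case. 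With that convention, the three clauses above show that $(\CC,\Exp_\RR)$ is a preclan.
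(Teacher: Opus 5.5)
Your three clauses follow the paper's argument. Pullback-stability comes from condition~(1) by pasting, and closure under composition comes from condition~(2) via $\Pi_{fg}\iso\Pi_f\circ\Pi_g$. The only difference is isomorphisms: the paper uses condition~(1) (Beck--Chevalley holds trivially at an isomorphism and at its pullbacks) rather than $\Pi_f\iso f_!$, and either works.

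The obstacle you flag is real, and the paper's proof does not address it. It asserts that pullbacks of $\RR$-exponentiable maps exist ``by condition~(1)'', but condition~(1) only quantifies over pullbacks that exist. Your counterexample is correct. Take $\CC$ to be the free cospan $a\to c\leftarrow b$ and $\RR$ the isomorphisms. Then $(\CC,\RR)$ is a preclan, and every $\RR(Z)$ is equivalent to the terminal category, so every map is $\RR$-exponentiable under either condition. Yet $a\to c$ has no pullback along $b\to c$. So the proposition is false as literally stated.

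One caution about your weakened reading: you switch freely between conditions~(1) and~(2) of \Cref{theorem:exponentiable}. However, the proof of (1)$\Rightarrow$(2), via \Cref{lem:BC-iff-BC-in-presheaves}, asserts $\widetilde Y\in\CC$, which uses exactly the pullbacks that may be missing. Without them the implication fails.

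For example, take the poset generated by $E\le P_1$, $P_i\le\widetilde X$ and $P_i\le Y$ ($i=1,2$), $\widetilde X\le X$ and $Y\le X$. Let $\RR$ be the identities together with $E\le P_1$.
\begin{itemize}
\item The map $Y\le X$ satisfies~(1). Its only existing pullbacks are itself and identities. Beck--Chevalley is trivial at identities, and at $Y\le X$ it only concerns $\id_Y$, the sole object of $\RR(Y)$.
\item In presheaves, $\widetilde X\times_X Y\iso P_1\sqcup P_2$. The pushforward along $P_1\sqcup P_2\to\widetilde X$ sends the $\wRR$-map $E\sqcup P_2\to P_1\sqcup P_2$ (coproducts of representables) to the non-representable sieve $E\sqcup P_2$ on $\widetilde X$. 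So~(2) fails, by \Cref{lem:pushforward-along-pullback}.
\end{itemize}

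The clean repair is to build ``$f$ has all pullbacks in $\CC$'' into the definition of $\RR$-exponentiability. Then the step $\widetilde Y\in\CC$ becomes valid. Existence holds by definition and passes to isomorphisms, composites and pullbacks, and your three clauses prove the proposition.

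If you want the weakened statement instead, take~(2) alone as the definition. You have already shown closure under isomorphisms and composition. Stability under existing pullbacks follows from $\Pi_{f'}\iso\eta^*\circ\Pi_f/Y'$, which is \Cref{lem:pushforward-along-pullback} applied in $\wCC$.
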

\begin{proof}
  Pullbacks of $\RR$-exponentiable morphisms exist and
  are still $\RR$-exponentiable by considering condition (1)
  from the equivalent definitions of $\RR$-exponentiability.
  If $f$ is an isomorphism, then the Beck--Chevalley condition holds trivially
  at $f$ as well as its pullbacks, which are also isomorphisms.
  If $g : Z \to Y$ and $f : Y \to X$ are both $\RR$-exponentiable,
  then by condition (3)
  \[\Pi_f \circ \Pi_g \iso \Pi_{f \circ g} : (\wCC/Z, \wRR) \to (\wCC/X, \wRR)\]
  we see that $f \circ g$ is also $\RR$-exponentiable.
\end{proof}

The remainder of this section provides various lemmas
about $\RR$-exponentiability that will be useful in the coming chapters.
The first lemma \Cref{lem:stable-class-of-exponentiable-maps}
makes it easier to check that a class of maps $\HH$ is $\RR$-exponentiable,
using a more elementary condition.

\begin{lemma}\label{lem:stable-class-of-exponentiable-maps}
  Suppose $\HH$ is a class of maps that is stable under pullback,
  and for every $f : Y \to X$ in $\HH$,
  and every $\RR$-map $g : Z \to Y$, 
  there is an $\RR$-map $p : P \to X$ satisfying the partial right adjoint
  universal property of the pushforward:
  \[ \CC / Y (f^* h, g) \iso \CC / X (h,p)\]
  naturally for all $h$ in $\CC / X$.
  Then $\HH$ is $\RR$-exponentiable.
  \[\HH \subseteq \Exp_\RR\]
\end{lemma}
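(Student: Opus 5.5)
We need to show every $f \in \HH$ satisfies condition (1) of \Cref{theorem:exponentiable}: the Beck--Chevalley condition holds at every pullback of $f$ in $(\CC,\RR)$. The plan is to reduce this to a statement about $f$ alone, and then use pullback-stability of $\HH$ to cover all pullbacks.

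First, observe that the hypothesis says exactly that the partial right adjoint condition holds at $f$ in its first formulation from \Cref{prop:pra-defs}, but only against test objects $h \in \CC/X$ rather than all of $\wCC/X$. So the main step is to upgrade the universal property from $\CC/X$ to $\wCC/X$.

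Let $g \in \RR(Y)$ and let $p \in \RR(X)$ be the given $\RR$-map. The two functors $\wCC/Y(f^*(-),g)$ and $\wCC/X(-,p)$ on $(\wCC/X)^\op$ both send colimits in $\wCC/X$ to limits:
\begin{itemize}
\item $\wCC/X(-,p)$ does so trivially;
\item $\wCC/Y(f^*(-),g)$ does so because $f^* : \wCC/X \to \wCC/Y$ preserves colimits, since $\wCC$ is locally Cartesian closed.
\end{itemize}
Every object of $\wCC/X$ is a canonical colimit of representables $(\widetilde X \to X)$ with $\widetilde X \in \CC$, and these are objects of $\CC/X$. For such $h$, the pullback $f^*h$ computed in $\wCC$ agrees with the one in $\CC$, as Yoneda preserves existing pullbacks, and the pullback exists in $\CC$ because $f \in \HH$ has pullbacks. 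Hence the natural isomorphism on $\CC/X$ extends to one on all of $\wCC/X$. Naturality in $g$ is then automatic from Yoneda, making $g \mapsto p$ functorial. So the partial right adjoint condition holds at $f$, and by \Cref{prop:beck-chevalley-defs} the Beck--Chevalley condition holds at $f$.

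Finally, any pullback $f'$ of $f$ in $\CC$ lies in $\HH$ by stability, so the same argument applied to $f'$ gives Beck--Chevalley at $f'$. This is condition (1), so $f \in \Exp_\RR$.

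The main obstacle I expect is the density/extension step. One must check carefully that the given isomorphism is natural in $h$ along all maps between representables over $X$, so that it genuinely induces an isomorphism of the colimit-extended functors. One must also check that restricting to representables suffices, given that the objects of $\CC/X$ in the hypothesis are possibly richer than the representables. Both points should follow from the naturality assumption in the statement.
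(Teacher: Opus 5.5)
Your proof is correct and follows the paper's route: obtain the partial right adjoint at $f$ from the elementary universal property, then use pullback-stability of $\HH$ to get it at every pullback of $f$. The paper only asserts that the universal property against $\CC/X$ suffices, whereas your density argument justifies the passage to $\wCC/X$ when only the pullbacks of $f$ exist and $\CC$ need not be lex: both functors send colimits in $\wCC/X \simeq \widehat{\CC/X}$ to limits and agree on $\CC/X$, whose objects are exactly the representables. This also means your second worry, that objects of $\CC/X$ might be richer than the representables, does not arise.
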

\begin{proof}
  Let $f : Y \to X$ be in $\HH$.
  By assumption on $\HH$, $f$ has all pullbacks.
  The elementary condition is sufficient for constructing a
  partial right adjoint $f_* : \RR(Y) \to \RR(X)$
  for the pullback functor $f^* : \CC / X \to \CC / Y$.
  \[\begin{tikzcd}
    \CC / X & {\RR(X)} \\
    \CC / Y & {\RR(Y)}
    \arrow[""{name=0, anchor=center, inner sep=0}, "f^*"', from=1-1, to=2-1]
    \arrow[hook', from=1-2, to=1-1]
    \arrow[""{name=1, anchor=center, inner sep=0}, "{f_*}"', from=2-2, to=1-2]
    \arrow[hook', from=2-2, to=2-1]
    \arrow["{\dashv_\partial}"{description}, draw=none, from=0, to=1]
  \end{tikzcd}\]
  Since $\HH$ is stable under pullback,
  the existence of the partial right adjoint also holds for all pullbacks of $f$.
\end{proof}

Like in \cite[Lemma 2.4.13]{joyal2017},
we also provide versions of 
\Cref{lem:pushforward-along-pullback}
and \Cref{lem:pullback-morphism-action}
for preclan (rather than a lex category).
These lemmas aid in various computations
involving the pushforward,
which we make use of for the theory of polynomial functors in \Cref{sec:polynomials}.

For a preclan morphism $F : (\CC, \RR) \to (\CC',\RR')$
and $X \in \CC$,
the action of $F$ on morphisms induces a functor
\[F / X : \RR(X) \to \RR'(F X)\]
Then for an $\RR$-map $f : X \to Y$
we have $(F f)_! \circ F / X \iso F / Y \circ f_!$.
\[
  \begin{tikzcd}
    {\RR(X)} & {\RR'(F X)} \\
    {\RR(Y)} & {\RR'(F Y)}
    \arrow["{F / X}", from=1-1, to=1-2]
    \arrow["{f_!}"', from=1-1, to=2-1]
    \arrow["{(F f)_!}", from=1-2, to=2-2]
    \arrow["F / Y"', from=2-1, to=2-2]
  \end{tikzcd}
\]

% In particular for 
% \begin{definition}
%   Let $c,d \in \CC$ and $F : (\RR(c), \RR) \to (\RR(d), \RR)$
%   be a preclan morphism.
%   For an object $X$ in $\RR(c)$,
%   the action of $F$ on morphisms induces a functor
%   \[F / X : \RR(X) \to \RR(F X)\]
%   Writing $X_! : \RR(X) \to \RR(c)$ for the functor composing
%   the map $X \to c$,
%   it follows that the following square commutes.
%   \[
%   \begin{tikzcd}
%     {\RR(X)} & {\RR(F X)} \\
%     {\RR(c)} & {\RR(d)}
%     \arrow["{F / X}", from=1-1, to=1-2]
%     \arrow["{X_!}"', from=1-1, to=2-1]
%     \arrow["{(F X)_!}", from=1-2, to=2-2]
%     \arrow["F"', from=2-1, to=2-2]
%   \end{tikzcd}
%   \]
% \end{definition}

\begin{lemma}
  Let $f : Y \to X$ be $\RR$-exponentiable,
  and let $A$ be an object in $\RR(X)$,
  which we pull back along $f$.
  Consider the diagram
  \[\begin{tikzcd}
    {f^* A} & {A} & {f_* f^* A} \\
    Y & X
    \arrow["{{{f'}}}", from=1-1, to=1-2]
    \arrow[from=1-1, to=2-1]
    \arrow["\lrcorner"{anchor=center, pos=0.125}, draw=none, from=1-1, to=2-2]
    \arrow["{{{\eta}}}", from=1-2, to=1-3]
    \arrow[from=1-2, to=2-2]
    \arrow["{{{f_* s'}}}", from=1-3, to=2-2]
    \arrow["f"', from=2-1, to=2-2]
  \end{tikzcd}\]
  where $\eta : A \to f_* f^* A$ denotes the unit of the adjunction
  $f^* \dashv f_*$ at $A$.
  Then there is a canonical isomorphism 
  ${f'_*} \iso \eta^* \circ f_* / f^* A$.
  \[ \begin{tikzcd}
    {\RR(f^* A)} & {\RR(f_* f^* A)} & {\RR(A)}
    \arrow["{f_* / f^* A}", from=1-1, to=1-2]
    \arrow["{{f'_*}}"', bend right, from=1-1, to=1-3]
    \arrow["{\eta^*}", from=1-2, to=1-3]
  \end{tikzcd} \]
\end{lemma}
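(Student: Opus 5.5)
The plan is to reduce to \Cref{lem:pushforward-along-pullback} by passing to presheaves, where $\wCC$ is lex (indeed locally Cartesian closed), so every map is exponentiable. Here $f' : f^*A \to A$ is the pullback of $f$ along the $\RR$-map $A \to X$. By condition (1) of \Cref{theorem:exponentiable}, $f'$ satisfies the Beck--Chevalley condition, so $f'_* : \RR(f^*A) \to \RR(A)$ exists. By \Cref{prop:pra-defs} it is the restriction of $\Pi_{f'} : \wCC/f^*A \to \wCC/A$ to $\RR(f^*A)$. Similarly, $f_*$ is the restriction of $\Pi_f$, and the unit $\eta : A \to f_* f^* A$ agrees with the unit of $f^* \dashv \Pi_f$ at $A$, because $\RR(X) \subseteq \wCC/X$ is full and $f_*A \cong \Pi_f A$ there.

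Applying \Cref{lem:pushforward-along-pullback} in the lex category $\wCC$ with $s := (A \to X)$ gives $\Pi_{f'} \cong \eta^* \circ \Pi_f / f^*A$ as functors $\wCC/f^*A \to \wCC/A$. It then remains to check that, restricted to $\RR(f^*A)$, the right-hand side is $\eta^* \circ f_*/f^*A$.

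The key point, and the only nontrivial step, is to show that $\Pi_f / f^*A$ sends $\RR(f^*A)$ into $\RR(\Pi_f f^*A) \simeq \RR(f_* f^* A)$ and agrees there with $f_*/f^*A$. This is exactly where condition (2) of \Cref{theorem:exponentiable} is used. $\Pi_f : (\wCC/Y,\wRR) \to (\wCC/X,\wRR)$ is a preclan morphism, so it sends an $\RR$-map $B \to f^*A$ to a $\wRR$-map $\Pi_f B \to \Pi_f f^*A$. Since $f^*A \in \RR(Y)$, the object $\Pi_f f^*A \cong f_* f^* A$ is representable, and since $\wRR$-maps with representable codomain have representable domain, we get an $\RR$-map in $\CC$. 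Moreover $f_*/f^*A$ is by definition the action of the functor $f_*$ (the restriction of $\Pi_f$) on morphisms over $f^*A$, so the two agree.

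Finally, $\eta^*$ preserves $\RR$-objects because $\RR$ is pullback-stable, and pullback along $\eta$ in $\CC$ agrees with that in $\wCC$ since the Yoneda embedding preserves pullbacks of $\RR$-maps. The restricted isomorphism therefore lands in $\RR(A)$ and gives the canonical isomorphism $f'_* \cong \eta^* \circ f_*/f^*A$.

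Alternatively, one can argue directly by uniqueness of adjoints. The partial-adjoint analogue of \Cref{lem:adjoints-of-slice-functors} shows that $\eta^* \circ f_*/f^*A$ is a partial right adjoint to $f^*/A \cong (f')^*$. The only care needed is that the restricted hom-sets match, and that is again guaranteed by the fullness of the inclusions.
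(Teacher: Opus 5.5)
Your proposal is correct and is essentially the paper's argument: the paper also embeds $\RR(-)$ into $\wCC/(-)$, notes that all the functors involved commute with these embeddings, and then invokes the presheaf-level statement, which is the content of \Cref{lem:pushforward-along-pullback} applied in the locally Cartesian closed category $\wCC$. You spell out the ``functors commute with the embeddings'' step more carefully than the paper, using condition (2) of \Cref{theorem:exponentiable} to see that $\Pi_f/f^*A$ restricts to $f_*/f^*A$; one small slip is that the comparison you need for identifying the unit $\eta$ is $f_* f^*A \cong \Pi_f f^*A$, not $f_*A \cong \Pi_f A$.
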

\begin{proof}
  Embedding everything into presheaves $\RR(A) \to \wCC / A$,
  and noting that all functors commute with the embeddings,
  the result follows from \cite[Lemma 1.13]{barton2026}.
\end{proof}

\begin{lemma}\label{lem:distributivity}
  Let $f : Y \to X$ be $\RR$-exponentiable,
  and let $A$ be an object in $\RR(Y)$.
  Consider the diagram
  \[\begin{tikzcd}
    A & {f^* f_* A} & {f_* A} \\
    & Y & X
    \arrow[from=1-1, to=2-2]
    \arrow["{\epsilon}"', from=1-2, to=1-1]
    \arrow["{f'}", from=1-2, to=1-3]
    \arrow[from=1-2, to=2-2]
    \arrow["\lrcorner"{anchor=center, pos=0.125}, draw=none, from=1-2, to=2-3]
    \arrow[from=1-3, to=2-3]
    \arrow["f"', from=2-2, to=2-3]
  \end{tikzcd}\]
  where $\epsilon : f^* f_* A \to A$
  is the counit of the adjunction $f^* \dashv f_*$ at $A$.
  Then there is a canonical natural isomorphism
  $f_*/A \iso {f'}_* \circ \epsilon^*$.
  \[
    \begin{tikzcd}
      {\RR(A)} & {\RR(f^* f_* A)} & {\RR(f_* A)}
      \arrow["{{\epsilon^*}}", from=1-1, to=1-2]
      \arrow["{f'_*}", from=1-2, to=1-3]
      \arrow["{f_*/A}"', from=1-1, to=1-3, bend right]
    \end{tikzcd}
  \]
\end{lemma}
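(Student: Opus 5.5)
The plan is to reduce to \Cref{lem:pullback-morphism-action} in the presheaf category. The previous lemma was handled the same way, by embedding into presheaves.

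First, embed everything along the Yoneda embedding. Since $\wCC$ is locally Cartesian closed, $f$ is exponentiable there, and \Cref{lem:pullback-morphism-action} applies to $g : A \to Y$ viewed in $\wCC/Y$. It gives a canonical isomorphism
\[ \Pi_f / A \iso \Pi_{f'} \circ \epsilon^* : \wCC/A \to \wCC/\Pi_f A , \]
where $\epsilon : Y \times_X \Pi_f A \to A$ is the counit in presheaves. Because $f$ is $\RR$-exponentiable, the partial right adjoint condition holds at $f$ (\Cref{prop:beck-chevalley-defs}). So $\Pi_f A \iso f_* A$ is representable and lies in $\RR(X)$. Consequently $Y \times_X \Pi_f A \iso f^* f_* A$ is a representable pullback of an $\RR$-map, and the presheaf counit agrees with the counit of the partial adjunction $f^* \dashv f_*$. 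Moreover $f' : f^* f_* A \to f_* A$ is a pullback of $f$ in $\CC$, so it is again $\RR$-exponentiable by condition (1) of \Cref{theorem:exponentiable}.

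Next, restrict each functor to the $\RR$-subcategories and check that the embeddings commute with them.
\begin{itemize}
\item The functor $\epsilon^*$ on $\wCC/A$ sends $\RR(A)$ into $\RR(f^* f_* A)$, by pullback stability of $\RR$ and because $\CC \subseteq \wCC$ preserves pullbacks of $\RR$-maps.
\item The functor $\Pi_{f'}$ restricts to $f'_* : \RR(f^* f_* A) \to \RR(f_* A)$, by the partial right adjoint condition at $f'$ (\Cref{prop:pra-defs}).
\item The functor $\Pi_f / A$ restricts to $f_*/A : \RR(A) \to \RR(f_* A)$. This is exactly where condition (2) of \Cref{theorem:exponentiable} enters. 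Since $\Pi_f$ is a preclan morphism on $(\wCC/Y,\wRR)$, an $\RR$-map $h : B \to A$ over $Y$ is sent to a $\wRR$-map $\Pi_f h$ with representable codomain $f_* A$. Hence $\Pi_f h$ is an $\RR$-map in $\CC$, using $\RR(f_*A) \simeq \wRR(f_*A)$. This agrees with how $f_*/A$ is defined, since $f_*$ is the restriction of $\Pi_f$.
\end{itemize}

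Finally, because $\RR(Z) \hookrightarrow \wCC/Z$ is fully faithful, the presheaf isomorphism restricts to a natural isomorphism $f_*/A \iso f'_* \circ \epsilon^*$ between the restricted functors.

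The main obstacle is a bookkeeping point: the objects and maps constructed in $\wCC$ must be identified with the chosen ones in $\CC$. The counit $\epsilon$ and the object $f^* f_* A$ are defined via the partial adjunction, while \Cref{lem:pullback-morphism-action} uses the presheaf adjunction. I would handle this by noting that the partial right adjoint $f_*$ represents the same functor as $\Pi_f$ on $\RR(Y)$. The two adjunctions then have canonically isomorphic units and counits, and transporting the isomorphism along these identifications yields the canonical one.
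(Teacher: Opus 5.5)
Your proposal is correct and takes the same route as the paper. The paper's proof embeds $\RR(A) \subseteq \wCC/A$ into presheaves and invokes the presheaf-level isomorphism, citing \cite[Lemma 1.14]{barton2026}, which is essentially \Cref{lem:pullback-morphism-action} applied in the locally Cartesian closed $\wCC$. Your restriction checks, and your identification of the presheaf counit with the counit of the partial adjunction, supply the details that the paper leaves implicit.
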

\begin{proof}
  Again,
  embedding everything into presheaves $\RR(A) \to \wCC / A$,
  the result follows from \cite[Lemma 1.14]{barton2026}.
\end{proof}

Like with \Cref{lem:pullback-morphism-action}, 
it follows from \Cref{lem:distributivity} that we have a \emph{distributivity law}
or \emph{type theoretic axiom of choice}
(cf. \cite[Proposition 2.4.15]{joyal2017}).
\begin{equation}
  f_* \circ A_! \iso (f_* A)_! \circ f'_* \circ \epsilon^* 
\end{equation}
Again, in an informal type theoretic syntax, one could render it as follows.
\[
  \prod_{x : X} \sum_{y : Y(x)} A(y) \iso \sum_{f : \Pi_{X} Y} \prod_{x : X} A(f x)
\]

\begin{lemma}\label{lem:pushforward-preserves-maps}
  Let $f : Y \to X$ be $\RR$-exponentiable.
  Suppose $(\CC,\SS)$ is another preclan such that $\SS \subseteq \RR$
  and $f$ is also $\SS$-exponentiable.
  Then the pushforward functor
  \[ f_* : (\RR(Y), \SS) \to (\RR(X), \SS)\]
  is a preclan morphism.
\end{lemma}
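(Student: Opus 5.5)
We need to check two things: that $f_* : \RR(Y) \to \RR(X)$ sends $\SS$-maps to $\SS$-maps, and that it preserves pullbacks of $\SS$-maps in $\RR(Y)$. The plan is to embed everything into presheaves, where the functor is the restriction of $\Pi_f : \wCC/Y \to \wCC/X$, and then use the $\SS$-exponentiability of $f$ through condition (2) of \Cref{theorem:exponentiable}.

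\textbf{Step 1: identify $f_*$ with $\Pi_f$.} Since $f$ is $\RR$-exponentiable, \Cref{prop:pra-defs} and \Cref{prop:beck-chevalley-defs} show that $\Pi_f : \wCC/Y \to \wCC/X$ restricts to $f_* : \RR(Y) \to \RR(X)$, up to the equivalences $\RR(Z) \simeq \wRR(Z)$. Hence $f_*$ is the restriction of $\Pi_f$ along the full inclusions $\RR(Y) \subseteq \wCC/Y$ and $\RR(X) \subseteq \wCC/X$.

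\textbf{Step 2: preservation of $\SS$-maps.} Let $h : A \to B$ be a morphism in $\RR(Y)$ whose underlying map lies in $\SS$. Viewed in $\wCC/Y$, $h$ is an $\widehat{\SS}$-map, because representable $\SS$-maps are $\SS$-representable. Since $f$ is $\SS$-exponentiable, condition (2) of \Cref{theorem:exponentiable} for $(\CC,\SS)$ says that $\Pi_f : (\wCC/Y,\widehat{\SS}) \to (\wCC/X,\widehat{\SS})$ is a preclan morphism. Therefore $\Pi_f h = f_* h$ lies in $\widehat{\SS}$. Its domain and codomain are representable, since they lie in $\RR(X)$. A natural transformation in $\widehat{\SS}$ with representable codomain is, by definition, pulled back along the identity to an $\SS$-map of $\CC$; so $f_* h$ is an $\SS$-map.

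\textbf{Step 3: preservation of pullbacks of $\SS$-maps.} Take a pullback in $\RR(Y)$ of an $\SS$-map $h : A \to B$ along some $k : C \to B$. Two facts are needed here, and this is the step where care is required:
\begin{itemize}
\item[(a)] This pullback exists in $\RR(Y)$ and is computed as in $\CC$, because $\SS$ is pullback-stable and $\SS \subseteq \RR$ is closed under composition, so the domain is again in $\RR(Y)$.
\item[(b)] The inclusion $\RR(Y) \hookrightarrow \wCC/Y$ preserves this pullback, because Yoneda preserves pullbacks that exist in $\CC$.
\end{itemize}
The right adjoint $\Pi_f$ preserves all pullbacks in $\wCC/Y$. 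Pullbacks in $\RR(X)$ of the $\SS$-map $f_* h$ (Step 2) are likewise computed in $\CC$ and reflected from $\wCC/X$, because the inclusion is fully faithful. So the comparison map $f_*(A \times_B C) \to f_* A \times_{f_* B} f_* C$ is the presheaf comparison for $\Pi_f$, which is an isomorphism.

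I expect no serious obstacle. The only subtle points are the two facts in Step 3: that $\RR(Y)$ is closed under pullbacks of $\SS$-maps, which uses $\SS \subseteq \RR$ and closure of $\RR$ under composition, and that these pullbacks agree with the presheaf ones.
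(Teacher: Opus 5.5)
Your proof is correct, but the key step uses a different lemma from the paper's.

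For preservation of $\SS$-maps, the paper stays inside the preclan. It uses the distributivity isomorphism $f_*/B \iso f'_* \circ \epsilon^*$ of \Cref{lem:distributivity}, where $f' : f^* f_* B \to f_* B$ is the pullback of $f$ and $\epsilon$ is the counit. Since $h$ is an object of $\SS(B) \subseteq \RR(B)$, we get $\epsilon^* h \in \SS(f^* f_* B)$. The $\SS$-Beck--Chevalley condition at the pullback $f'$ then gives that $f_* h \iso f'_* \epsilon^* h$ lies in $\SS(f_* B)$.

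You instead use condition (2) of \Cref{theorem:exponentiable} for $(\CC,\SS)$ as a black box: $\Pi_f$ preserves $\widehat{\SS}$-maps. You then descend from $\widehat{\SS}$ to $\SS$ by pulling back along the identity of the representable codomain. The direction (1)$\Rightarrow$(2) of that theorem is itself proved via the presheaf version of the same distributivity (\Cref{lem:pullback-morphism-action}), so both arguments rest on the same mechanism.

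What each route buys:
\begin{itemize}
\item Your version is shorter. It also makes the compatibility of the $\RR$- and $\SS$-pushforwards automatic, since both are restrictions of the single functor $\Pi_f$.
\item The paper's version needs that compatibility implicitly, via the commuting square between $\SS(-)$ and $\RR(-)$ in its diagram. In exchange, it shows exactly at which pullback of $f$ the $\SS$-exponentiability is used, and it parallels the proof of \Cref{prop:lex-exponentiable}.
\end{itemize}

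For pullbacks, your detour through $\wCC/Y$ works but is unnecessary. As the paper notes, $f_* : \RR(Y) \to \RR(X)$ is right adjoint to $f^* : \RR(X) \to \RR(Y)$, so it preserves every pullback that exists in $\RR(Y)$. Your observation (a) together with Step 2 then finishes the argument.
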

\begin{proof}
  The proof is similar to (1) $\Rightarrow$ (4) in the proof of \Cref{theorem:exponentiable}.
  Being a right adjoint, $f_* : \RR(Y) \to \RR(X)$ preserves all pullback squares.
  It remains to show that $f_*$ preserves $\SS$-maps.
  Consider an $\SS$-map $h : A \to B$ over $Y$.
  \[\begin{tikzcd}
    A & B & Y & X
    \arrow["h", from=1-1, to=1-2]
    \arrow[from=1-2, to=1-3]
    \arrow["f", from=1-3, to=1-4]
  \end{tikzcd}\]
  Recall the diagrams from \Cref{lem:distributivity}.
  \[\begin{tikzcd}
    B & {f^* f_* B} & {f_* B} \\
    & Y & X
    \arrow[from=1-1, to=2-2]
    \arrow["{\epsilon}"', from=1-2, to=1-1]
    \arrow["{f'}", from=1-2, to=1-3]
    \arrow[from=1-2, to=2-2]
    \arrow["\lrcorner"{anchor=center, pos=0.125}, draw=none, from=1-2, to=2-3]
    \arrow[from=1-3, to=2-3]
    \arrow["f"', from=2-2, to=2-3]
  \end{tikzcd}
  \quad \quad
  \begin{tikzcd}
    {\SS(B)} & {\SS(f^* f_* B)} & {\SS(f_* B)} \\
    {\RR(B)} & {\RR(f^* f_* B)} & {\RR(f_* B)}
    \arrow["{{{\epsilon^*}}}", from=1-1, to=1-2]
    \arrow[from=1-1, to=2-1]
    \arrow["{{f'_*}}", from=1-2, to=1-3]
    \arrow[from=1-2, to=2-2]
    \arrow[from=1-3, to=2-3]
    \arrow["{{{\epsilon^*}}}", from=2-1, to=2-2]
    \arrow["{{f_*/B}}"', bend right, from=2-1, to=2-3]
    \arrow["{{f'_*}}", from=2-2, to=2-3]
  \end{tikzcd}
  \]
  Although $A$ and $B$ in $\RR(Y)$ might not be in $\SS(Y)$,
  $h : A \to B$ is an object in $\SS(B) \subseteq \RR(B)$,
  and therefore we can compute
  \[f_* h \iso f'_* \, \epsilon^* \, h\]
  as an object in $\SS(f_* B) \subseteq \RR(f_* B)$,
  making $f_* h$ an $\SS$-map.
\end{proof}

\section{Examples} \label{sec:example}

\begin{example}
  Let $\CC$ be a lex category
  and let $\RR$ consist of all morphisms in $\CC$.
  Then $(\CC,\RR)$ forms a clan.
  Functors between slices always preserve $\RR$-maps,
  making condition (4) of \Cref{theorem:exponentiable} trivial.
  Thus, $\RR$-exponentiability reduces to
  the standard notion of exponentiability of a morphism in a lex category.
  In terms of the pullback-pushforward adjunction,
  since $\RR(X) = \CC / X$ for all $X$,
  the partial right adjoint to pullback $f^* : \CC / X \to \CC / Y$ becomes
  an actual right adjoint $f_* = \Pi_f : \CC/Y \to \CC/X$.
  % Conversely,
  % any exponentiable morphism $f : Y \to X$ has a
  % (global) pullback-pushforward adjunction,
  % and $\Pi_f : \CC / Y \to \CC / X$ will preserve pullbacks --
  % since it is a right adjoint -- and will preserve $\RR$-maps trivially.
\end{example}

\begin{example}[Topological spaces, open embeddings and closed embeddings]
  \label{ex:top-opens}
  Consider the category of topological spaces $\Top$.
  We write $\OO$ for the class of open embeddings,
  i.e. those maps homeomorphic to the inclusion of an open subset,
  and $\HH$ for the class of closed embeddings,
  i.e. those maps homeomorphic to the inclusion of a closed subset.
  These form a $\tau$-preclan $(\Top,\OO,\HH)$ (\Cref{def:double-preclan}).

  It is easy to check that embeddings and open embeddings form clans in $\Top$.
  Writing $\Sub$ for the class of embeddings,
  we can define a right adjoint to $f^* : \Sub(X) \to \Sub(Y)$ as follows.
  For $U \subseteq Y$ a subspace,
  let $(\forall f)(U) \subseteq X$ denote the subspace
  \[\{\, x \in X \mid f(y) = x \Rightarrow y \in U \,\}\]
  It is easy to see that $(\forall f)(U) \in \Top/X$
  represents the functor $\Hom_{\Top/Y}(f^*(-), U)$.
  \[ \begin{tikzcd}
    {\Top / Y} & {\Sub(Y)} & {\OO(Y)} \\
    {\Top / X} & {\Sub(X)} & {\OO(X)}
    \arrow[from=1-2, to=1-1]
    \arrow[""{name=0, anchor=center, inner sep=0}, "{\forall f}", shift left=3, from=1-2, to=2-2]
    \arrow[hook', from=1-3, to=1-2]
    \arrow["{f^*}", from=2-1, to=1-1]
    \arrow[""{name=1, anchor=center, inner sep=0}, "{f^*}", shift left=3, from=2-2, to=1-2]
    \arrow[from=2-2, to=2-1]
    \arrow["{f^*}", from=2-3, to=1-3]
    \arrow[hook', from=2-3, to=2-2]
    \arrow["\dashv"{description}, draw=none, from=1, to=0]
  \end{tikzcd} \]
  
  A map $f : Y \to X$ is $\OO$-exponentiable
  if and only if it is universally closed
  (all pullbacks of $f$ are closed).
  Indeed, we show that
  the partial right adjoint condition holds at $f : Y \to X$
  if and only if $f$ is closed ($f$-image of a closed subset is closed):
  the partial right adjoint condition holds at $f$
  if and only if 
  $\forall f : \Sub(Y) \to \Sub(X)$
  sends $\OO(Y) \subseteq \Sub(Y)$ into $\OO(X) \subseteq \Sub(X)$,
  i.e. if $U \subseteq Y$ is open then 
  $(\forall f)(U) \subseteq X$ is open.
  By passage to complements, this is the same as asking that the map $f$ is closed.
  
  By a dual argument,
  the partial right adjoint condition with respect to the closed embeddings
  $\HH$ holds at $f : Y \to X$
  if and only if $f$ is an open map ($f$-image of an open subset is open).
  Since open maps are stable under pullback (unlike closed maps; see below),
  it follows that
  $f$ is $\HH$-exponentiable if and only if
  $f$ is an open map.
  \[\OO \subseteq \{\text{open maps}\} = \Exp_\HH\]
  Also, any closed embedding is universally closed.
  \[\HH \subseteq \Exp_\OO\]
  Thus, open embeddings and closed embeddings together
  form a $\tau$-preclan $(\Top,\OO,\HH)$.

  In general, a map $f : Y \to X$ is universally closed
  if and only if it is closed and the preimage of any compact subset of $X$ is compact
  \cite[Theorem 055R]{stacks-project}.
  For maps between locally compact Hausdorff spaces,
  the assumption that $f$ is closed can be omitted.
  Depending on the source, the term ``proper'' can refer to being universally closed,
  just the condition that preimages of compact subsets are compact,
  or also include the condition that $f$ is separated.

  We see that the Beck--Chevalley condition holding at $f$
  does not imply that it holds at pullbacks of $f$,
  since there are maps $f : Y \to X$ that are closed but not universally closed.
  Consider $f : \N \to 1$.
  Though $f$ is closed, the pullback $X \times \N \to X$ may not be closed.
  An $\N$-indexed union of closed subsets $U_i$ of $X$ may not be closed,
  but the disjoint union of the subsets $U_i \times \{i\}$ in $\N \times X$ is closed.
  
  We can also see that when the Beck--Chevalley condition holds at $f : Y \to X$,
  the pushforward
  \[f_* : (\OO(Y), \OO) \to (\OO(X), \OO)\]
  is automatically a preclan morphism,
  because all morphisms between objects of $\OO(X)$ are open embeddings.
  This shows that the converse of (\textit{i}) in \Cref{rmk:counterexample}
  is not true:
  $f$ satisfies Beck–-Chevalley and $f_* : (\OO(Y), \OO) \to (\OO(X), \OO)$
  is a preclan morphism, but $f$ is not $\OO$-exponentiable.
\end{example}

\begin{example}[Topological spaces and open maps]
  \label{ex:top-open-maps}
  Another choice of preclan structure $\RR$ on $\Top$
  is the class of open maps.
  In fact, this $(\Top, \RR)$ is a clan.
  We use it to give an example of
  a map $f : Y \to X$ that satisfies the Beck--Chevalley condition
  but for which $f_* : (\RR(Y), \RR) \to (\RR(X), \RR)$ is not a preclan morphism.

  Take the unique map $f : \N \to 1$,
  where $\N$ has the discrete topology.
  Since $\N$ is locally compact, $f$ is exponentiable \cite{niefield1982}.
  Moreover, every map into $1$ (or $\N$ for that matter) is an open map,
  so $\Pi_f : \Top/\N \to \Top/1 = \Top$ certainly sends $\RR(\N)$ into $\RR(1)$.
  However, we claim that $f_* = \Pi_f$ does not preserve open maps.
  Since the exponential $(-)^\N : \Top \to \Top$
  factors as $(-)^\N = \Pi_f \circ f^*$,
  and $f^* : \Top \to \Top / \N$ preserves open maps,
  it suffices to show that
  $(-)^\N = \Pi_f \circ f^*$ does not preserve open maps.
  To see this,
  take the open point $1$ in the Sierpinski space $\mathbb{S} = \{0 < 1\}$,
  which gives us an open embedding $i : \{1\} \to \mathbb{S}$ by inclusion.
  We want to show that \[i^\N : \{1\}^\N \to \mathbb{S}^\N\]
  is not an open map,
  by showing that the constant map $c_1 : \N \to \mathbb{S}$
  at $1 \in \mathbb{S}$
  is not an open point in $\mathbb{S}^\N$.

  Recall that $\mathbb{S}^\N$ is given the Scott topology \cite{niefield1982};
  in our case we can provide a basis for the Scott topology on $\mathbb{S}^\N$
  where a basic open is of the form 
  \[\{f : \Top(\N,\mathbb{S}) \st f(F) \subseteq \{1\} \}\]
  for some finite set of naturals $F \subseteq \N$.
  Suppose that the constant function $c_1$ were an open point in $\mathbb{S}^\N$.
  Then we would have
  \[ \{c_1\} = \bigcup_{i} \{f : \Top(\N,\mathbb{S}) \st f(F_i) \subseteq \{1\} \}\]
  for some finite sets $F_i \subset \N$.
  There is some $j$ such that $c_1 (F_j) \subseteq \{1\}$.
  Consider the continuous map $g : \N \to \mathbb{S}$ that is constant at $1$
  on $F_j$, and constant at $0$ elsewhere.
  Then 
  \[g \in \bigcup_{i} \{f : \Top(\N,\mathbb{S}) \st f(F_i) \subseteq \{1\} \}\]
  is in the union of opens, but $g \ne c_1$.
\end{example}

\begin{example}[Simplicial sets and Kan fibrations]
  \label{ex:sset-kan}
  Consider the category of simplicial sets $\sSet$
  with the Kan--Quillen model structure.
  Writing $\RR$ for the Kan fibrations,
  we have that $(\sSet, \RR)$ is a $\pi$-preclan \cite{joyal2017}.
  Since $\sSet$ is locally Cartesian closed,
  it is enough to check the ``Frobenius property'':
  pushforwards of fibrations along fibrations are fibrations.

  Whilst all fibrations are $\RR$-exponentiable,
  not all $\RR$-exponentiable maps are fibrations
  (an example is given below).
  The following conditions on $f : Y \to X$ are equivalent
  to $f$ being $\RR$-exponentiable.
  \begin{enumerate}
  \item
    $f^* : \sSet/X \to \sSet/Y$ preserves weak equivalences,
    i.e., in any ``two pullbacks'' diagram
    \[
      \begin{tikzcd}
        Y'' \ar[r, "t'"] \ar[d, "f''"'] \pbcorner &
        Y' \ar[r] \ar[d, "f'"'] \pbcorner & Y \ar[d, "f"] \\
        X'' \ar[r, "t"] & X' \ar[r] & X
      \end{tikzcd}
    \]
    if $t$ is a weak equivalence, then so is $t'$.
    This is Rezk's definition of a \emph{sharp map} \cite{rezk1998}.
  \item
    Every pullback of $f$ is a homotopy pullback.
  \item
    $f^* \dashv \Pi_f$ is a Quillen adjunction.
  \item
    $\Pi_f : \sSet/Y \to \sSet/X$ preserves fibrations
    ($f$ is $\RR$-exponentiable).
  \item
    $\Pi_f : \sSet/Y \to \sSet/X$ preserves fibrant objects
    (the partial right adjoint condition holds at $f$).
  \end{enumerate}

  \begin{proof}
    (1) $\Leftrightarrow$ (2) is
    proven in \cite[Proposition 2.7]{rezk1998}.

    (1) $\Leftrightarrow$ (3) $\Leftrightarrow$ (4).
    The cofibrations of $\sSet$ are the monomorphisms,
    which are stable under pullback.
    Therefore, $f^*$ always preserves cofibrations
    and by the adjunction, $\Pi_f$ always preserves trivial fibrations.
    Furthermore,
    $f^*$ preserves weak equivalences if and only if it preserves trivial cofibrations
    if and only if $\Pi_f$ preserves fibrations.

    (4) $\Rightarrow$ (5).
    Preserving fibrations implies preserving fibrant objects.

    (5) $\Rightarrow$ (1).
    A map between cofibrant objects of any model category is a weak equivalence
    just when it induces an isomorphism on
    homotopy classes of maps into every fibrant object.
    In $\sSet$ and its slices, we can use the product $A \times \Delta^1$
    as the cylinder object to compute the (left) homotopy relation.
    Let $A \in \sSet/X$, and $Z \in \sSet/Y$.
    In addition to the adjunction
    \[
      \Hom_{/Y}(f^* A, Z) \cong \Hom_{/X}(A, \Pi_f Z),
    \]
    we also have
    \[
      \Hom_{/Y}(f^*(A) \times \Delta^1, Z) =
      \Hom_{/Y}(f^*(A \times \Delta^1), Z) \cong \Hom_{/X}(A \times \Delta^1, \Pi_f Z),
    \]
    Therefore, when $Z$ is fibrant, $\Pi_f Z$ is also fibrant by (5)
    and the adjunction isomorphism passes to homotopy classes of maps
    \[
      [f^* A, Z]_{/Y} \cong [A, \Pi_f Z]_{/X}.
    \]
    This isomorphism is natural in $A$.
    So if $g : A \to B$ is a weak equivalence in $\sSet_{/X}$,
    then 
    \[g^* : [B, \Pi_f Z]_{/X} \to [A, \Pi_f Z]_{/X}\]
    is an isomorphism for all fibrant $Z \in \sSet/X$,
    and so is \[(f^*g)^* : [f^* B, Z]_{/Y} \to [f^* A, Z]_{/Y}\]
    so $f^*g : f^* A \to f^* B$ is a weak equivalence.
    This proves (1).
  \end{proof}
  
  We provide an example of an $\RR$-exponentiable map that is not a fibration.
  One can show that a weak equivalence $f : Y \to X$ is sharp
  if and only if its pullback along
  any nondegenerate simplex $\Delta^n \to X$ is again a weak equivalence,
  if and only if the pullback (object) is weakly contractible.
  The map $f : \Lambda^2_1 \to \Delta^1$
  sending vertices $0$, $1$ to $0 \in \Delta^1$ and $2$ to $1 \in \Delta^1$
  is sharp because $\Lambda^2_1$ and the fibres $(\Lambda^2_1)_0 \cong \Delta^1$,
  $(\Lambda^2_1)_1 \cong \Delta^0$ are all weakly contractible.
  However, this $f$ is not a fibration.
\end{example}

\begin{example}[Cartesian cubical sets and (unbiased) fibrations]
  \label{ex:cSet-unbiased-fibrations}
  For the following facts, we refer to \cite{awodey2026cartesian}.
  Let $\cSet$ denote the category of Cartesian cubical sets.
  Then the class of unbiased fibrations (henceforth just ``fibrations'')
  $\RR$ provides a $\pi$-preclan structure
  $(\cSet,\RR)$.
  The cubical interval $I = y([1])$ is not a fibrant,
  i.e. $I = y([1]) \to 1$ is not an $\RR$-map,
  but $I = y([1]) \to 1$ is $\RR$-exponentiable.
  It is not fibrant because the following open box in $I$ does not have a filler.
  \[ \begin{tikzcd}
    0 & 1 \\
    0 & 0
    \arrow[from=2-1, to=1-1]
    \arrow[from=2-1, to=2-2]
    \arrow[from=2-2, to=1-2]
  \end{tikzcd} \]

  To show that $I \to 1$ is $\RR$-exponentiable,
  let us first note that exponentiating by $I$ preserves fibrations.
  Let $f : A \to B$ be a fibration.
  Consider the unique map $u : 0 \to I$
  and the following ``pullback-hom'' diagram.
  \[\begin{tikzcd}
    {A^I} && \\
    & \bullet & {A^0} \\
    & {B^I} & {B^0}
    \arrow["{u \Rightarrow f}", two heads, from=1-1, to=2-2]
    \arrow["{A^u}", bend left, from=1-1, to=2-3]
    \arrow["{f^I}"', bend right, from=1-1, to=3-2]
    \arrow[from=2-2, to=2-3]
    \arrow["\iso"', from=2-2, to=3-2]
    \arrow["\lrcorner"{anchor=center, pos=0.125}, draw=none, from=2-2, to=3-3]
    \arrow["\iso", from=2-3, to=3-3]
    \arrow["{B^u}"', from=3-2, to=3-3]
  \end{tikzcd}\]
  Since $u : 0 \to I$ is a cofibration and $f : A \to B$ is a fibration,
  $u \Rightarrow f : A^I \to \bullet$ is a fibration,
  and so is $f^I : A^I \to B^I$ by the isomorphism.

  Using this fact,
  we can construct the pushforward of a fibration $f : X \to I$
  along $I \to 1$ as follows
  \[ \begin{tikzcd}
    X & {\Pi_I f} & {X^I} \\
    I & 1 & {I^I}
    \arrow["f"', two heads, from=1-1, to=2-1]
    \arrow[from=1-2, to=1-3]
    \arrow[from=1-2, to=2-2]
    \arrow["\lrcorner"{anchor=center, pos=0.125}, draw=none, from=1-2, to=2-3]
    \arrow["{f^I}", two heads, from=1-3, to=2-3]
    \arrow[from=2-1, to=2-2]
    \arrow["{\tilde{\id}}"', from=2-2, to=2-3]
  \end{tikzcd}\]
  where $\tilde{\id} : 1 \to I^I$ is the adjoint transpose 
  of the identity on $I$.
  Then $\Pi_I f \to 1$ is a pullback of a fibration,
  so is itself a fibration.

  This example will be revisited in \Cref{sec:path-types},
  to understand ``path types'' in $\cSet$.
\end{example}

\begin{example}[Topological spaces and \'etale maps]
  \label{ex:topos-etale}
  ``Proper maps'' provide a prototypical example of $\RR$-exponentiability.
  They can be found in topological spaces, locales, 1-toposes,
  and $\infty$-toposes, where $\RR$ is the class of \'etale maps.
  These examples are very similar,
  and share a lot of terminology that may not be consistent across the literature;
  the word ``proper'' is used to mean various things
  depending on the context.
  For example, Lurie uses the word ``proper'' for \'etale-exponentiability
  in $\infty$-toposes \cite[Definition 7.3.1.4]{lurie2009},
  whereas Moerdijk and Vermeulen say ``tidy'' \cite[III Definition 1.2]{moerdijk2000proper}
  or ``stable BCC'' (Beck--Chevalley condition) for \'etale-exponentiability in 1-toposes,
  and reserve the word ``proper'' for a weaker condition.
  To keep things precise,
  in this example we will specifically consider \'etale maps
  (also called local homeomorphisms)
  of topological spaces.
  
  The category of topological spaces with the class of
  \'etale maps $(\Topos,\EE)$ forms a clan,
  such that \'etale maps over $X$ are
  equivalent to sheaves on $X$
  \[\EE(X) \simeq \Sh(X)\]
  for each topos $X$ \cite[Section II.6]{maclane2012}.
  For a continuous map $f : Y \to X$,
  the pullback $f^* : \Topos/X \to \Topos/Y$ preserves \'etale maps
  and so restricts to $f^* : \EE(X) \to \EE(Y)$,
  which corresponds to the ``inverse image'' functor $f^* : \Sh(X) \to \Sh(Y)$
  under the equivalence.
  There is also a ``direct image''
  functor $f_* : \Sh(Y) \to \Sh(X)$, right adjoint to
  the inverse image $f^*  : \Sh(X) \to \Sh(Y)$.
  This gives us a right adjoint to the pullback $f^* : \EE(X) \to \EE(Y)$
  of \'etale maps.
  \[ \begin{tikzcd}
    Y & {\Sh(Y)} & {\EE(Y)} & {\Top / Y} \\
    X & {\Sh(X)} & {\EE(X)} & {\Top / X}
    \arrow["f"', from=1-1, to=2-1]
    \arrow["\simeq"{description}, draw=none, from=1-2, to=1-3]
    \arrow[""{name=0, anchor=center, inner sep=0}, "{f_*}", shift left=2, from=1-2, to=2-2]
    \arrow[hook, from=1-3, to=1-4]
    \arrow[""{name=1, anchor=center, inner sep=0}, "{f_*}", shift left=2, dashed, from=1-3, to=2-3]
    \arrow[""{name=2, anchor=center, inner sep=0}, "{f^*}", shift left=2, from=2-2, to=1-2]
    \arrow[""{name=3, anchor=center, inner sep=0}, "{f^*}", shift left=2, from=2-3, to=1-3]
    \arrow["\simeq"{description}, draw=none, from=2-3, to=2-2]
    \arrow[hook, from=2-3, to=2-4]
    \arrow["{f^*}", from=2-4, to=1-4]
    \arrow["\dashv"{anchor=center}, draw=none, from=2, to=0]
    \arrow["\dashv"{anchor=center}, draw=none, from=3, to=1]
  \end{tikzcd}\]
  However, $f_* : \EE(Y) \to \EE(X)$ may not satisfy Beck--Chevalley;
  and even if it does, the same might not hold for pullbacks of $f$.
  Hence, the converse of $\textit(iii)$ in \Cref{rmk:counterexample} fails.
  Moerdijk and Vermeulen characterise those $\EE$-exponentiable maps
  as those that are ``tidy'' \cite[III, Corollary 4.9]{moerdijk2000proper}.
  %TODO example of a geometric morphism that doesn't satisfy BC?
\end{example}

\begin{example}[Categories and fibrations]\label{ex:categories-and-fibrations}
  The following example will be explored in greater detail in \Cref{sec:cat-as-types}.

  The category of categories contains several preclans.
  The classes of (Grothendieck) fibrations and opfibrations are both clans.
  Opfibrations are fibration-exponentiable and vice versa
  (\Cref{prop:pushforward-in-cat}),
  together forming a $\tau$-clan (\Cref{def:double-preclan}).
  However, this example is somewhat unnatural,
  since viewing (op)fibrations as clans in $\Cat$ means that
  we are considering all functors between opfibrations,
  not just opcartesian functors (\Cref{def:morphism-split-opfibration}).

  We can restrict our attention to
  \emph{groupoidal} (op)fibrations --
  those which have groupoid fibres.
  The classes of groupoidal fibrations and
  groupoidal opfibrations form a $\tau$-preclan in $\Cat$
  (as opposed to a $\tau$-clan).
  Similarly, discrete fibrations and discrete opfibrations form another $\tau$-preclan in $\Cat$.
  % All functors between groupoidal fibrations are morphisms of fibrations,
  % so this example is somewhat more natural than the previous.

  The class of (split) isofibrations in the category of groupoids forms a $\pi$-clan.
  This will be used in \Cref{sec:hottlean} to construct a model of Martin-L\"of Type Theory,
  after \cite{hofmann1995}.
  One might expect that another model of Martin-L\"of Type Theory
  could be given by using isofibrations $\II$ or groupoidal isofibrations $\GG$ in $\Cat$.
  Unfortunately, although $(\Cat,\II)$ is a clan and $(\Cat,\GG)$ is a preclan,
  neither satisfies the $\pi$-preclan condition.
  The following (standard) example of a functor that is not
  exponentiable (with respect to all morphisms) can be used to demonstrate this fact.
  Consider the embedding $f := [0,2] : \two \to \three$,
  where $\two$ is the walking arrow,
  $\three$ is the walking composition of two arrows,
  and $f$ takes the arrow to the composition.
  % It is well known that $f$ is not exponentiable:
  % $f^* : \Cat / \three \to \Cat / \two$ does not preserve colimits (this example will demonstrate this),
  % therefore it does not have a right adjoint.
  Since $\GG$ is a preclan,
  there is a pullback functor $f^* : \GG(\three) \to \GG(\two)$.
  If $f$ were $\GG$-exponentiable, $f^*$ would have a right adjoint,
  so that $f^*$ preserves colimits.
  Consider the following pushout diagram in $\GG(\three) \subseteq \II(\three) \subseteq \Cat / \three$.
  \[
    \begin{tikzcd}
      1 & \two \\
      \two & \three
      \arrow["{[0]}", from=1-1, to=1-2]
      \arrow["{[1]}"', from=1-1, to=2-1]
      \arrow["{[1,2]}", from=1-2, to=2-2]
      \arrow["{[0,1]}"', from=2-1, to=2-2]
    \end{tikzcd}
  \]
  The pullback along $f$ produces the following square, which is not a pushout in $\GG(\two)$.
  \[
    \begin{tikzcd}
      0 & 1 \\
      1 & \two
      \arrow["{!}", from=1-1, to=1-2]
      \arrow["{!}"', from=1-1, to=2-1]
      \arrow["{[1]}", from=1-2, to=2-2]
      \arrow["{[0]}"', from=2-1, to=2-2]
    \end{tikzcd}
  \]
  Finally, note that $f$ is a groupoidal isofibration,
  hence $\GG$ is not a $\pi$-preclan.
  The same example shows that $\II$ is not a $\pi$-clan and $f$ is not an exponentiable morphism.

  It is also easy to come up with examples of exponentiable maps that are not
  exponentiable with respect to opfibrations.
  All opfibrations are exponentiable \cite{giraud1964,vidmar2018},
  so it suffices to give an opfibration that is not opfibration-exponentiable.
  Consider the (discrete) opfibrations $0 \to 1$ and $b : 1 \to \{a < b\}$.
  We can calculate the pushforward explicitly as $a : 1 \to \{a < b\}$,
  which is not an opfibration.
  \[ \begin{tikzcd}
    0 & {1 = \Pi_b 0} \\
    1 & {\{a < b\}}
    \arrow["\subseteq"', from=1-1, to=2-1]
    \arrow["a", from=1-2, to=2-2]
    \arrow["b"', from=2-1, to=2-2]
  \end{tikzcd}\]
\end{example} %TODO x1

\chapter{Polynomial functors in preclans}\label{sec:polynomials}
Polynomial functors can be developed in different settings,
including locally Cartesian closed categories \cite{gambino2013},
lex categories \cite{weber2015},
and $\pi$-clans \cite{hua2026}.
In \Cref{sec:exponentiable},
we saw that the theory of preclans
with an appropriate notion of preclan-exponentiability subsumes these various cases.
This chapter reproduces the theory of polynomial functors in this more general setting.

\section{Polynomial signatures}
From now on, we fix a preclan $(\CC,\RR)$ with a terminal object $1$.

\begin{definition}[Polynomial signature]
  A (univariate) \emph{polynomial signature} in $(\CC,\RR)$ consists of
  a morphism $f : E \to B$ such that
  $B \to 1$ is an $\RR$-map
  and $f$ is $\RR$-exponentiable.
\end{definition}

For simplicity, we only work with univariate polynomials.
This can be straightforwardly adapted to the multivariate case:
a multivariate polynomial signature would consist of three maps
\[\begin{tikzcd}
	I & E & B & J
	\arrow["i"', from=1-2, to=1-1]
	\arrow["f", from=1-2, to=1-3]
	\arrow["j", from=1-3, to=1-4]
\end{tikzcd}\]
such that $j$ is an $\RR$-map and $f$ is $\RR$-exponentiable.

\begin{definition}
  The \emph{category $\Poly_{(\CC,\RR)}$ of polynomial signatures}
  in $(\CC,\RR)$ has as objects
  polynomial signatures, and morphisms between objects
  $f : E \to B$ and $f' : E' \to B'$ as pairs
  \[(\phi_1 : B \to B', \phi^\# : f' \times_{B'} \phi_1 \to E)\]
  of morphisms in $\CC$ such that
  $f \circ \phi^\# = \pi_B : f' \times_{B'} \phi_1 \to B$.
  \[
  \begin{tikzcd}
    {f' \times_{B'} \phi_1} & E & B \\
    {E'} && {B'}
    \arrow["{\phi^\#}", from=1-1, to=1-2]
    \arrow["{\pi_B}", bend left = 50, from=1-1, to=1-3]
    \arrow["{\pi_{E'}}"', from=1-1, to=2-1]
    \arrow["\lrcorner"{anchor=center, pos=0.125}, draw=none, from=1-1, to=2-3]
    \arrow["f", from=1-2, to=1-3]
    \arrow["{\phi_1}", from=1-3, to=2-3]
    \arrow["{f'}"', from=2-1, to=2-3]
  \end{tikzcd}\]
  We write $\phi = (\phi_1, \phi^\#) : f \to f'$.
  The identity morphism on $(f : E \to B)$ is given by
  computing the pullback along an identity map
  \[
    \begin{tikzcd}
      {f \times_{B} \id_B} & E & B \\
      E && B
      \arrow["{{\iso}}"{description}, draw=none, from=1-1, to=1-2]
      \arrow["{{\pi_{E}}}"', from=1-1, to=2-1]
      \arrow["f", from=1-2, to=1-3]
      \arrow["{\id_E}"{description}, from=1-2, to=2-1]
      \arrow["\lrcorner"{anchor=center, pos=0.125}, draw=none, from=1-2, to=2-3]
      \arrow["{{\id_B}}", from=1-3, to=2-3]
      \arrow["{{f}}"', from=2-1, to=2-3]
    \end{tikzcd}
  \]
  Composition of $\phi : (f_0 : E_0 \to B_0) \to (f_1 : E_1 \to B_1)$
  and $\psi : (f_1 : E_1 \to B_1) \to (f_2 : E_2 \to B_2)$
  is given as follows
  \[
  \begin{tikzcd}
    {f_2 \times_{B_2} (\psi_1 \circ \phi_1)} & {f_1 \times_{B_1} \phi_1} & {E_0} & {B_0} \\
    {f_2 \times_{B_2} \psi_1} & {E_1} && {B_1} \\
    {E_2} &&& {B_2}
    \arrow["{\phi_1^* \psi^\#}", from=1-1, to=1-2]
    \arrow[from=1-1, to=2-1]
    \arrow["\lrcorner"{anchor=center, pos=0.125}, draw=none, from=1-1, to=2-2]
    \arrow["{\phi^\#}", from=1-2, to=1-3]
    \arrow[from=1-2, to=2-2]
    \arrow["\lrcorner"{anchor=center, pos=0.125}, draw=none, from=1-2, to=2-4]
    \arrow["{f_0}", from=1-3, to=1-4]
    \arrow["{\phi_1}", from=1-4, to=2-4]
    \arrow["{\psi^\#}", from=2-1, to=2-2]
    \arrow[from=2-1, to=3-1]
    \arrow["\lrcorner"{anchor=center, pos=0.125}, draw=none, from=2-1, to=3-4]
    \arrow["{f_1}", from=2-2, to=2-4]
    \arrow["{\psi_1}", from=2-4, to=3-4]
    \arrow["{f_2}", from=3-1, to=3-4]
  \end{tikzcd}
  \]
\end{definition}

On the notational difference with \cite{gambino2013}:
when $\CC$ is LCC (locally cartesian closed) and $\RR$ is the class of all maps,
our 1-category $\Poly_{(\CC,\RR)}$ corresponds to the 1-category
of univariate polynomials $\Poly_\CC (1,1)$ in \cite{gambino2013}.
They write $\Poly_\CC$ for the bicategory of multivariate polynomials;
for objects $I$ and $J$,
$\Poly_\CC(I,J)$ consists of the 1-category of multivariate polynomials
indexed over $I$ and $J$.

\begin{remark}
  As this work is part of a formalisation effort
  \cite{hazratpour2024poly},
  we note a technical detail concerning the definition of morphisms
  between polynomial signatures.
  The object $f' \times_{B'} \phi_1$
  cannot be part of the data of a morphism,
  since this would get in the way of proving equality of morphisms,
  such as associativity of composition.
  One workaround is to require that a morphism $\phi : (f) \to (f')$
  provides {for any given pullback} $f \times_B \phi_1 \iso P$ a morphism
  $\phi^\#_P : P \to E$,
  such that for any two such pullbacks $P$ and $P'$,
  the triangle formed with the canonical isomorphism $i : P \to P'$ commutes.
  \[\phi^\#_P = \phi^\#_Q \circ i : P \to E\]
\end{remark}

\begin{definition}\label{def:vert-cart-ofs}
  We say that a morphism $\phi : (f : E \to B) \to (f' : E' \to B')$ in $\Poly_{(\CC,\RR)}$
  is \emph{vertical} when $\phi_1$ is an isomorphism;
  it is \emph{cartesian} when $\phi^\#$ is an isomorphism.
  We write $\vert$ and $\cart$ respectively for
  the classes of vertical and cartesian morphisms in $\Poly_{(\CC,\RR)}$.
\end{definition}
  A morphism
  $\phi : (f : E \to B) \to (f' : E' \to B')$
  can be factorised as a vertical morphism
  $(\id_B, \phi^\#) : (f : E \to B) \to (f \circ \phi^\# : Q \to B)$
  followed by a cartesian one
  $(\phi_1, \id_{Q}) : (f \circ \phi^\# : Q \to B) \to (f' : E' \to B')$,
  where $Q := f' \times_{B'} \phi_1$ is the pullback of $\phi_1$ along $f$.
  We note that $f \circ \phi^\#$ is $\RR$-exponentiable,
  since it is the pullback of $f'$.
  \[
  \begin{tikzcd}
    {Q} & E & B \\
    {E'} && {B'}
    \arrow["{\phi^\#}", from=1-1, to=1-2]
    \arrow[from=1-1, to=2-1]
    \arrow["f", from=1-2, to=1-3]
    \arrow["{\phi_1}", from=1-3, to=2-3]
    \arrow["{f'}"', from=2-1, to=2-3]
    \arrow["\lrcorner"{anchor=center, pos=0.125}, draw=none, from=1-1, to=2-3]
  \end{tikzcd}
  \quad = \quad
  \begin{tikzcd}
    {Q} & {Q} & E & B \\
    {Q} & {Q} && B \\
    {E'} &&& {B'}
    \arrow[equals, from=1-1, to=1-2]
    \arrow[equals, from=1-1, to=2-1]
    \arrow["\lrcorner"{anchor=center, pos=0.125}, draw=none, from=1-1, to=2-2]
    \arrow["{\phi^\#}", from=1-2, to=1-3]
    \arrow[equals, from=1-2, to=2-2]
    \arrow["\lrcorner"{anchor=center, pos=0.125}, draw=none, from=1-2, to=2-4]
    \arrow["f", from=1-3, to=1-4]
    \arrow[equals, from=1-4, to=2-4]
    \arrow[equals, from=2-1, to=2-2]
    \arrow[from=2-1, to=3-1]
    \arrow["\lrcorner"{anchor=center, pos=0.125}, draw=none, from=2-1, to=3-4]
    \arrow["{f \circ \phi^\#}"', from=2-2, to=2-4]
    \arrow["{\phi_1}", from=2-4, to=3-4]
    \arrow["{f'}"', from=3-1, to=3-4]
  \end{tikzcd}
  \]
\begin{proposition}
  The classes $(\vert,\cart)$
  form an orthogonal factorisation system on $\Poly_{(\CC,\RR)}$.
\end{proposition}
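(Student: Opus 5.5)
I will verify the three standard axioms of an orthogonal factorisation system: both classes contain the isomorphisms and are closed under composition, every morphism factors as a vertical map followed by a cartesian one, and every vertical map is left orthogonal to every cartesian map. Existence of factorisations is already provided in the discussion just before the statement, where $\phi$ is written as $(\id_B,\phi^\#)$ followed by $(\phi_1,\id_Q)$ with $Q = f'\times_{B'}\phi_1$. So the work consists of checking closure properties and the unique lifting property.

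First I will characterise the isomorphisms of $\Poly_{(\CC,\RR)}$. If $\phi_1$ and $\phi^\#$ are both isomorphisms, I obtain an inverse with first component $\phi_1^{-1}$ and second component built from $(\phi^\#)^{-1}$ together with the canonical identification of pullbacks. Conversely, from the composition formula, the first component of a composite is $\psi_1\circ\phi_1$, and its second component is $\phi^\#\circ\phi_1^*\psi^\#$. So if $\psi\circ\phi=\id$ and $\phi\circ\psi=\id$, then $\phi_1$ is invertible, and $\phi^\#$ has both a left and a right inverse up to the canonical pullback isomorphisms.

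Next I will check closure under composition using the same formula. Vertical maps compose to vertical maps because isomorphisms compose. For cartesian maps, the second component $\phi^\#\circ\phi_1^*\psi^\#$ is a composite of isomorphisms, since pullback preserves isomorphisms. Isomorphisms lie in both classes. It then suffices to show orthogonality, since "isos in both classes, closure under composition, factorisations and orthogonality" characterises an OFS. Equivalently, I could show that factorisations are unique up to unique isomorphism.

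For orthogonality I take a square in which $\alpha:(f_0)\to(f_1)$ is vertical, $\beta:(f_2)\to(f_3)$ is cartesian, and $u:(f_0)\to(f_2)$, $v:(f_1)\to(f_3)$ satisfy $\beta u=v\alpha$. On base components the diagonal is forced: $d_1 := u_1\circ\alpha_1^{-1}$. The identity $\beta_1 d_1 = v_1$ follows from the base component of the square equation. On the fibre components, cartesianness of $\beta$ identifies $E_2$ with the pullback $f_3\times_{B_3}\beta_1$. The datum $d^\#$ is a map $f_2\times_{B_2} d_1 \to E_1$ over $B_1$. Using $E_2 \iso f_3\times_{B_3}\beta_1$ and pullback pasting, this domain is $f_3\times_{B_3} v_1$, so I define $d^\# := v^\#$ transported along this isomorphism. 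I then verify $d\alpha=u$ and $\beta d=v$ by unwinding the composition formula: the equation $\beta d=v$ holds essentially by construction, and $d\alpha=u$ follows from the fibre component of the original square. Uniqueness holds because $d_1$ is forced by invertibility of $\alpha_1$, and $d^\#$ is forced by the equation $\beta d = v$ together with the invertibility of $\beta^\#$.

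The main obstacle is the bookkeeping with non-canonical pullbacks. Every equation holds only modulo the canonical comparison isomorphisms, as in the remark on formalisation. I will handle this by consistently identifying each pullback with the iterated pullback via the pasting lemma, and by checking the fibre equations after precomposing with these comparison isomorphisms. Note that no exponentiability is needed beyond knowing that pullbacks of $f'$ stay in $\Poly$, which the paper already noted.
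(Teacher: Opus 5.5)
Your proposal is correct and is essentially the paper's approach. The paper exhibits the same factorisation $\phi = (\phi_1,\id_Q)\circ(\id_B,\phi^\#)$ and defers the rest to the cited $\Set$ case as ``not significantly different''; your direct orthogonality check supplies exactly that deferred verification, with $d_1 = u_1\circ\alpha_1^{-1}$ forced by verticality of $\alpha$, $d^\#$ forced by invertibility of $\beta^\#$, and $d\alpha = u$ recovered from the fibre component of the square via $\alpha_1^* d_1^*\beta^\# \iso u_1^*\beta^\#$.
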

A detailed account of the factorisation system when $\CC = \Set$ and $\RR$ consists of all maps 
can be found in \cite[Section 5.5]{niu2025};
the construction here is not significantly different.

\section{Polynomial functors}\label{sec:polynomial-functor}

\begin{definition}\label{def:polynomial-functor-1}
  Let $f : E \to B$ be a polynomial signature in $(\CC,\RR)$.
  The polynomial functor $P_f : \RR(1) \to \RR(1)$
  associated to $f$ is given by the
  composition of functors $P_f = B_! \circ f_* \circ E^*$.
  \[\begin{tikzcd}
     & {\RR(1)} & {\RR(E)} & {\RR(B)} & {\RR(1)} & 
    \arrow["{E^*}", from=1-2, to=1-3]
    \arrow["{f_*}", from=1-3, to=1-4]
    \arrow["{B_!}", from=1-4, to=1-5]
  \end{tikzcd}\]
\end{definition}

Unlike in \cite{hua2026},
we do not require that all objects are $\RR$-objects,
and we do not require that all $\RR$-maps
are $\RR$-exponentiable.
This general setting permits us to apply this theory
to the examples considered in \cite{barton2026},
such as when $\CC = \Top$ and $\RR$ is the class of open embeddings,
or when $\CC = \Cat$ and $\RR$ is the class of Grothendieck opfibrations.

Of course, we could also define polynomial functors for
multivariate polynomial signatures.
\[\begin{tikzcd}
	I & E & B & J
	\arrow["i"', from=1-2, to=1-1]
	\arrow["f", from=1-2, to=1-3]
	\arrow["j", from=1-3, to=1-4]
\end{tikzcd}\]
\[\begin{tikzcd}
   & {\RR(J)} & {\RR(E)} & {\RR(B)} & {\RR(I)} & 
  \arrow["{j^*}", from=1-2, to=1-3]
  \arrow["{f_*}", from=1-3, to=1-4]
  \arrow["{i_!}", from=1-4, to=1-5]
\end{tikzcd}\]

\begin{remark}
  Our setting is fairly minimal for a well-behaved theory of polynomials.
  The definition of a polynomial functor already uses two of
  the three preclan axioms -- pullback stability for $E^*$ and
  closure under composition $B_!$.
  The theory of exponentiability
  (which uses the remaining preclan axiom)
  is required for $f_*$.

  Of course,
  one can also wonder if all the conditions
  for exponentiability are necessary.
  Indeed,
  we will see that to define polynomial composition,
  we would like $f$ to belong to a pullback stable class of maps $\HH$.
  We will also see that we need $f_*$ to be a partial right adjoint
  for the universal property of polynomial functors,
  and this fact should also hold for its pullbacks.
  By \Cref{lem:stable-class-of-exponentiable-maps}
  this means that $\HH \subseteq \Exp_\RR$.

  For an even more general approach,
  which may be useful for working with examples considered in \cite{anel2024}
  (in particular the example of (op)fibrations in $\Cat$ used in \Cref{sec:cat-as-types}),
  it appears that some of this theory could be recovered
  when $\RR$ is a comprehension category over $\CC$ that is not full
  (fullness is implicit in our assumptions).
  We leave this for future work.
\end{remark}

\begin{lemma}\label{lem:polynomial-functor-preserves-maps}
  Let $f : E \to B$ be a signature in $(\CC,\RR)$.
  Suppose $(\CC,\SS)$ is another preclan such that $\SS \subseteq \RR$
  and $f$ is also $\SS$-exponentiable.
  Then the polynomial functor
  \[ P_f : (\RR(1), \SS) \to (\RR(1), \SS)\]
  is a preclan morphism with respect to $(\RR(1),\SS)$.
\end{lemma}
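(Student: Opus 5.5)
Since $P_f = B_! \circ f_* \circ E^*$ is a composite of three functors, and preclan morphisms compose (they form a 2-category), I will show that each factor is a preclan morphism with respect to the relevant $\SS$-structures:
\[
  (\RR(1),\SS) \xrightarrow{E^*} (\RR(E),\SS) \xrightarrow{f_*} (\RR(B),\SS) \xrightarrow{B_!} (\RR(1),\SS).
\]
Here $(\RR(Z),\SS)$ denotes $\RR(Z)$ with the class of morphisms whose underlying map lies in $\SS$. This is a preclan: a pullback of an $\SS$-map between $\RR$-objects over $Z$ is computed in $\CC$, exists because $\SS$ is pullback-stable, and its domain is again over $Z$ through an $\RR$-map, since $\SS \subseteq \RR$ and $\RR$ is closed under composition.

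For the middle factor, I will apply \Cref{lem:pushforward-preserves-maps} directly. Its hypotheses are that $f$ is both $\RR$- and $\SS$-exponentiable with $\SS \subseteq \RR$, which is exactly what we are given, so $f_* : (\RR(E),\SS) \to (\RR(B),\SS)$ is a preclan morphism.

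For $E^*$, let $e : E \to 1$ denote the composite $E \xrightarrow{f} B \to 1$. Pullback along $e$ sends $\SS$-maps to $\SS$-maps by pullback stability, and it preserves pullbacks of $\SS$-maps by the pasting lemma for pullbacks. One subtlety is that $E \to 1$ need not be an $\RR$-map, but this causes no trouble: pullbacks of $\RR$-maps along arbitrary maps exist, so $E^* : \RR(1) \to \RR(E)$ is well-defined.

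For $B_!$, the map $B \to 1$ is an $\RR$-map, so $B_!$ lands in $\RR(1)$ by closure under composition. It preserves $\SS$-maps trivially, because the underlying morphism is unchanged. For pullbacks, a pullback in $\RR(B)$ of an $\SS$-map along any map is computed as a pullback in $\CC$, and the same square, now viewed over $1$, is still a pullback in $\RR(1)$. This is because the forgetful functors $\RR(B) \to \CC$ and $\RR(1) \to \CC$ both create these pullbacks.

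The only step with real content is the middle one, which has already been done in \Cref{lem:pushforward-preserves-maps}. What remains is routine bookkeeping: checking that pullbacks in the slice preclans $(\RR(Z),\SS)$ are computed in $\CC$, so that preservation by $E^*$ and $B_!$ reduces to pasting of pullback squares.
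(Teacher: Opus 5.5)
Your proposal is correct and follows the paper's proof: you decompose $P_f = B_! \circ f_* \circ E^*$ into preclan morphisms, invoke \Cref{lem:pushforward-preserves-maps} for the middle factor, and check $E^*$ and $B_!$ directly. For those two outer factors, both your argument and the paper's rest on the fact that pullbacks of $\SS$-maps in $\RR(Z)$ are computed in $\CC$. Your extra bookkeeping (that $(\RR(Z),\SS)$ is a preclan, and the pasting argument for $E^*$) fills in details the paper calls clear.
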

\begin{proof}
  The polynomial functor is a composition of preclan morphisms.
  \[\begin{tikzcd}
    {(\RR(1),\SS)} & {(\RR(E),\SS)} & {(\RR(B),\SS)} & {(\RR(1),\SS)}
    \arrow["{{{E^*}}}", from=1-1, to=1-2]
    \arrow["{{{f_*}}}", from=1-2, to=1-3]
    \arrow["{{{B_!}}}", from=1-3, to=1-4]
  \end{tikzcd}\]
  That $E^* : (\RR(1),\SS) \to (\RR(E),\SS)$ is a preclan morphism is clear.
  We have already shown that $f_* : (\RR(E),\SS) \to (\RR(B),\SS)$
  is a preclan morphism in \Cref{lem:pushforward-preserves-maps}.
  The functor $B_! : \RR(B) \to \RR(1)$ clearly preserves $\SS$-maps;
  $B_!$ preserves pullbacks of $\SS$-maps since
  the inclusions $\RR(X) \subseteq \CC / X$ reflect pullbacks,
  and the map $B_! : \CC / B \to \CC / 1$ preserves pullbacks.
\end{proof}

\begin{proposition}\label{prop:polynomial-functor-preclan-morphism}
  Let $f : E \to B$ be a signature in $(\CC,\RR)$.
  Then the polynomial functor $P_f$ is a preclan morphism.
  \[ P_f : (\RR(1), \RR) \to (\RR(1), \RR)\]
  Moreover, it is a restriction of the usual polynomial functor in presheaves,
  which is also a preclan morphism.
  \[ \begin{tikzcd}
      {(\RR(1), \RR)} & {(\RR(1), \RR)} \\
      {(\wCC, \wRR)} & {(\wCC, {\wRR})}
      \arrow["{P_f}", from=1-1, to=1-2]
      \arrow["y"', hook, from=1-1, to=2-1]
      \arrow["y", hook, from=1-2, to=2-2]
      \arrow["{P_f}"', from=2-1, to=2-2]
    \end{tikzcd} \]
\end{proposition}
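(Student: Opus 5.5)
The first claim is the special case $\SS = \RR$ of \Cref{lem:polynomial-functor-preserves-maps}. By definition of a polynomial signature, $f$ is $\RR$-exponentiable, and trivially $\RR \subseteq \RR$. The lemma therefore gives that $P_f = B_! \circ f_* \circ E^* : (\RR(1),\RR) \to (\RR(1),\RR)$ is a preclan morphism. Unfolded, this says three things: $E^*$ preserves $\RR$-maps and pullbacks (pullback stability in $\RR$); $f_*$ is a preclan morphism by \Cref{lem:pushforward-preserves-maps}; and $B_!$ preserves $\RR$-maps by closure under composition and preserves pullbacks because the inclusions $\RR(X) \subseteq \CC/X$ reflect pullbacks. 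No new work is needed here.

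For the second claim, I would define the presheaf polynomial functor $P_f := B_! \circ \Pi_f \circ E^* : \wCC \to \wCC$, using the locally cartesian closed structure of $\wCC$ and the Yoneda images of $E$ and $B$. I then show the square with $y$ commutes up to natural isomorphism, one factor at a time.
\begin{enumerate}
\item The Yoneda embedding preserves all existing limits, so pullback along $E \to 1$ in $\CC$ (which exists for objects of $\RR(1)$) is sent to pullback in $\wCC$. Hence $y \circ E^* \iso E^* \circ y$ on $\RR(1)$.
\item By \Cref{prop:pra-defs}, the partial right adjoint $f_* : \RR(E) \to \RR(B)$ is exactly the restriction of $\Pi_f : \wCC/E \to \wCC/B$ along the embeddings $\RR(-) \subseteq \wCC/(-)$. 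This uses that the partial right adjoint condition holds at $f$, which is part of $\RR$-exponentiability.
\item Composition commutes with $y$, so $y \circ B_! = B_! \circ y$.
\end{enumerate}
Pasting these three squares gives $y \circ P_f \iso P_f \circ y$.

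It remains to show that the presheaf $P_f : (\wCC,\wRR) \to (\wCC,\wRR)$ is itself a preclan morphism, again factor by factor.
\begin{enumerate}
\item $E^* : (\wCC,\wRR) \to (\wCC/E,\wRR)$ preserves $\wRR$-maps because $\wRR$ is pullback stable, and it preserves pullbacks because it is a right adjoint.
\item $\Pi_f : (\wCC/E,\wRR) \to (\wCC/B,\wRR)$ is a preclan morphism by condition (2) of \Cref{theorem:exponentiable}, since $f$ is $\RR$-exponentiable.
\item $B_! : \wCC/B \to \wCC$ preserves $\wRR$-maps, since a map over $B$ is in $\wRR$ exactly when its underlying map is. It preserves pullbacks because pullbacks in a slice are computed in the base.
\end{enumerate}
A composite of preclan morphisms is a preclan morphism, which finishes the argument.

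The main obstacle is bookkeeping rather than substance: checking that the isomorphism in step 2 of the commuting square is natural, and that the two notions of ``preserving pullbacks of $\RR$-maps'' agree under the embeddings. For both I would rely on the fact that $\RR(X) \simeq \wRR(X)$ and that the embeddings $\RR(X) \hookrightarrow \wCC/X$ are preclan embeddings.
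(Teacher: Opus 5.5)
Your proposal is correct and follows the paper's route. The paper's proof is simply the citation of \Cref{lem:polynomial-functor-preserves-maps} with $\SS = \RR$, exactly as in your first paragraph. Your factor-by-factor argument for the presheaf part (identifying $f_*$ with the restriction of $\Pi_f$ via \Cref{prop:pra-defs}, and using condition (2) of \Cref{theorem:exponentiable} for $\Pi_f$) fills in the ``moreover'' claim, which the paper leaves implicit.
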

\begin{proof}
  This follows from \Cref{lem:polynomial-functor-preserves-maps}.
  % The polynomial functors are compositions of preclan morphisms.
  % \[\begin{tikzcd}
  %   {(\RR(1),\RR)} & {(\RR(E),\RR)} & {(\RR(B),\RR)} & {(\RR(1),\RR)} \\
  %   (\wCC,\wRR) & {(\wCC / E,\wRR)} & {(\wCC / B,\wRR)} & (\wCC,\wRR)
  %   \arrow["{{E^*}}", from=1-1, to=1-2]
  %   \arrow[hook, from=1-1, to=2-1]
  %   \arrow["{{f_*}}", from=1-2, to=1-3]
  %   \arrow[hook, from=1-2, to=2-2]
  %   \arrow["{{B_!}}", from=1-3, to=1-4]
  %   \arrow[hook, from=1-3, to=2-3]
  %   \arrow[hook, from=1-4, to=2-4]
  %   \arrow["{{E^*}}"', from=2-1, to=2-2]
  %   \arrow["{{\Pi_f}}"', from=2-2, to=2-3]
  %   \arrow["{{\Sigma_B}}"', from=2-3, to=2-4]
  % \end{tikzcd}\]
  % That the two $E^*$ are preclan morphisms is clear.
  % It follows from the equivalent conditions of \Cref{theorem:exponentiable}
  % that $f_*$ and $\Pi_f$ are preclan morphisms.
  % The map $B_!$ clearly preserves $\RR$-maps.
  % That $B_!$ preserves pullbacks of $\RR$-maps follows from the
  % observation that the pullback of an $\RR$-map
  % $f : A \to B$ along an arbitrary map $b : B' \to B$ in $\RR(X)$
  % is the same thing as the pullback of
  % $f : X_! A \to X_! B$ along $b : X_! B' \to X_! B$ in $\CC$.
  % It is straightforward to show that $\Sigma_B$ is a preclan morphism.
\end{proof}

The polynomial functor associated to a signature $f : E \to B$
is characterised by its universal property:
\[ P_f X = \sum_{b : B} X^{E_b}\]

More precisely, we have the following proposition.

\begin{proposition}[Universal property of polynomial functors]\label{prop:poly-up}
  Given a signature $f : E \to B$,
  there is a bijection
  \[ \CC(\Gamma, P_f X) \iso
    \coprod_{t_1 \in \CC(\Gamma,B)}\CC(t_1 \times_B f, X)\]
  that is natural in both $\Gamma \in \CC^\op$ and $X \in \RR(1)$.
  For a given $t : \Gamma \to P_f X$,
  an element of the dependent sum can be visualised as
\[\begin{tikzcd}
	X & {t_1 \times_B f} & E \\
	& \Gamma & B
	\arrow["t_2"', from=1-2, to=1-1]
	\arrow[from=1-2, to=1-3]
	\arrow[from=1-2, to=2-2]
	\arrow["\lrcorner"{anchor=center, pos=0.125}, draw=none, from=1-2, to=2-3]
	\arrow["f", from=1-3, to=2-3]
	\arrow["t_1"', from=2-2, to=2-3]
\end{tikzcd}\]
  Of particular interest is the natural bijection applied to
  the identity $\id \in \CC(P_f X, P_f X)$,
  which results in a {universal} pair
  $\fstProj = (\id)_1 : P_f X \to B$
  and $\sndProj = (\id)_2 : \fstProj \times_B f \to X$,
  through which all others factor
\[\begin{tikzcd}[column sep = large]
	& X \\
	{t_1 \times_B f} & {\fstProj \times_B f} & E \\
	\Gamma & {P_f X} & B
	\arrow["{t_2}", from=2-1, to=1-2]
  \arrow["{t \times_B f}"', from=2-1, to=2-2]
	\arrow[from=2-1, to=3-1]
	\arrow["\lrcorner"{anchor=center, pos=0.125}, draw=none, from=2-1, to=3-2]
	\arrow["{\sndProj}"', from=2-2, to=1-2]
	\arrow[from=2-2, to=2-3]
	\arrow[from=2-2, to=3-2]
	\arrow["\lrcorner"{anchor=center, pos=0.125}, draw=none, from=2-2, to=3-3]
	\arrow["f", from=2-3, to=3-3]
	\arrow["t"', from=3-1, to=3-2]
	\arrow["{t_1}"', bend right, from=3-1, to=3-3]
	\arrow["{\fstProj}"', from=3-2, to=3-3]
\end{tikzcd}\]
\end{proposition}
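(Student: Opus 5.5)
The plan is to unwind the composite $P_f = B_! \circ f_* \circ E^*$ one factor at a time. Each factor comes with an adjunction-style hom-set bijection, and composing these gives the claimed bijection.

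We start with a map $t : \Gamma \to P_f X$, where $P_f X = B_!(f_*(E^* X))$ is an $\RR$-object over $1$. Since $B_!$ is just composition with $B \to 1$, we have a natural bijection
\[
\CC(\Gamma, B_! W) \iso \coprod_{t_1 \in \CC(\Gamma,B)} \CC/B(t_1, W)
\]
for any $W \in \RR(B)$: send $t$ to the pair consisting of $t_1 := p_W \circ t$ and the map $t$ itself, viewed as a map over $B$. Naturality in $\Gamma$ is immediate.

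Next we take $W = f_*(E^* X)$ and apply the partial right adjoint condition at $f$, which holds by \Cref{prop:beck-chevalley-defs} because $f$ is $\RR$-exponentiable. In the form of \Cref{prop:pra-defs}, this gives
\[
\CC/B(t_1, f_* E^* X) \iso \CC/E(f^* t_1, E^* X),
\]
natural in $t_1 \in \CC/B$. Here $t_1$ need not be an $\RR$-map, and this is exactly why the \emph{partial} adjunction is needed: it is natural in arbitrary objects of $\CC/B$ (via Yoneda into $\wCC/B$), not just in $\RR(B)$. We also need the pullback $f^* t_1 = t_1 \times_B f$ to exist in $\CC$. It does: $f$ is $\RR$-exponentiable, hence pullback-stable by condition (1), and the Beck--Chevalley condition at pullbacks of $f$ presupposes that those pullbacks exist. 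Alternatively, we can argue directly that the pullback of $f$ along $t_1$ exists because $\Exp_\RR$ is a preclan by \Cref{prop:exp-clan}.

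Finally, maps over $E$ into $E^* X = E \times X$ are in bijection with maps into $X$:
\[
\CC/E(t_1 \times_B f, E \times X) \iso \CC(t_1 \times_B f, X).
\]
This is just the universal property of the product, which exists because $X \to 1$ is an $\RR$-map and so can be pulled back along $E \to 1$.

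Composing the three bijections gives the stated bijection. Naturality in $X \in \RR(1)$ follows because each of the three steps is natural in its target. Naturality in $\Gamma$ follows because precomposition with $\sigma : \Gamma' \to \Gamma$ sends $t_1$ to $t_1\sigma$, and the pullback $t_1\sigma \times_B f$ maps canonically to $t_1 \times_B f$; the partial adjunction bijection is natural with respect to this map in $\CC/B$. The universal pair $(\fstProj, \sndProj)$ is obtained by applying the bijection to $\id$, and the factorisation of an arbitrary $t$ through it is the Yoneda-style consequence of naturality in $\Gamma$.

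The main obstacle is bookkeeping rather than substance. The care needed is in checking that the second step is legitimately natural in non-$\RR$ objects $t_1$, which requires the presheaf formulation from \Cref{prop:pra-defs} rather than the restricted adjunction $f^* \dashv f_*$ on $\RR(-)$.
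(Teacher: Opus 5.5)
Your proof is correct and follows the standard argument the paper relies on (the paper itself only cites \cite[Proposition 3.6]{hua2026}): write $P_f = B_! \circ f_* \circ E^*$ and compose the three hom-set bijections, using the presheaf form of the partial right adjoint from \Cref{prop:pra-defs} so that $t_1$ need not be an $\RR$-map. One small correction concerns your first justification for the existence of $t_1 \times_B f$: condition (1) of \Cref{theorem:exponentiable} is explicitly hedged by ``whenever such pullbacks exist'', so it does not give that existence; existence is presupposed by the statement itself (its right-hand side is a hom-set of $\CC$), in line with what the paper asserts in \Cref{prop:exp-clan}.
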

\begin{proof}
  See \cite[Proposition 3.6]{hua2026}.
\end{proof}

\begin{definition}\label{def:pcomp}
  If $f : E \to B$ and $f' : E' \to B'$ are both signatures,
  then we can form the \emph{polynomial composition}
  $f \pcomp f' : \sndProj \times_{B'} f' \to P_f B'$ as follows
  \[ \begin{tikzcd}
    {E'} & {\sndProj \times_{B'} f'} & \\
    {B'} & {\fstProj \times_B f} & E \\
    & {P_fB'} & B
    \arrow["{{f'}}"', from=1-1, to=2-1]
    \arrow[from=1-2, to=1-1]
    \arrow["\lrcorner"{anchor=center, pos=0.125, rotate=-90}, draw=none, from=1-2, to=2-1]
    \arrow[from=1-2, to=2-2]
    \arrow["{f \pcomp f'}"{description, pos=0.2}, shift left=10, bend left = 50, from=1-2, to=3-2]
    \arrow["{{\sndProj}}", from=2-2, to=2-1]
    \arrow[from=2-2, to=2-3]
    \arrow[from=2-2, to=3-2]
    \arrow["\lrcorner"{anchor=center, pos=0.125}, draw=none, from=2-2, to=3-3]
    \arrow["f", from=2-3, to=3-3]
    \arrow["{{\fstProj}}"', from=3-2, to=3-3]
  \end{tikzcd}\]
  The map $f \pcomp f'$ is a polynomial signature
  since $P_f B'$ is an $\RR$-object
  and $\Exp_\RR$ is stable under pullback and closed under composition
  (\Cref{prop:exp-clan}).
\end{definition}

\begin{lemma}
  The polynomial composition ${f \pcomp f'}$
  is characterised by the following natural isomorphism.
  \[ P_{f \pcomp f'} \iso P_{f} \circ P_{f'} : \RR(1) \to \RR(1) \]
\end{lemma}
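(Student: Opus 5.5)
I will prove the isomorphism by Yoneda, using the universal property of polynomial functors (\Cref{prop:poly-up}) on both sides. Both $P_{f \pcomp f'} X$ and $P_f(P_{f'} X)$ are $\RR$-objects, hence objects of $\CC$. It therefore suffices to give a bijection
\[ \CC(\Gamma, P_{f\pcomp f'} X) \iso \CC(\Gamma, P_f P_{f'} X) \]
natural in $\Gamma \in \CC^\op$ and $X \in \RR(1)$.

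\textbf{Unfolding the right-hand side.} By \Cref{prop:poly-up} for $f$, a map $\Gamma \to P_f(P_{f'}X)$ is a pair $(t_1 : \Gamma \to B,\ t_2 : t_1\times_B f \to P_{f'}X)$. Applying \Cref{prop:poly-up} again for $f'$, with $t_1 \times_B f$ as the new context, $t_2$ corresponds to a pair $(u_1 : t_1\times_B f \to B',\ u_2 : u_1 \times_{B'} f' \to X)$. So the right side is naturally the set of triples $(t_1, u_1, u_2)$.

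\textbf{Unfolding the left-hand side.} A map $\Gamma \to P_{f\pcomp f'}X$ is a pair $(s_1 : \Gamma \to P_f B',\ s_2 : s_1 \times_{P_fB'} (f\pcomp f') \to X)$. Applying \Cref{prop:poly-up} to $s_1$ with $X = B'$, which is legitimate since $B' \in \RR(1)$, we get that $s_1$ is a pair $(t_1 : \Gamma \to B,\ u_1 : t_1\times_B f \to B')$. By the universal factorisation through $(\fstProj,\sndProj)$, $t_1 = \fstProj\circ s_1$, $u_1 = \sndProj \circ (s_1\times_B f)$, and $t_1 \times_B f \iso s_1 \times_{P_fB'} (\fstProj\times_B f)$.

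\textbf{Matching the domains.} The key step is a pasting of pullbacks. The domain of $f\pcomp f'$ is $\sndProj\times_{B'} f'$, which lies over $P_fB'$ via $\fstProj\times_B f$. Pulling it back along $s_1$ and applying the two-pullbacks lemma twice gives
\[ s_1 \times_{P_fB'} (f \pcomp f') \iso (t_1\times_B f)\times_{\fstProj\times_B f}(\sndProj\times_{B'} f') \iso u_1 \times_{B'} f'. \]
Hence $s_2$ corresponds exactly to $u_2$, and $(s_1,s_2) \leftrightarrow (t_1,u_1,u_2)$ is a bijection. All pullbacks used exist, because they are pullbacks of $f$ or $f'$, which are $\RR$-exponentiable and so have all pullbacks by \Cref{prop:exp-clan}.

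\textbf{Naturality, the main obstacle.} Naturality in $\Gamma$ follows from naturality in \Cref{prop:poly-up} together with the canonicity of the pullback-pasting isomorphisms. The part I expect to be fiddly is naturality in $X$ and coherence of the chosen pullbacks. For naturality in $X$, a map $X \to X'$ acts by postcomposition on $s_2$ and on $u_2$ alike, so the naturality reduces to the naturality statement in \Cref{prop:poly-up}. Coherence of chosen pullbacks is only the bookkeeping of the canonical isomorphisms between different pullback choices noted in the earlier remark; I will handle it by checking that all comparison maps are the canonical ones induced by universal properties. The Yoneda lemma then yields $P_{f\pcomp f'} \iso P_f\circ P_{f'}$.
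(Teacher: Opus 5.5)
Your proof is correct, but it takes a different route from the paper's. The paper works at the level of functors, following \cite[Proposition 1.12]{gambino2013}. It expands $P_f \circ P_{f'} = B_! \, f_* \, E^* \, B'_! \, f'_* \, (E')^*$ and rewrites this composite into the polynomial functor of $f \pcomp f'$. The rewriting uses Beck--Chevalley for composition (\Cref{prop:beck-chevalley-left}), the distributivity law (\Cref{lem:distributivity}), Beck--Chevalley for pushforward (\Cref{def:beck-chevalley}), and pseudofunctoriality.

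You instead compare the two sides on generalised elements $\Gamma \to (-)$, using only \Cref{prop:poly-up} (three times), pasting of pullbacks, and Yoneda. The distributivity law never appears explicitly in your argument. It is already packaged in the description of a map $\Gamma \to P_f B'$ as a pair $(t_1,u_1)$, which is exactly the type-theoretic axiom of choice. Your proof draws on the partial-right-adjoint form of exponentiability (\Cref{prop:pra-defs}) that underlies \Cref{prop:poly-up}. The paper uses the equivalent Beck--Chevalley form (\Cref{prop:beck-chevalley-defs}).

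What your route buys is that it is more elementary and closer to the type-theoretic reading, and to how \Cref{prop:poly-up} is used in the formalisation. What the paper's route buys is an isomorphism given as an explicit pasting of canonical 2-cells. That is what one needs for coherence questions, such as associativity and units for $\pcomp$, or compatibility with the transformations $P_\psi$ of \Cref{def:cart-trans,def:vert-trans}. It also carries over verbatim to multivariate polynomials. In your approach that bookkeeping is deferred to the naturality checks you flag at the end, and a multivariate version would need a sliced form of \Cref{prop:poly-up}.
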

\begin{proof}
  The proof of \cite[Proposition 1.12]{gambino2013}
  works mutatis mutandis in this setting,
  where slices $\CC / X$ are replaced with $\RR(X)$.
  Indeed, we have Beck--Chevalley isomorphisms for pushforward
  (\Cref{def:beck-chevalley}),
  Beck--Chevalley isomorphisms for composition
  (\Cref{prop:beck-chevalley-left}),
  the distributivity law (\Cref{lem:distributivity}),
  and pseudofunctoriality of pullback and its adjoints. 
\end{proof}

\begin{definition}\label{def:cart-trans}
  For any cartesian morphism of signatures
  $\psi : (f : E \to B) \to (f' : E' \to B')$
  we define an associated natural transformation of polynomial functors
  $P_\psi : P_f \to P_{f'}$.

  Let us rename $\psi_1 : B \to B'$ as $\delta$.
  Since $\psi$ is cartesian, we have a pullback square
  \[ \begin{tikzcd}
    E & B \\
    {E'} & {B'}
    \arrow["f", from=1-1, to=1-2]
    \arrow["\phi"', from=1-1, to=2-1]
    \arrow["\lrcorner"{anchor=center, pos=0.125}, draw=none, from=1-1, to=2-2]
    \arrow["\delta", from=1-2, to=2-2]
    \arrow["{{f'}}"', from=2-1, to=2-2]
  \end{tikzcd}\]
  We define $P_\psi : P_f \to P_{f'}$ by whiskering the following natural transformations.
  \[\begin{tikzcd}
    {\RR(1)} & {\RR(E)} & {\RR(B)} & {\RR(1)} \\
    {\RR(1)} & {\RR(E')} & {\RR(B')} & {\RR(1)}
    \arrow["{{{E^*}}}", from=1-1, to=1-2]
    \arrow[equals, from=1-1, to=2-1]
    \arrow["\iso"{marking, allow upside down}, draw=none, from=1-1, to=2-2]
    \arrow["{{{f_*}}}", from=1-2, to=1-3]
    \arrow["\iso"{marking, allow upside down}, draw=none, from=1-2, to=2-3]
    \arrow["{{{B_!}}}", from=1-3, to=1-4]
    \arrow["\Rightarrow"{marking, allow upside down}, draw=none, from=1-3, to=2-4]
    \arrow["{{{E'^*}}}"', from=2-1, to=2-2]
    \arrow["{{{\phi^*}}}", from=2-2, to=1-2]
    \arrow["{{{f'_*}}}"', from=2-2, to=2-3]
    \arrow["{{{\delta^*}}}"', from=2-3, to=1-3]
    \arrow["{{{B'_!}}}"', from=2-3, to=2-4]
    \arrow["{{\id^*}}"', equals, from=2-4, to=1-4]
  \end{tikzcd}\]
  The first isomorphism is given by pseudofunctoriality,
  the second by Beck--Chevalley for pushforward along 
  the $\RR$-exponentiable map $f$.
  The final (non-invertible)
  natural transformation is the Beck--Chevalley
  transformation for composition applied to the commutative square
  \[ \begin{tikzcd}
    B & 1 \\
    {B'} & 1
    \arrow["{{{B}}}", from=1-1, to=1-2]
    \arrow["{{{\delta}}}", from=1-1, to=2-1]
    \arrow["{{{B'}}}"', from=2-1, to=2-2]
    \arrow[equals, from=2-2, to=1-2]
  \end{tikzcd} \]
\end{definition}

Recall that a natural transformation is cartesian
when each naturality square of the transformation is
a pullback square.
\begin{proposition}
  For any cartesian morphism of signatures
  $\psi : (f : E \to B) \to (f' : E' \to B')$
  the associated natural transformation $P_\psi : P_f \to P_{f'}$
  is cartesian.
\end{proposition}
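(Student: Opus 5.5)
We need to show that for each $X \in \RR(1)$ the naturality square of $P_\psi$ at a morphism $u : X \to X'$ in $\RR(1)$ is a pullback. The plan is to check this componentwise on the three whiskered pieces in \Cref{def:cart-trans}. The first two 2-cells are natural isomorphisms, and whiskering preserves cartesianness, so cartesianness of $P_\psi$ reduces to showing that the last 2-cell, the Beck--Chevalley transformation $B_! \delta^* \Rightarrow B'_!$, is cartesian. Concretely, for $A \in \RR(B')$ its component is the canonical map $B \times_{B'} A \to A$ in $\CC$, the projection out of the pullback of $A \to B'$ along $\delta$. Naturality squares of whiskered transformations $\alpha G$ are naturality squares of $\alpha$ at morphisms $G(u)$. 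Vertical composition with natural isomorphisms preserves pullback squares, since the squares differ by isomorphisms. So it suffices to prove that $B_!\delta^* \Rightarrow B'_!$ is cartesian on the image of $f'_* E'^*$, and in fact on all of $\RR(B')$.

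For a morphism $h : A \to A'$ in $\RR(B')$, with structure maps $p' \circ h = p$, the naturality square has top edge $B\times_{B'} A \to B \times_{B'} A'$, bottom edge $h : A \to A'$, and projections as vertical legs. This is a pullback by the two pullbacks lemma: $B\times_{B'}A$ is the pullback of $A \to B'$ along $\delta$, which factors as a pullback of $A\to A'$ along $B\times_{B'}A' \to A'$ stacked on the pullback of $A'\to B'$ along $\delta$. All these pullbacks exist in $\CC$ because $A \to B'$ and $A' \to B'$ are $\RR$-maps, so the two pullbacks lemma applies in $\CC$ directly. The image of the square in $\RR(1)$ is still a pullback, because the inclusion $\RR(1) \subseteq \CC/1 \cong \CC$ reflects pullbacks, as noted in the remark after the definition of preclans.

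The main obstacle is bookkeeping. I need to verify that the isomorphisms in \Cref{def:cart-trans}, namely pseudofunctoriality $\phi^* E'^* \cong E^*$ and the pushforward Beck--Chevalley isomorphism $\delta^* f'_* \cong f_* \phi^*$, are natural isomorphisms. Then the pasted transformation $P_f \Rightarrow P_{f'}$ has each naturality square isomorphic, as a square, to one of the squares above. The Beck--Chevalley isomorphism is available because $f'$ is $\RR$-exponentiable and $f$ is its pullback along $\delta$ (\Cref{def:beck-chevalley}). Once the naturality of these 2-cells is secured, the argument above concludes.
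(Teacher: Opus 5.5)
Your proposal is correct and follows the paper's proof. You reduce, by whiskering and composing with the two invertible 2-cells, to cartesianness of the Beck--Chevalley transformation for composition $B_!\delta^* \Rightarrow B'_!$ (correctly whiskered by $f'_* E'^*$), whose naturality squares are pullbacks in $\CC$ and hence in $\RR(1)$ because the full inclusion reflects limits; your two-pullbacks-lemma check, which only needs the pullbacks of the $\RR$-maps $A \to B'$ and $A' \to B'$ along $\delta$, just makes explicit what the paper attributes to the definition of that transformation.
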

\begin{proof}
  Unfolding the whiskering diagram defining $P_\psi$,
  we see that $P_\psi$ is a composition of three transformations,
  of which two are invertible.
  Let us call the non-invertible transformation $\alpha$.
  \[ \begin{tikzcd}
    B & 1 && {\RR(B)} & {\RR(1)} & \CC \\
    {B'} & 1 && {\RR(B')} & {\RR(1)}
    \arrow["{{{{B}}}}", from=1-1, to=1-2]
    \arrow["{{{{\delta}}}}", from=1-1, to=2-1]
    \arrow["{B_!}", from=1-4, to=1-5]
    \arrow["\alpha"{description}, Rightarrow, from=1-4, to=2-5]
    \arrow["\subseteq"{description}, draw=none, from=1-5, to=1-6]
    \arrow["{{{{B'}}}}"', from=2-1, to=2-2]
    \arrow[equals, from=2-2, to=1-2]
    \arrow["{\delta^*}", from=2-4, to=1-4]
    \arrow["{B'_!}"', from=2-4, to=2-5]
    \arrow[from=2-5, to=1-5]
  \end{tikzcd}\]
  It suffices to check that the whiskering $\alpha f_* E^*$ is cartesian,
  which is true if $\alpha$ is cartesian.
  Now, the naturality squares of the natural transformation
  $\alpha$ are always pullback squares in $\CC$,
  by the definition of Beck--Chevalley for composition.
  Since $\RR(1) \subseteq \CC$ is fully faithful,
  it reflects limits,
  hence the naturality squares are also pullback squares in $\RR(1)$.
\end{proof}

\begin{definition}\label{def:vert-trans}
  For any vertical morphism of signatures
  $\psi : (f : E \to B) \to (f' : E' \to B')$
  we define an associated natural transformation of polynomial functors
  $P_\psi : P_f \to P_{f'}$.
  Since $\psi$ is vertical,
  $\psi_1 : B \to B'$ is invertible,
  and so is its pullback.
  \[
  \begin{tikzcd}
    {f' \times_{B'} \psi_1} & E & B \\
    {E'} && {B'}
    \arrow["{{\psi^\#}}", from=1-1, to=1-2]
    \arrow["\sim"', from=1-1, to=2-1]
    \arrow["\lrcorner"{anchor=center, pos=0.0125}, draw=none, from=1-1, to=2-3]
    \arrow["f", from=1-2, to=1-3]
    \arrow["{{\psi_1}}", from=1-3, to=2-3]
    \arrow["\sim"', draw=none, from=1-3, to=2-3]
    \arrow["{{f'}}"', from=2-1, to=2-3]
  \end{tikzcd}
  \]
  Let us write simplify the diagram by writing $\delta : B' \to B$ for the inverse of $\psi_1$,
  and $\phi : E' \to E$ for the composition of $\psi^\#$ with the comparison map between $E'$ the pullback.
  \[
  \begin{tikzcd}
    {E} & {B} \\
    E' & B'
    \arrow["{f}", from=1-1, to=1-2]
    \arrow["\phi", from=2-1, to=1-1]
    \arrow["\delta"', from=2-2, to=1-2]
    \arrow["\sim"', draw=none, from=1-2, to=2-2]
    \arrow["{f'}"', from=2-1, to=2-2]
  \end{tikzcd}
  \]
  We define $P_\psi: P_f \to P_{f'}$ as the following whiskering 
  \[\begin{tikzcd}
    \RR(1) & {\RR(E)} & {\RR(B)} \\
    & {\RR(E')} & {\RR(B')} & \RR(1)
    \arrow["{E^*}", from=1-1, to=1-2]
    \arrow[""{name=0, anchor=center, inner sep=0}, "{E'^*}"', from=1-1, to=2-2]
    \arrow["{f_*}", from=1-2, to=1-3]
    \arrow["{\phi^*}", from=1-2, to=2-2]
    \arrow["\Rightarrow"{marking, allow upside down}, draw=none, from=1-3, to=2-2]
    \arrow["{\delta^*}"', from=1-3, to=2-3]
    \arrow[""{name=1, anchor=center, inner sep=0}, "{B_!}", from=1-3, to=2-4]
    \arrow["{f'_*}"', from=2-2, to=2-3]
    \arrow["{B'_!}"', from=2-3, to=2-4]
    \arrow["\iso"{marking, allow upside down}, draw=none, from=0, to=1-2]    
    \arrow["{\iso}"{marking, allow upside down}, draw=none, from=2-3, to=1]
  \end{tikzcd}\]
  where the (non-invertible) natural transformation in the middle is
  Beck--Chevalley for pushforward applied to the previous commutative square.
\end{definition}

\begin{proposition}
  For any vertical morphism of signatures
  $\psi : (f : E \to B) \to (f' : E' \to B')$,
  $P_\psi$ is vertical, meaning
  \[\begin{tikzcd}
      {P_f 1} & {P_{f'} 1} \\
      B & {B'}
      \arrow["{P_\psi 1}", from=1-1, to=1-2]
      \arrow["\sim"', draw=none, from=1-1, to=1-2]
      \arrow["\sim", from=1-1, to=2-1]
      \arrow["\fstProj"', no head, from=1-1, to=2-1]
      \arrow["\sim"', from=1-2, to=2-2]
      \arrow["\fstProj", draw=none, from=1-2, to=2-2]
      \arrow["{\psi_1}"', from=2-1, to=2-2]
      \arrow["\sim", draw=none, from=2-1, to=2-2]
    \end{tikzcd}\]
\end{proposition}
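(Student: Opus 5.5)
The plan is to evaluate every functor in the whiskering of \Cref{def:vert-trans} at the terminal object $1 \in \RR(1)$. The key point is that all but the last functor preserve terminal objects. After that, the components of $P_\psi$ at $1$ are forced to be the canonical isomorphisms between terminal objects, except at the final step, where the isomorphism $B_! \iso B'_!\,\delta^*$ contributes exactly $\delta^{-1} = \psi_1$.

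\emph{Step 1: the vertical isomorphisms.} First, $E^* 1 \iso 1_E$ is terminal in $\RR(E)$, being the pullback of the identity on $1$. Second, $f_* : \RR(E) \to \RR(B)$ is a right adjoint to $f^*$, so $f_* 1_E \iso 1_B$. Third, $B_! 1_B = (B \to 1)$. Hence $P_f 1 \iso B$. I will check that this isomorphism is $\fstProj$: by the universal property \Cref{prop:poly-up}, $\fstProj : P_f X \to B$ is $B_!$ applied to the structure map of $f_* E^* X$ over $B$. For $X = 1$ that structure map is the terminal object $1_B$ itself, so $\fstProj$ is an isomorphism. The same argument applies to $f'$ and gives $\fstProj : P_{f'}1 \iso B'$.

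\emph{Step 2: track $P_\psi 1$ through the whiskering.} The first $2$-cell, $E'^* \iso \phi^* E^*$, at $1$ is an isomorphism between terminal objects of $\RR(E')$. The middle Beck--Chevalley cell $\delta^* f_* \Rightarrow f'_* \phi^*$, evaluated at $1_E$, is a morphism $\delta^* 1_B \to f'_* 1_{E'}$ in $\RR(B')$. Its source and target are both terminal ($\delta^*$ and $f'_*$ each preserve terminal objects), so it is the unique isomorphism between them, and it commutes with the structure maps to $B'$. The last $2$-cell, $B_! \iso B'_!\,\delta^*$, at $1_B$ is the comparison $B \to \delta^* B = B \times_B B'$ followed by the projection to $B'$. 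Because $\delta$ is invertible with inverse $\psi_1$, this composite is $\delta^{-1} = \psi_1$.

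\emph{Step 3: assemble the square.} Every map in Step 2 lies over the relevant base ($B$ or $B'$), and the maps in Steps 1 and 2 are $B_!$ or $B'_!$ applied to structure maps. Naturality of these comparison maps then gives $\fstProj \circ P_\psi 1 = \psi_1 \circ \fstProj$. The only nontrivial bookkeeping, which I expect to be the main obstacle, is the last $2$-cell. There one must confirm that the convention used for $B_! \iso B'_!\,\delta^*$ (pulling back along the isomorphism $\delta : B' \to B$) really produces $\psi_1$ rather than $\delta$. I would verify this by writing the pullback square for $\delta^* 1_B$ explicitly and using that the chosen pullback of $\id_B$ along $\delta$ is $B'$ with projection $\delta$.
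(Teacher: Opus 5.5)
Your proof is correct. The paper states this proposition without proof, so there is no argument of its own to compare against. Unwinding the whiskering of \Cref{def:vert-trans} at the terminal object, as you do, is the natural verification.

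Two small remarks. First, the obstacle you flag at the end is automatic. Since $\delta : B' \to B$ points the wrong way, the only map $B \to B'$ that the isomorphism $B_! \iso B'_!\,\delta^*$ can produce is $\delta^{-1} = \psi_1$.

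Second, the equation $\fstProj \circ P_\psi X = \psi_1 \circ \fstProj$ holds for every $X \in \RR(1)$, with no appeal to terminal objects. The Beck--Chevalley cell and the pseudofunctoriality cell are whiskered by $B'_!$, so they are $B'_!$ of morphisms in $\RR(B')$ and commute with the projections to $B'$. Only the component of $B_! \iso B'_!\,\delta^*$ contributes, and it contributes $\psi_1$. Preservation of terminal objects by $E^*$, $\phi^*$, $\delta^*$, $f_*$ and $f'_*$ is needed only to show that the two $\fstProj$ maps are invertible at $X = 1$. Invertibility of $P_\psi 1$ then follows from the square.
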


\begin{definition}
Combining the $(\vert, \cart)$ orthogonal factorisation system
with \Cref{def:cart-trans} and \Cref{def:vert-trans},
we obtain a functor from the category of polynomial signatures
to the category of endofunctors on $\RR(1)$.
\[
  P : \Poly_{(\CC,\RR)} \to [\RR(1), \RR(1)]
\]
\end{definition}

\begin{remark}
  Assuming that $\CC$ is locally Cartesian closed,
  Gambino and Kock \cite{gambino2013} use an enrichment of each slice $\CC / X$ over $\CC$
  to show that when $\CC$ is locally Cartesian closed (and taking $\RR$ to be all maps),
  the functor $P$ is faithful and the full image of $P$ consists of natural tranformations
  that are ``strong'' with respect to the given enrichment.
  When $(\CC,\RR)$ is a $\pi$-clan,
  one can produce a similar enrichment of $\RR(X)$ over $\RR(1)$,
  but when $(\CC,\RR)$ is merely a clan,
  it is unclear whether a similar argument could be made to work.
\end{remark}

\chapter{HoTTLean}\label{sec:hottlean}
While it is common to use MLTT as an internal language for
certain arguments about objects in its models,
this idea has not yet been fully implemented in a proof assistant.
The HoTTLean project \cite{hua2025, nawrocki2026, hottuf2026}
(\href{https://github.com/sinhp/HoTTLean}{github.com/sinhp/HoTTLean}) 
aims to bridge this gap by
designing a domain-specific language (DSL) for Martin-L\"of type theories (MLTT)
using Lean's DSL support,
formalising the general semantics of MLTT in Lean,
and supplying a specific model
-- namely the Hofmann--Streicher groupoid model \cite{hofmann1995} --
as a test case.
Overall, this allows users to write synthetic proofs in the DSL,
and use the interpretation to produce constructions pertaining to
groupoids as defined in Mathlib \cite{mathlib2020},
the standard library for mathematics in Lean.
In short, this would make automated internal reasoning in the category of groupoids a reality.

Here, we will focus on the formalisation of the groupoid model,
with a finite hierarchy of universes,
$\Si$-types, $\Pi$-types, and $\Id$-types.

The process of formalisation involves a great deal of experimentation.
It is often difficult to ascertain what approach is best until it has been tried.
For the construction of a model of type theory,
there are many choices to be made.
For example,
just the definition of ``a model of MLTT'' can be taken to mean
a natural model \cite{awodey2018},
or a category with families (CwF) \cite{clairambault2011},
or a comprehension category \cite{jacobs1993};
the list goes on.
After various experiments,
we decided on an approach that was (1) ``internal'',
and (2) ``unalgebraic''.

% The word ``internal'' here refers to replacing constructions on an external classifier
% of all type families (in presheaves) with 
% constructions on classifiers of small type families (in the category itself).

By ``internal'', we mean that instead of working with a classifier of all types $\Ty : \CC^\op \to \Set$
(as is typically required for categories with families and natural models),
we work with a hierarchy of universes in the category of contexts
$U_0 \to U_1 \to \cdots \in \CC$
(i.e., a hierarchy of representable presheaves $\Ty_0 \to \Ty_1 \to \cdots : \CC^\op \to \Set$
classifying types of a certain size).
% such that the universes are defined polymorphically over the universe levels of Lean.
Working internally meant that the user supplying a model need not make separate constructions
for the classifier of all types and the internal universes classifying small types.
It also simplified the universe level calculus,
as presheaves involve introducing extra universe levels and size constraints therein.
Furthermore, this approach seems to be more uniform across different models of HoTT.
Usually, an ad hoc description of simplical and cubical models is given in a presheaf category,
to which we would not want to apply the theory of natural models or CwFs.
In short, we are satisfied with the internal approach.

On the other hand,
the ``unalgebraic'' construction of the groupoid model is
not entirely satisfactory.
(Recall that we refer to the natural models style of semantics as \emph{algebraic}
(as in \emph{algebraic type theory} \cite{awodey2025}),
and use the word \emph{unalgebraic} in contrast
to describe semantics with explicit coherence conditions, like a CwF.)
We make the case that an alternative ``algebraic'' approach,
presented in \Cref{sec:StrSem},
would improve upon this first attempt
(in terms of the metrics for formalisation).
We discuss this in \Cref{sec:algebraic-vs-elementary}.

In the following, we present general unalgebraic models
and the groupoid model constructed as an unalgebraic model,
as a companion to the formalisation.
We also present algebraic models and a roadmap towards formalisation
using algebraic models.
The formalisation repository can be found at
\href{https://github.com/sinhp/HoTTLean/tree/HoTTUF}{\textsf{github.com/sinhp/HoTTLean}}.
We also include links to the
formalised counterparts of definitions,
labeled using the symbol \done{https://github.com/sinhp/HoTTLean/tree/HoTTUF}.

\subsection*{Related Work}
Du \cite{du2025} formalised in Lean a simplicial model
of type theory as a contextual category with $\Pi$-types,
a single universe, and without other type formers.
Sozeau and Tabareau \cite{sozeau2014} worked towards a Rocq formalisation
of the groupoid model,
but this has not been completed, to our knowledge.
The groupoid model is formalised as an example comprehension category
in the Unimath project \cite{2024grayson},
also without type formers.

\section{Unalgebraic models}\label{sec:UnstrSem}
We fix a category $\CC$,
and call objects of $\CC$ \emph{contexts} and
morphisms of $\CC$ \emph{substitutions}. 

\begin{definition}[\done{https://github.com/sinhp/HoTTLean/blob/HoTTUF/HoTTLean/Model/Unstructured/UnstructuredUniverse.lean\#L19}]
  \label{def:elementary-universe}
  A \emph{universe}
  in $\CC$ consists of a morphism
  $\uu : \Ulow \to \U$
  and a chosen pullback $\Gamma . A$
  for each object $\Gamma$ and morphism $A : \Gamma \to \U$.
  \[\begin{tikzcd}
	{\Gamma.A} & \Ulow \\
	\Gamma & \U
	\arrow["\var_A", from=1-1, to=1-2]
	\arrow["d_A", swap, from=1-1, to=2-1]
	\arrow["\lrcorner"{anchor=center, pos=0.125}, draw=none, from=1-1, to=2-2]
	\arrow["\uu", from=1-2, to=2-2]
	\arrow["A"', from=2-1, to=2-2]
  \end{tikzcd}\]
  We will view $A : \Gamma \to \U$,
  as a type in context $\Gamma$,
  and view the chosen pullback $\Gamma . A$
  as context extension $d_A : \Gamma . A \to \Gamma$.
  We also view a morphism $a : \Gamma \to \Ulow$
  satisfying $\uu \circ a = A$ as a term of type $A$.
\end{definition}

One can view $\uu : \Ulow \to \U$ as a context extension
$d_X : (X : \U.\,x : X) \to (X : \U)$,
by taking the type $X : \U \to \U$ to be the identity
$\id_\U : \U \to \U$.

\begin{definition}[\done{https://github.com/sinhp/HoTTLean/blob/HoTTUF/HoTTLean/Model/Unstructured/UnstructuredUniverse.lean\#L98}]
  For $\sigma : \Delta \to \Gamma$,
  a type $A : \Gamma \to \U$ and a term $a : \Delta \to \Ulow$
  such that $\uu \circ a = A \circ \sigma$,
  we denote the map into the context extension
  using the notation
  $\sigma . a : \Delta \to \Gamma . A$,
  induced by the pullback
\[\begin{tikzcd}
	\Delta \\
	& {\Gamma.A} & \Ulow \\
	& \Gamma & \U
	\arrow["{\sigma.a}"{description}, dashed, from=1-1, to=2-2]
	\arrow["a", bend left, from=1-1, to=2-3]
	\arrow["\sigma"', bend right, from=1-1, to=3-2]
	\arrow[from=2-2, to=2-3]
	\arrow[from=2-2, to=3-2]
	\arrow["\lrcorner"{anchor=center, pos=0.125}, draw=none, from=2-2, to=3-3]
	\arrow["\uu", from=2-3, to=3-3]
	\arrow["A"', from=3-2, to=3-3]
\end{tikzcd}\]
\end{definition}

\begin{definition}[$\Unit$-types] \label{def:elementary-unit}
  We say that a universe $\uu : \Ulow \to \U$ admits
  \emph{unalgebraic $\Unit$-types} when we have the following.
  \begin{enumerate}
  \item
    For any context $\Gamma$, there is a type $\Unit_\Gamma : \Gamma \to \U$.
  \item
    The construction $\Unit_\Gamma$ is stable under substitution,
    meaning that for any $\sigma : \Delta \to \Gamma$ we have
    \[
      \Unit_\Gamma \circ \sigma = \Unit_\Delta
    \]
  \item
    For any context $\Gamma$, there is a unique map $\unit_\Gamma : \Gamma \to \Ulow$
    satisfying $\uu \circ \unit_\Gamma = \Unit_\Gamma$.
  \end{enumerate}
\end{definition}

\begin{definition}[$\Sigma$-types, \done{https://github.com/sinhp/HoTTLean/blob/HoTTUF/HoTTLean/Model/Unstructured/UnstructuredUniverse.lean\#L219}]
  \label{def:elementary-sigma}
  We will say $\uu : \Ulow \to \U$ admits
  \emph{unalgebraic $\Sigma$-types}
  when we have the following.
  \begin{enumerate}
  \item 
    For any context $\Gamma$, type $A : \Gamma \to \U$,
    and dependent type $B : \Gamma . A \to \U$
    there is a type $\Sigma_A B : \Gamma \to \U$.
  \item
    The construction $\Sigma_A B$ is stable under substitution,
    meaning that for any $\sigma : \Delta \to \Gamma$ we have
    \[
      \Sigma_{A \circ \sigma} (B \circ \tilde{\sigma}) =
      \Sigma_{A} B \circ \sigma
    \]
    where $\tilde{\sigma}$ is the same as in \Cref{def:elementary-pi}.
  \item
    Under the assumptions of (1),
    for any $a : \Gamma \to \Ulow$ and $b:\Gamma\to \Ulow$
    such that $\uu \circ a = A$ and $\uu \circ b = B \circ \id_\Gamma . a $,
    there is a map $\pair (a,b) : \Gamma \to \Ulow$
    satisfying $\uu \circ \pair (a,b)  = \Sigma_A B$. 
    \[
\begin{tikzcd}
	\Gamma \\
	& {\Gamma . A} & \Ulow \\
	& \Gamma & \U
	\arrow["{\id_\Gamma . a}"{description}, from=1-1, to=2-2]
	\arrow["a", bend left, from=1-1, to=2-3]
	\arrow["{\id_\Gamma}"', bend right, from=1-1, to=3-2]
	\arrow[from=2-2, to=2-3]
	\arrow[from=2-2, to=3-2]
	\arrow["\uu", from=2-3, to=3-3]
	\arrow["A"', from=3-2, to=3-3]
\end{tikzcd}
    \]
  \item
    The construction $\pair (a, b)$ is stable under substitution.
    For any $\sigma : \Delta \to \Gamma$
    \[
      \pair (a \circ \sigma,b \circ \sigma) =
       \pair(a , b) \circ \sigma
    \]
  \item
    Under the assumptions of (1),
    for any $s : \Gamma \to \Ulow$
    such that $ \uu \circ s = \Sigma_A B$,
    there are maps $\fst s : \Gamma  \to \Ulow$
    and $\snd s : \Gamma  \to \Ulow$
    satisfying $\uu \circ \fst\; s = A$
    and $\uu \circ \snd \; s = B \circ (\id_\Gamma . \fst\; s)$.
  \item ($\beta$-rules) The operations 
    $\fst$ and $\snd$ deconstruct a term formed using $\pair$.
    \[ \fst \;(\pair(a, b)) = a \quad \text{ and } 
    \quad \snd \; (\pair (a, b)) = b\]
  \item ($\eta$-rule) Conversely, the operation $\pair$
    reconstructs the term split up by $\fst$ and $\snd$.
  \[\pair (\fst \;s, \snd \;s) = s\]
  \end{enumerate}
\end{definition}

In the presence of sufficient exponentiability conditions --
for example, if there is an embient $\pi$-clan (\Cref{def:pi-preclan}) or
$\CC$ is lex and the universe is an exponentiable morphism
(see \Cref{sec:exponentiable} and \Cref{sec:polynomials}) --
then unalgebraic $\Sigma$-types on $\uu$
can be reformulated algebraically using polynomials.
We provide such a reformulation in \Cref{def:alg-sig-type-mltt}.

% \[
%     \begin{tikzcd}
%       Q & \Ulow \\
%       {P_\uu \U} & \U
%       \arrow["{\pair}", from=1-1, to=1-2]
%       \arrow["{{\uu \pcomp \uu}}"', from=1-1, to=2-1]
%       \arrow["\lrcorner"{anchor=center, pos=0.125}, draw=none, from=1-1, to=2-2]
%       \arrow["\uu", from=1-2, to=2-2]
%       \arrow["{\Sigma}"', from=2-1, to=2-2]
%     \end{tikzcd}
% \]

For reasons of generality that will become important in \Cref{sec:cat-as-types},
we will define a version of $\Pi$-types that is slightly more general than what is needed in this section,
involving two universes rather than just one.
(In fact, the formalisation is even more general,
involving three universes. See \Cref{rmk:three-universes-generality}.)
\begin{definition}[$\Pi$-types, \done{https://github.com/sinhp/HoTTLean/blob/HoTTUF/HoTTLean/Model/Unstructured/UnstructuredUniverse.lean\#L332}]
  \label{def:elementary-pi}
  Let $\uu : \Ulow \to \U$ and $\vv : \Vlow \to \V$ be two universes.
  We will say that $\uu$ admits $\vv$-indexed \emph{unalgebraic $\Pi$-types}
  when we have the following.
  \begin{enumerate}
  \item 
    For any context $\Gamma$, type $A : \Gamma \to \V$,
    and dependent type $B : \Gamma . A \to \U$
    there is a type \[\Pi_A B : \Gamma \to \U\]
  \item
    The construction $\Pi_A B$ is stable under substitution,
    meaning that for any $\sigma : \Delta \to \Gamma$ we have
    \[
      \Pi_{A \circ \sigma} (B \circ \tilde{\sigma}) =
      B \circ \sigma
    \]
    where $\tilde{\sigma} := (\sigma \circ d_{A \circ \sigma}) . \var_{A \circ \sigma}$ as in
    \[
    \begin{tikzcd}
      {\Delta . {(A \circ \sigma)}} & {\Gamma . A} & \Vlow \\
      \Delta & \Gamma & \V
      \arrow["{{\tilde{\sigma}}}", dashed, from=1-1, to=1-2]
      \arrow["{\var_{A \circ \sigma}}", bend left, from=1-1, to=1-3]
      \arrow["{d_{A \circ \sigma}}", from=1-1, to=2-1]
      \arrow["{\var_A}", from=1-2, to=1-3]
      \arrow["{d_A}", from=1-2, to=2-2]
      \arrow["\lrcorner"{anchor=center, pos=0.125}, draw=none, from=1-2, to=2-3]
      \arrow[from=1-3, to=2-3]
      \arrow["\sigma"', from=2-1, to=2-2]
      \arrow["A"', from=2-2, to=2-3]
    \end{tikzcd}
    \]
  \item
    Under the assumptions of (1),
    for any $b : \Gamma . A \to \Ulow$
    such that $\uu \circ b = B$,
    there is a map $\lam  b : \Gamma \to \Ulow$
    satisfying $\uu \circ \lam \;b  = \Pi_A B$.
  \item
    The construction $\lam  b$ is stable under substitution.
    For any $\sigma : \Delta \to \Gamma$
    \[
      \lam (b \circ \tilde{\sigma}) =
      \lam  b \circ \sigma
    \]
  \item
    Under the assumptions of (1),
    for any $f : \Gamma \to \Ulow$
    such that $\uu \circ f = \Pi_A B$,
    there is a map $\unlam  f : \Gamma . A \to \Ulow$
    satisfying $\uu \circ \unlam  f  = B$.
  \item ($\beta$- and $\eta$-rules) The operations 
    $\lam$ and $\unlam$ are inverses of one another.
    \[ \unlam (\lam  b) = b \quad \text{ and } 
    \quad \lam (\unlam  f) = f\]
  \end{enumerate}
  If $\uu$ admits unalgebraic $\uu$-indexed $\Pi$-types,
  then we will simply say that $\uu$ admits
  \emph{unalgebraic $\Pi$-types}.
\end{definition}
Again, when polynomials are available,
unalgebraic $\Pi$-types on the universe can be 
reformulated algebraically as a pullback square (\Cref{def:alg-pi-type-mltt}).

% In the presence of sufficient exponentiability conditions,
% for example if there is an ambient $\pi$-clan or
% $\CC$ is lex and the universe is an exponentiable morphism
% (see \Cref{sec:exponentiable} and \Cref{sec:polynomials}),
% then an elementary $\Pi$-types on $\uu$
% can be reformulated algebraically using polynomials.
% We provide such this reformulation in \Cref{def:alg-pi-type-mltt}.

\begin{definition}[$\Id$-types, \done{https://github.com/sinhp/HoTTLean/blob/HoTTUF/HoTTLean/Model/Unstructured/UnstructuredUniverse.lean\#L439}]
  We say that $\uu : \Ulow \to \U$
  admits \emph{unalgebraic $\Id$-types}
  when we have the following.
  \begin{enumerate}
  \item 
    For any context $\Gamma$, type $A : \Gamma \to \U$,
    and terms $a_0, a_1 : \Gamma \to \Ulow$ such that
    $\uu \circ a_i = A$,
    there is a type $\Id_A (a_0,a_1) : \Gamma \to \U$.
  \item
    The construction $\Id_A (a_0,a_1)$ is stable under substitution.
  \item
    For any context $\Gamma$, type $A : \Gamma \to \U$,
    and term $a : \Gamma \to \Ulow$ such that
    $\uu \circ a = A$,
    there is a term $\refl a : \Gamma \to \Ulow$
    satisfying
    $\uu \circ \refl a  = \Id_A (a,a)$.
  \item The construction $\refl$ is stable under substitution.
  \item
    Under the assumptions of (3)
    one can construct the context $\Gamma . (x : A) . \Id_A(a,x)$
    by taking context extension on $\Gamma . A$ for the type
    $\Id_A(a,x)$ given by (1),
    and the substitution
    \[\rho_a := \id_\Gamma . a . \refl a : \Gamma \to \Gamma . (x : A) . \Id_A(a,x)\]
    For any \emph{motive} $C : \Gamma .(x : A) . \Id_A(a,x) \to \U$
    -- that means $C$ is parametrised by a family of paths with a fixed starting point $a$ --
    and map $r : \Gamma \to \Ulow$ satisfying
    \[ \uu \circ r = C \circ \rho_a \]
    there is a term
    $j (C, r) : \Gamma . (x : A) . \Id_A(a,x) \to \Ulow$
    satisfying $\uu \circ j (C, r) = C$ and
    $j (C, r) \circ \rho_a  = r$
    \[\begin{tikzcd}
	   \Gamma & \Ulow \\
	   {\Gamma . (x : A) . \Id_A(a,x)} & \U
	   \arrow["{r}", from=1-1, to=1-2]
	   \arrow["{\rho_a}"', from=1-1, to=2-1]
	   \arrow["\uu", from=1-2, to=2-2]
	   \arrow["{j(C,r)}"{description}, dashed, from=2-1, to=1-2]
	   \arrow["C"', from=2-1, to=2-2]
    \end{tikzcd}\]
  \item The construction $j$ is stable under substitution.
  \end{enumerate}
\end{definition}

\Cref{def:algebraic-id-types} provides the algebraic version of identity type structure.

\begin{remark}\label{rmk:three-universes-generality}
% In the previous definitions,
% we have exclusively worked with a single universe $\uu$,
% for the sake of simplicity.
% However,
The definitions for $\Sigma$-types (and similarly $\Pi$-types)
% $\Sigma$-types in Lean
have $A$ in a universe $\uu_0$,
$B$ in a second universe $\uu_1$,
and $\Sigma_A B$ in a third universe $\uu_2$.
% A similar generalisation can be made for path types.
The formalisation also makes $\Id$-types \emph{large eliminating},
meaning that the motive for identity elimination $C$
need not live in the same universe as the type $A$ on which
identities are taken.
The more general definitions can be found in the formalisation.
\end{remark}

The following defines
non-cumulative universe lifts
``\`a la Coquand'' \cite{gratzer2020}.

\begin{definition}[\done{https://github.com/sinhp/HoTTLean/blob/HoTTUF/HoTTLean/Model/Unstructured/UHom.lean\#L23},
\done{https://github.com/sinhp/HoTTLean/blob/HoTTUF/HoTTLean/Model/Unstructured/UHom.lean\#L77}]
  A \emph{morphism of universes} $l : \uu_0 \to \uu_1$ 
  consists of a pair of maps $l_\U : \U_0 \to \U_1$
  and $l_\Ulow : \Ulow_0 \to \Ulow_1$ such that 
  the following square is a pullback
  \[\begin{tikzcd}
	{\Ulow_0} & {\Ulow_1} \\
	{\U_0} & {\U_1}
	\arrow["{{{l_\Ulow}}}", from=1-1, to=1-2]
	\arrow["{{{\uu_0}}}"', from=1-1, to=2-1]
	\arrow["\lrcorner"{anchor=center, pos=0.125}, draw=none, from=1-1, to=2-2]
	\arrow["{\uu_1}", from=1-2, to=2-2]
	\arrow["{{{l_\U}}}"', from=2-1, to=2-2]
\end{tikzcd}\]
  Suppose $\CC$ has a terminal object $1$.
  A \emph{universe lift} consists of
  a morphism of universes $l : \uu_0 \to \uu_1$
  and a pair of maps $\ulcorner U_0 \urcorner : 1 \to \U_1$
  and $a : \U_0 \to \Ulow_1$
  such that the following square is a pullback.
\[\begin{tikzcd}
	{\U_0} & {\Ulow_1} \\
	1 & {\U_1}
	\arrow["{{a}}", from=1-1, to=1-2]
	\arrow[from=1-1, to=2-1]
	\arrow["\lrcorner"{anchor=center, pos=0.125}, draw=none, from=1-1, to=2-2]
	\arrow["{{{\uu_1}}}", from=1-2, to=2-2]
	\arrow["{{{\ulcorner U_0 \urcorner}}}"', from=2-1, to=2-2]
\end{tikzcd}\]
\end{definition}

\subsection*{Unalgebraic path types}

Path types will be covered in more detail in \Cref{sec:path-types},
in an algebraic style.
For HoTTLean, an unalgebraic formulation of path types
was used in the HoTTLean formalisation
to construct identity types in the groupoid model.
In this section, we briefly introduce the relevant definitions
of unalgebraic path types,
and leave a more detailed discussion to \Cref{sec:path-types}.

\begin{definition}[\done{https://github.com/sinhp/HoTTLean/blob/HoTTUF/HoTTLean/Model/Unstructured/Hurewicz.lean\#L22}]
  \label{def:cylinder-structure}
  A cylinder structure on a category $\CC$
  consists of
  \begin{itemize}
    \item an endofunctor ${\cyl} : \CC \to \CC$, called the \emph{cylinder}
    \item and natural transformations
      \[\delta_0, \delta_1 : \id_\CC \to {\cyl}
      \quad \quad
      \pi : {\cyl} \to \id_\CC
      \quad \quad 
      \psi : \cyl \circ \cyl \to \cyl \circ \cyl\]
    \item satisfying the following equations
    \[
    \begin{tikzcd}
      \id & {\cyl} & \id & {{\cyl} \circ {\cyl}} & {{\cyl} \circ {\cyl}} \\
      & \id &&& {{\cyl} \circ {\cyl}}
      \arrow["{\delta_0}", from=1-1, to=1-2]
      \arrow[equals, from=1-1, to=2-2]
      \arrow["\pi"', from=1-2, to=2-2]
      \arrow["{\delta_0}"', from=1-3, to=1-2]
      \arrow[equals, from=1-3, to=2-2]
      \arrow["\psi", from=1-4, to=1-5]
      \arrow[equals, from=1-4, to=2-5]
      \arrow["\psi", from=1-5, to=2-5]
    \end{tikzcd}
    \]
    \[ \begin{tikzcd}
      {\cyl} & {\cyl} & {\cyl} & {\cyl} & {{\cyl} \circ {\cyl}} & {{\cyl} \circ {\cyl}} \\
        {{\cyl} \circ {\cyl}} & {{\cyl} \circ {\cyl}} & {{\cyl} \circ {\cyl}} & {{\cyl} \circ {\cyl}} & {\cyl} & {\cyl}
        \arrow[equals, from=1-1, to=1-2]
        \arrow["{\delta_0 {\cyl}}"', from=1-1, to=2-1]
        \arrow["{{\cyl} \delta_0}", from=1-2, to=2-2]
        \arrow[equals, from=1-3, to=1-4]
        \arrow["{\delta_1 {\cyl}}"', from=1-3, to=2-3]
        \arrow["{{\cyl} \delta_1}", from=1-4, to=2-4]
        \arrow["\psi", from=1-5, to=1-6]
        \arrow["{\pi {\cyl}}"', from=1-5, to=2-5]
        \arrow["{{\cyl} \pi}", from=1-6, to=2-6]
        \arrow["\psi", from=2-1, to=2-2]
        \arrow["\psi", from=2-3, to=2-4]
        \arrow[equals, from=2-5, to=2-6]
      \end{tikzcd}\]
  \end{itemize}
\end{definition}

For example, a cylinder structure can be defined
using a bipointed object and binary products.
Suppose $\CC$ has a terminal object and $I$ is
a bipointed object $\delta_0, \delta_1 : 1 \to I$.
Then we can define the cylinder functor by $\cyl : X \mapsto X \times I$.

\begin{definition}[$\Path$-types, \done{https://github.com/sinhp/HoTTLean/blob/HoTTUF/HoTTLean/Model/Unstructured/Hurewicz.lean\#L238}]
  Suppose $\CC$ has a terminal object and
  a cylinder structure.
  A universe $\uu : \Ulow \to \U$ admits
  \emph{unalgebraic $\Path$-types}
  when we have the following.
  \begin{enumerate}
  \item 
    For any context $\Gamma$, type $A : \Gamma \to \U$,
    and terms $a_0, a_1 : \Gamma \to \Ulow$ such that
    $\uu \circ a_i  = A$,
    there is a type $\Path_A (a_0,a_1) : \Gamma \to \U$.
  \item
    The construction $\Path_A (a_0,a_1)$ is stable under substitution.
  \item
    For any context $\Gamma$, type $A : \Gamma \to \U$,
    and ``path'' (or ``homotopy'') $p : \cyl \Gamma \to \Ulow$ such that
    $\uu \circ p = A \circ \pi$,
    there is a term $\path  p : \Gamma \to \Ulow$
    satisfying
    \[\uu \circ \path  p  = {\Path}_A (p \circ \delta_0,p \circ \delta_1)\]
  \item The construction $\path$ is stable under substitution.
  \item
    For any context $\Gamma$, type $A : \Gamma \to \U$,
    a pair of terms $a_0, a_1 : \Gamma \to \Ulow$ such that
    $\uu \circ a_0 = \uu \circ a_1 = A$,
    and a term $p : \Gamma \to \Ulow$ such that
    $\uu \circ p = \Path_A (a_0,a_1)$,
    there is a term $\unpath  p : \cyl \Gamma \to \Ulow$
    satisfying
    \[\uu \circ \unpath  p = A \circ \pi
      \quad \quad
      \unpath p \circ \delta_0  = a_0
      \quad \quad
      \unpath p \circ \delta_1  = a_1
    \]
  \item Under the assumptions of (3),
    \[ \unpath (\path p) = p \]
  \item Under the assumptions of (5),
    \[ \path (\unpath p) = p \]
  \end{enumerate}
\end{definition}

Unalgebraic $\Path$-types are reformulated algebraically in
\Cref{def:mltt-alg-path-types-2}.

% The following is closely related to opfibrancy in $\Cat$
% (\Cref{def:split-opfibration}).
\begin{definition}[\done{https://github.com/sinhp/HoTTLean/blob/HoTTUF/HoTTLean/Model/Unstructured/Hurewicz.lean\#L148}]
  \label{def:hurewicz-defs}
  Suppose $\CC$ admits a cylinder structure.
  Let $f : A \to B$ be a morphism in $\CC$.
  A \emph{Hurewicz structure $(f,l)$} consists of
  \begin{itemize} 
    \item for every object $X$ in $\CC$,
    and every pair of maps $a : X \to A$
    and $p : \cyl X \to B$ such that
    $f \circ a = p \circ \delta_0$,
    a chosen diagonal filler $l_{(a,p)} : \cyl X \to A$,
    \[\begin{tikzcd}
        X & A \\
        {\cyl X} & B
        \arrow["a", from=1-1, to=1-2]
        \arrow["\delta_0"', from=1-1, to=2-1]
        \arrow["f", from=1-2, to=2-2]
        \arrow["{l_{(a,p)}}"{description}, dashed, from=2-1, to=1-2]
        \arrow["p"', from=2-1, to=2-2]
      \end{tikzcd} \]
    \item such that for any morphism $x : X' \to X$,
    the following coherence equation between lifts
    $l_{(a \circ x,p \circ \cyl x)} : \cyl X' \to A$
    and $l_{(a,p)} : \cyl X \to A$ holds.
    \[ l_{(a,p)} \circ \cyl x = l_{(a \circ x,p \circ \cyl x)} \]
  \end{itemize}

  We say that $(f,l)$ is \emph{normal} if
  for every $b : X \to B$, we have $l_{(a,b \circ \pi)} = a \circ \pi$. 
  \[\begin{tikzcd}
      X && A \\
      {\cyl X} & X & B
      \arrow["a", from=1-1, to=1-3]
      \arrow["{\delta_0}"', from=1-1, to=2-1]
      \arrow["f", from=1-3, to=2-3]
      \arrow["{l_{(a,b \circ \pi)}}"{description}, dashed, from=2-1, to=1-3]
      \arrow["\pi"', from=2-1, to=2-2]
      \arrow["a"{description}, from=2-2, to=1-3]
      \arrow["b"', from=2-2, to=2-3]
    \end{tikzcd}
    \]  
\end{definition}
When $\cyl = (-) \times I$ is given by product with an interval object,
a Hurewicz structure on $f : A \to B$ is the usual homotopy lifting
definition from \cite{awodey2026}.

\begin{theorem}[Identity elimination, \done{https://github.com/sinhp/HoTTLean/blob/HoTTUF/HoTTLean/Model/Unstructured/Hurewicz.lean\#L743}]
  \label{thm:id-elim-2}
  If $\uu : \Ulow \to \U$ admits unalgebraic $\Path$-types
  and a normal Hurewicz structure $(\uu,l)$,
  then $\uu$ also admits unalgebraic $\Id$-types.
\end{theorem}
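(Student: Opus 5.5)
The plan is to define $\Id_A(a_0,a_1) := \Path_A(a_0,a_1)$ and to define $\refl a := \path(a \circ \pi)$, where $a \circ \pi : \cyl\Gamma \to \Ulow$ is the constant path. Stability of $\Id$ and $\refl$ under substitution then follows from stability of $\Path$ and $\path$, together with naturality of $\pi$: $(a\circ\pi)\circ\cyl\sigma = a\circ\sigma\circ\pi$. Since $\pi\circ\delta_i = \id$, the term $\refl a$ indeed has type $\Path_A(a,a)$. All the work is in constructing $j$.

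For the eliminator, fix $a$, the motive $C$ on $\Delta := \Gamma.(x:A).\Path_A(a,x)$, and $r : \Gamma \to \Ulow$ with $\uu\circ r = C\circ\rho_a$. On $\Delta$ we have the generic path $p := \unpath(\var) : \cyl\Delta \to \Ulow$, running from $a\circ d$ to $x$. The idea is to contract this path: we build a homotopy $h : \cyl\Delta \to \Delta$ with $h\circ\delta_0 = \rho_a\circ d$ (the base point) and $h\circ\delta_1 = \id_\Delta$.

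Concretely, set $h := (d\circ\pi).\,(p\circ\psi\text{-reparametrised}).\,\path(\ldots)$. Its first component is $\Gamma$ held constant. Its endpoint component at time $t$ is $p(t)$, which is $p$ itself. Its path component at time $t$ is the restricted path $s \mapsto p(\min(s,t))$. This connection-like restriction is where $\psi : \cyl\cyl \to \cyl\cyl$ and the equations of \Cref{def:cylinder-structure} enter. We form a map $\cyl\cyl\Delta \to \Ulow$ from $p$, $\psi$ and $\pi$, and then apply $\path$ fibrewise (using stability of $\path$ under the substitution $\delta_i$) to get the path component over $\cyl\Delta$. The equations $\psi\circ\delta_0\cyl = \cyl\delta_0$ and $\cyl\pi\circ\psi = \pi\cyl$ should give $h\circ\delta_0 = \rho_a\circ d$; at $t=0$ the path restricts to the constant path, i.e.\ $\refl a$, using $\unpath\circ\path = \id$. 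The equations for $\delta_1$ should give $h\circ\delta_1 = \id$, using $\path(\unpath\,\var) = \var$.

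Next we transport $r$ along $h$ using the Hurewicz structure on $\uu$. Set $j(C,r) := l_{(r\circ d,\;C\circ h)}\circ\delta_1 : \Delta \to \Ulow$. The lifting problem is well-typed because $\uu\circ r\circ d = C\circ\rho_a\circ d = C\circ h\circ\delta_0$. The lift lies over $C\circ h$, so $\uu\circ j = C\circ h\circ\delta_1 = C$. For the computation rule, precompose with $\rho_a$. Coherence of $l$ gives $l_{(r\circ d,\,C h)}\circ\cyl\rho_a = l_{(r,\,C h\cyl\rho_a)}$. We then show $h\circ\cyl\rho_a = \rho_a\circ\pi$: the generic path pulled back along $\rho_a$ is the constant path, and its restrictions are constant. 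Normality then gives $l_{(r,\,C\rho_a\pi)} = r\circ\pi$, hence $j\circ\rho_a = r\circ\pi\circ\delta_1 = r$. Stability of $j$ under substitution follows from the coherence of $l$, together with naturality of $h$ in $\Gamma$. That naturality is inherited from stability of $\path$ and $\unpath$ and naturality of $\psi,\pi,\delta_i$.

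The main obstacle is the construction of $h$ and the verification of its boundary equations, especially $h\circ\cyl\rho_a = \rho_a\circ\pi$. This requires carefully matching the axioms on $\psi$ against the $\beta$/$\eta$ laws for $\path$/$\unpath$ at the level of $\cyl\cyl\Delta$. My first attempt would be to define the path component as $\path$ applied to $p\circ\cyl\pi\circ\psi$, possibly with a swap, suitably restricted, and then check each boundary by rewriting with one $\psi$-axiom at a time.
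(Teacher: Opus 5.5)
Your outer architecture matches the paper's proof (given in algebraic form as \Cref{thm:id-elim}). It takes $\Id := \Path$ and $\refl a := \path(a\circ\pi)$. The eliminator transports $r$ along a contraction of the generic path using the Hurewicz lift and evaluates at $\delta_1$, and the computation rule comes from coherence plus normality of $l$.

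The gap is the contraction $h$. This is not bookkeeping: $h$ cannot be built from the cylinder structure as you propose. \Cref{def:cylinder-structure} provides only $\delta_0,\delta_1,\pi,\psi$. The only maps $\cyl\cyl\Delta\to\cyl\Delta$ you can assemble from these are projections (possibly after the swap) and constants at the endpoints. So every map $\cyl\cyl\Delta\to\Ulow$ you can get from $p$ this way is $(s,t)\mapsto p(s)$, $p(t)$, $p(0)$ or $p(1)$. None of these is a path from $a$ to $p(t)$, which is what the path component of $h$ at time $t$ must be. The formula $p(\min(s,t))$ needs a connection on the interval. That is not part of the data, and it fails in intended models: in Cartesian cubical sets (\Cref{example:cSet}) the only maps $I\times I\to I$ are the two projections and the two constants.

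The missing idea is to manufacture the connection from the Hurewicz structure by box filling; this is \Cref{lem:connection}.
\begin{itemize}
\item The path family is a context extension, hence a pullback of $\uu$, hence inherits a normal Hurewicz structure. This holds for the free family $\Path_A(x,y)$ over $\Gamma.A.A$ used in the paper, and equally for your based family $\Delta\to\Gamma.A$.
\item Lift starting at $\rho_a\circ d$ (i.e.\ at $\refl a$) over the path $(d\circ\pi).\,p$ in $\Gamma.A$. Call the lift $\lambda:\cyl\Delta\to\Delta$ and write $\lambda_3$ for its path component.
\item After $\unpath$, $\lambda_3$ is a square with edges $\refl a$, $\refl a$, $p$ and an unknown fourth edge $q$.
\item Precompose $\unpath(\lambda_3)$ with $\psi$ and reapply $\path$; this is the $\sym$ step. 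The result is a path in path space from $\refl a$ to $p$ whose intermediate paths all start at $a$ and end on $q$.
\end{itemize}
So set $h := (d\circ\pi).\,q.\,\path(\unpath(\lambda_3)\circ\psi)$. Then $h\circ\delta_0=\rho_a\circ d$ and $h\circ\delta_1=\id$ follow from the $\psi$-axioms and the $\path$/$\unpath$ laws. The endpoint component is $q$ rather than $p$, which is harmless.

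Likewise, $h\circ\cyl\rho_a=\rho_a\circ\pi$ does not come from ``restrictions of a constant path are constant''. It comes from normality of the lift on the path family (pulled back along $\rho_a$, the box lies over a constant path) together with the $\psi$-axioms. Normality is therefore used twice: once to build the connection and once in your transport step. With this $h$, the rest of your argument goes through as written.
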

A proof of \Cref{thm:id-elim-2} in the style of algebraic models is provided in \Cref{thm:id-elim}.

\section{The groupoid model}

\begin{definition}[Contexts \done{https://github.com/sinhp/HoTTLean/blob/31133dd5b25226ea897f8aa5e2e43b61392459eb/HoTTLean/Groupoids/Basic.lean\#L65}]
The category $\CC$ is taken to be the category of groupoids.
In Mathlib, there are two universe-level variables
associated with the category of groupoids,
one for the size of objects and one for the size of hom-sets,
both of which we (arbitrarily) take to be $4$.
\[ \CC := \Grpd.\{4,4\} \]
\end{definition}

\begin{definition}\label{def:split-isofibration}
  A cloven isofibration consists of a pair $(F,l)$,
  where $F : \DD \to \CC$ is a functor
  and $l$ provides for each object $x \in \DD$, each object $y \in \CC$,
  and each isomorphism $f : F x \to y$ in $\CC$,
  a chosen isomorphism $l_{(x,f)} : x \to y'$
  such that $F l_{(x,f)} = f$,
  called the lift of $f$.

  A cloven isofibration $(F : \DD \to \CC,l)$
  is \emph{normal} if 
  for any $x \in \DD$ the lift of the identity
  $\id_{F x} : F x \to F x$ is the identity
  \[l_{(x,\id_{F x})} = \id_x : x \to x\]
  In particular, the endpoint of the lift satisfies $(F x)' = x$.
  We call $(F,l)$ a \emph{normal isofibration} for short.

  A normal isofibration $(F : \DD \to \CC, l)$ is \emph{split} if
  for any $x \in \DD$, $y$ and $z$ in $\CC$ and
  isomorphisms $f : F x \to y$ and $g : y \to z$,
  the lift of the composition is the composition of the lifts
  \[l_{(x,g \circ f)} = l_{(x,g)} \circ l_{(y',f)} : x \to z' \]
  We call $(F,l)$ a \emph{split isofibration} for short.
\end{definition}

\begin{definition}[Universes and context extension \done{https://github.com/sinhp/HoTTLean/blob/31133dd5b25226ea897f8aa5e2e43b61392459eb/HoTTLean/Groupoids/UnstructuredModel.lean\#L26}]
\label{sec:universe-in-groupoid-model}
We provide a universe (as in \Cref{def:elementary-universe})
for each Lean universe level $\alpha < 4$ polymorphically.
$\uu_\alpha$ will be called the \emph{universe $\alpha$-small isofibrations}
(since each of its pullbacks admits a canonical isofibration structure):
\begin{itemize}
  \item $\U$ is the core of the category of ($\alpha$-small) groupoids
  \[\U_\alpha := \Core(\Grpd.\{\alpha,\alpha\})\]
  \item $\Ulow$ is the core of the category of ($\alpha$-small) pointed groupoids,
  \[\Ulow_\alpha := \Core(\PGrpd.\{\alpha,\alpha\})\]
  where the category of $\alpha$-small pointed groupoids is the
  Grothendieck construction on the forgetful functor from groupoids to categories
  $\Grpd.\{\alpha,\alpha\} \to \Cat.\{\alpha,\alpha\}$.
  \item The map $\uu$ is the image of the forgetful functor
  from pointed groupoids to groupoids $\PGrpd \to \Grpd$.
  \[ \uu_\alpha : \Ulow_\alpha \to \U_\alpha\]
  \item Pullbacks of each universe $\uu : \Ulow \to \U$
    are computed as Grothendieck constructions
  \[ \begin{tikzcd}
	{\int (i \circ A)} & {\Core(\PGrpd)} & \PGrpd \\
	\Gamma & {\Core(\Grpd)} & \Grpd
	\arrow[dashed, from=1-1, to=1-2]
	\arrow[from=1-1, to=2-1]
	\arrow["\lrcorner"{anchor=center, pos=0.125}, draw=none, from=1-1, to=2-2]
	\arrow[from=1-2, to=1-3]
	\arrow[from=1-2, to=2-2]
	\arrow["\lrcorner"{anchor=center, pos=0.125}, draw=none, from=1-2, to=2-3]
	\arrow[from=1-3, to=2-3]
	\arrow["A"', from=2-1, to=2-2]
	\arrow["i"', from=2-2, to=2-3]
  \end{tikzcd} \]
  where $i : \Core(\Grpd) \to \Grpd$ is the usual inclusion of the core category.
  We will use context extension notation for
  the pullback $\Gamma . A := {\int (i \circ A)}$.
  (We could alternatively have taken $\Ulow := \int i$ as the Grothendieck construction.)
\end{itemize}
\end{definition}

\begin{definition}[Universe lifts \done{https://github.com/sinhp/HoTTLean/blob/31133dd5b25226ea897f8aa5e2e43b61392459eb/HoTTLean/Groupoids/UHom.lean\#L33}]
Universes in the groupoid model are related by a chain of universe lifts
$\uu.\{\alpha\} \to \uu.\{\alpha+1\}$, for each $\alpha < 3$.
First, by lifting (Lean) types in universe $\alpha$ to universe $\alpha+1$,
we have a pullback square
\[
\begin{tikzcd}
	{\PGrpd.\{\alpha,\alpha\}} & {\PGrpd.\{\alpha+1,\alpha+1\}} \\
	{\Grpd.\{\alpha,\alpha\}} & {\Grpd.\{\alpha+1,\alpha+1\}}
	\arrow[from=1-1, to=1-2]
	\arrow[from=1-1, to=2-1]
	\arrow[from=1-2, to=2-2]
	\arrow[from=2-1, to=2-2]
    \arrow["\lrcorner"{anchor=center, pos=0.125}, draw=none, from=1-1, to=2-2]
\end{tikzcd}
\]
The functor $\Core : \Cat.\{4,4\} \to \Grpd.\{4,4\}$ is right adjoint to the forgetful functor,
hence it preserves pullbacks,
providing the following pullback square as the image
of the previous
\[
\begin{tikzcd}
	{\Ulow_\alpha} & {\Ulow_{\alpha+1}} \\
	{\U_\alpha} & {\U_{\alpha+1}}
	\arrow[from=1-1, to=1-2]
	\arrow[from=1-1, to=2-1]
    \arrow["\lrcorner"{anchor=center, pos=0.125}, draw=none, from=1-1, to=2-2]
	\arrow[from=1-2, to=2-2]
	\arrow[from=2-1, to=2-2]
\end{tikzcd}
\]
Secondly, we construct a functor $U_\alpha : 1 \to \U_{\alpha+1}$,
which picks out the $(\alpha+1)$-small groupoid $\Core(\Grpd.\{\alpha,\alpha\})$.
It follows that we have the required isomorphism
\[ 1 . U_\alpha \iso \U_\alpha \]
\end{definition}

\begin{definition}[$\Sigma$-types \done{https://github.com/sinhp/HoTTLean/blob/31133dd5b25226ea897f8aa5e2e43b61392459eb/HoTTLean/Groupoids/UHom.lean\#L50}]
For the formation rule of sigma types,
we require for each pair of functors $A : \Gamma \to \U$
and $B : \Gamma . A \to \U$,
a functor $\Sigma_A B : \Gamma \to \U$.
This amounts to constructing for each pair of functors
$A' : \Gamma \to \Grpd$
and $B' : \int A' \to \Grpd$,
a functor $\Sigma_{A'} B' : \Gamma \to \Grpd$.
This can be done ``pointwise'' using Grothendieck constructions.
\[ \textstyle \Sigma_{A'} B' (x) = \int (B |_{A' x} : A' x \to \smallint A' \to \Grpd)\]
One checks that this operation is stable under substitution.
This construction of $\Sigma$-types is automatically universe-polymorphic:
if $A$ is $u$-small and $B$ is $v$-small, then $\Sigma_A B$ is $\max (u,v)$-small.

Rather than constructing unalgebraic operations that interpret the introduction
and elimination rules,
we construct an isomorphism
\[\textstyle\int (B' : \smallint A' \to \Grpd) \iso \int (\Sigma_{A'} B' : \Gamma \to \Grpd)\]
\end{definition}

\begin{definition}[$\Pi$-types \done{https://github.com/sinhp/HoTTLean/blob/31133dd5b25226ea897f8aa5e2e43b61392459eb/HoTTLean/Groupoids/UHom.lean\#L47}]
Our unalgebraic construction of $\Pi$-types
uses the construction of $\Sigma$-types:
the formation of $\Pi$-types is defined by
taking ``pointwise sections'' of the $\Sigma$-types.
As a result of constructing $\Pi$-types this way,
it is easier to not construct $\Pi$-types polymorphically, rather
if $A$ is $u$-small and $B$ is $u$-small, then $\Pi_A B$ is $u$-small.
Then, to obtain universe-polymorphic $\Pi$-types,
we simply combine the above construction with universe lifts.
\end{definition}

\begin{definition}[$\Id$-types \done{https://github.com/sinhp/HoTTLean/blob/31133dd5b25226ea897f8aa5e2e43b61392459eb/HoTTLean/Groupoids/UHom.lean\#L71}]
  \label{def:elementary-id-type}
To construct $\Id$-types in the groupoid model,
it suffices to identify a cylinder structure on the category of groupoids,
and $\Path$-types using the interval (\Cref{thm:id-elim-2}).
The cylinder endofunctor $\cyl : \CC \to \CC$ is constructed
by taking products with the walking isomorphism $I = \{\star \iso \star\}$
\[ \cyl X = X \times I\]
For the formation rule,
let $A : \Gamma \to \Grpd$ be a type and $a, b : \Gamma \to \PGrpd$ be two terms
in $A$.
We construct a functor $\Id_A (a,b) : \Gamma \to \Grpd$ such that
pointwise, it takes the discrete groupoid of morphisms
\[ \Id_A (a,b) (x) = \Hom_{A x} (a x, b x)\]
The $\path$-$\unpath$ bijection follows from
\[ \Hom_{A x} (a x, b x) = \{ f : I \to A x \st f \circ \delta_0 = a x, f \circ \delta_1  = b x\}\]

We construct a normal Hurewicz structure on $\uu : \Ulow \to \U$.
For any groupoid $\Gamma$ and functors
$p : I \times \Gamma \to \U$
and $p_0 : \Gamma \to \Ulow$ such that $\uu \circ p_0 = p \circ (0 \times \Gamma)$,
we construct a lift $l$:
\[ \begin{tikzcd}
	\Gamma & \Ulow \\
	{I \times \Gamma} & \U
	\arrow["{p_0}", from=1-1, to=1-2]
	\arrow["{0 \times \Gamma}"', from=1-1, to=2-1]
	\arrow[from=1-2, to=2-2]
	\arrow["l"{description}, dashed, from=2-1, to=1-2]
	\arrow["p"', from=2-1, to=2-2]
\end{tikzcd}\]
This is provided by the canonical split 
isofibration structure on $\uu : \Ulow \to \U$,
viewing $\Ulow \iso \int i$ as a Grothendieck construction over $\U$.
It follows that these lifts are \emph{normal},
meaning that for any $A : \Gamma \to \U$
and $a : \Gamma \to \Ulow$ such that $\uu \circ a = A$,
the lift provided for the following diagram
\[
\begin{tikzcd}
	\Gamma && \Ulow \\
	{I \times \Gamma} & \Gamma & \U
	\arrow["a", from=1-1, to=1-3]
	\arrow["{0 \times \Gamma}"', from=1-1, to=2-1]
	\arrow[from=1-3, to=2-3]
	\arrow["l"{description}, dashed, from=2-1, to=1-3]
	\arrow["{! \times \Gamma}"', from=2-1, to=2-2]
	\arrow["a"{description}, from=2-2, to=1-3]
	\arrow["A"', from=2-2, to=2-3]
\end{tikzcd}
\]
factors as $l = a \circ (! \times \Gamma)$.
\end{definition}

\section{Algebraic models}\label{sec:StrSem}

Here we provide an alternative algebraic formulation of an MLTT model,
following \cite{awodey2025}, but working in a $\pi$-clan (\Cref{def:pi-preclan}) instead.
For this section, we fix a $\pi$-clan $(\CC, \RR)$.

\begin{definition}[\done{https://github.com/sinhp/HoTTLean/blob/TYPES2026/HoTTLean/Model/Structured/StructuredUniverse.lean\#L20}]
  \label{def:algebraic-universe}
  An \emph{algebraic universe} in $(\CC,\RR)$ is a universe $\uu : \Ulow \to \U$ in
  $\CC$ which is also an $\RR$-map.
  We will just say a universe $\uu$ is $\RR$-algebraic, or algebraic when the $\pi$-clan
  is clear from the context.
\end{definition}

We can replicate the MLTT type formers described in \cite{awodey2025} in this setting,
making use of our general theory of polynomial functors.

\begin{definition}[$\Unit$-types]
  \label{def:alg-unit-type-mltt}
  Suppose $\CC$ has a terminal object $1$.
  We say that an algebraic universe $\uu : \Ulow \to \U$ admits
  \emph{algebraic $\Unit$-types} when we have maps
  $\Unit : 1 \to \Ulow$ and $\unit : 1 \to \U$, such that
  \[\begin{tikzcd}
    1 & \Ulow \\
    1 & \U
    \arrow["\unit", from=1-1, to=1-2]
    \arrow[equals, from=1-1, to=2-1]
    \arrow["\lrcorner"{anchor=center, pos=0.125}, draw=none, from=1-1, to=2-2]
    \arrow["\uu", from=1-2, to=2-2]
    \arrow["\Unit"', from=2-1, to=2-2]
  \end{tikzcd}\]
  is a pullback square.
\end{definition}

% In that case, the relevant pullback square is the following.
% \[
% \begin{tikzcd}
% 	{P_\vv \Ulow} & \Ulow \\
% 	{P_\vv \U} & \U
% 	\arrow["\lam", from=1-1, to=1-2]
% 	\arrow["{P_\vv \vv}"', from=1-1, to=2-1]
% 	\arrow["\lrcorner"{anchor=center, pos=0.125}, draw=none, from=1-1, to=2-2]
% 	\arrow["\vv", from=1-2, to=2-2]
% 	\arrow["\Pi"', from=2-1, to=2-2]
% \end{tikzcd}
% \]
% A more precise correspondence between an elementary $\Pi$-type structure
% and an algebraic one is given in \cite{hua2026}.
Recall that since $\uu$ is an $\RR$-map and $(\CC,\RR)$ is a $\pi$-clan,
we can take $\uu$ to be the signature for a polynomial functor $P_\uu : \CC \to \CC$
(\Cref{def:polynomial-functor-1}).

\begin{definition}[$\Sigma$-types, \done{https://github.com/sinhp/HoTTLean/blob/TYPES2026/HoTTLean/Model/Structured/StructuredUniverse.lean\#L651}]
    \label{def:alg-sig-type-mltt}
    
    Recall the definition of polynomial composition from \Cref{def:pcomp}.
    We consider the polynomial composition of $(\uu : \Ulow \to \U)$ with itself.
    For notational convenience, we write $Q$ for the domain of the composition.
    We say that an
    algebraic universe $\uu:\Ulow\to \U$ admits
    \emph{algebraic $\Sigma$-types} when we have maps
    $\Sigma : P_\uu \U \to \U$ and $\pair: Q \to \Ulow$,
    such that
    \[ \begin{tikzcd}
      Q & \Ulow \\
      {P_\uu\U} & \U
      \arrow["\pair", from=1-1, to=1-2]
      \arrow["{\uu \pcomp \uu}"', from=1-1, to=2-1]
      \arrow["\lrcorner"{anchor=center, pos=0.125}, draw=none, from=1-1, to=2-2]
      \arrow["\uu", from=1-2, to=2-2]
      \arrow["\Sigma"', from=2-1, to=2-2]
    \end{tikzcd} \]
    is a pullback square.
\end{definition}

\begin{definition}[$\Pi$-types, \done{https://github.com/sinhp/HoTTLean/blob/TYPES2026/HoTTLean/Model/Structured/StructuredUniverse.lean\#L359}]
    \label{def:alg-pi-type-mltt}
    We say that an algebraic universe $\uu:\Ulow\to \U$
    admits \emph{algebraic $\Pi$-types} when we have maps
    $\Pi : P_\uu \U \to \U$ and
    $\lam : P_\uu \Ulow\to \Ulow$, such that
    \[ \begin{tikzcd}
      {P_{\uu}{\Ulow}} & \Ulow \\
      {P_{\uu}{\U}} & \U
      \arrow["\lam", from=1-1, to=1-2]
      \arrow["{{P_{\uu}{\uu}}}"', from=1-1, to=2-1]
      \arrow["\lrcorner"{anchor=center, pos=0.125}, draw=none, from=1-1, to=2-2]
      \arrow["\uu", from=1-2, to=2-2]
      \arrow["\Pi"', from=2-1, to=2-2]
    \end{tikzcd} \]
    is a pullback square.
\end{definition}

% Like with elementary $\Pi$-types,
% our formalisation formulates a more general definition
% using three universes $\uu_0 : \Ulow_0 \to \U_0$,
% $\uu_1 : \Ulow_1 \to \U_1$ and $\uu_2 : \Ulow_2 \to \U_2$.
% \[\begin{tikzcd}
% 	{P_{\uu_0}{\Ulow_1}} & {\Ulow_2} \\
% 	{P_{\uu_0}{\U_1}} & {\U_2}
% 	\arrow["\lam", from=1-1, to=1-2]
% 	\arrow["{{P_{\uu_0}{\uu_1}}}"', from=1-1, to=2-1]
% 	\arrow["{\uu_2}", from=1-2, to=2-2]
% 	\arrow["\Pi"', from=2-1, to=2-2]
% \end{tikzcd}\]

% Let us begin by recalling what we mean by a universe having algebraic identity types,
% from \cite{awodey2025}.
% This could be an application of the theory of hom-types from \Cref{sec:cat-as-types},
% where the twisted structure is now degenerate,
% but we repeat the definitions for the sake of clarity.

% \Cref{def:algebraic-id-types} is reformulated in an elementary style in \Cref{def:elementary-id-type}.

\begin{definition}\label{def:algebraic-id-form-intro}
  We will say that an algebraic universe $\uu : \Ulow \to \U$
  admits identity formation and introduction when
  we have a pair of maps $i : \Ulow \to \Ulow$
  and $\Id : \Ulow \times_\U \Ulow \to \Ulow$
  forming a commutative square
  \[ \begin{tikzcd}
    \Ulow & \Ulow \\
    {\Ulow \times_\U \Ulow} & \U
    \arrow["i", from=1-1, to=1-2]
    \arrow["\Delta"', from=1-1, to=2-1]
    \arrow[from=1-2, to=2-2]
    \arrow["\Id"', from=2-1, to=2-2]
  \end{tikzcd} \]
  where the map $\Delta : \Ulow \to {\Ulow \times_\U \Ulow}$ is the diagonal. 
\end{definition}

Assuming that we have structure for formation and introduction of
identity types on a universe $\uu : \Ulow \to \U$,
we can take the pullback of $\Id$ to produce a comparison map
$\rfl : \Ulow \to I$ that factors the diagonal through the pullback.
\[\begin{tikzcd}
  \Ulow && \\
  & I & \Ulow \\
  & {\Ulow \times_\U \Ulow} & \U
  \arrow["\rfl"{description}, dashed, from=1-1, to=2-2]
  \arrow["i", bend left, from=1-1, to=2-3]
  \arrow["\Delta"', bend right, from=1-1, to=3-2]
  \arrow[from=2-2, to=2-3]
  \arrow["d_I"', from=2-2, to=3-2]
  \arrow["\lrcorner"{anchor=center, pos=0.125}, draw=none, from=2-2, to=3-3]
  \arrow[from=2-3, to=3-3]
  \arrow["\Id"', from=3-2, to=3-3]
\end{tikzcd}\]
Let us write $\pp = \uu \circ \pi_0 \circ d_I = \uu \circ \pi_1 \circ d_I : I \to \U$ for the composition 
\[ \begin{tikzcd}
	I & {\Ulow \times_\U \Ulow} & \Ulow & \U
	\arrow["{d_I}", from=1-1, to=1-2]
	\arrow["{\pi_0}", shift left, from=1-2, to=1-3]
	\arrow["{\pi_1}"', shift right, from=1-2, to=1-3]
	\arrow["\uu", from=1-3, to=1-4]
\end{tikzcd}\]
Consider the following commuting triangle.
\[ \begin{tikzcd}
	\Ulow & I \\
	& \U
	\arrow["\rfl", from=1-1, to=1-2]
	\arrow["{\uu}"', from=1-1, to=2-2]
	\arrow["\pp", from=1-2, to=2-2]
\end{tikzcd} \]
Both $(\uu : \Ulow \to \U)$ and $(\pp : I \to \U)$ are
polynomial signatures in $\Poly_{(\CC,\RR)}$,
since $(\CC,\RR)$ is a $\pi$-clan and they are both $\RR$-maps
($\pp$ is a composition of $\RR$-maps).
Using \Cref{def:vert-trans},
this triangle gives rise to a vertical natural transformation
between the polynomial functors.
\[P_{\rfl} : P_{\pp} \to P_{\uu}\]
We consider the naturality square of $P_{\rfl}$
for the morphism $\uu : \Ulow \to \U$.
\[ \begin{tikzcd}
	{ P_{\pp} \Ulow} & {P_{\uu} \Ulow} \\
	{ P_{\pp} \U} & {P_{\uu} \U}
	\arrow["{{P_{\rfl} \Ulow}}", from=1-1, to=1-2]
	\arrow["{{ P_{\pp} \uu}}"', from=1-1, to=2-1]
	\arrow["{{P_{\uu} \uu}}", from=1-2, to=2-2]
	\arrow["{{P_{\rfl} \U}}"', from=2-1, to=2-2]
\end{tikzcd} \]
Since $P_\uu$ preserves $\RR$-maps
(\Cref{prop:polynomial-functor-preclan-morphism}),
the map $P_\uu \uu : P_\uu \Ulow \to P_\uu \U$ is an $\RR$-map,
giving us a pullback $L$:
  \begin{equation}
    \label{eq:id-elimination-2}
    \begin{tikzcd}
    { P_{\pp} \Ulow} && \\
    & L & {P_{\uu} \Ulow} \\
    & { P_{\pp} \U} & {P_{\uu} \U}
    \arrow["p", dashed, from=1-1, to=2-2]
    \arrow["{P_{\rfl} \Ulow}", bend left, from=1-1, to=2-3]
    \arrow["{P_{\pp} \uu}"', bend right, from=1-1, to=3-2]
    \arrow["\phi", from=2-2, to=2-3]
    \arrow["\psi"', from=2-2, to=3-2]
    \arrow["\lrcorner"{anchor=center, pos=0.125}, draw=none, from=2-2, to=3-3]
    \arrow["{P_{\uu} \uu}", from=2-3, to=3-3]
    \arrow["{P_{\rfl} \U}"', from=3-2, to=3-3]
  \end{tikzcd}
  \end{equation}
By the universal property of polynomials
and commutativity of the square,
$\psi$ and $\phi$ are equivalent to the data of three maps
\[\psi_1 = \phi_1 : L \to \U
\quad \psi_2 : \Id_A \to \U
\quad \phi_2 : {A} \to \Ulow\]
where $A := L \opext \psi_1 \to L$ is
given by pullback of $\uu : \Ulow \to \U$ along $\psi_1 : L \to \U$,
and $\Id_A := L \times_U I$ is the pullback in the following diagram
\begin{equation}\label{eq:id-elimination-1}
  \begin{tikzcd}
	\Ulow & {{A}} & \Ulow \\
	\U & {\Id_{A}} & I \\
	& L & \U
	\arrow["\uu"', from=1-1, to=2-1]
	\arrow["{\phi_2}"', from=1-2, to=1-1]
	\arrow[from=1-2, to=1-3]
	\arrow["{\rfl}", from=1-2, to=2-2]
	\arrow["\lrcorner"{anchor=center, pos=0.125}, draw=none, from=1-2, to=2-3]
	\arrow["\rfl", from=1-3, to=2-3]
	\arrow["{\uu}", bend left = 50, from=1-3, to=3-3]
	\arrow["{\psi_2}", from=2-2, to=2-1]
	\arrow[from=2-2, to=2-3]
	\arrow[from=2-2, to=3-2]
	\arrow["\lrcorner"{anchor=center, pos=0.125}, draw=none, from=2-2, to=3-3]
	\arrow["\pp", from=2-3, to=3-3]
	\arrow["{\psi_1}"', from=3-2, to=3-3]
\end{tikzcd}
\end{equation}

\begin{lemma}
  \label{lem:psi-fibration}
  The morphism $\psi_1 : L \to \U$ is an $\RR$-map.
\end{lemma}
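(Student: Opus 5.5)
The plan is to write $\psi_1$ as a composite of two $\RR$-maps and then use closure of $\RR$ under composition, which is one of the preclan axioms. By construction of the universal pair in \Cref{prop:poly-up}, the first component $\psi_1$ of the map $\psi : L \to P_\pp \U$ is the composite
\[ \psi_1 = \fstProj \circ \psi : L \to P_\pp \U \to \U, \]
where $\fstProj : P_\pp \U \to \U$ is the universal first projection for the signature $\pp : I \to \U$. It therefore suffices to show that $\psi$ and $\fstProj$ are both $\RR$-maps.

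For $\psi$, recall that $L$ was defined in \eqref{eq:id-elimination-2} as the pullback of $P_\uu \uu : P_\uu \Ulow \to P_\uu \U$ along $P_\rfl \U$. Since $\uu$ is an $\RR$-map, \Cref{prop:polynomial-functor-preclan-morphism} says that $P_\uu$ is a preclan morphism on $(\RR(1),\RR)$, so $P_\uu \uu$ is an $\RR$-map. Then $\psi$ is a pullback of an $\RR$-map and is itself an $\RR$-map, because $\RR$ is stable under pullback.

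For $\fstProj$, I would unfold \Cref{def:polynomial-functor-1}. We have $P_\pp \U = \U_!\, \pp_*\, I^* \U$, and $\fstProj$ is exactly the structure map of the object $\pp_*(I^*\U) \in \RR(\U)$. Since $\pp$ is $\RR$-exponentiable (it is an $\RR$-map in a $\pi$-clan), the pushforward $\pp_*$ lands in $\RR(\U)$. Hence its structure map $P_\pp \U \to \U$ is an $\RR$-map.

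Composing the two gives the claim. The only point needing care is identifying $\psi_1$ with $\fstProj \circ \psi$. This is just the naturality of the bijection in \Cref{prop:poly-up} in $\Gamma$, applied to $t = \psi$: the component $t_1$ equals $\fstProj \circ t$, as shown in the factorisation diagram there.
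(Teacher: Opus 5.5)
Your proof is correct and follows the paper's decomposition $\psi_1 = \fstProj \circ \psi$, with the same argument that $\psi$ is a pullback of the $\RR$-map $P_\uu \uu$. The only variation is in the $\fstProj$ step: the paper identifies $\fstProj$ with $P_\pp ! : P_\pp \U \to P_\pp 1 \iso \U$ and uses that $P_\pp$ preserves $\RR$-maps, whereas you read $\fstProj$ off directly as the structure map of $\pp_*(I^*\U) \in \RR(\U)$. Your version only needs the pushforward $\pp_*$ to land in $\RR(\U)$, so it is slightly more economical.
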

\begin{proof}
  The map $\psi_1 : L \to \U$ is a composition of two maps
  $\psi : L \to P_\pp \U$ and
  $\fstProj : P_\pp \U \to \U$,
  the first projection of the polynomial from \Cref{prop:poly-up}.
  The first map $\psi$ is a pullback of $P_\uu \uu$,
  which is an $\RR$-map
  since $P_\uu$ preserves $\RR$-maps
  (\Cref{prop:polynomial-functor-preclan-morphism})
  As for $\fstProj$,
  recall that evaluating a polynomial at the terminal object
  recovers the base of the signature
  $P_\pp 1 \iso \U$.
  The map $\fstProj : P_\pp \U \to \U$ is therefore isomorphic to
  $P_\pp ! : P_\pp \U \to P_\pp 1$,
  where $! : \U \to 1$.
  Finally, $P_\pp !$ is an $\RR$-map because $P_\pp$ preserves $\RR$-maps
  and $!$ is an $\RR$-map.
\end{proof}

\begin{proposition}[Identity elimination]\label{prop:id-elim-equiv}
  Suppose $\uu : \Ulow \to \U$ is an algebraic universe that admits
  identity formation and introduction.
  The following are equivalent (structures)
  % for a universe $\uu : \Ulow \to \U$ that admits
  % identity formation and introduction.
  \begin{itemize}
    \item A section $j : L \to P_{\pp} \Ulow$
    of the comparison map $p : P_{\pp} \Ulow \to L$
    from \cref{eq:id-elimination-2}.
    \[ p \circ j = \id\]
    \item A diagonal lift $j : \Id_{A} \to \Ulow$
    as indicated in the following diagram,
    extracted from \cref{eq:id-elimination-1}.
    \[\begin{tikzcd}
        {{A}} & \Ulow \\
        {\Id_A} & \U
        \arrow["{{\phi_2}}", from=1-1, to=1-2]
        \arrow["{{\rfl}}"', from=1-1, to=2-1]
        \arrow["\uu", from=1-2, to=2-2]
        \arrow["j"{description}, dashed, from=2-1, to=1-2]
        \arrow["{{\psi_2}}"', from=2-1, to=2-2]
      \end{tikzcd}\]
    \item A diagonal lift $j' : \Id_{A} \to C$
    as indicated in the following diagram.
    \[\begin{tikzcd}
        {{A}} & C & \Ulow \\
        {\Id_{A}} & {\Id_{A}} & \U
        \arrow["r", dashed, from=1-1, to=1-2]
        \arrow["{{{\phi_2}}}", bend left, from=1-1, to=1-3]
        \arrow["{{{\rfl}}}"', from=1-1, to=2-1]
        \arrow[from=1-2, to=1-3]
        \arrow[from=1-2, to=2-2]
        \arrow["\lrcorner"{anchor=center, pos=0.125}, draw=none, from=1-2, to=2-3]
        \arrow["\uu", from=1-3, to=2-3]
        \arrow["{j'}"{description}, dashed, from=2-1, to=1-2]
        \arrow[equals, from=2-1, to=2-2]
        \arrow["{{{\psi_2}}}"', from=2-2, to=2-3]
      \end{tikzcd}\]
  \end{itemize}
\end{proposition}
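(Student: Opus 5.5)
The plan is to prove the two equivalences (first $\Leftrightarrow$ second, second $\Leftrightarrow$ third) separately, using the universal property of polynomial functors (\Cref{prop:poly-up}) for the first and the pullback $C$ for the second.

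For the first equivalence, I will unfold what a section $j : L \to P_\pp \Ulow$ of $p$ amounts to. By \Cref{prop:poly-up} applied to $P_\pp$, a map $j : L \to P_\pp \Ulow$ is the same as a pair $(j_1 : L \to \U,\ j_2 : j_1 \times_\U \pp \to \Ulow)$. The condition $p \circ j = \id$ splits into two conditions: $P_\pp \uu \circ j = \psi$ and $P_{\rfl}\Ulow \circ j = \phi$.

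The first condition fixes $j_1 = \psi_1$, so the domain of $j_2$ is exactly $\Id_A = L \times_\U I$. It also forces $\uu \circ j_2 = \psi_2$, since postcomposition with $\uu$ corresponds to applying $P_\pp \uu$ under the naturality of \Cref{prop:poly-up} in $X$.

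For the second condition, I need to compute $P_{\rfl}\Ulow$ in terms of the universal property. Since $\rfl$ gives a vertical morphism, \Cref{def:vert-trans} defines $P_{\rfl}$ as the Beck--Chevalley transformation for pushforward along the square $\rfl : \Ulow \to I$ over $\U$. On generalized elements this should act by precomposition: it sends $(t_1, t_2)$ to $(t_1,\ t_2 \circ (t_1^*\rfl))$, where $t_1^*\rfl : t_1 \times_\U \uu \to t_1 \times_\U \pp$ is the pulled-back map. I will check this by unwinding the adjunction bijection in \Cref{prop:poly-up} against the Beck--Chevalley mate, which I expect to be a routine mate calculation.

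Granting that, the second condition says $j_2 \circ \rfl = \phi_2$, where $\rfl : A \to \Id_A$ is the pullback of $\rfl$ along $\psi_1$, as in \cref{eq:id-elimination-1}. So sections $j$ correspond bijectively to diagonal fillers $j_2 : \Id_A \to \Ulow$ with $\uu \circ j_2 = \psi_2$ and $j_2 \circ \rfl = \phi_2$. These are exactly the lifts in the second item. All identifications are bijections of hom-sets, so this is an equivalence of structures, not merely of mere existence.

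The equivalence of the second and third items is formal. $C$ is the pullback of $\uu$ along $\psi_2$, so maps $j' : \Id_A \to C$ that are sections of $C \to \Id_A$ correspond bijectively to maps $j : \Id_A \to \Ulow$ with $\uu \circ j = \psi_2$. Under this correspondence, the condition $j' \circ \rfl = r$ matches $j \circ \rfl = \phi_2$, because $r$ is by definition the map induced into the pullback by $(\rfl, \phi_2)$ and maps into a pullback are determined by their two components.

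The main obstacle is the description of $P_{\rfl}\Ulow$ on generalized elements as precomposition with the pulled-back $\rfl$. I will establish it first, naturally in $\Gamma$, by reducing to $\Gamma = P_\pp \Ulow$ and the universal pair $(\fstProj,\sndProj)$, and then tracking the counit of $f'^* \dashv f'_*$ through the Beck--Chevalley mate of the square with $\delta = \id$.
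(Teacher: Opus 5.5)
Your proposal is correct and follows the route the paper intends. The paper gives no separate proof of this proposition, but its directed analogue \Cref{prop:hom-elimination-equiv} is proved in exactly this way: the first two items are matched via the universal property of polynomials (\Cref{prop:poly-up}), and the last two via the pullback $C$. Your explicit description of $P_{\rfl}$ on generalized elements, as precomposition with the pulled-back $\rfl$, is the fact the paper uses silently when it describes $L$ through the square \cref{eq:id-elimination-1}, so spelling it out adds rigour rather than changing the argument.
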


\begin{definition}[$\Id$-types]\label{def:algebraic-id-types}
  If an algebraic universe admits identity formation and introduction and
  any one of the equivalent conditions of \Cref{prop:id-elim-equiv} is met (elimination),
  we will say that the universe has $\Id$-type structure.
\end{definition}

\begin{proposition}
  If $\uu : \Ulow \to \U$ is an $\RR$-algebraic universe,
  and $\uu$ admits algebraic $\Unit$-types
  (respectively $\Sigma$-types, $\Pi$-types, $\Id$-types),
  it also admits unalgebraic $\Unit$-types
  (respectively $\Sigma$-types, $\Pi$-types, $\Id$-types).
\end{proposition}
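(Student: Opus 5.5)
The plan is to treat each type former separately. In every case the algebraic structure is a pullback square (or a lift) over a polynomial, and the universal property of polynomial functors (\Cref{prop:poly-up}) turns maps $\Gamma \to P_\uu X$ into pairs $(A : \Gamma \to \U,\ B : \Gamma.A \to X)$. Stability under substitution will come for free: every unalgebraic operation is defined by postcomposing with a fixed map ($\Sigma$, $\Pi$, $\Unit$, $\Id$, $\lam$, $\pair$, $j$) or by pullback along such a map, and the bijection of \Cref{prop:poly-up} is natural in $\Gamma$. Here $\tilde\sigma$ is exactly the map $t \times_\U \uu$ induced by precomposition.

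\textbf{Unit, $\Sigma$ and $\Pi$.} For $\Unit$, set $\Unit_\Gamma := \Unit \circ !_\Gamma$. Terms of $\Unit_\Gamma$ are maps $\Gamma \to \Ulow$ over $\Unit_\Gamma$, and these correspond to maps $\Gamma \to 1$ by the pullback square, so there is exactly one. Stability holds because $!_\Gamma \circ \sigma = !_\Delta$.

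For $\Pi$, a pair $(A,B)$ gives a map $\langle A,B\rangle : \Gamma \to P_\uu\U$; set $\Pi_A B := \Pi \circ \langle A, B\rangle$. A term $b : \Gamma.A \to \Ulow$ over $B$ gives $\langle A, b\rangle : \Gamma \to P_\uu\Ulow$, and naturality of \Cref{prop:poly-up} in $X$ shows that $P_\uu \uu \circ \langle A,b\rangle = \langle A, \uu b\rangle$. Set $\lam b := \lam \circ \langle A,b\rangle$. For $\unlam$, given $f$ over $\Pi_A B$, the pullback property produces a unique map $\Gamma \to P_\uu \Ulow$ lying over $\langle A,B\rangle$. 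Decomposing it by \Cref{prop:poly-up} gives a first component $A$ and a second component $\Gamma.A \to \Ulow$ over $B$. The $\beta$ and $\eta$ laws follow from uniqueness in the pullback and the bijection.

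For $\Sigma$, the extra ingredient is describing maps into $Q$. Using \Cref{def:pcomp}, a map $\Gamma \to Q$ over $\langle A, B\rangle : \Gamma \to P_\uu \U$ is a map into the pullback $\sndProj \times_{\U} \uu$. By the pasting of pullbacks this amounts to a section $a$ of $\Gamma.A \to \Gamma$ together with a lift $b : \Gamma \to \Ulow$ of $B \circ (\id.a)$. Then $\pair$, $\fst$ and $\snd$ are defined exactly as for $\Pi$, and the $\beta$/$\eta$ laws again come from pullback uniqueness.

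\textbf{Identity types.} Set $\Id_A(a_0,a_1) := \Id \circ (a_0,a_1)$ and $\refl a := i \circ a$, using the square of \Cref{def:algebraic-id-form-intro}. The context $\Gamma.(x:A).\Id_A(a,x)$ is then the pullback of $\pp : I \to \U$ along $A$, restricted to first component $a$; $\rho_a$ is induced by $\rfl \circ a$. For $J$, I would use the second formulation in \Cref{prop:id-elim-equiv}. A motive $C$ and a term $r$ over $C\rho_a$ determine, via \Cref{prop:poly-up} applied to $P_\pp$ and $P_\uu$, a map $\ell : \Gamma \to L$. Its components are $\psi_1\ell = A$, $\psi_2$ reindexed to $C$, and $\phi_2$ reindexed to $r$ (up to the fixed basepoint $a$, handled by pulling back further along $a$). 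Define $j(C,r)$ as the composite of the canonical map $\Gamma.(x:A).\Id_A(a,x) \to \Id_A$ over $\ell$ with the diagonal filler $j$. The equations $\uu \circ j(C,r) = C$ and $j(C,r)\circ \rho_a = r$ are then the two triangles of that filler, pulled back.

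\textbf{Main obstacle.} The step I expect to be hardest is the based-path bookkeeping. The algebraic eliminator quantifies over families parametrised by both endpoints (the whole of $I$ over $L$), while the unalgebraic $J$ fixes $a$. I would first establish the identification of $\Gamma.(x:A).\Id_A(a,x)$ with a pullback of $\Id_A$ along the map $\Gamma \to L$ described above. The delicate part is confirming that this map is well defined when the motive is only given on based paths, since $L$ classifies motives over all of $\Id_A$. If this fails, I would instead pass through the third formulation $j'$ and construct free-endpoint motives by pulling back along projection. Stability of $j$ then follows once more from naturality of \Cref{prop:poly-up} and the fact that the filler $j$ is a single fixed map.
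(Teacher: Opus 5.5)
Your treatment of $\Unit$-, $\Sigma$- and $\Pi$-types is correct. It is the routine argument the paper has in mind; the paper gives no details beyond citing \cite{awodey2025,hua2026}. You classify the data by maps into $P_\uu\U$, $P_\uu\Ulow$ or $Q$ via \Cref{prop:poly-up}, read the eliminators and the $\beta$/$\eta$ laws off the pullback squares, and get stability from naturality in $\Gamma$.

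The gap is in the $\Id$ case, at exactly the step you flagged.

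The algebraic eliminator of \Cref{prop:id-elim-equiv} is an \emph{unbased} $J$. The fibre of $\pp : I \to \U$ over $A$ is $(x:A).(y:A).\Id_A(x,y)$. So a map $\ell : \Gamma \to L$ amounts to three pieces of data: a type $A$, a motive on all of $\Gamma.(x:A).(y:A).\Id_A(x,y)$, and a term on all of $\Gamma.(x:A)$ lying over that motive restricted along $\rfl$.

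The unalgebraic eliminator is \emph{based}. Here $C$ is only given on $\Gamma.(x:A).\Id_A(a,x)$, and $r$ only at the point $a$. Restricting along $a$ turns unbased data into based data, not conversely, so $(C,r)$ determines no $\ell$. Your parenthetical ``handled by pulling back further along $a$'' runs in the wrong direction.

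The fallback via $j'$ fails for the same reason. There is no projection $\Gamma.(x:A).(y:A).\Id_A(x,y) \to \Gamma.(y:A).\Id_A(a,y)$ along which to pull $C$ back. A map of that kind, compatible with the two reflexivity maps, would exhibit $\rho_a$ as a retract of the unbased reflexivity map. Constructing it amounts to contracting the based path space, which is precisely what based $J$ is meant to deliver, so you cannot presuppose it.

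What is missing is an actual derivation of based (Paulin-Mohring) $J$ from unbased (Martin-L\"of) $J$. The standard one runs in two steps:
\begin{enumerate}
\item Use unbased $J$ with the motive $\Id_{\Sigma_{z:A}\Id_A(x,z)}((x,\refl x),(y,p))$ to contract singletons.
\item Transport $r$ along the resulting path, again by unbased $J$, with a motive $C(u) \to C(v)$ for $u,v$ in the singleton type (or use a Frobenius form of $J$).
\end{enumerate}
The computation rule $j(C,r)\circ\rho_a = r$ then holds because both uses of $J$ compute to identities at $\refl$.

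This derivation uses $\Sigma$- and $\Pi$-types on $\uu$, which the $\Id$ clause of the statement does not supply on its own. You therefore have two options:
\begin{itemize}
\item Add $\Sigma$- and $\Pi$-types as hypotheses.
\item Change the algebraic side. State the eliminator with the polynomial whose signature is the first-endpoint projection $I \to \Ulow$ (fibre over $(A,a)$ being $(y:A).\Id_A(a,y)$), compared via $\rfl$ with the identity polynomial on $\Ulow$. With that formulation, your construction of $\ell$ and of $j(C,r)$ by pullback goes through essentially verbatim, with stability again from naturality.
\end{itemize}
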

\begin{proof}
  This is routine (cf. \cite{awodey2025, hua2026}).
\end{proof}
As a result, when the universe is algebraic,
we can drop the adjective ``algebraic'' and ``unalgebraic''
for type formers.

\subsection*{Constructing algebraic models}

\begin{definition}\label{def:alg-universe-classifying-map}
  Suppose $\uu : \Ulow \to \U$ is a universe.
  We denote the set of pullbacks of $\uu$ by $\RR_\U$
  and call $\RR_\U$-maps \emph{small fibrations}.
  (If $\uu$ is $\RR$-algebraic then $\RR_\U \subseteq \RR$.)
  \[
    \RR_\U := \{ A : X \to \Gamma \st \begin{tikzcd}
    X & \Ulow \\
    \Gamma & \U
    \arrow["\exists", dashed, from=1-1, to=1-2]
    \arrow["A"', from=1-1, to=2-1]
    \arrow["\lrcorner"{anchor=center, pos=0.125}, draw=none, from=1-1, to=2-2]
    \arrow["\uu", from=1-2, to=2-2]
    \arrow["\exists"', dashed, from=2-1, to=2-2]
  \end{tikzcd}\}
  \]
  Then we say that $\uu$ \emph{has chosen classifying maps}
  when every small fibration $A : X \to \Gamma$ in $\RR_\U$
  is equipped with a {chosen classifying map}
  $\ulcorner A \urcorner : \Gamma \to \U$,
  so that
  \[ \begin{tikzcd}
    X & \Ulow \\
    \Gamma & \U
    \arrow[from=1-1, to=1-2]
    \arrow["A"', from=1-1, to=2-1]
    \arrow["\lrcorner"{anchor=center, pos=0.125}, draw=none, from=1-1, to=2-2]
    \arrow["\uu", from=1-2, to=2-2]
    \arrow["{{\ulcorner A \urcorner}}"', from=2-1, to=2-2]
  \end{tikzcd} \]
  is a pullback square.
\end{definition}
When the axiom of choice is available,
this condition is trivial.
In many models, such as the groupoid model with $\uu : \Ulow \to \U$
as the universe of small isofibrations,
it is not constructively true that the universe has chosen classifying maps.
In this case, we should really work in a setting where $\RR_\U$
is not just a class of maps satisfying a property,
but a class of maps with structure.
% Corresponding to this,
% the structure of a small split isofibration is preserved under 
% pullback, composition, isomorphism, and pushforward.
For now, we will make it clear where chosen classifying maps are being used,
and have the groupoid model as a non-constructive example.
% For the universe of small split isofibrations in the category of groupoids
% (\Cref{sec:universe-in-groupoid-model}),
% this condition is constructively satisfied by taking the fibres functor
% of a split isofibration (cf. \Cref{prop:universal-opfibration-classifies}).

\begin{proposition}\label{prop:constructing-alg-models}
  Suppose $(\CC,\RR)$ is a $\pi$-clan and $\uu : \Ulow \to \U$ is an $\RR$-algebraic universe
  that has chosen classifying maps.
  If $\RR_\U$ is a $\pi$-preclan then
  $\uu$ admits $\Unit$-types, $\Sigma$-types and $\Pi$-types.
\end{proposition}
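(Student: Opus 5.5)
The strategy is to build each type former by exhibiting the relevant map as a small fibration, i.e. an $\RR_\U$-map, and then using the chosen classifying maps to produce the pullback square required by \Cref{def:alg-unit-type-mltt}, \Cref{def:alg-sig-type-mltt} and \Cref{def:alg-pi-type-mltt}. The hypothesis that $\RR_\U$ is a $\pi$-preclan supplies three closure properties: pullback stability, closure under composition (and containment of isomorphisms), and $\RR_\U$-exponentiability of $\RR_\U$-maps. Each type former uses one of these.

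For $\Unit$-types, $\id_1 : 1 \to 1$ is an isomorphism, hence lies in $\RR_\U$. Its chosen classifying map gives $\Unit : 1 \to \U$ together with a pullback square whose top map is $\unit : 1 \to \Ulow$. This is exactly \Cref{def:alg-unit-type-mltt}.

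For $\Sigma$-types, I would show that the polynomial composite $\uu \pcomp \uu : Q \to P_\uu \U$ of \Cref{def:pcomp} lies in $\RR_\U$. By construction it is the composite of two maps.
\begin{itemize}
\item The first is the pullback of $\uu$ along $\sndProj : \fstProj \times_\U \uu \to \U$, which lies in $\RR_\U$ by pullback stability.
\item The second is the projection $\fstProj \times_\U \uu \to P_\uu \U$, which is the pullback of $\uu$ along $\fstProj$ and so also lies in $\RR_\U$.
\end{itemize}
Closure of $\RR_\U$ under composition then puts $\uu \pcomp \uu$ in $\RR_\U$. Its chosen classifying map is $\Sigma : P_\uu \U \to \U$, and the top map of the resulting pullback square is $\pair$.

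For $\Pi$-types, I need $P_\uu \uu : P_\uu \Ulow \to P_\uu \U$ to lie in $\RR_\U$. Here I would invoke \Cref{lem:polynomial-functor-preserves-maps} with $\SS := \RR_\U \subseteq \RR$. Its hypotheses hold: $\uu$ is $\RR$-exponentiable because $(\CC,\RR)$ is a $\pi$-clan, and $\uu$ is $\RR_\U$-exponentiable because $\uu \in \RR_\U$ and $\RR_\U$ is a $\pi$-preclan. The lemma gives that $P_\uu : (\RR(1),\RR_\U) \to (\RR(1),\RR_\U)$ preserves $\RR_\U$-maps. Applying this to $\uu : \Ulow \to \U$, which is an $\RR_\U$-map between $\RR$-objects, shows $P_\uu \uu \in \RR_\U$. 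The chosen classifying map gives $\Pi$, and the top of the pullback square gives $\lam$.

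I expect this last step to be the main obstacle. \Cref{lem:polynomial-functor-preserves-maps} lives in $\RR(1)$, so I must check that $\Ulow$ and $\U$ are $\RR$-objects, i.e. that $\U \to 1$ and $\Ulow \to 1$ lie in $\RR$. This holds because $(\CC,\RR)$ is a clan, so maps to $1$ are $\RR$-maps. I must also check that $\RR_\U$ is genuinely a preclan on $\CC$ with $\RR_\U \subseteq \RR$, which is given together with the algebraicity of $\uu$.
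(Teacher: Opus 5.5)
Your proposal is correct and follows the paper's proof essentially step for step. Each type former is obtained from the chosen classifying maps applied to a small fibration: $\id_1$ for $\Unit$-types; $\uu \pcomp \uu$, a composite of two pullbacks of $\uu$, for $\Sigma$-types; and $P_\uu \uu$, shown small via \Cref{lem:polynomial-functor-preserves-maps} with $\SS = \RR_\U$, for $\Pi$-types. Your extra checks are precisely the hypotheses the paper leaves implicit: that $\U$ and $\Ulow$ are $\RR$-objects because $(\CC,\RR)$ is a clan, that $\RR_\U \subseteq \RR$, and that $\uu$ is both $\RR$- and $\RR_\U$-exponentiable.
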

\begin{proof}
  Since $\uu$ has chosen classifying maps,
  to show that $\uu$ admits $\Unit$-types,
  we only need to note that the identity $1 \to 1$
  is a small fibration.

  To show that $\uu$ admits $\Sigma$-types,
  we can simply recall that $\uu \pcomp \uu$
  is defined as a composition of pullbacks of small fibrations,
  and is therefore also a small fibration.

  To show that $\uu$ admits $\Pi$-types,
  it suffices to show that $P_\uu \uu$ is a small fibration.
  This follows from \Cref{lem:polynomial-functor-preserves-maps} with $\SS = \RR_\U$,
  which says that $P_\uu : (\RR(1),\RR_\U) \to (\RR(1),\RR_\U)$ is a preclan morphism.
\end{proof}

Modulo concerns about constructivism,
\Cref{prop:constructing-alg-models} makes models of MLTT much easier to set up.
For example,
when working with the groupoid model,
one only needs to check that the class of small split isofibrations
(meaning the maps $F : \CC \to \DD$ on which there merely exists the
structure of a small split isofibration $(F,l)$)
are closed under isomorphisms, compositions, and pushforwards.
(A stronger claim is also true: the structure of small split isofibrations
are preserved under these operations.)
The stability of type formers under substitution automatically
follows from the general theory of algebraic models.
% To construct algebraic $\Id$-types,
% we will use algebraic $\Path$-types,
% which we present in \Cref{sec:path-types}.
For constructing algebraic $\Id$-types,
one could either consider the usual factorisation conditions,
which we do below,
or algebraic $\Path$-types, as we present in \Cref{sec:path-types}.

\begin{definition}[Anodyne maps]
  Let $\RR$ be a class of maps in a category $\CC$.
  A morphism $f : X \to Y$ is $\RR$-anodyne
  when $f$ has the left lifting property against all $\RR$-maps,
  meaning that for any $\RR$-map $d : B \to A$, and morphisms
  $b : X \to B$ and $a : Y \to A$ such that $d \circ b = a \circ f$,
  there is a diagonal lift $t : Y \to B$.
  We draw $\RR$-maps with a double head $\twoheadrightarrow$
  and anodyne maps with a tail $\rightarrowtail$. 
  \[
    \begin{tikzcd}
    X & B \\
    Y & A
    \arrow[from=1-1, to=1-2]
    \arrow[tail, from=1-1, to=2-1]
    \arrow[two heads, from=1-2, to=2-2]
    \arrow[dashed, from=2-1, to=1-2]
    \arrow[from=2-1, to=2-2]
  \end{tikzcd}
  \]
  % A pretribe $(\CC,\RR)$ is a preclan such that 
  % \begin{itemize}
  %   \item every map $f : X \to Y$ between $\RR$-objects factors as an
  %   anodyne map followed by an $\RR$-map and
  %   \item anodyne maps are stable under pullback along $\RR$-maps.
  % \end{itemize}
\end{definition}

% \begin{definition}
%   A subtribe $(\CC,\RR,\RR_\U)$ consists of 
%   \begin{itemize}
%     \item a clan $(\CC,\RR)$ such that $\RR$-anodyne maps are stable under pullback
%     \item a preclan $\RR_\U \subseteq \RR$,
%     \item 
%   \end{itemize}
%   Let us say that $\RR_U \subseteq \RR$ is a pretribe in $\RR$
%   when any 
% \end{definition}

\begin{lemma}\label{lem:anodyne-pullback}
  If $(\CC,\RR)$ is a $\pi$-clan,
  then anodyne maps are stable under pullback along $\RR$-maps.
\end{lemma}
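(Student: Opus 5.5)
The argument is the standard one: pullback $f^*$ along an $\RR$-map $f$ is left adjoint to $f_*$, and $f_*$ preserves $\RR$-maps, so lifting problems of $f^*i$ against $\RR$-maps transpose to lifting problems of $i$ against $\RR$-maps.

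Let $i : X \to Y$ be $\RR$-anodyne and $f : Y' \to Y$ an $\RR$-map. Write $f^* i : X' \to Y'$ for the pullback, where $X' := X \times_Y Y'$; it exists because $f$ is an $\RR$-map. Take a lifting problem of $f^* i$ against an $\RR$-map $d : B \to A$, given by $b : X' \to B$ and $a : Y' \to A$ with $d b = a \circ f^* i$.

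First, I reduce to the case $A = Y'$ with $d$ an $\RR$-object over $Y'$. Pull $d$ back along $a$ to get $a^* d : a^*B \to Y'$, which lies in $\RR(Y')$. Lifts of the original square correspond bijectively to sections $Y' \to a^* B$ extending the induced map $X' \to a^* B$ over $Y'$. So it suffices to treat squares of the form
\[
  \begin{tikzcd}
    X' & E \\
    Y' & Y'
    \arrow["b", from=1-1, to=1-2]
    \arrow["{f^* i}"', from=1-1, to=2-1]
    \arrow["e", from=1-2, to=2-2]
    \arrow[equals, from=2-1, to=2-2]
  \end{tikzcd}
\]
with $e \in \RR(Y')$. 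In other words, we need to extend a map $f^*(X \xrightarrow{i} Y) \to e$ in $\CC/Y'$ along $f^*(i)$, viewed as a map $f^*(i) \to f^*(\id_Y)$ in $\CC/Y'$.

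Next, I transpose across the partial adjunction. Since $(\CC,\RR)$ is a $\pi$-clan, $f$ is $\RR$-exponentiable, so by \Cref{prop:pra-defs} there is a partial right adjoint $f_* : \RR(Y') \to \RR(Y)$ to $f^* : \CC/Y \to \CC/Y'$, natural in the first variable. Maps $f^*(h) \to e$ in $\CC/Y'$ correspond naturally to maps $h \to f_* e$ in $\CC/Y$. Apply this to $h = (X \xrightarrow{i} Y)$ and $h = \id_Y$; naturality along $i : (X,i) \to (Y,\id)$ shows that the problem becomes extending $\tilde b : X \to f_* E$, a map over $Y$, along $i$ to a section $Y \to f_* E$ of $f_* e$. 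This is the square
\[
  \begin{tikzcd}
    X & f_* E \\
    Y & Y
    \arrow["{\tilde b}", from=1-1, to=1-2]
    \arrow["i"', from=1-1, to=2-1]
    \arrow["{f_* e}", from=1-2, to=2-2]
    \arrow[equals, from=2-1, to=2-2]
  \end{tikzcd}
\]
Because $f_* e \in \RR(Y)$ is an $\RR$-map and $i$ is anodyne, a diagonal exists. Transposing it back gives the required section $Y' \to E$, which is a lift for $f^* i$.

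The step needing the most care is the transposition. We must check that the correspondence really sends commuting squares to commuting squares: the upper triangle follows from naturality of the bijection in $h$, and the lower triangle holds because everything lives in slices over $Y'$ and $Y$. We also need the partial adjunction to hold against all of $\CC/Y$, not only $\RR(Y)$, since $(X,i)$ need not be an $\RR$-object. That is exactly the form given by \Cref{prop:pra-defs} (via presheaves), so I would cite it directly rather than the weaker adjunction $f^* \dashv f_*$ on $\RR(-)$ alone.
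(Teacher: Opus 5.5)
Your proof is correct and is the standard Frobenius-style transposition argument that the paper delegates to \cite[Lemma 29]{awodey2018}: reduce to a section problem over $Y'$, transpose along the partial adjunction $f^* \dashv_\partial f_*$, and use that $f_*$ lands in $\RR(Y)$. Your explicit use of the bijection against all of $\CC/Y$, which follows from $\RR$-exponentiability via \Cref{prop:beck-chevalley-defs} and \Cref{prop:pra-defs}, is exactly the adaptation needed because $\CC$ need not be lex here. The argument in fact only uses that $(\CC,\RR)$ is a $\pi$-preclan.
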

\begin{proof}
  See \cite[Lemma 29]{awodey2018}.
\end{proof}

\begin{proposition}\label{prop:constructing-alg-id-types}
  Suppose $(\CC,\RR)$ is a $\pi$-clan and
  $\uu : \Ulow \to \U$ is an $\RR$-algebraic universe that has chosen classifying maps.
  If the diagonal on the universe
  $\Delta : \Ulow \to \Ulow \times_\U \Ulow$
  factors as an $\RR$-anodyne\footnote{
    For the sake of avoiding the axiom of choice,
    we also need to assume that anodyne maps
    come equipped with certain chosen lifts.
    Let us do so implicitly.} %TODO
  map followed by an $\RR_\U$-map
  then $\uu$ admits $\Id$-types.
\end{proposition}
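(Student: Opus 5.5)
Write the given factorisation as $\Delta = d_I \circ \rfl$, with $\rfl : \Ulow \rightarrowtail I$ an $\RR$-anodyne map and $d_I : I \to \Ulow \times_\U \Ulow$ an $\RR_\U$-map. The plan is to read formation, introduction and elimination off this factorisation, using the third formulation of \Cref{prop:id-elim-equiv} for elimination.

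\emph{Formation and introduction.} Since $d_I \in \RR_\U$ and $\uu$ has chosen classifying maps, $d_I$ has a classifying map $\Id := \ulcorner d_I \urcorner : \Ulow \times_\U \Ulow \to \U$ together with a pullback square whose top edge is a map $I \to \Ulow$. Define $i : \Ulow \to \Ulow$ as this top edge composed with $\rfl$. The square of \Cref{def:algebraic-id-form-intro} commutes because $\uu \circ i = \Id \circ d_I \circ \rfl = \Id \circ \Delta$. Moreover, the pullback of $\Id$ along $\uu$ recovers $I$ itself, and the induced comparison map is exactly the chosen $\rfl$. So the object $I$, the map $\pp$, and diagram \eqref{eq:id-elimination-1} are built from our given factorisation.

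\emph{Elimination.} By \Cref{prop:id-elim-equiv}, it suffices to produce a diagonal filler $j'$ in the third square. There, $C \to \Id_A$ is the pullback of $\uu$ along $\psi_2$, hence an $\RR$-map because $\uu$ is algebraic. The left map $A \to \Id_A$ is the pullback of $\rfl : \Ulow \to I$ along the projection $\Id_A = L \times_\U I \to I$, as in \eqref{eq:id-elimination-1}. If we show this pullback of $\rfl$ is still $\RR$-anodyne, the lift exists by definition of anodyne, using the chosen lifts from the footnote to keep the construction choice-free.

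\emph{Main obstacle.} The key step is stability of anodyne maps under this pullback. \Cref{lem:anodyne-pullback} gives stability only along $\RR$-maps, so I need the projection $\Id_A \to I$ to be an $\RR$-map. It is the pullback of $\psi_1 : L \to \U$ along $\pp : I \to \U$, and \Cref{lem:psi-fibration} says $\psi_1$ is an $\RR$-map; pullback-stability of $\RR$ then finishes this step. A further point to check is that the square in \eqref{eq:id-elimination-1} exhibiting $A \to \Id_A$ is genuinely a pullback of $\rfl$. This follows from the two-pullbacks lemma, since $A = L.\psi_1$ is the pullback of $\uu = \pp \circ \rfl$ along $\psi_1$.
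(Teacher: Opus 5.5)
Your proposal is correct and follows the paper's proof. You classify $d_I$ to get formation and introduction, and you obtain elimination by showing that $A \to \Id_A$ is anodyne as a pullback of $\rfl$, using \Cref{lem:anodyne-pullback} together with \Cref{lem:psi-fibration}. You are also slightly more careful than the paper, which says the pullback is taken ``along $\psi_1$''. You note that the actual base change is along the projection $\Id_A \to I$, which is an $\RR$-map because it is a pullback of $\psi_1$. You also check, via the two-pullbacks lemma and uniqueness of the comparison map, that the $\rfl$ in \eqref{eq:id-elimination-1} is the given anodyne map.
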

\begin{proof}
  Let us denote the given factorisation of $\Delta$ as follows.
  \[ \begin{tikzcd}
      \Ulow && {\Ulow \times_\U \Ulow} \\
      & I
      \arrow["\Delta", from=1-1, to=1-3]
      \arrow["\rfl"', tail, from=1-1, to=2-2]
      \arrow["{(\src,\trg)}"', two heads, from=2-2, to=1-3]
    \end{tikzcd} \]
  Identity formation and introduction follows from
  $(\src,\trg) : I \to {\Ulow \times_\U \Ulow}$
  being in $\RR_\U$.
  \[ \begin{tikzcd}
    \Ulow & I & \Ulow \\
    & {{\Ulow \times_\U \Ulow}} & \U
    \arrow["\rfl", from=1-1, to=1-2]
    \arrow["\Delta"', from=1-1, to=2-2]
    \arrow[dashed, from=1-2, to=1-3]
    \arrow[from=1-2, to=2-2]
    \arrow[from=1-3, to=2-3]
    \arrow["{\ulcorner I \urcorner}"', dashed, from=2-2, to=2-3]
    \arrow["\lrcorner"{anchor=center, pos=0.125}, draw=none, from=1-2, to=2-3]
  \end{tikzcd}\]
  By the second (or third) condition in \Cref{prop:id-elim-equiv},
  to provide identity elimination,
  it suffices to show that the map $A \to \Id_A$
  from the ``universal elimination'' diagram
  \labelcref{eq:id-elimination-1}, reproduced below, is anodyne.
  \[
    \begin{tikzcd}
    \Ulow & {{A}} & \Ulow \\
    \U & {\Id_{A}} & I \\
    & L & \U
    \arrow["\uu"', from=1-1, to=2-1]
    \arrow["{\phi_2}"', from=1-2, to=1-1]
    \arrow[from=1-2, to=1-3]
    \arrow["{\rfl}", from=1-2, to=2-2]
    \arrow["\lrcorner"{anchor=center, pos=0.125}, draw=none, from=1-2, to=2-3]
    \arrow["\rfl", from=1-3, to=2-3]
    \arrow["{\uu}", bend left = 50, from=1-3, to=3-3]
    \arrow["{\psi_2}", from=2-2, to=2-1]
    \arrow[from=2-2, to=2-3]
    \arrow[from=2-2, to=3-2]
    \arrow["\lrcorner"{anchor=center, pos=0.125}, draw=none, from=2-2, to=3-3]
    \arrow["\pp", from=2-3, to=3-3]
    \arrow["{\psi_1}"', from=3-2, to=3-3]
  \end{tikzcd}\]
  The map $A \to \Id_A$ is (by definition) a pullback of the map
  $\rfl : \Ulow \to I$ along $\psi_1 : L \to \U$.
  By \Cref{lem:anodyne-pullback},
  any pullback of an anodyne map along an $\RR$-map is anodyne.
  As required,
  $\rfl : \Ulow \to I$ is anodyne
  and $\psi_1 : L \to \U$ is an $\RR$-map \Cref{lem:psi-fibration}.
\end{proof}

% We stated the result of \Cref{prop:constructing-alg-id-types}
% propositionally,
% but we could also make the same constructive construction:
% If additionally, the universe has chosen classifying maps,
% lifts for anodyne maps
% and the anodyne-$\RR$-map factorisations are given as structure
% (rather than mere existence),
% then the universe admits $\Id$-types.

% The equivalent condition of \Cref{prop:pi-pretribe-equiv}
% are similar to the condition of being a sub-tribe in \cite{joyal2017},
% except, using the terminology of \cite{joyal2017},
% we require that $(\CC,\RR)$ is a $\pi$-clan instead of a tribe.

\section{Future work}
\label{sec:algebraic-vs-elementary}

\subsection*{HoTT0}
We have not shown that the groupoid model of MLTT is a model of HoTT0,
a fragment of HoTT in which univalence holds only on the set-truncated types \cite{hua2025}. 
The two axioms of HoTT0 are set-truncated univalence and function extensionality.
Though a brute-force proof of the axioms should be fairly straightforward,
we would prefer to first create a general user interface
for this kind of syntax-semantics reasoning,
using the tools developed in \cite{nawrocki2026}.

\subsection*{Algebraic models}
Constructing the groupoid model as an unalgebraic model comes with several problems. 
The main problems are those of type equalities and nested dependent equalities,
which ultimately add several minutes of compile time to the files defining type formers in the model.
Another issue is the code's reusability.
The unalgebraic constructions result in long files of code that pertain only to groupoids.
The unalgebraic approach also requires explicit proofs of
stability of all operations under substitution,
though these equations are usually very easy to prove.

Nonetheless, as a target for writing an interpretation function from syntax to semantics,
the unalgebraic model is ideal.
This is because each judgment in the syntax corresponds closely to a rule
in the unalgebraic semantics.
The HoTTLean library would benefit from having
a translation between the two descriptions of models,
taking advantage of both formulations.

To construct an algebraic groupoid model instead,
we would need to formalise the following.
\begin{enumerate}
  \item The theory of algebraic models, including the theory of polynomial functors in preclans.
  \item The $\pi$-clan of isofibrations (or split isofibrations) in groupoids.
  \item The universe of split small isofibrations.
  \item Small split isofibrations form a $\pi$-preclan.
  \item The pathobject factorisation of the diagonal of an isofibration
    (analogous to \Cref{prop:src-trg-discrete-opfibration}).
  \item Either showing that isofibrations are Hurewicz
    or that the pathobject factorisation is part of a weak factorisation system
    (analogous to \Cref{thm:lari-opfib-awfs}).
  \item An algebraic-to-unalgebraic model translation.
\end{enumerate}

We have strong reasons to believe that all of the items
in the list admit quite straightforward and ``nice'' formalisations
(some of the items have already been formalised),
possibly with the exception of showing that
split isofibrations are closed under pushforwards (as part of (4)).
Since the closure conditions of (4) are only required up to isomorphism
and not equality, we expect that there will be no equalities of types 
when working with groupoids.
Instead, those dependent equalities that are inherent to a strict model of
type theory are automatically and uniformly generated in (7),
resulting in a much faster compile time.
Moreover, every item in this list would make for a good contribution to Mathlib,
and are of general interest to the formalisation community.

Finally, there is the issue of using the axiom of choice when applying
\Cref{prop:constructing-alg-id-types,prop:constructing-alg-models}
to construct the groupoid model.
The propositions are currently stated using the class of maps $\RR_\U$
that are (propositionally) pullbacks of the universe,
meaning that the axiom of choice is required to extract a classifying map.
In Mathlib, the convention is usually to apply choice whenever needed,
but a constructive alternative formulation of the result should also be possible.
We leave this for future work.

\subsection*{Algebraic categorical models}

In \Cref{sec:cat-as-types},
we will develop a blueprint
for extending the HoTTLean formalisation of the groupoid model
to the category of categories $\Cat$.
% The development of \Cref{sec:cat-as-types} is designed in part as a blueprint
% for extending the HoTTLean formalisation of the groupoid model
% to the category of categories $\Cat$.
Indeed, many parts of the groupoid model deserve to be factored through
more general theorems in $\Cat$.
Such results would include the following:
\begin{itemize}
  \item The clans of (groupoidal or discrete) (split) (op)fibrations (\Cref{prop:preclans-in-cat})
    and the clan of isofibrations in $\Cat$.
  \item Straightening and unstraightening of split (op)fibrations (\Cref{thm:opfibration-main}).
  \item The universes of split (op)fibrations (\Cref{def:universal-small-split-opfibration}).
  \item The $\tau$-clan of opfibrations and fibrations in $\Cat$ (\Cref{prop:pushforward-in-cat}).
  \item The (lari, split opfibration) algebraic weak factorisation system (\Cref{thm:lari-opfib-awfs}).
\end{itemize}
Again, these results would make for a more modular library,
and could ultimately be part of a library for automated internal reasoning in $\Cat$.

\chapter{Path types}\label{sec:path-types}
In \cite{awodey2026},
a new approach to the semantics of identity types was introduced,
in a category with finite limits with a bipointed exponentiable object $I$,
playing the role of an interval. 
From a small number of easy-to-check assumptions on a universe,
including the presence of what we will call ``path types'',
this interval can be used to produce an elimination principle for identity types.

Though not all models with intensional identity types have path types,
many of the standard models of Martin-L\"of type theory have them.
These examples include the model in groupoids \cite{hofmann1995},
the discrete model in presheaves,
Voevodsky's model in simplicial sets \cite{kapulkin2021},
and various cubical models.

In addition to simplifying the construction of identity types in models of
type theory,
path types also present identity types in a style more consistent
with that of $\Sigma$ and $\Pi$-type formers in algebraic type theory.
In algebraic type theory, a universe has $\Sigma$-types and $\Pi$-types
precisely when there are pullback squares classifying certain
universal constructions by the universe of type families
(cf. \Cref{def:alg-pi-type-mltt,def:alg-sig-type-mltt}).
\[ \begin{tikzcd}
      {P_{\uu}{\Ulow}} & \Ulow \\
      {P_{\uu}{\U}} & \U
      \arrow["\lam", from=1-1, to=1-2]
      \arrow["{{P_{\uu}{\uu}}}"', from=1-1, to=2-1]
      \arrow["\lrcorner"{anchor=center, pos=0.125}, draw=none, from=1-1, to=2-2]
      \arrow["\uu", from=1-2, to=2-2]
      \arrow["\Pi"', from=2-1, to=2-2]
    \end{tikzcd} 
    \quad \quad 
    \begin{tikzcd}
      Q & \Ulow \\
      {P_\uu\U} & \U
      \arrow["\pair", from=1-1, to=1-2]
      \arrow["{\uu \pcomp \uu}"', from=1-1, to=2-1]
      \arrow["\lrcorner"{anchor=center, pos=0.125}, draw=none, from=1-1, to=2-2]
      \arrow["\uu", from=1-2, to=2-2]
      \arrow["\Sigma"', from=2-1, to=2-2]
    \end{tikzcd}\]
A similar pullback square will be used to define path types.

A version of the argument presented here has been formalised in Lean,
as part of the HoTTLean project \cite{hua2025},
which is summarised in \Cref{sec:hottlean}.
% The assumptions for having Path types also produces
% a cubical filling structure on the universe,
% making any type family classified by the universe a cubical Kan fibration.
We present a variation of the proof in \cite{awodey2026} in a style
that is more consistent with the formalisation,
using $\pi$-preclans rather than a lex category,
and replacing the assumption of an exponentiable interval with
cylinder and path functors.
In the formalisation,
abstracting away some of the cartesian structure
from \cite{awodey2026} simplifies the formalisation in Lean.
This results in an almost syntactic presentation of the argument,
where the cylinder and path structures are controlled by a \emph{modality}
(see for example \cite{licata2016, licata2017, gratzer2020, shulman2023}).
% Unlike \cite{awodey2026},
% we do not assume the underlying category is lex.
As a result, the setup will be slightly more complicated,
but the abstractions should ultimately clarify some technical
details appearing in the argument.

\section{Cylinders and paths}

We fix a $\pi$-preclan $(\CC,\RR)$ (\Cref{def:pi-preclan}) with a terminal object
and a cylinder structure (\Cref{def:cylinder-structure}).
We can use the cylinder structure
to define an endofunctor on the slice $\CC / \Gamma$
for every object $\Gamma \in \CC$.
\[
   \cyl[\Gamma] : \CC / \Gamma \to \CC / \Gamma
\]
\[
  (d : X \to \Gamma) \mapsto (d \circ \pi : \cyl X \to \Gamma)
\]
This results in a {cylinder structure in each slice $\CC / \Gamma$}.

Let us denote the bicategory of large categories using $\tcat$.
For a category $\CC$,
let us write $S(-) : \CC \to \tcat$
for the \emph{covariant slice pseudofunctor} (in fact a strict 2-functor),
that takes an object $\Gamma \in \CC$ to the slice
$S(\Gamma) = \CC / \Gamma$ and a morphism $\sigma : \Delta \to \Gamma$
to the postcomposition functor
\[ S(\sigma) = \sigma_! : \CC / \Delta \to \CC / \Gamma\]
\[ (h : X \to \Delta) \mapsto (\sigma \circ h : X \to \Gamma)\]

\begin{proposition}
  Given a cylinder structure on a category $\CC$,
  the cylinders over each $\Gamma \in \CC$
  \[\cyl[\Gamma] : \CC / \Gamma \to \CC / \Gamma\]
  constitute a pseudotransformation
  \[ {\cyl} : S (-) \to S (-) \]
  with modifications
  \[\delta_0, \delta_1 : \id \to {\cyl}
  \quad \quad
  \pi : {\cyl} \to \id
  \quad \quad 
  \psi : \cyl \circ \cyl \to \cyl \circ \cyl\]
   satisfying the same equations as before,
   but now as modifications.
    % \[
    % \begin{tikzcd}
    %   \id & {\cyl} & \id & {{\cyl} \circ {\cyl}} & {{\cyl} \circ {\cyl}} \\
    %   & \id &&& {{\cyl} \circ {\cyl}}
    %   \arrow["{\delta_0}", from=1-1, to=1-2]
    %   \arrow[equals, from=1-1, to=2-2]
    %   \arrow["\pi"', from=1-2, to=2-2]
    %   \arrow["{\delta_0}"', from=1-3, to=1-2]
    %   \arrow[equals, from=1-3, to=2-2]
    %   \arrow["\psi", from=1-4, to=1-5]
    %   \arrow[equals, from=1-4, to=2-5]
    %   \arrow["\psi", from=1-5, to=2-5]
    % \end{tikzcd}
    % \]
    % \[ \begin{tikzcd}
    %   {\cyl} & {\cyl} & {\cyl} & {\cyl} & {{\cyl} \circ {\cyl}} & {{\cyl} \circ {\cyl}} \\
    %     {{\cyl} \circ {\cyl}} & {{\cyl} \circ {\cyl}} & {{\cyl} \circ {\cyl}} & {{\cyl} \circ {\cyl}} & {\cyl} & {\cyl}
    %     \arrow[equals, from=1-1, to=1-2]
    %     \arrow["{\delta_0 {\cyl}}"', from=1-1, to=2-1]
    %     \arrow["{{\cyl} \delta_0}", from=1-2, to=2-2]
    %     \arrow[equals, from=1-3, to=1-4]
    %     \arrow["{\delta_1 {\cyl}}"', from=1-3, to=2-3]
    %     \arrow["{{\cyl} \delta_1}", from=1-4, to=2-4]
    %     \arrow["\psi", from=1-5, to=1-6]
    %     \arrow["{\pi {\cyl}}"', from=1-5, to=2-5]
    %     \arrow["{{\cyl} \pi}", from=1-6, to=2-6]
    %     \arrow["\psi", from=2-1, to=2-2]
    %     \arrow["\psi", from=2-3, to=2-4]
    %     \arrow[equals, from=2-5, to=2-6]
    %   \end{tikzcd}\]
\end{proposition}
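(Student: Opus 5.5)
The plan is to write down the data of the pseudotransformation and each modification explicitly, and then observe that every required equation reduces to an equation already assumed in \Cref{def:cylinder-structure}. The point is that all the structure over $\Gamma$ is inherited from the cylinder structure on $\CC$, by precomposing the structure map with the appropriate component.

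First, the components. For each $\Gamma$, the functor $\cyl[\Gamma]$ sends $(d : X \to \Gamma)$ to $(d \circ \pi_X : \cyl X \to \Gamma)$. On a morphism $h : (X,d) \to (X',d')$ over $\Gamma$ it sends $h$ to $\cyl h$; this lies over $\Gamma$ by naturality of $\pi$, since $d' \circ \pi_{X'} \circ \cyl h = d' \circ h \circ \pi_X = d \circ \pi_X$. For $\sigma : \Delta \to \Gamma$ we need a $2$-cell between $\sigma_! \circ \cyl[\Delta]$ and $\cyl[\Gamma] \circ \sigma_!$. Evaluated at $(d : X \to \Delta)$, both composites give the object $\sigma \circ d \circ \pi_X : \cyl X \to \Gamma$. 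So I will take the structure $2$-cell to be the identity, which makes $\cyl$ a strict $2$-natural transformation. Its coherence with identities and with composition $\tau_! \sigma_! = (\tau\sigma)_!$ is then immediate, because the slice pseudofunctor $S$ is itself strict.

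Next, the modifications. Their components at $\Gamma$ are the natural transformations on $\CC/\Gamma$ whose component at $(X,d)$ is $\delta_{0,X}$, $\delta_{1,X}$, $\pi_X$ or $\psi_X$ respectively, each regarded as a map in the slice. I must check that each lies over $\Gamma$:
\begin{itemize}
\item for $\delta_i$, we have $d \circ \pi_X \circ \delta_{i,X} = d$ by the counit-type equation $\pi\delta_i = \id$;
\item for $\pi$, the condition $d \circ \pi_X = d \circ \pi_X$ holds trivially;
\item for $\psi$, the object $\cyl[\Gamma]\cyl[\Gamma](X,d)$ is $d\pi_X\pi_{\cyl X}$, so we need $d\pi_X\pi_{\cyl X}\psi_X = d\pi_X\pi_{\cyl X}$.
\end{itemize}

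The condition for $\psi$ is the main obstacle, because the stated axioms give $\pi\cyl \circ \psi = \cyl\pi$ rather than $\pi\cyl\circ\psi = \pi\cyl$. I would resolve it as follows. Postcompose that axiom with $\pi_X$ to get $\pi_X \pi_{\cyl X}\psi_X = \pi_X \circ \cyl\pi_X$. Then apply naturality of $\pi$ at the map $\pi_X$, which gives $\pi_X \circ \cyl \pi_X = \pi_X \circ \pi_{\cyl X}$. Together these yield the required equation. (For $\delta_1$, I note that the equation $\pi\delta_1 = \id$ is needed. If the displayed triangles only record $\delta_0$, I would read the second triangle as the $\delta_1$ version, as in the intended interval example.)

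Finally, I verify the remaining conditions. Naturality of each component in $(X,d)$ is just the naturality of $\delta_i,\pi,\psi$ in $\CC$. The modification axiom asks for compatibility with $\sigma_!$. Since $\sigma_!$ acts as the identity on underlying maps and the structure cells of $\cyl$ are identities, both sides have the same underlying components $\delta_{i,X}$ (respectively $\pi_X$, $\psi_X$), so the axiom holds. Every equation between modifications, such as $\pi\circ\delta_0=\id$, $\psi\psi=\id$ and the squares with $\delta_i\cyl$, $\cyl\delta_i$ and $\pi\cyl$, $\cyl\pi$, is checked componentwise on underlying maps in $\CC$. There it is exactly the corresponding equation of \Cref{def:cylinder-structure}, because the forgetful functor $\CC/\Gamma\to\CC$ is faithful and sends whiskerings to whiskerings.
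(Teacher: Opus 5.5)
Your proposal is correct. The paper states this proposition without proof, and your componentwise check is exactly the routine verification it leaves implicit: strict naturality holds because both composites send $(X,d)$ to $\sigma d\pi_X$, the modification axioms hold on underlying maps, and $\psi_X$ lies over $\Gamma$ by naturality of $\pi$ at $\pi_X$. Two small remarks: your reading of the duplicated $\delta_0$ triangle as $\pi\circ\delta_1=\id$ is genuinely needed, since without it $\delta_1$ need not lie over $\Gamma$; and the displayed square actually reads $\cyl\pi\circ\psi=\pi\cyl$, which is equivalent to your version because $\psi\circ\psi=\id$, and your two-step argument works verbatim with it.
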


Pseudonaturality of ${\cyl} : S(-) \to S(-)$ says that for any
map $\sigma : \Delta \to \Gamma$ there is an isomorphism
\begin{equation}\label{eq:frobenius-identity}
\begin{tikzcd}
	{\CC / \Delta} & {\CC / \Gamma} \\
	{\CC / \Delta} & {\CC / \Gamma}
	\arrow["{\sigma_!}", from=1-1, to=1-2]
  \arrow["{\cyl[\Delta]}"', from=1-1, to=2-1]
  \arrow["{\cyl[\Gamma]}", from=1-2, to=2-2]
	\arrow["{\sigma_!}"', from=2-1, to=2-2]
\end{tikzcd} \end{equation}

\begin{example}\label{ex:exponentiable-bipointed}
Consider the assumptions of \cite{awodey2026},
where there is a bipointed exponentiable object
$\delta_0, \delta_1 : 1 \rightrightarrows I$.
We could instantiate a cylinder structure
by taking ${\cyl} X := X \times I$, 
where $I$ is the given interval,
and $X$ is an object in the category.
It follows that the cylinder over $\Gamma$ of some $X$ in the slice
$\CC / \Gamma$ is
${\cyl}_{\Gamma} X := X \times_\Gamma I_\Gamma$.
Then the pseudonaturality square looks like the topos-theoretic ``Frobenius identity'':
\[ \sigma_! (X \times_\Delta I_\Delta) \iso (\sigma_! X) \times_\Gamma I_\Gamma\]
\end{example}

% \begin{axiom}\label{ax:preclan}
%   There is a $\pi$-preclan (\Cref{def:pi-preclan}) $(\CC,\RR)$.
% \end{axiom}

Recall the definition of a partial right adjoint
from \Cref{def:partial-right-adjoint}.
\begin{definition}\label{def:path-functor}
  We say that a $\pi$-preclan $(\CC,\RR)$ with a terminal object
  and a cylinder structure (\Cref{def:cylinder-structure})
  \emph{has path functors} if, additionally,
  for each object $\Gamma$ in $\CC$,
  the cylinder functor $\cyl[\Gamma] : \CC / \Gamma \to \CC / \Gamma$ over $\Gamma$
  has a partial right adjoint $\arr[\Gamma] : \RR(\Gamma) \to \RR(\Gamma)$.
  \[\begin{tikzcd}
    {\CC / \Gamma} & {\RR(\Gamma)} \\
    {\CC / \Gamma} & {\RR(\Gamma)}
    \arrow[""{name=0, anchor=center, inner sep=0}, "{\cyl[\Gamma]}"', from=1-1, to=2-1]
    \arrow[hook', from=1-2, to=1-1]
    \arrow[""{name=1, anchor=center, inner sep=0}, "{\arr[\Gamma]}"', from=2-2, to=1-2]
    \arrow[hook', from=2-2, to=2-1]
    \arrow["{{\dashv_\partial}}"{description}, draw=none, from=0, to=1]
  \end{tikzcd}\]
\end{definition}
We will call $\arr[\Gamma] A$ the \emph{paths} in $A$ over $\Gamma$,
and $\arr[\Gamma]$ the \emph{path functor}.

Partial right adjoints have counits (but not units on all elements);
we will denote the counit at an object $A \in \RR(\Gamma)$ by
\[ \epsilon_A : \cyl[\Gamma] \arr[\Gamma]{A} \to A\]
It is the morphism in $\CC / \Gamma$
that is conjugate to the identity $\id : \arr[\Gamma]{A} \to \arr[\Gamma]{A}$.

The $\pi$-preclan $\RR$ induces a pseudofunctor
$\RR(-) : \CC^\op \to \tcat$,
where the contravariant action on morphisms is given by pullback.

\begin{proposition}
  Suppose $(\CC,\RR)$ has path functors.
  The functors $\arr[\Gamma]$ together form a pseudotransformation
  $\arr : \RR(-) \to \RR(-)$.
  This pseudotransformation is equipped with modifications 
  \[\src, \trg : \arr \to {\id}
  \quad \quad
  \pi : {\id} \to \arr
  \quad \quad 
  \sym : \arr \circ \arr \to \arr \circ \arr\]
  satisfying the following equations
    \[
      \begin{tikzcd}
        & \id &&& {{\arr} \circ {\arr}} \\
        \id & \arr & \id & {{\arr} \circ {\arr}} & {{\arr} \circ {\arr}}
        \arrow[equals, from=1-2, to=2-1]
        \arrow["\rfl"{description}, from=1-2, to=2-2]
        \arrow[equals, from=1-2, to=2-3]
        \arrow["\src", from=2-2, to=2-1]
        \arrow["\trg"', from=2-2, to=2-3]
        \arrow[equals, from=2-4, to=1-5]
        \arrow["\sym"', from=2-4, to=2-5]
        \arrow["\sym"', from=2-5, to=1-5]
      \end{tikzcd}
    \]
    \[ \begin{tikzcd}
	{{\arr} \circ {\arr}} & {{\arr} \circ {\arr}} & {{\arr} \circ {\arr}} & {{\arr} \circ {\arr}} & \arr & \arr \\
	\arr & \arr & \arr & \arr & {{\arr} \circ {\arr}} & {{\arr} \circ {\arr}}
	\arrow["\sym", from=1-1, to=1-2]
	\arrow["{{\src {\arr}}}"', from=1-1, to=2-1]
	\arrow["{{{\arr} \src}}", from=1-2, to=2-2]
	\arrow["\sym", from=1-3, to=1-4]
	\arrow["{{\trg {\arr}}}"', from=1-3, to=2-3]
	\arrow["{{{\arr} \trg}}", from=1-4, to=2-4]
	\arrow[equals, from=1-5, to=1-6]
	\arrow["{{\rfl {\arr}}}"', from=1-5, to=2-5]
	\arrow["{{{\arr} \rfl}}", from=1-6, to=2-6]
	\arrow[equals, from=2-1, to=2-2]
	\arrow[equals, from=2-3, to=2-4]
	\arrow["\sym"', from=2-5, to=2-6]
\end{tikzcd}\]
\end{proposition}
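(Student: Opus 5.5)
The plan is to obtain everything by \emph{conjugation} (mates) along the partial adjunctions $\cyl[\Gamma] \dashv_\partial \arr[\Gamma]$, transporting the structure on $\cyl$ from the previous proposition. The first task is functoriality in $\Gamma$. For $\sigma : \Delta \to \Gamma$ and $A \in \RR(\Gamma)$ I want an isomorphism $\sigma^* \arr[\Gamma] A \iso \arr[\Delta] \sigma^* A$. Both sides are $\RR$-objects over $\Delta$. I would check that both represent the same functor on $\CC/\Delta$. For $X \in \CC/\Delta$,
\[
  \CC/\Delta(X, \sigma^*\arr[\Gamma]A) \iso \CC/\Gamma(\sigma_! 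X, \arr[\Gamma] A) \iso \CC/\Gamma(\cyl[\Gamma]\sigma_! X, A),
\]
using the partial adjunction $\sigma_! \dashv_\partial \sigma^*$ on $\RR$-objects and then the path functor. Applying the pseudonaturality isomorphism \eqref{eq:frobenius-identity}, this becomes $\CC/\Gamma(\sigma_!\cyl[\Delta] X, A)$. Going back through the two partial adjunctions gives $\CC/\Delta(\cyl[\Delta]X, \sigma^*A) \iso \CC/\Delta(X,\arr[\Delta]\sigma^*A)$. Yoneda in $\CC/\Delta$ then yields the required isomorphism. The pseudonaturality coherences (compatibility with identities and composites $\tau\sigma$) reduce, by uniqueness of representing objects, to the corresponding coherences for $\sigma_!$, $\sigma^*$ and \eqref{eq:frobenius-identity}, which hold because $S(-)$ is a strict 2-functor.

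Next I would define the modifications componentwise as mates. Take $\src_A := \arr[\Gamma] A \to A$ to be the transpose of $\epsilon_A \circ \delta_0 : \arr[\Gamma] A \to \cyl[\Gamma]\arr[\Gamma] A \to A$; this is a map in $\CC/\Gamma$ between $\RR$-objects, so it lives in $\RR(\Gamma)$. Concretely, $\src_A$ is the mate of $\delta_0$; similarly $\trg$ is the mate of $\delta_1$. Take $\rfl_A : A \to \arr[\Gamma] A$ to be the transpose of $\pi_A : \cyl[\Gamma] A \to A$; this is legitimate because $A$ may be used as the left argument. For $\sym$, the composite $\cyl\cyl \dashv_\partial \arr\arr$ is again a partial adjunction, since $\arr$ lands in $\RR(\Gamma)$ and hence can be iterated. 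I would define $\sym$ as the mate of $\psi$ under this composite adjunction. Naturality in $A$ follows from naturality of the hom-bijections. That each family is a modification, i.e.\ compatible with the isomorphisms $\sigma^*\arr[\Gamma] \iso \arr[\Delta]\sigma^*$, follows because these isomorphisms were built from the same representability argument and \eqref{eq:frobenius-identity} commutes with $\delta_i$, $\pi$, $\psi$ (these are modifications for $\cyl$).

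Finally, each listed equation is the mate of the corresponding equation in the previous proposition, and taking mates is functorial and contravariant. For example, $\src\circ\rfl = \id$ corresponds to $\pi\circ\delta_0 = \id$, and $\sym\circ\sym = \id$ to $\psi\circ\psi = \id$. The squares involving whiskerings such as $\src\,\arr$ versus $\arr\,\src$ correspond to the squares with $\delta_0\cyl$ and $\cyl\delta_0$ (the whiskering sides swap under mates); likewise $\rfl$ corresponds to $\pi$.

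I expect the main obstacle to be the mate calculus for \emph{partial} adjoints. Units are not available on all objects, so the usual mate correspondence must be phrased via hom-set bijections and Yoneda, restricting to arguments in $\RR(\Gamma)$ wherever $\arr$ is applied. In particular, for the whiskered equations I must check that the mate of $\cyl\,\delta_0$ really is $\src$ whiskered by $\arr$ on the correct side. I would do this by verifying directly that the two hom-bijections $\CC/\Gamma(\cyl\cyl X, A) \iso \CC/\Gamma(X, \arr\arr A)$ intertwine precomposition by $\delta_0\cyl$ (resp.\ $\cyl\delta_0$) with postcomposition by $\arr\src$ (resp.\ $\src\arr$).
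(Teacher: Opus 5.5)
Your proposal matches the paper's proof: the same chain of hom-set bijections through $\sigma_! \dashv_\partial \sigma^*$, the path functor and the Frobenius identity \eqref{eq:frobenius-identity}, followed by Yoneda, gives $\sigma^*\arr[\Gamma] A \iso \arr[\Delta]\sigma^* A$. The operations $\src,\trg,\rfl,\sym$ are then defined exactly as in the paper ($\epsilon_A\circ\delta_i$, the conjugate of $\pi$, and the transport of $\psi$ through the composite partial adjunction), and your mate-calculus sketch for the modification property and the equations, including the swap of whiskering sides, correctly fills in what the paper leaves to the reader; the only slip is calling $\src_A$ the ``transpose'' of $\epsilon_A\circ\delta_0$, when it is that composite itself (equivalently, the mate of $\delta_0$).
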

\begin{proof}
  The pseudonaturality condition follows a standard
  (partial) adjoint hom-set bijection argument,
  making use of the Frobenius identity.
  Suppose $\sigma : \Delta \to \Gamma$,
  then naturally in $X \in \CC / \Delta$ we have,
  \begin{align*}
    & \CC / \Delta (X, \sigma^* \arr[\Gamma] A) \\
     \iso \; & \CC / \Gamma (\sigma_! X, \arr[\Gamma] A) \\
     \iso \; & \CC / \Gamma (\cyl[\Gamma] \sigma_! X, A) \\
     \iso \; & \CC / \Gamma (\sigma_! \cyl[\Delta] X, A) \\
     \iso \; & \CC / \Delta (\cyl[\Delta] X, \sigma^* A) \\
     \iso \; & \CC / \Delta (X, \arr[\Delta] \sigma^* A) \\
  \end{align*}
  From the Yoneda lemma,
  it follows that $\sigma^* \arr[\Gamma] A \iso \arr[\Delta] \sigma^* A$.
  This isomorphism holds in $\RR(\Delta)$ since it is a full
  subcategory of the slice.

  To define $\src : \arr[\Gamma] A \to A$ (and similarly $\trg$)
  for some $A \in \RR(\Gamma)$,
  consider the composition 
  \[ 
  \begin{tikzcd}
    {\arr[\Gamma]A} & {\cyl[\Gamma] \arr[\Gamma]{A}} \\
    & A
    \arrow["{\delta_0 }", from=1-1, to=1-2]
    \arrow["\src"', from=1-1, to=2-2]
    \arrow["{\epsilon_A}", from=1-2, to=2-2]
  \end{tikzcd}
  \]
  Under the partial adjunction,
  the map $\pi : \cyl[\Gamma] A \to A$ has a conjugate
  $\rfl : A \to \arr[\Gamma] A$.
  We can define
  $\sym : \arr[\Gamma] \arr[\Gamma] A \to \arr[\Gamma] \arr[\Gamma] A$
  via the following hom-set bijection
  \begin{align*}
    & \CC / X (\arr \arr A, \arr \arr A) & \ni \sym \\
    \iso \; & \CC / X (\cyl \arr \arr A, \arr A) \\
    \iso \; & \CC / X (\cyl \cyl \arr \arr A, A) \\
    \iso \; & \CC / X (\cyl \cyl \arr \arr A, A) & (\text{via } \psi) \\
    \iso \; & \CC / X (\cyl \arr \arr A, \arr A) \\
    \iso \; & \CC / X (\arr \arr A, \arr \arr A) & \ni \id
  \end{align*}
  
  We leave it to the reader to check that $\src, \trg, \rfl$,
  and $\sym$ are modifications
  that satisfy the required equations.
  Here, being a modification amounts to these operations being natural in
  $A \in \RR(\Gamma)$
  and stable (up to isomorphism) under pullback along maps $\Delta \to \Gamma$.
\end{proof}

For any $\RR$-object $A$ in the slice over $\Gamma$,
we obtain a \emph{pathobject factorisation} of the diagonal
$\Delta : A \to A \times_\Gamma A$ using the path functor.
\[\begin{tikzcd}
	{A} & {\arr[\Gamma]{A}} \\
	& {{A} \times_\Gamma A}
	\arrow["\rfl", from=1-1, to=1-2]
	\arrow["{\Delta}"', from=1-1, to=2-2]
	\arrow["{(\src,\trg)}", from=1-2, to=2-2]
\end{tikzcd}\]
% We will refer to the map $(\src,\trg) : \arr[\Gamma] A \to A$
% as the pathobject of $A$.

% Let us call the morphisms in $\RR$ fibrations.

% \begin{axiom}\label{ax:path-types-universe}
%   We have an $\RR$-algebraic universe (\Cref{def:algebraic-universe})
%   $\uu : \Ulow \to \U$.
%   We call $\uu$ \emph{the universe of small fibrations}.
%   Denoting the set of pullbacks of $\uu$ by $\RR_\U$,
%   we call maps in $\RR_\U$ \emph{small fibrations}.
%   We also require that every small fibration $A : X \to \Gamma$ in $\RR_\U$
%   is equipped with a chosen \emph{classifying map}
%   $\ulcorner A \urcorner : \Gamma \to \U$,
%   so that
%   \[ \begin{tikzcd}
% 	X & \Ulow \\
% 	\Gamma & \U
% 	\arrow[from=1-1, to=1-2]
% 	\arrow["A"', from=1-1, to=2-1]
% 	\arrow["\lrcorner"{anchor=center, pos=0.125}, draw=none, from=1-1, to=2-2]
% 	\arrow["\uu", from=1-2, to=2-2]
% 	\arrow["{{\ulcorner A \urcorner}}"', from=2-1, to=2-2]
% \end{tikzcd} \]
% \end{axiom}

Let us now consider the universal pathobject factorisation
on a universe $\uu : \Ulow \to \U$ in $\RR(\U)$.
We will say that $\uu : \Ulow \to \U$ is an $\RR$-algebraic
universe if it is an $\RR$-map and $\U$ is an $\RR$-object
(cf. \Cref{def:algebraic-universe} where $\RR$ was a clan).

\begin{definition}\label{def:mltt-alg-path-types-2}
  Suppose $(\CC,\RR)$ has path functors
  and an $\RR$-algebraic universe
  $\uu : \Ulow \to \U$.
  Then we will say that $\uu$ admits algebraic $\Path$-types
  when we have maps
  $\Path : \Ulow \times_\U \Ulow \to \U$
  and $\path : \arr[\U] \Ulow \to \Ulow$
  forming a pullback square
  \[
  \begin{tikzcd}
    {{\arr[\U] \Ulow}} & \Ulow \\
    {\Ulow \times_\U \Ulow} & \U
    \arrow["\path", from=1-1, to=1-2]
    \arrow["{(\src,\trg)}"', from=1-1, to=2-1]
    \arrow[from=1-2, to=2-2]
    \arrow["\Path"', from=2-1, to=2-2]
    \arrow["\lrcorner"{anchor=center, pos=0.125}, draw=none, from=1-1, to=2-2]
  \end{tikzcd}\]
\end{definition}

For an $\RR$-algebraic universe $\uu : \Ulow \to \U$.
Let us write $\RR_\U$ for the class of pullbacks of $\uu$.
\[
  \RR_\U := \{ A : X \to \Gamma \st \begin{tikzcd}
  X & \Ulow \\
  \Gamma & \U
  \arrow["\exists", dashed, from=1-1, to=1-2]
  \arrow["A"', from=1-1, to=2-1]
  \arrow["\lrcorner"{anchor=center, pos=0.125}, draw=none, from=1-1, to=2-2]
  \arrow["\uu", from=1-2, to=2-2]
  \arrow["\exists"', dashed, from=2-1, to=2-2]
\end{tikzcd}\}
\]
Let us call the maps in $\RR_\U$ \emph{small fibrations},
and the maps in $\RR$ fibrations.

\begin{proposition}\label{prop:universal-pathobject}
  Suppose $(\CC,\RR)$ has path functors
  and an $\RR$-algebraic universe.
  The following are equivalent
  \begin{itemize}
    \item For any small fibration $A \to \Gamma$,
      the pathobject projection
      $(\src,\trg) : \arr[\Gamma] A \to A \times_\Gamma A$
      is a small fibration.
    \item The universal pathobject projection
      $(\src,\trg) : \arr[\Gamma] \Ulow \to \Ulow \times_\U \Ulow$
      is a small fibration.
  \end{itemize}
\end{proposition}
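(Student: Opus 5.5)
The plan is to use the pseudonaturality of $\arr$ proved in the preceding proposition, namely $\sigma^* \arr[\Gamma] A \iso \arr[\Delta] \sigma^* A$ for $\sigma : \Delta \to \Gamma$. Together with this, we need that $\RR_\U$ is closed under pullback: a pullback of a pullback of $\uu$ is again a pullback of $\uu$, by pasting pullback squares. (The second item in the statement should be read with $\Gamma = \U$ and $A = \uu$.)

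\emph{First implication.} Taking the small fibration to be $\uu : \Ulow \to \U$ itself, with $\Gamma = \U$, gives the second item directly.

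\emph{Converse.} Let $A \to \Gamma$ be a small fibration with classifying map $\alpha : \Gamma \to \U$, so that $A \iso \alpha^* \Ulow$ in $\RR(\Gamma)$. I will show that the square
\[\begin{tikzcd}
  {\arr[\Gamma] A} & {\arr[\U]\Ulow} \\
  {A \times_\Gamma A} & {\Ulow \times_\U \Ulow}
  \arrow[from=1-1, to=1-2]
  \arrow["{(\src,\trg)}"', from=1-1, to=2-1]
  \arrow["{(\src,\trg)}", from=1-2, to=2-2]
  \arrow[from=2-1, to=2-2]
\end{tikzcd}\]
is a pullback. Once this holds, $(\src,\trg)$ on $A$ is a pullback of a small fibration and hence is small.

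To see that the square is a pullback, view everything in $\CC/\Gamma$ and $\CC/\U$. Pullback along $\alpha$ preserves fibre products over the base, so $\alpha^*(\Ulow\times_\U\Ulow) \iso A\times_\Gamma A$. By pseudonaturality, $\alpha^*\arr[\U]\Ulow \iso \arr[\Gamma]\alpha^*\Ulow \iso \arr[\Gamma] A$. The remaining point is that under these isomorphisms $\alpha^*(\src,\trg)$ corresponds to $(\src,\trg)$. This is the statement that $\src$ and $\trg$ are modifications, i.e.\ stable under pullback up to the pseudonaturality isomorphisms, which the previous proposition asserts.

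Finally, a map $\alpha^* g$ between objects of $\CC/\Gamma$, for $g$ a map over $\U$, is the pullback of $g$ along the induced map $\alpha^*(\Ulow\times_\U\Ulow) \to \Ulow\times_\U\Ulow$. This follows by the two-pullbacks lemma, since both $\alpha^*\arr[\U]\Ulow$ and $\alpha^*(\Ulow\times_\U\Ulow)$ are pullbacks along $\alpha$. These pullbacks exist because the objects involved are $\RR$-objects over $\U$: $\arr$ lands in $\RR(\U)$, and $\Ulow\times_\U\Ulow$ is an $\RR$-object since $\RR$ is stable under pullback and closed under composition.

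The main obstacle is this compatibility of $(\src,\trg)$ with the pseudonaturality isomorphism, which the paper left to the reader. I would verify it by transposing across the partial adjunction. The isomorphism $\sigma^*\arr[\Gamma]A \iso \arr[\Delta]\sigma^*A$ came from the hom-set chain using the Frobenius identity~\eqref{eq:frobenius-identity}, and $\src$ is $\epsilon_A\circ\delta_0$. It therefore suffices to check that the counit $\epsilon$ pulls back to the counit and that $\delta_0$ commutes with the Frobenius isomorphism. The first follows from naturality of the hom-set chain evaluated at the identity. The second is part of $\delta_0$ being a modification of $\cyl : S(-)\to S(-)$.
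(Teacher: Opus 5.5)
Your argument is correct and is the one the paper relies on here. The paper gives no separate proof of this proposition, but it argues the same way in \Cref{prop:vertical-opposite-small} and \Cref{prop:src-trg-small-opfibration}: pseudonaturality of $\arr$ and the modification property of $\src,\trg$ make the pathobject projection of $A$ the pullback of the universal one along a classifying map $\alpha : \Gamma \to \U$, and $\RR_\U$ is pullback-stable.

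Your compatibility check is easier than you suggest. The equality $\cyl[\Gamma]\sigma_! X = \sigma_!\cyl[\Delta] X$ holds on the nose, since the slice cylinder is defined by postcomposition, and $\delta_0$ over $\Gamma$ is just the global $\delta_0$. You should, however, read ``the counit pulls back to the counit'' in transposed form via the hom-set chain. In a non-lex $\CC$, $\cyl[\Gamma]\arr[\Gamma] A$ need not be an $\RR$-object, so it need not have a pullback along $\alpha$.
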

If the universe admits $\Path$-types,
then certainly the conditions of \Cref{prop:universal-pathobject} hold.
Conversely, if the universe $\uu$ has classifying maps
(\Cref{def:alg-universe-classifying-map})
then the latter condition provides $\Path$-types on $\uu$.

% \begin{definition}\label{def:AM-with-path-types}
%   Let us call the combination of Axioms
%   \labelcref{ax:cylinder,ax:path-functor,ax:path-types-universe,,ax:universal-pathobject}
%   \[(\CC,\RR,\cyl,\arr,\uu : \Ulow \to \U, \Path, \path)\]
%   an algebraic model of MLTT with (a universe that admits) $\Path$-types.
% \end{definition}

\section{Hurewicz structure}

Let us fix a $\pi$-preclan $(\CC,\RR)$ that has path functors.
Recall the definition of a Hurewicz structure on a morphism
from \Cref{def:hurewicz-defs}.

\begin{lemma}\label{lem:hurewicz-over-1}
  If $\uu : \Ulow \to \U$ is an $\RR$-algebraic universe 
  that admits a normal Hurewicz structure $(\uu, l)$,
  then any small fibration $f : A \to B$ (i.e., any pullback of $\uu$)
  inherits a normal Hurewicz structure.
\end{lemma}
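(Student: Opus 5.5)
The plan is to transfer lifts along the pullback square that exhibits $f$ as a small fibration. Write
\[\begin{tikzcd}
  A & \Ulow \\
  B & \U
  \arrow["g", from=1-1, to=1-2]
  \arrow["f"', from=1-1, to=2-1]
  \arrow["\lrcorner"{anchor=center, pos=0.125}, draw=none, from=1-1, to=2-2]
  \arrow["\uu", from=1-2, to=2-2]
  \arrow["h"', from=2-1, to=2-2]
\end{tikzcd}\]
for a fixed choice of this pullback square.

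Given $X$, $a : X \to A$ and $p : \cyl X \to B$ with $f \circ a = p \circ \delta_0$, the pair $(g \circ a, h \circ p)$ is a lifting problem against $\uu$. Indeed $\uu \circ g \circ a = h \circ f \circ a = h \circ p \circ \delta_0$. We therefore obtain the lift $l_{(g a, h p)} : \cyl X \to \Ulow$ with $l_{(ga,hp)} \circ \delta_0 = g a$ and $\uu \circ l_{(ga,hp)} = h p$. Using the universal property of the pullback $A$, we define
\[ l^f_{(a,p)} := (p, l_{(ga,hp)}) : \cyl X \to A, \]
the unique map with $f \circ l^f_{(a,p)} = p$ and $g \circ l^f_{(a,p)} = l_{(ga,hp)}$. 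To check that it is a filler, i.e. $l^f_{(a,p)} \circ \delta_0 = a$, we test against both projections: $f$-component $p \delta_0 = f a$, and $g$-component $l_{(ga,hp)}\delta_0 = g a$. Joint monicity of the pullback projections then gives the equation.

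Coherence under reindexing $x : X' \to X$ is checked in the same way, by comparing both projections. The $f$-components agree since $p \circ \cyl x$ is the same on both sides. For the $g$-components we need $l_{(ga,hp)} \circ \cyl x = l_{(gax, hp\,\cyl x)}$, which is exactly coherence of $(\uu,l)$, because $g(a x) = (ga)x$ and $h(p \circ \cyl x) = (hp)\circ \cyl x$.

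Normality also follows componentwise. Given $b : X \to B$ with $f a = b$, we have $l^f_{(a, b\pi)}$ with $f$-component $b\pi = f a \pi$. Its $g$-component is $l_{(ga, hb\pi)} = g a \pi$, by normality of $(\uu,l)$ applied to the base map $hb$. Hence $l^f_{(a,b\pi)} = a \circ \pi$.

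I expect no real obstacle. The only care needed is the hypothesis in the normality clause: the base map is $b$ with $f a = b$, and it is transported to $hb$ with $\uu(ga) = hb$.

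The lemma asks only that $f$ inherit some structure, so a choice of pullback square and classifying maps is needed. This choice is harmless: it is fixed once for $f$, and the construction depends on nothing else. Being $\RR$-algebraic is not actually used here, beyond ensuring that the relevant pullbacks exist.
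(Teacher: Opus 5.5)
Your proposal is correct and takes the same approach as the paper, which simply notes that right lifting structures are stable under pullback; you have spelled that out in full, checking the filler equations, coherence under reindexing, and normality componentwise via joint monicity of the pullback projections. One minor aside: $\RR$-algebraicity is not needed even for the existence of pullbacks, since the only square you use, the one exhibiting $f$ as a pullback of $\uu$, is given by hypothesis.
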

\begin{proof}
  The right lifting structure of Hurewicz maps is
  preserved under pullback.
  Thus, the Hurewicz structure on the universe pulls back to 
  Hurewicz structure on all small fibrations.
\end{proof}

\begin{proposition}[Locally Hurewicz]\label{prop:hurewicz-equiv-defs}
  Let $\Gamma$ be an object of $\CC$ and
  $f : A \to B$ be a morphism in $\RR(\Gamma)$.
  (Note that $A$ and $B$ are required to be fibrant over $\Gamma$.)
  The following are equivalent (structures)
  \begin{itemize}
    \item A Hurewicz structure $(f,l)$ in the slice $\CC / \Gamma$ with respect to
      the cylinder $\cyl[\Gamma] : \CC / \Gamma \to \CC / \Gamma$.
    \item A section $l$ of the ``pullback-hom'' $h$
      given in the following diagram in $\RR(\Gamma)$.
      \[\begin{tikzcd}
      {\arr[\Gamma] A} && \\
      & P & A \\
      & {\arr[\Gamma] B} & B
      \arrow["h"{description}, dashed, from=1-1, to=2-2]
      \arrow["\src", bend left, from=1-1, to=2-3]
      \arrow["{\arr[\Gamma] f}"', bend right, from=1-1, to=3-2]
      \arrow[from=2-2, to=2-3]
      \arrow[from=2-2, to=3-2]
      \arrow["\lrcorner"{anchor=center, pos=0.125}, draw=none, from=2-2, to=3-3]
      \arrow["f", from=2-3, to=3-3]
      \arrow["\src"', from=3-2, to=3-3]
      \end{tikzcd}
      \quad \quad
      \quad \quad
      \begin{tikzcd}
        & {\arr[\Gamma] A} \\
        P & P
        \arrow["h", from=1-2, to=2-2]
        \arrow["l", from=2-1, to=1-2]
        \arrow[equals, from=2-1, to=2-2]
      \end{tikzcd} \]
  \end{itemize}

  Then a Hurewicz structure $(f,l)$ is normal
  if and only if $l \circ h \circ \rfl = \rfl$.
  \[\begin{tikzcd}
    A & {\arr[\Gamma] A} \\
    {\arr[\Gamma] A} & P
    \arrow["\rfl", from=1-1, to=1-2]
    \arrow["\rfl"', from=1-1, to=2-1]
    \arrow["h"', from=2-1, to=2-2]
    \arrow["l"', from=2-2, to=1-2]
  \end{tikzcd}\]
\end{proposition}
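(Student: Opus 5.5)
The plan is to translate lifting problems against $\delta_0$ in the slice into maps into $P$, and then read off the equivalence, naturality and normality from this translation using only the partial adjunction $\cyl[\Gamma] \dashv_\partial \arr[\Gamma]$ and the Yoneda lemma.

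First, fix $X \in \CC/\Gamma$. A lifting problem for the Hurewicz structure on $f$ over $\Gamma$ is a pair $(a,p)$ with $a : X \to A$ and $p : \cyl[\Gamma] X \to B$ in $\CC/\Gamma$ such that $f a = p\,\delta_0$. Since $B \in \RR(\Gamma)$, the map $p$ transposes to $\tilde p : X \to \arr[\Gamma] B$. By the definition of $\src$ as $\epsilon \circ \delta_0$, the composite $\src \circ \tilde p$ corresponds to $p\,\delta_0$. Hence the condition $fa = p\,\delta_0$ says exactly that $(a,\tilde p)$ is a map $X \to P$, where $P = A \times_B \arr[\Gamma] B$; this pullback exists because $f$ is an $\RR$-map. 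Likewise, a filler $l_{(a,p)} : \cyl[\Gamma] X \to A$ transposes, since $A \in \RR(\Gamma)$, to a map $X \to \arr[\Gamma] A$. The two triangle conditions $l\delta_0 = a$ and $f l = p$ become $h \circ \tilde l = (a,\tilde p)$. Here we use naturality of the transposition in the second variable: the transpose of $f\circ l$ is $\arr[\Gamma] f \circ \tilde l$, and the transpose of $l\,\delta_0$ is $\src\circ\tilde l$.

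So lifting problems are in natural bijection with $\CC/\Gamma(X,P)$, and solutions correspond to preimages under $h_* : \CC/\Gamma(X,\arr[\Gamma]A) \to \CC/\Gamma(X,P)$. The coherence condition $l_{(a,p)} \circ \cyl x = l_{(ax,\,p\,\cyl x)}$ translates, via naturality of the bijection in $X$, into naturality of the assignment $\CC/\Gamma(X,P) \to \CC/\Gamma(X,\arr[\Gamma]A)$ in $X \in \CC/\Gamma$. By Yoneda, noting that $P$ itself is an object of $\CC/\Gamma$, such a natural assignment that is a section of $h_*$ is the same as a map $l : P \to \arr[\Gamma] A$ with $h l = \id$. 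Conversely, a section $l$ gives lifts by postcomposition, and these are automatically coherent. These two constructions are mutually inverse, which yields the equivalence of structures. The object $P$ lies in $\RR(\Gamma)$ because $P \to \arr[\Gamma] B$ is a pullback of the $\RR$-map $f$ and $\RR$ is closed under composition, so the section indeed lives in $\RR(\Gamma)$.

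For normality, take the universal instance $X = P$. A problem of the form $(a, b\pi)$ corresponds to $(a, \rfl\circ b)$, since $\rfl$ is conjugate to $\pi$. The required lift $a\pi$ transposes to $\rfl \circ a$. So normality says that $l(a, \rfl\, b) = \rfl\, a$ for all $a$ and $b = fa$. Now observe that $(a, \rfl\, fa) = h \circ \rfl \circ a$, using $\src\circ\rfl = \id$ and naturality $\arr f \circ \rfl = \rfl \circ f$. The condition therefore becomes $l h \rfl a = \rfl a$ for all $a$, and taking $a = \id_A$ shows this is equivalent to $l h \rfl = \rfl$. The main obstacle is bookkeeping: checking that the transposition of $p\,\delta_0$ is precisely $\src\circ\tilde p$, and that $\rfl$ is natural in $f$. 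Both should follow directly from the naturality of the partial adjunction bijection in $X$ and in $A \in \RR(\Gamma)$.
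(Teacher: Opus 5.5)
Your proposal is correct. The paper states this proposition without proof, so there is no argument of the paper's to compare it with.

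Your route is the natural one:
\begin{itemize}
\item transpose lifting problems and fillers along $\cyl[\Gamma] \dashv_\partial \arr[\Gamma]$, using $\src = \epsilon \circ \delta_0$ and the fact that $\rfl$ is conjugate to $\pi$;
\item use Yoneda at the object $P \in \CC/\Gamma$ to turn a coherent choice of lifts into a single section $l$;
\item test normality on the generic element.
\end{itemize}
It also matches how the paper later uses the result: postcomposing with $l$ in \Cref{lem:connection} and \Cref{thm:id-elim}, and invoking $l \circ h \circ \rfl = \rfl$ there. The two identities you leave as bookkeeping do hold. First, $\src \circ \rfl = \epsilon \circ \cyl[\Gamma](\rfl) \circ \delta_0 = \pi \circ \delta_0 = \id$. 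Second, $\arr[\Gamma] f \circ \rfl$ and $\rfl \circ f$ both transpose to $f \circ \pi = \pi \circ \cyl[\Gamma] f$.

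A few small points to tidy up:
\begin{itemize}
\item The statement only says $f$ is a morphism in $\RR(\Gamma)$. You should say explicitly that you take $f$ to be an $\RR$-map, as in every application, where $f$ is a small fibration. Alternatively, say that $P$ is presupposed to exist; otherwise the pullback need not exist in $\CC$.
\item In the normality step the universal instance is $X = A$ with $a = \id_A$, not $X = P$.
\item ``The transpose of $l\,\delta_0$'' should read as the identity $l\,\delta_0 = \src \circ \tilde l$. This follows from $l = \epsilon_A \circ \cyl[\Gamma] \tilde l$ and naturality of $\delta_0$.
\end{itemize}
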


\begin{lemma}[Globally Hurewicz implies locally Hurewicz]\label{lem:global-hurewicz-to-local-hurewicz}
  If a morphism $f : A \to B$ in $\RR(\Delta)$ is (normal) Hurewicz
  as a map in $\CC$,
  then $f$ is also (normal) Hurewicz as a map in the slice $\CC / \Delta$.
\end{lemma}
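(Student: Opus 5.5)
The plan is to build the local lifts directly out of the global ones, by translating lifting problems in $\CC/\Delta$ into lifting problems in $\CC$ and checking that the three required conditions (filling, coherence, normality) carry over.

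First I unwind a lifting problem in the slice. An object of $\CC/\Delta$ is some $d : X \to \Delta$, and its cylinder is $\cyl[\Delta] X = (d\circ\pi : \cyl X \to \Delta)$. The maps $\delta_0 : X \to \cyl X$ and $\pi : \cyl X \to X$ are maps over $\Delta$, because $\pi\circ\delta_0 = \id$. A local lifting problem consists of $a : X \to A$ and $p : \cyl X \to B$ over $\Delta$ with $f\circ a = p\circ\delta_0$. Forgetting the structure over $\Delta$, this is exactly a lifting problem in $\CC$ for the global cylinder, so I set the local lift to be the global one, $l^\Delta_{(a,p)} := l_{(a,p)}$.

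The first real check is that this lift is a map over $\Delta$. Write $q_A : A \to \Delta$ and $q_B : B \to \Delta$ for the structure maps, with $q_B f = q_A$. Then
\[ q_A \circ l_{(a,p)} = q_B \circ f \circ l_{(a,p)} = q_B \circ p = d\circ \pi , \]
where the last equality holds because $p$ is a map over $\Delta$. The triangle equations $l\circ\delta_0 = a$ and $f\circ l = p$ are inherited from the global structure.

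Next come coherence and normality. A morphism $x : X' \to X$ in $\CC/\Delta$ has $\cyl[\Delta] x = \cyl x$ as underlying map, so the global coherence equation gives the local one verbatim. For normality, a local map $b : X \to B$ over $\Delta$ gives $b\circ\pi$, which is the same composite in both settings. The global equation $l_{(a,b\circ\pi)} = a\circ\pi$ therefore yields local normality.

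The main obstacle is bookkeeping rather than mathematics. I expect the paper's notion of ``Hurewicz in $\CC/\Delta$'' to be phrased via $\cyl[\Delta]$ together with the structure maps $\delta_0,\pi$ of the cylinder structure on the slice, induced as in the slice construction. I need to confirm that the underlying maps of these slice $\delta_0$ and $\pi$ really are the global $\delta_0$ and $\pi$. If instead the slice structure were only given up to the pseudonaturality isomorphism \eqref{eq:frobenius-identity}, I would transport the lifts along that isomorphism and verify that coherence is respected by naturality of the isomorphism. Since $\cyl[\Delta]$ is defined strictly by postcomposition, I expect the identifications to be equalities, so the proof should be a direct check.
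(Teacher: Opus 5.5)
Your argument is correct and is essentially the paper's proof. The paper proves a slightly more general statement: for any $\sigma : \Delta \to \Gamma$, Hurewicz over $\Gamma$ implies Hurewicz over $\Delta$. It does this by pushing the lifting problem forward along $\sigma_!$ and using the Frobenius identity $\sigma_!\,\cyl[\Delta] \cong \cyl[\Gamma]\,\sigma_!$, which is an equality for the strictly defined slice cylinders, as you anticipated. Taking $\Gamma = 1$ gives exactly your ``forget the structure over $\Delta$'' step. Your explicit checks are a genuine improvement: that the global filler lies over $\Delta$ ($q_A \circ l = q_B \circ f \circ l = q_B \circ p = d\circ\pi$), and that coherence and normality carry over, are details the paper leaves implicit.
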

\begin{proof}
  Let us prove a more general statement:
  if $\sigma : \Delta \to \Gamma$ is a morphism in $\CC$
  and $f : A \to B$ in $\RR(\Delta)$
  is (normal) Hurewicz over $\Gamma$,
  then $f : A \to B$ is (normal) Hurewicz over $\Delta$.

  Consider a diagram in $\CC / \Delta$:
  \[\begin{tikzcd}
      X & A \\
      {\cyl[\Gamma] X} & B
      \arrow["a", from=1-1, to=1-2]
      \arrow["\delta_0"', from=1-1, to=2-1]
      \arrow["f", from=1-2, to=2-2]
      \arrow["p"', from=2-1, to=2-2]
    \end{tikzcd} \]
  We take the image of this diagram under the composition functor
  $\sigma_! : \CC / \Delta \to \CC / \Gamma$.
  By the Frobenius identity (\cref{eq:frobenius-identity}),
  we obtain a diagonal filler in $\CC / \Gamma$.
  \[
  \begin{tikzcd}
    {\sigma_!X} & {\sigma_!X} & {\sigma_!A} \\
    {\cyl[\Gamma]\sigma_! X} & {\sigma_!\cyl[\Gamma] X} & {\sigma_!B}
    \arrow[equals, from=1-1, to=1-2]
    \arrow["{\delta_0}"', from=1-1, to=2-1]
    \arrow["{\sigma_!a}", from=1-2, to=1-3]
    \arrow["{\sigma_!\delta_0}"{pos=0.7}, from=1-2, to=2-2]
    \arrow["{\sigma_!f}", from=1-3, to=2-3]
    \arrow["l"{pos=0.2}, dashed, from=2-1, to=1-3]
    \arrow["\iso"{description}, draw=none, from=2-1, to=2-2]
    \arrow["{\sigma_!p}"', from=2-2, to=2-3]
  \end{tikzcd}
  \]
  The composition of the isomorphism 
  ${\cyl[\Gamma]\sigma_! X} \iso {\sigma_!\cyl[\Gamma] X}$
  with $l$ provides us with the required diagonal filler over $\Delta$.
\end{proof}

\begin{corollary}\label{cor:small-fibration-hurewicz}
  If $\uu : \Ulow \to \U$ is an $\RR$-algebraic universe 
  with a normal Hurewicz structure $(\uu, l)$,
  and $A$ and $B$ are objects in $\RR(\Gamma)$
  then any small fibration $f : A \to B$
  is normal Hurewicz $(f,l)$ in the slice over $\Gamma$.
\end{corollary}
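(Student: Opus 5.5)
The corollary is a two-step composition of \Cref{lem:hurewicz-over-1} and \Cref{lem:global-hurewicz-to-local-hurewicz}. First, since $f : A \to B$ is a small fibration, it is by definition a pullback of $\uu$. We pull back the normal Hurewicz structure $(\uu,l)$ along the classifying square. For any $X$, $a : X \to A$ and $p : \cyl X \to B$ with $f \circ a = p \circ \delta_0$, we compose with the pullback square to get a lifting problem against $\uu$. We solve it with $l$ and factor the result back through $A$ via the universal property of the pullback.

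Coherence under precomposition with $\cyl x$ is inherited from the coherence of $l$ together with uniqueness of maps into a pullback. Normality is inherited the same way: if $p = b \circ \pi$, the induced problem against $\uu$ has bottom map of the form $(\,\cdot\,) \circ \pi$. The lift is therefore (image of $a$)$\circ \pi$, and factoring back through $A$ gives $a \circ \pi$. This is exactly the content of \Cref{lem:hurewicz-over-1}, so $f$ is normal Hurewicz as a map in $\CC$.

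Second, $f$ is a morphism of $\RR(\Gamma)$: $A$ and $B$ are $\RR$-objects over $\Gamma$, and $f$ commutes with their structure maps. We apply \Cref{lem:global-hurewicz-to-local-hurewicz} with $\Delta := \Gamma$. Being Hurewicz ``in $\CC$'' is the case of the more general statement in that proof taken over the terminal object, via $\Gamma \to 1$, since $\CC/1 \simeq \CC$ and $\cyl[1]$ agrees with $\cyl$. The lemma then gives that $f$ is normal Hurewicz in $\CC/\Gamma$ with respect to $\cyl[\Gamma]$.

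The only real point needing care is that normality and coherence survive the transfer along $\sigma_!$. A lift in $\CC/\Gamma$ is obtained by transporting along the Frobenius isomorphism $\cyl[1]\sigma_! X \iso \sigma_!\cyl[\Gamma] X$. We must check that the resulting map lies over $\Gamma$. It does, because its composite with $A \to \Gamma$ equals the composite through $p$ and $B \to \Gamma$. We must also check that, when $p$ factors through $\pi$, this isomorphism commutes with $\pi$ and $\delta_0$. That holds because these are the modifications of the pseudotransformation $\cyl$. Once those compatibilities are recorded, the corollary follows immediately.
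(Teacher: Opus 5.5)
Your proposal is correct and matches the paper's proof, which simply composes \Cref{lem:hurewicz-over-1} (pulling the normal Hurewicz structure on $\uu$ back along the classifying square) with \Cref{lem:global-hurewicz-to-local-hurewicz} applied with $\Delta := \Gamma$. Your extra compatibility checks are fine. They are in fact automatic: since $\cyl[\Gamma]$ is defined by postcomposing with $\pi$, the Frobenius comparison $\cyl\,\sigma_! X \iso \sigma_!\cyl[\Gamma] X$ is an identity here, so it trivially commutes with $\pi$ and $\delta_0$.
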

\begin{proof}
  This follows from \Cref{lem:hurewicz-over-1} and \Cref{lem:global-hurewicz-to-local-hurewicz}.
% we have that the ``global'' Hurewicz condition on the universe
% implies a ``local'' Hurewicz condition on each small fibration.
\end{proof}

In the following,
we will always work in the slice over $\Gamma \in \CC$,
so that we drop the subscript in $\arr[\Gamma] A$ and $A \times_\Gamma A$
for an object $A \in \RR(\Gamma)$.
\begin{lemma}[Connection]\label{lem:connection}
  Suppose $\uu : \Ulow \to \U$ is an $\RR$-algebraic universe 
  with a normal Hurewicz structure $(\uu, l)$,
  and $\Gamma$ is an object in $\CC$.
  For any small fibration $A \to \Gamma$,
  there is a map $\chi : \arr A \to \arr \arr A$
  satisfying
  \[ \begin{tikzcd}
        A & {\arr A} \\
        {\arr A} & {\arr \arr A} \\
        & {\arr A}
        \arrow["\rfl", from=1-1, to=1-2]
        \arrow["\src_A", from=2-1, to=1-1]
        \arrow["\chi", from=2-1, to=2-2]
        \arrow[equals, from=2-1, to=3-2]
        \arrow["\src_{\arr \! A}"', from=2-2, to=1-2]
        \arrow["\trg_{\arr \! A}", from=2-2, to=3-2]
    \end{tikzcd} 
    \quad \quad
    \text{and}
    \quad \quad
    \begin{tikzcd}
	A & {\arr A} \\
        {\arr A} & {\arr \arr A}
        \arrow["\rfl_A", from=1-1, to=1-2]
        \arrow["\rfl_A"', from=1-1, to=2-1]
        \arrow["\rfl_{\arr \! A}", from=1-2, to=2-2]
        \arrow["\chi"', from=2-1, to=2-2]
    \end{tikzcd}
    \]
\end{lemma}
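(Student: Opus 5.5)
We construct $\chi$ as a Hurewicz lift for the map $\src_A : \arr A \to A$, i.e. path lifting along the source projection. Since $A \to \Gamma$ is a small fibration, $\src_A : \arr A \to A$ is a pullback of $(\src,\trg) : \arr A \to A\times A$ along a section-like map. More directly, I will argue that $\src_A$ is normal Hurewicz in the slice over $\Gamma$. The plan is to observe that $\arr A \to A$ (source) is a small fibration, or at least inherits a normal Hurewicz structure from $A \to \Gamma$. This follows from the path functor being partially right adjoint to the cylinder: lifting problems against $\src_A$ of the form $(\delta_0 : X \to \cyl X)$ transpose to lifting problems of $\delta_0 \times \cyl$ against $A \to \Gamma$, which are solved using the $\psi$ connection-swap and the Hurewicz structure on $A\to\Gamma$ from \Cref{cor:small-fibration-hurewicz}.

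Concretely, I use \Cref{prop:hurewicz-equiv-defs} for the small fibration $A \to \Gamma$ (with $B = \Gamma$, so $\arr[\Gamma]\Gamma = \Gamma$). This gives a section $l : A \to \arr A$... that is too degenerate, so instead I will apply \Cref{prop:hurewicz-equiv-defs} to $f = A \to \Gamma$ but in the slice over $A$-indexed contexts. The cleaner route is as follows. Set up the lifting problem in $\CC/\Gamma$ with $X := \arr A$, the top map $a := \rfl\circ \src_A : \arr A \to A \to \arr A$, and the bottom homotopy $p := \epsilon_A : \cyl \arr A \to A$ into $\Gamma$-fibrant target. Then lift against the map $\src_A : \arr A \to A$ regarded as a small fibration (a pullback of $(\src,\trg)$ along $A \times \Gamma \cdots$). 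Its Hurewicz structure comes from \Cref{lem:hurewicz-over-1}, since it is a pullback of $\uu$ whenever $\Path$-types exist. The alternative is to avoid needing $\Path$-types: transpose via $\cyl\dashv_\partial\arr$ so that a lift $\cyl\arr A \to \arr A$ corresponds to a map $\cyl\cyl\arr A \to A$. That map is obtained by lifting against $A\to\Gamma$ along $\delta_0\cyl$ after precomposing with $\psi$, which is why $\psi$ is in the cylinder structure.

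Having the lift $L : \cyl \arr A \to \arr A$ with $L\circ\delta_0 = \rfl\circ\src_A$ and $\src_A \circ L = \epsilon_A$, define $\chi$ as the transpose of $L$ (up to $\sym$ to fix orientation), a map $\arr A \to \arr\arr A$. The first diagram then follows by transposition. The equation $\src_{\arr A}\chi = \rfl\,\src_A$ is $L\delta_0$, and $\trg_{\arr A}\chi = \id$ follows because $L\delta_1$ transposes to $\epsilon_A$'s conjugate, which is $\id$. The second diagram uses normality: restricting along $\rfl_A$ makes the homotopy $p$ constant ($\epsilon_A\circ\cyl\rfl = \pi$), so the normal lift is the constant one $a\circ\pi$, whose transpose is $\rfl_{\arr A}\circ\rfl_A$. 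This uses naturality of the lifts under precomposition with $\rfl_A$.

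The main obstacle is the orientation bookkeeping. One must check that the lift's endpoint $\delta_1$ really yields the identity on $\arr A$, rather than only the source, and that the transposes involve $\sym$ correctly. I would first verify this in the case $\cyl = (-)\times I$, where $\chi(\gamma)(s,t)$ should be $\gamma$ lifted from the constant path, and then translate each step into hom-set bijections.
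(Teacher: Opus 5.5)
There is a genuine gap. A Hurewicz lift against the source projection $\src_A : \arr A \to A$ cannot produce the connection. Such a lifting problem prescribes only two adjacent sides of the square: the initial path $\rfl\circ\src_A$ and the path of sources $\epsilon_A$. The conclusion $\src_{\arr A}\chi=\rfl\circ\src_A$ together with $\trg_{\arr A}\chi=\id$ needs three sides controlled.

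Concretely, your step ``$\trg_{\arr A}\chi = L\delta_1=\id$'' fails for \emph{every} solution $L$ of your problem. From $\src_A\circ L=\epsilon_A$ you get $\src_A\circ L\circ\delta_1=\epsilon_A\circ\delta_1=\trg_A$, so $L\delta_1=\id$ would force $\src_A=\trg_A$. In fact $L:=\rfl_A\circ\epsilon_A$ already solves your lifting problem; its transpose is $\arr(\rfl_A)$, whose target is $\rfl_A\circ\trg_A$.

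Inserting $\sym$ does not rescue this. Writing $\tilde L$ for the transpose of $L$, $\src_{\arr A}\circ\sym\circ\tilde L=\arr(\src_A)\circ\tilde L$ is the transpose of $\src_A\circ L=\epsilon_A$, i.e.\ the identity. So you get a homotopy \emph{out of} $p$ rather than into it, and no path reversal is available (think of $\Cat$).

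Your alternative of lifting against $A\to\Gamma$ controls even less. The claim that problems against $\src_A$ transpose to Hurewicz problems for $A\to\Gamma$ is unjustified: they transpose to lifting against the inclusion of two adjacent faces into $\cyl\cyl X$, which is not of the form $\delta_0$, and $\psi$ only permutes coordinates. Also, $\src_A=\pi_0\circ(\src,\trg)$ is a composite of two small fibrations, not a pullback of $\uu$. It is Hurewicz because Hurewicz structures compose, but this is beside the main problem.

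The missing idea is to lift against the pathobject projection $(\src,\trg):\arr A\to A\times_\Gamma A$. Under the $\Path$-type assumption (\Cref{prop:universal-pathobject}) this is a small fibration, hence Hurewicz over $\Gamma$ by \Cref{cor:small-fibration-hurewicz}. It controls \emph{both} endpoints, so it can pose a genuine open box.

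Take $b:\arr A\to P$ into the pullback-hom of \Cref{prop:hurewicz-equiv-defs}, with initial path $\rfl\circ\src$ and endpoint homotopy $(\rfl\circ\src,\id)$ in $\arr A\times_\Gamma\arr A\cong\arr(A\times_\Gamma A)$. Then $\theta:=l\circ b$ satisfies $\src_{\arr A}\theta=\arr(\src_A)\theta=\rfl\circ\src_A$ and $\arr(\trg_A)\theta=\id$. Setting $\chi:=\sym\circ\theta$ gives $\src_{\arr A}\chi=\arr(\src_A)\theta=\rfl\circ\src_A$ and $\trg_{\arr A}\chi=\arr(\trg_A)\theta=\id$.

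Normality uses the same idea you had. One checks $b\circ\rfl_A=h\circ\rfl_{\arr A}\circ\rfl_A$, so normality of $l$ gives $\theta\circ\rfl_A=\rfl_{\arr A}\circ\rfl_A$. Then $\sym\circ\rfl_{\arr A}=\arr(\rfl_A)$ and naturality of $\rfl$ yield $\chi\circ\rfl_A=\rfl_{\arr A}\circ\rfl_A$.
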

We will call $\chi$ a (normal) connection,
where the square involing $\rfl$ is the normality condition.
We think of $\chi$ as taking a path $p : x \rightsquigarrow y$ in $A$
to a path $\chi_p : \rfl x \rightsquigarrow p$ in $\arr A$,
or a homotopy in $A$,
such that $\chi_{\rfl x}$ is the constant path $\rfl : \rfl x \rightsquigarrow \rfl x$:
\[ \begin{tikzcd}
	x & x \\
	x & y
	\arrow[rightsquigarrow, from=1-1, to=1-2]
	\arrow[""{name=0, anchor=center, inner sep=0}, "{\rfl x}"', rightsquigarrow, from=1-1, to=2-1]
	\arrow[""{name=1, anchor=center, inner sep=0}, "p", rightsquigarrow, from=1-2, to=2-2]
	\arrow[rightsquigarrow, from=2-1, to=2-2]
	% \arrow["\chi"{description}, between={0.1}{0.9}, Rightarrow, from=0, to=1]
  \end{tikzcd} \]
We will construct $\chi$ via a ``box-filling'' argument,
schematically as follows.
First, the open box
\[ \begin{tikzcd}
	x & x \\
	x & y
	\arrow["{\rfl x}"', rightsquigarrow, from=1-1, to=2-1]
	\arrow["{\rfl x}", rightsquigarrow, from=1-1, to=1-2]
	\arrow["p"', rightsquigarrow, from=2-1, to=2-2]
  \end{tikzcd} \]
is a Hurewicz lifting problem in $(\src,\trg) : \arr A \to A \times A$.
By \Cref{cor:small-fibration-hurewicz},
the map $\arr A \to A \times A$ is Hurewicz over $\Gamma$.
Hence, we have filled box
\[ \begin{tikzcd}
	x & x \\
	x & y
	\arrow["{\rfl x}"', rightsquigarrow, from=1-1, to=2-1]
	\arrow["{\rfl x}", rightsquigarrow, from=1-1, to=1-2]
	\arrow["p"', rightsquigarrow, from=2-1, to=2-2]
	\arrow["", rightsquigarrow, from=1-2, to=2-2]
  \end{tikzcd} \]
Using $\sym : \arr \arr A \to \arr \arr A$,
we obtain the required homotopy. 
\[ \begin{tikzcd}
	x & x \\
	x & y
	\arrow["{\rfl x}", rightsquigarrow, from=1-1, to=1-2]
	\arrow["{\rfl x}"', rightsquigarrow, from=1-1, to=2-1]
	\arrow["p", rightsquigarrow, from=1-2, to=2-2]
	\arrow[rightsquigarrow, from=2-1, to=2-2]
  \end{tikzcd} \]
Notice that although we will know that the top map is $\rfl x$
(definitionally),
we will not be able to say anything about the bottom map provided by box filling.
\begin{proof}[Proof of \Cref{lem:connection}.]
  The open box can be constructed as follows.
  Since $\arr : \RR(\Gamma) \to \RR(\Gamma)$ is a partial right adoint,
  it preserves all limits,
  and so $\arr (A \times A) \iso \arr A \times \arr A$.
  We construct the open box $b$
  a map into the pullback.
  \[ \begin{tikzcd}
        {\arr A} && A \\
        & P & {\arr A} \\
        {\arr A \times \arr A} & {\arr (A \times A)} & {A \times A}
        \arrow["\src", from=1-1, to=1-3]
        \arrow["b"{description}, dashed, from=1-1, to=2-2]
        \arrow["{(\rfl \circ \src, \id)}"', from=1-1, to=3-1]
        \arrow["\rfl", from=1-3, to=2-3]
        \arrow["\src", from=2-2, to=2-3]
        \arrow["{\arr (\src,\trg)}"', from=2-2, to=3-2]
        \arrow["{(\src,\trg)}", from=2-3, to=3-3]
        \arrow["\iso"{description}, draw=none, from=3-2, to=3-1]
        \arrow["\src"', from=3-2, to=3-3]
        \arrow["\lrcorner"{anchor=center, pos=0.125}, draw=none, from=2-2, to=3-3]
    \end{tikzcd} \]
  By \Cref{cor:small-fibration-hurewicz},
  the map $\arr A \to A \times A$ is Hurewicz over $\Gamma$.
  Then we can obtain a filled box by composing $b$ with $l : P \to \arr \arr A$
  (see \Cref{prop:hurewicz-equiv-defs}),
  and then flip the box by composing with $\sym$.
%   Then we can construct a map $b : \arr A \to P$
%   \[\begin{tikzcd}
% 	{\arr A} && A \\
% 	& {\arr \arr A} & {\arr A} \\
% 	{\arr A \times \arr A} & {\arr (A \times A)} & {A \times A}
% 	\arrow["\src", from=1-1, to=1-3]
% 	\arrow["{l \circ b}"{description}, dashed, from=1-1, to=2-2]
% 	\arrow["{(\rfl \circ \src, \id)}"', from=1-1, to=3-1]
% 	\arrow["\rfl", from=1-3, to=2-3]
% 	\arrow["\src", from=2-2, to=2-3]
% 	\arrow["{\arr (\src,\trg)}"', from=2-2, to=3-2]
% 	\arrow["{(\src,\trg)}", from=2-3, to=3-3]
% 	\arrow["\iso"{description}, draw=none, from=3-2, to=3-1]
% 	\arrow["\src"', from=3-2, to=3-3]
% \end{tikzcd}\]
  \[ 
    \begin{tikzcd}
        {\arr A} & P & {\arr \arr A} & {\arr \arr A}
        \arrow["b", from=1-1, to=1-2]
        \arrow["\chi"', bend right, dashed, from=1-1, to=1-4]
        \arrow["l", from=1-2, to=1-3]
        \arrow["\sym", from=1-3, to=1-4]
    \end{tikzcd}
  \]
\end{proof}

\section{Identity elimination}

\begin{theorem}\label{thm:id-elim}
  Suppose $(\CC,\RR)$ is a $\pi$-preclan.
  If $\uu : \Ulow \to \U$ is an $\RR$-algebraic universe (\labelcref{def:algebraic-universe}) that
  admits $\Path$-types (\labelcref{def:mltt-alg-path-types-2})
  and a normal Hurewicz structure (\labelcref{def:hurewicz-defs}),
  then the universe also admits $\Id$-types (\labelcref{def:algebraic-id-types}).
\end{theorem}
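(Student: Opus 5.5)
We first obtain identity formation and introduction (\Cref{def:algebraic-id-form-intro}) directly from the $\Path$-types. Take $\Id := \Path : \Ulow \times_\U \Ulow \to \U$, and take $i := \path \circ \rfl : \Ulow \to \arr[\U]\Ulow \to \Ulow$. Since $(\src,\trg)\circ\rfl = \Delta$, the required square commutes. The pullback square in \Cref{def:mltt-alg-path-types-2} then identifies the object $I$ of the identity-type construction with $\arr[\U]\Ulow$, with $d_I = (\src,\trg)$. Under this identification the comparison map $\rfl : \Ulow \to I$ is the path-functor reflexivity. Forming the universal elimination diagram \eqref{eq:id-elimination-1} over $L$, $\psi_1 : L \to \U$ is an $\RR$-map by \Cref{lem:psi-fibration}. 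Hence $A \to L$ is a small fibration, $\Id_A \iso \arr[L] A$ by pseudonaturality of $\arr$, and the map $A \to \Id_A$ is $\rfl_A : A \to \arr[L] A$, all working in the slice over $\Gamma := L$.

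By the third condition of \Cref{prop:id-elim-equiv}, it then suffices to solve, in $\CC/L$, every lifting problem of $\rfl_A : A \to \arr A$ against a small fibration $C \to \arr A$. This is a J-eliminator for based path spaces, carried out as in \cite{awodey2026}.

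Given a section $r$ over $A$ (that is, $r : A \to C$ over $\rfl_A$), set $c := r \circ \src : \arr A \to C$. This lies over $\rfl\circ\src : \arr A \to \arr A$. The connection $\chi : \arr A \to \arr\arr A$ of \Cref{lem:connection} is a path in $\arr A$ from $\rfl\circ\src$ to $\id$, because $\src_{\arr A}\chi = \rfl\circ\src_A$ and $\trg_{\arr A}\chi = \id$. Transposing $\chi$ along $\cyl[L] \dashv_\partial \arr[L]$ gives a homotopy $\hat\chi : \cyl[L]\arr A \to \arr A$ with $\hat\chi\circ\delta_0 = \rfl\circ\src$ and $\hat\chi\circ\delta_1 = \id$.

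Since $C \to \arr A$ is a small fibration, \Cref{cor:small-fibration-hurewicz} makes it normal Hurewicz over $L$. Lifting the problem $(c, \hat\chi)$ yields $\tilde l : \cyl[L]\arr A \to C$, and we define $j' := \tilde l\circ\delta_1 : \arr A \to C$. This lies over $\hat\chi\delta_1 = \id$, so $j'$ is a section of $C \to \arr A$.

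It remains to check $j'\circ\rfl_A = r$, and I expect this to be the main obstacle. Precomposing with $\rfl_A$ and using naturality (coherence) of the Hurewicz lifts along $\rfl_A : A \to \arr A$, the lifting problem becomes $(c\circ\rfl, \hat\chi\circ\cyl\rfl)$. Here $c\circ\rfl = r\circ\src\circ\rfl = r$. By the normality square of \Cref{lem:connection}, $\chi\circ\rfl_A = \rfl_{\arr A}\circ\rfl_A$, and transposing $\rfl$ along the partial adjunction gives $\pi$, so $\hat\chi\circ\cyl\rfl = \rfl_A\circ\pi$. Normality of the Hurewicz structure then forces the lift to be $r\circ\pi$, and hence $j'\rfl = r\pi\delta_1 = r$.

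The delicate parts are the bookkeeping of the transposition $\chi \leftrightarrow \hat\chi$ (including naturality of the partial adjunction in the domain, which uses the pseudonaturality of $\cyl$ and the Frobenius identity \eqref{eq:frobenius-identity}), the identification of $\Id_A$ with $\arr[L]A$, and the verification that the resulting $j'$ is exactly the structure asked for in \Cref{prop:id-elim-equiv}. The latter reduces to checking that its triangles commute.
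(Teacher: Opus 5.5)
Your proposal is correct and is essentially the paper's proof. The paper also sets $I=\arr[\U]\Ulow$ and $\Id=\Path$, and takes $j=\trg\circ l\circ(\chi,r\circ\src)$, using the pathobject form $l : P\to\arr C$ of the local Hurewicz structure (\Cref{prop:hurewicz-equiv-defs}). Since $l\circ(\chi,r\circ\src)$ is the transpose of your cylinder lift $\tilde l$, this $j$ is exactly your $\tilde l\circ\delta_1$.

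The computation rule is derived the same way, from normality of $\chi$ and of $l$; the latter, in the form $l\circ h\circ\rfl=\rfl$, packages your coherence-along-$\rfl_A$ and cylinder-normality step. Your appeal to \Cref{lem:psi-fibration} is unnecessary, since $A\to L$ is small simply because it is a pullback of $\uu$.
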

\begin{proof}
  Identity formation and introduction (\Cref{def:algebraic-id-form-intro})
  are provided by the following diagram.
  \[\begin{tikzcd}
    \Ulow & {{\arr[\U] \Ulow}} & \Ulow \\
    & {\Ulow \times_\U \Ulow} & \U
    \arrow["\rfl", from=1-1, to=1-2]
    \arrow["\Delta"', from=1-1, to=2-2]
    \arrow["\path", from=1-2, to=1-3]
    \arrow[from=1-2, to=2-2]
    \arrow["\lrcorner"{anchor=center, pos=0.125}, draw=none, from=1-2, to=2-3]
    \arrow[from=1-3, to=2-3]
    \arrow["\Path"', from=2-2, to=2-3]
  \end{tikzcd}\]
  Note that $I = {\arr[\U] \Ulow}$ is provided algebraically by the path functor.
  
  Let $\Gamma \in \CC$;
  we again drop subscripts for $\arr[\Gamma]$ and always work over $\Gamma$.
  Let $A \to \Gamma$ and $d : C \to \arr A$ be
  two small fibrations,
  and $r : A \to C$ be a map satisfying $d \circ r = \rfl$.
  By \Cref{prop:id-elim-equiv},
  it suffices construct a lift $j : \arr A \to C$
  such that $d \circ j = \id$ and $j \circ \rfl = r$.
  \[ \begin{tikzcd}
	{{A}} & C \\
	{\arr{A}} & {\arr{A}}
	\arrow["r",  from=1-1, to=1-2]
	\arrow["{{{{\rfl}}}}"', from=1-1, to=2-1]
	\arrow[from=1-2, to=2-2]
	\arrow["{{j}}"{description}, dashed, from=2-1, to=1-2]
	\arrow[equals, from=2-1, to=2-2]
  \end{tikzcd} \]
  By \Cref{cor:small-fibration-hurewicz}, $d : C \to \arr A$ is Hurewicz over $\Gamma$.
  To construct $j$, let us consider the following lifting problem in $C$
  for some path $p : x \rightsquigarrow y$ in $A$.
  \[ \begin{tikzcd}
	r & {j_p} & { C} \\
	{\rfl x} & p & { \arr A}
	\arrow[dashed, from=1-1, to=1-2]
	\arrow[maps to, from=1-1, to=2-1]
	\arrow["d", from=1-3, to=2-3]
	\arrow["{\chi_p}"', squiggly, from=2-1, to=2-2]
    \end{tikzcd}\]
  More formally, the lifting problem is captured by the following dotted map.
  \[\begin{tikzcd}
        {\arr A} && A \\
        & P & C \\
        & {\arr \arr A} & {\arr A}
        \arrow["\src", from=1-1, to=1-3]
        \arrow["{(\chi,r \circ \src)}"{description}, dashed, from=1-1, to=2-2]
        \arrow["\chi"', bend right, from=1-1, to=3-2]
        \arrow["r", from=1-3, to=2-3]
        \arrow[from=2-2, to=2-3]
        \arrow["\pi_1", from=2-2, to=3-2]
        \arrow["\lrcorner"{anchor=center, pos=0.125}, draw=none, from=2-2, to=3-3]
        \arrow["d", from=2-3, to=3-3]
        \arrow["\src"', from=3-2, to=3-3]
    \end{tikzcd}\]
    By \Cref{prop:hurewicz-equiv-defs},
    $\arr C$ comes with a locally Hurewicz lifting operation $l : P \to \arr C$.
    We obtain $j$ by composing
    $(\chi,r \circ \src) : \arr A \to P$
    with $l : P \to \arr C$
    and the target map $\trg : \arr C \to C$.
    \[ \begin{tikzcd}
        {\arr A} & P & {\arr C} & C
        \arrow["{(\chi,r \circ \src)}", from=1-1, to=1-2]
        \arrow["j"', bend right, dashed, from=1-1, to=1-4]
        \arrow["l", from=1-2, to=1-3]
        \arrow["\trg", from=1-3, to=1-4]
    \end{tikzcd} \]
    The lower triangle involving $j$ commutes:
    \begin{align*}
      & d \circ j \\
      = \; & d \circ {\trg} \circ l \circ (\chi, r \circ \src) \\
      = \; & {\trg} \circ \arr (d) \circ l \circ (\chi, r \circ \src) \\
      = \; & {\trg} \circ \pi_1 \circ (\chi, r \circ \src) \\
      = \; & {\trg} \circ \chi \\
      = \; & \id \\
    \end{align*}
    The upper triangle commutes due to normality of $\chi$ and $l$.
    \begin{align*}
      & j \circ \rfl \\
      = \; & {\trg} \circ l \circ (\chi, r \circ \src) \circ \rfl \\
      = \; & {\trg} \circ l \circ (\chi\circ \rfl, r \circ \src\circ \rfl)  \\
      = \; & {\trg} \circ l \circ (\rfl \circ \rfl, r) & (\chi \text{ normal}) \\
      = \; & {\trg} \circ l \circ h \circ \rfl \circ r   \\
      = \; & {\trg} \circ \rfl \circ r & (l \text{ normal}) \\
      = \; & r  \\
    \end{align*}
\end{proof}

\section{Examples} \label{sec:examples}

\begin{example}\label{example:sets}
  Take $\RR$ to be all maps in a lex category $\CC$
  and the cylinder functor $\cyl : \CC \to \CC$
  to be the identity functor on $\CC$, 
  then the path functors $\arr[X] : \RR(X) \to \RR(X)$
  will also be identities
  and all maps are trivially normal Hurewicz.
  It follows that the pullback square appearing in \Cref{def:mltt-alg-path-types-2}
  is the same pullback square defining \emph{extensional} Identity types (cf.~\cite{awodey2025}):
  \[ \begin{tikzcd}
        \Ulow & \Ulow \\
        {\Ulow \times_\U \Ulow} & \U
        \arrow["\path", from=1-1, to=1-2]
        \arrow["{{(\src,\trg) = \Delta}}"', from=1-1, to=2-1]
        \arrow["\lrcorner"{anchor=center, pos=0.125}, draw=none, from=1-1, to=2-2]
        \arrow[from=1-2, to=2-2]
        \arrow["\Path"', from=2-1, to=2-2]
    \end{tikzcd} \]

  In particular when $\CC = \Psh(\DD)$ is a category of presheaves,
  $\RR$ is the class of all maps,
  $\uu : \Ulow \to U$ is a Hofmann-Streicher universe \cite{hofmann1997},
  such a pullback square can always be constructed (see \cite[Section {7.1}]{awodey2026}).
\end{example}

As shown in \Cref{ex:exponentiable-bipointed},
a bipointed exponentiable object
$\delta_0, \delta_1 : 1 \rightrightarrows I$
naturally gives rise to a cylinder structure on $\CC$.

\begin{example}\label{example:groupoids}
  Take $\CC$ to be the category of groupoids
  and $I = \{\star \iso \star\}$ to be the walking isomorphism.
  Then the normal Hurewicz condition is equivalent to being a normal isofibration.
  Taking $\RR$ to be the class of isofibrations,
  which is a $\pi$-clan,
  it is clear that exponentiation by $I_X$ provides a partial right adjoint
  $\RR(X) \to \RR(X)$ -- it also extends to an actual right adjoint.
  Taking $\uu : \Ulow \to \U$ to be the universal small isofibration in groupoids
  (see \Cref{sec:universe-in-groupoid-model}),
  it is easy to check the equivalent conditions of \Cref{prop:universal-pathobject}.
  In fact, pathobject projection for a small isofibration
  is a small discrete isofibration.
\end{example}

\begin{example} \label{example:cSet}
  If we take $\CC = \cSet$ to be the category of
  Cartesian cubical sets, $I = y(\Box_1)$,
  and $\RR$ to be the class of unbiased cubical fibrations
  \cite[Definition 4.6]{awodey2026cartesian},
  then $(\CC,\RR)$ is only a $\pi$-preclan.
  To see that $(\CC,\RR)$ fails to be a $\pi$-clan,
  note that the interval $I$ is not fibrant.
  However, for each $\Gamma$, taking $\arr[\Gamma] A := A^{I_\Gamma}$
  still defines an endofunctor $\arr[\Gamma] : \RR(\Gamma) \to \RR(\Gamma)$,
  where $I_\Gamma$ is the pullback of $I$ to the slice $\CC / \Gamma$.
  This is because although $I \to 1$ is not a fibration,
  it is $\RR$-exponentiable.
  We can take $\uu : \Ulow \to \U$
  to be the classifier for small unbiased fibrations
  \cite[Proposition 7.17]{awodey2026cartesian};
  checking that $\U$ itself is fibrant \cite[Proposition 9.4]{awodey2026cartesian}.
  In this case, all unbiased cubical fibrations are Hurewicz,
  and a straightforward computation shows that the universal pathobject projection
  is a small unbiased fibration.
\end{example}

\begin{example} \label{example:sSet}
  Take $\CC = \mathsf{sSet}$ to be the category of
  simplicial sets and $\RR$ to be the class of Kan fibrations.
  This again forms a $\pi$-preclan (\Cref{ex:sset-kan}).
  Exponentiation by $I = [1]$ (and its pullbacks)
  again defines an endofunctor $\arr[1] : \RR(1) \to \RR(1)$.
  Then we can take $\uu : \Ulow \to \U$
  to be the universe from \cite[Definition 2.1.9]{kapulkin2021},
  checking that $\U$ is fibrant \cite[Theorem 2.2.1]{kapulkin2021}.
  Here, it is also the case that all Kan fibrations are Hurewicz.
  A proof that the universal pathobject projection
  is a small unbiased fibration can be adapted from \cite[Proposition 2.3.3]{kapulkin2021}.
\end{example}

\section{A path types modality}
There is a modal character to path types, which we make precise now
by defining a modal theory and showing that our examples are models
of this modal theory.
This suggests a modal type theory that
has characteristics of both (plain) Homotopy Type Theory
and (various versions of) cubical type theory.
Here we will suggest what a model of such a modal type theory should satisfy,
leaving the design of a modal syntax to future work.
% So far,
% we have presented path types in a style that is reminiscent of modal type theory.
% Here, we make this precise by providing
% a modal type theory and an additional condition
% that makes our setting a model of it.

The modal theory is given as follows,
in the style of \cite{licata2016,licata2017,gratzer2020}.
Consider the 2-category $\MM$ (the \emph{mode theory}) with a single 0-cell (\emph{mode}) $\star$,
a single non-trivial 1-cell (\emph{modality}) $\arr : \star \to \star$,
and two-cells $\src, \trg : \arr \to \id$,
$\rfl : \id \to \arr$ and $\sym : \arr \circ \arr \to \arr \circ \arr$.

Let us again assume that
$(\CC,\RR)$ is a $\pi$-preclan with a terminal object,
a cylinder structure and path functors (\Cref{def:path-functor}).
It is straightforward to define a strict 2-functor
$\llbracket - \rrbracket : \MM^\mathsf{coop} \to \tcat$
(cf. \cite[Definition 5.1]{gratzer2020}),
using the cylinder structure.
The mode $\star$ is interpreted as the category $\CC$,
the modality $\arr$ is interpreted as the cylinder functor $\cyl : \CC \to \CC$
and the two-cells are interpreted as the obvious natural transformations.

\begin{definition}\label{def:modal-model}
  A modal universe with respect to the context structure
  $\llbracket - \rrbracket : \MM^\mathsf{coop} \to \tcat$
  consists of an $\RR$-algebraic universe $\uu : \Ulow \to \U$,
  maps $\Mod : \arr[1] \U \to \U$ and $\mod : \arr[1] \Ulow \to \Ulow$,
  such that
  \[\begin{tikzcd}
      {\arr[1] \Ulow} & \Ulow \\
      {\arr[1] \U} & {  \U}
      \arrow["\mod", from=1-1, to=1-2]
      \arrow["{\arr[1]\uu}"', from=1-1, to=2-1]
      \arrow["\lrcorner"{anchor=center, pos=0.125}, draw=none, from=1-1, to=2-2]
      \arrow["\uu", from=1-2, to=2-2]
      \arrow["\Mod"', from=2-1, to=2-2]
  \end{tikzcd}\]
  is a pullback square.
\end{definition}
Given an interval object $I$,
the map $\Mod : \arr[1] \U \to \U$ should be viewed as
a kind of $\Pi$-type indexed by $I$,
taking a type $A : I \to \U$ to the type $\Pi_I A : \U$.

Since we are working internally,
the above definition of a modal universe is the internal analogue of a
``modal natural model'' (\cite{gratzer2020}) with a ``dependent right adjoint''
(\cite{birkedal2020}).
By taking the image of $\uu : \Ulow \to \U$ under the Yoneda embedding,
we obtain a model of modal type theory with 
a dependent right adjoint provided by the partial right adjoints
$\arr[\Gamma] : \RR(\Gamma) \to \RR(\Gamma)$
for each $\cyl[\Gamma] : \CC / \Gamma \to \CC / \Gamma$.
One way to state a precise result is using CwDRAs
(categories with dependent right adjoints),
but the same could be done with modal natural models.

\begin{proposition}
  Suppose $(\CC,\RR)$ is a $\pi$-preclan with a terminal object,
  a cylinder structure and path functors (\Cref{def:path-functor}).
  A modal universe provides a model of the modal type theory $\MM$
  in the sense of being a CwDRA (\cite[Definition 2]{birkedal2020})
  where the presheaf of types is representable.
\end{proposition}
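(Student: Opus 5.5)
I plan to unfold the definition of a CwDRA from \cite[Definition 2]{birkedal2020} piece by piece, and supply each piece from the internal data via the Yoneda embedding $y : \CC \to \wCC$. The ordering will be: (i) the underlying CwF, (ii) the endofunctor on contexts, (iii) the dependent right adjoint on types, (iv) the dependent right adjoint on terms, together with its $\beta$/$\eta$ laws and substitution stability.

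For (i), the category of contexts is $\CC$. The presheaf of types is $\Ty := y\U$, which is representable by construction, and the presheaf of terms is $\mathsf{Tm} := y\Ulow$. The projection is $y\uu$. Context extension $\Gamma.A$ is the chosen pullback of $\uu$ along $A : \Gamma \to \U$ (\Cref{def:elementary-universe}). This is the standard fact that a universe with chosen pullbacks is a natural model, i.e.\ a CwF.

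For (ii), the left adjoint endofunctor on contexts is the cylinder $\cyl : \CC \to \CC$. This is a strict functor, as the 2-functor $\llbracket - \rrbracket$ requires.

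For (iii), given $A : \cyl\Gamma \to \U$, I define $\mathsf{R}A : \Gamma \to \U$ as follows. First transpose $A$ along the partial adjunction $\cyl[1] \dashv_\partial \arr[1]$, using that $\U \in \RR(1)$ since $\uu$ is $\RR$-algebraic. This gives $\tilde A : \Gamma \to \arr[1]\U$, and I set $\mathsf{R}A := \Mod \circ \tilde A$. Stability under substitution $\sigma : \Delta \to \Gamma$ follows from naturality of the hom-set bijection in its first argument: $\widetilde{A \circ \cyl\sigma} = \tilde A \circ \sigma$. Here the slice over $1$ is just $\CC$, so no Frobenius isomorphism is needed.

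For (iv), a term $a : \cyl\Gamma \to \Ulow$ over $A$ transposes to $\tilde a : \Gamma \to \arr[1]\Ulow$. By naturality of the partial adjunction in the second variable, $\arr[1]\uu \circ \tilde a = \tilde A$. The pullback square of \Cref{def:modal-model} then gives a bijection between such $\tilde a$ and maps $\Gamma \to \Ulow$ lying over $\Mod\circ\tilde A$. Composing the two bijections yields the term-level bijection $\mathrm{Tm}(\cyl\Gamma, A) \cong \mathrm{Tm}(\Gamma, \mathsf{R}A)$. This bijection is the transpose/untranspose pair demanded by a dependent right adjoint, with $\beta$ and $\eta$ holding because both steps are bijections. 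Substitution stability again follows from naturality of transposition and of the pullback.

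The main obstacle I expect is matching the exact shape of \cite[Definition 2]{birkedal2020}. That definition phrases the DRA in terms of types over $\mathsf{L}\Gamma$ and presheaf-level data, and it may require strict equalities of substitutions where we only have a partial adjunction. The key point to check is that every object we transpose into, namely $\U$ and $\Ulow$, lies in $\RR(1)$, so the partial adjoint suffices. $\Ulow \to 1$ is in $\RR$ because $\RR$ is closed under composition. Where the reference works with a possibly non-representable presheaf of types, I would restrict to representables and apply the Yoneda lemma so that everything is internal. I would leave the remaining strict coherence checks, which are routine naturality squares, to the reader.
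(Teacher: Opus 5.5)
Your proposal is correct. The paper gives no separate proof here, only the preceding remark: take the Yoneda image of $\uu$, and the dependent right adjoint is ``provided by the partial right adjoints $\arr[\Gamma]$ for each $\cyl[\Gamma]$''.

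Your route differs in that you never use the slice-wise family. You work generically over the terminal object: you transpose classifying maps $A : \cyl\Gamma \to \U$ along $\cyl \dashv_\partial \arr[1]$. You then read off both $\mathsf{R}A = \Mod \circ \tilde A$ and the term bijection $a \mapsto \mod \circ \tilde a$ from the pullback square of the modal universe.

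This gives you strict substitution stability for free, from two facts: the transposition is natural in its first variable, and the presheaf of types is representable. An argument through the family $\arr[\Gamma]$ would on its own only have the pseudonaturality isomorphisms $\sigma^*\arr[\Gamma] \iso \arr[\Delta]\sigma^*$, which give stability only up to isomorphism. It would also have to route through the classifier anyway. A type over $\cyl\Gamma$ does not give an object of $\RR(\Gamma)$, because the projection $\cyl\Gamma \to \Gamma$ need not be an $\RR$-map (e.g.\ in cubical sets).

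Your argument also makes explicit that only $\arr[1]$, not the whole family of path functors, is needed for this proposition. The paper's phrasing instead stresses the reading of the modality as paths in each slice. That reading is what the rest of the chapter relies on (Hurewicz structure, identity elimination), but it plays no role here.
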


% Cylinders on contexts $\cyl : \CC \to \CC$
% are the functors associated with the modality
% $\arr$ and paths on types $\arr[\Gamma] : \RR(\Gamma) \to \RR(\Gamma)$
% are their so-called ``{dependent right adjoints}''.
% As usual,
% each two-cell between modalities induces a natural transformation
% between the associated functors.

If the universe $\uu : \Ulow \to \U$ has classifying maps
(\Cref{def:alg-universe-classifying-map}),
then we can also rephrase the pullback condition in \Cref{def:modal-model} as saying
that the path functor $\arr[1]$ preserves small fibrations
(pullbacks of the universe $\uu$).
\[ \arr[1] : (\RR(1),\RR_\U) \to (\RR(1),\RR_\U) \]
(If $A$ and $B$ are in $\RR(1)$ and $f : A \to B$ is a small fibration then
so is $\arr[1] f : \arr[1] A \to \arr[1] B$.)
This is useful because in practice it is often easier
to check whether the path functor preserves small fibrations,
as opposed to directly constructing the maps $\Mod$ and $\mod$.
To demonstrate this,
let us revisit the previous examples.

\begin{example}
  % The examples from before,
  % using an interval object $I$,
  % all admit modal universes,
  % and are therefore models of the model type theory
  % given by $\MM$.
  In $\Set$, taking $I = 1$ as the interval,
  exponentiation by $1$ is the identity,
  and so path functors preserve small fibrations.

  In $\Grpd$ with $I$ as the walking isomorphism,
  exponentiation by $I$ preserves small isofibrations because
  $I \to 1$ is itself a small isofibration,
  and small isofibrations form a $\pi$-preclan.
  One way to see this is by using \Cref{lem:pushforward-preserves-maps},
  with $\SS$ as small isofibrations and $\RR$ as the
  $\pi$-clan of isofibrations.

  In $\mathsf{cSet}$ with $I = y(\Box_1)$,
  although $I \to 1$ is not a fibration,
  it is $\RR$-exponentiable (see \Cref{ex:cSet-unbiased-fibrations}),
  where $\RR$ is the class of unbiased fibrations.
  It follows that exponentiating by $I$ preserves small unbiased fibrations:
  again we can use \Cref{lem:pushforward-preserves-maps},
  with $\SS$ as small unbiased fibrations and $\RR$ as the
  $\pi$-preclan of unbiased fibrations.
\end{example}

% \begin{remark}
%   We have only used the $\pi$-clan for making polynomial functors,
%   which in turn are used to construct the generic case for identity elimination.
%   As was demonstrated in \Cref{sec:exponentiability} and \Cref{sec:polynomials},
%   a $\pi$-clan is not the setting in which polynomials can be defined.
%   A similar exponentiability condition was omitted in \cite{awodey2026},
%   where coherence of identity elimination was not proven;
%   one way to fix this issue is to require that the universe is exponentiable.
%   % As noted in \Cref{a},
%   % $\pi$-clans are not the only way one can obtain polynomial functors

%   % If such a condition were added,
%   % then the axioms presented here are more general than those in \cite{awodey2026}.
% \end{remark}

\chapter{Categories as types}\label{sec:cat-as-types}
\section*{Introduction}

While Martin-L\"of Type Theory (MLTT) is a synthetic language for $\infty$-groupoids,
\emph{directed type theory} -- which is by no means a fixed construct -- 
aims to achieve the same for higher categories.

\subsection*{Size issues}

Let us assume that we have three sizes: small, large, and extra-large.
In classical foundations, this could be done by postulating enough
strongly inaccessible cardinals or
(equivalently) Grothendieck universes \cite{bourbaki1972}.
We denote the 1-category of small categories as $\cat$,
which is an object in the 1-category of large categories as $\Cat$.
We will also use $\tCat$, which differs from the others in two ways:
$\tCat$ is a strict 2-category rather than a 1-category,
and its objects are extra-large categories.

\subsection*{Overview of the literature}

There is an analogue of the Hofmann--Streicher groupoid model of MLTT \cite{hofmann1995},
where the category of categories $\Cat$ replaces the category of groupoids
as the category of contexts.
% There is a 1-categorical approach to the semantics of directed type theory,
% emulating Hofmann and Streicher's groupoid model of MLTT \cite{hofmann1995}.
North \cite{north2019} proposed a type theory for this model,
with \emph{hom-types} replacing identity types,
placing restrictions on
the introduction and elimination rules for hom-types using the groupoid core of a category.
% The model in $\Cat$ then makes use of a certain
% algebraic weak factorisation system on $\Cat$.
Altenkirch and Neumann \cite{altenkirch2024} continued this work by
formulating directed type theory in terms of a generalised algebraic theory,
providing several category-theoretic constructions in this synthetic category theory.
Chu and North \cite{rivera2026} provided an analysis of dependent 2-sided fibrations,
allowing reasoning with natural transformations,
which proved challenging in \cite{altenkirch2024}.
Also notable is their treatment of \emph{hom-types},
which no longer placed restrictions on introduction and elimination rules
using the groupoid core.

For higher-categorical models,
the model structure for complete Segal spaces in bisimplicial sets
presents the category theory of $\infty$-categories \cite{riehlverity2017}.
Riehl and Shulman \cite{riehl2017} developed a three-layered type theory,
with \emph{Simplicial Type Theory} at the highest level,
and an interpretation in complete Segal spaces.
Not all types in Simplicial Type Theory model higher categories.
Rather, $\infty$-categories are modelled by Rezk types (and Segal types).
There is a growing list of results developed in this setting,
which we do not detail here
(overviews are provided in \cite{riehl2025, gratzer2026}, for example).
Some of these results have also been formalised in the proof assistant Rzk \cite{rzk}.

A bicategorical approach to directed type theory is also natural to consider.
Licata and Harper \cite{licata2011} defined 2-Dimensional Directed Type Theory
that has a syntactic notion of a 2-cell.
The authors developed a syntax for directed $\Pi$-types
that can handle the complexity introduced by the variances (polarity) of types.
An ad-hoc model of 2-Dimensional Directed Type Theory was
then constructed in $\tCat$.
Ahrens et al. \cite{ahrens2023} defined \emph{comprehension bicategories},
capturing what it means to be a general model of 2-Dimensional Directed Type Theory,
for which the $\tCat$ is an example.

Laretto et al. \cite{laretto2026} defined a non-dependent directed type theory,
where types are modelled by categories,
special ``propositional contexts'' are modelled by profunctors,
and their associated substitutions are modelled by dinatural transformations.
Notably, this work includes an axiomatisation of ends and coends.

Our work is based upon the MLTT-style approach of \cite{north2019, altenkirch2024},
with the same focus on the model in the 1-category $\Cat$.
We will adopt the methods of \emph{algebraic type theory} \cite{awodey2025}
to analyse type formers in this setting,
using the polynomial machinery developed in \Cref{sec:polynomials}.
The aim here is not necessarily to prove something new about the model,
but rather to reformulate existing results into a more algebraic language
that can illuminate what is needed for a full axiomatisation.

\subsection*{Challenges and limitations}

An axiomatisation of the model in $\Cat$ is given in \Cref{sec:type-theoretic-axioms},
and it is by no means complete.
For example, the axioms fail to capture the fact that
the formation of $\Sigma$-types and $\Pi$-types
preserves morphisms of opfibrations.
This is discussed further in \Cref{rmk:pi-preserve-opfib-mor}.
There is no treatment of univalence here,
which remains a challenge.
There is also no consideration of directed higher inductive types.
The theory of $W$-types in $\Cat$
would be an interesting application of the theory of polynomials in $\Cat$
that we develop.

Before jumping into the model in $\Cat$,
we make a summary of the various intricacies of the model,
which will be explained in further detail during the exposition.
\begin{itemize}
  \item There are two kinds of types,
    corresponding to two kinds of variances of indexed categories,
    or two kinds of fibrations.
  \item Whilst there are the usual $\Unit$ and $\Sigma$-types for both variances,
    the $\Pi$-types in $\Cat$ involve an interaction between the two variances.
  \item $\Hom$-types give rise to twisted arrow categories,
    rather than simply arrow categories,
    and this results in restricted introduction and elimination rules
    associated with $\Hom$-types.
  \item Like in the category of groupoids,
    opfibrancy and fibrancy can be viewed as types having transport.
    Unlike in the category of groupoids,
    not all morphisms in a slice between opfibrations preserve transport,
    which is to say that not all morphisms (in a slice) between opfibrations
    are morphisms of opfibrations.
    We do not address this issue in this work.
\end{itemize}

\section{A case study in the category of categories}
\label{sec:preliminaries}
We recall some of the basic definitions in $\Cat$
that will be relevant to our model.
There are two interacting classes of maps,
namely the fibrations and opfibrations.
We will focus on opfibrations,
and only introduce notation for the dual notion of
fibrations when needed.

\section*{Opfibrations}
% \label{sec:fibrations-in-cat}

Let $\DD$ and $\CC$ be categories and
$F : \DD \to \CC$ be a functor.
We say $f : x \to y$ is a lift of its image $F f : F x \to F y$.
Recall that $f'$ is \emph{opcartesian}
when for every morphism $g : y \to z$ in $\CC$
and every lift $h' : x' \to z'$ of $g \circ f : x \to z$
there is a unique lift $g' : y' \to z'$ of $g$ such that
$h' = g' \circ f'$.
\[ \begin{tikzcd}[row sep = tiny]
  && {z'} \\
  {x'} & {y'} &&& \DD \\
  && z \\
  x & y &&& \CC
  \arrow["{h'}", from=2-1, to=1-3]
  \arrow["{f'}"', from=2-1, to=2-2]
  \arrow[dashed, from=2-2, to=1-3]
  \arrow["F", from=2-5, to=4-5]
  \arrow[from=4-1, to=3-3]
  \arrow["f"', from=4-1, to=4-2]
  \arrow["g"', from=4-2, to=3-3]
\end{tikzcd}\]
% \end{definition}

Recall also the following ``pullback-hom'' diagram in $\Cat$,
where the pullback computes a comma category $F / \CC$.
\begin{equation}\label{eq:pullback-hom}
\begin{tikzcd}
  {\DD^{\two}} \\
  & {F / \CC} & \DD \\
  & {\CC^{\two}} & \CC
  \arrow["{h_F}"{description}, dashed, from=1-1, to=2-2]
  \arrow["\src", bend left, from=1-1, to=2-3]
  \arrow["{F^{\two}}"', bend right, from=1-1, to=3-2]
  \arrow[from=2-2, to=2-3]
  \arrow[from=2-2, to=3-2]
  \arrow["\lrcorner"{anchor=center, pos=0.125}, draw=none, from=2-2, to=3-3]
  \arrow["F", from=2-3, to=3-3]
  \arrow["\src"', from=3-2, to=3-3]
\end{tikzcd}
\end{equation}

The functor $h_F$ takes an arrow $f : x \to y$ in $\DD$
and constructs a lifting problem (i.e. an element of the comma category)
by taking the domain of the arrow in $\DD$ and the image of the arrow in $\CC$.
\[ h_F (f) = (x,F f)\]

\begin{proposition}\label{prop:opfibration-equiv-defs}
  Let $F : \DD \to \CC$ be a functor.
  The following are equivalent (structures)
  \begin{enumerate}
    \item For each object $x \in \DD$, each object $y \in \CC$,
    and each morphism $f : F x \to y$ in $\CC$,
    a chosen opcartesian lift $l_{(x,f)} : x \to y'$ of $f$.
    \item A section $l : F / \CC \to \DD^\two$
    of the pullback-hom $h_F : \DD^\two \to F / \CC$,
    that factors through the full subcategory
    $\KK \subseteq \DD^\two$ consisting of opcartesian arrows.
    \[ \begin{tikzcd}
      {\KK} & {\DD^\two} \\
      {F / \CC} & {F / \CC}
      \arrow[hook, from=1-1, to=1-2]
      \arrow["{h_F}", from=1-2, to=2-2]
      \arrow[dashed, from=2-1, to=1-1]
      \arrow["l"{description}, from=2-1, to=1-2]
      \arrow[equals, from=2-1, to=2-2]
    \end{tikzcd} \]
    \item A left adjoint $l : F / \CC \to \DD^\two$
    to $h_F : \DD^\two \to F / \CC$,
    with the unit of the adjunction equal to the identity.
    (This makes $l$ a ``left-adjoint-right-inverse'' or ``lari''.)
    \[ \eta : \id_{F / \CC} = h_F \circ l \]
    % \item A section $l : F \tcomma \CC \to \Tw{\DD}$
    % of the twisted pullback-hom $t_F : \Tw{\DD} \to F \tcomma \CC$,
    % that factors through the full subcategory
    % $\TT \subseteq \Tw{\DD}$ consisting of opcartesian twisted arrows.
    % \[ \begin{tikzcd}
    %   {\TT} & {\Tw{\DD}} \\
    %   {F \tcomma \CC} & {F \tcomma \CC}
    %   \arrow[hook, from=1-1, to=1-2]
    %   \arrow["{t_F}", from=1-2, to=2-2]
    %   \arrow[dashed, from=2-1, to=1-1]
    %   \arrow["l"{description}, from=2-1, to=1-2]
    %   \arrow[equals, from=2-1, to=2-2]
    % \end{tikzcd} \]
    % \item A left-adjoint-right-inverse $l : F \tcomma \CC \to \Tw{\DD}$
    % to $t_F : \Tw{\DD} \to F \tcomma \CC$, with identity unit.
    % \[ \eta : \id_{F \tcomma \CC} = t_F \circ l \]
  \end{enumerate}
\end{proposition}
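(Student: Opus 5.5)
We will prove (1)$\Leftrightarrow$(2) and (2)$\Leftrightarrow$(3) by first making the comma category concrete. An object of $F/\CC$ is a pair $(x, f)$ with $x \in \DD$ and $f : Fx \to y$ in $\CC$. A morphism $(x,f) \to (x',f')$ is a pair $(u : x \to x', v : y \to y')$ with $f' \circ Fu = v \circ f$. The functor $h_F$ sends an arrow $g : x \to z$ to $(x, Fg)$, and sends a commuting square in $\DD$ to its domain component together with the $F$-image of its codomain component.

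\textbf{(1)$\Rightarrow$(2).} Put $l(x,f) := l_{(x,f)}$. To define $l$ on a morphism $(u,v) : (x,f) \to (x',f')$, we need a square from $l_{(x,f)} : x \to y^\sharp$ to $l_{(x',f')} : x' \to y'^\sharp$. Its domain component is $u$. For the codomain component, note that $l_{(x',f')} \circ u$ lies over $f' \circ Fu = v \circ f$. Opcartesianness of $l_{(x,f)}$ then gives a unique $w : y^\sharp \to y'^\sharp$ over $v$ with $w \circ l_{(x,f)} = l_{(x',f')} \circ u$. Uniqueness of $w$ yields functoriality. We have $h_F \circ l = \id$ on the nose because $F l_{(x,f)} = f$ and $Fw = v$, and $l$ lands in $\KK$ by construction.

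\textbf{(2)$\Rightarrow$(1).} Restrict the section $l$ to objects. The identity $h_F(l(x,f)) = (x,f)$ says exactly that $l(x,f)$ is an arrow out of $x$ lying over $f$, and it is opcartesian because $l$ factors through $\KK$. These two constructions are mutually inverse on the structure: passing from (1) to (2) and back returns the same chosen lifts, and starting from a section $l$, its value on morphisms is forced by the uniqueness of opcartesian factorisations. So the correspondence is a bijection of structures.

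\textbf{(2)$\Leftrightarrow$(3).} For (2)$\Rightarrow$(3), with unit the identity we must give, for $(x,f) \in F/\CC$ and $g : a \to b$ in $\DD^\two$, a natural bijection between maps $l(x,f) \to g$ in $\DD^\two$ and maps $(x,f) \to h_F g$ in $F/\CC$. One direction is $h_F$ itself, composed with the identity unit. For the other, a map $(u,v) : (x,f) \to (a, Fg)$ satisfies $Fg \circ Fu = v \circ f$. Opcartesianness of $l_{(x,f)}$ applied to $g \circ u$, which lies over $v \circ f$, gives a unique $w$ over $v$ with $w \circ l_{(x,f)} = g \circ u$, that is, a unique square $(u,w) : l(x,f) \to g$ whose image under $h_F$ is $(u,v)$. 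Hence $h_F$ induces the required bijection of hom-sets, and naturality is routine.

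For (3)$\Rightarrow$(2), a lari with identity unit is in particular a section. It remains to show that each $l(x,f)$ is opcartesian, and we expect this to be the main obstacle, because opcartesianness is a statement about arrows $x \to z'$ lying over $g \circ f$, whereas the adjunction only speaks about squares. The plan is as follows. Given $h' : x \to z'$ over $g \circ f$, form the morphism $(\id_x, g) : (x,f) \to (x, Fh') = h_F(h')$ in $F/\CC$; it is a morphism because $Fh' \circ F\id_x = g \circ f$. By the adjunction, this corresponds to a unique square $l(x,f) \to h'$ in $\DD^\two$. Since the unit is the identity, the domain component of that square is $\id_x$ and its codomain component $g'$ lies over $g$, so $g' \circ l(x,f) = h'$. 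For uniqueness, any other $g''$ over $g$ with $g'' \circ l(x,f) = h'$ gives a square $(\id_x, g'') : l(x,f) \to h'$ with the same transpose $(\id_x, g)$; the bijectivity of the adjunction then forces $g'' = g'$.
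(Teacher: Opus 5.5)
Your proof is correct and takes essentially the paper's route: your square-lifting via opcartesianness of $l_{(x,f)}$ is the paper's construction of $l$ on morphisms, and your transpose argument for (3)$\Rightarrow$(2) is the paper's counit argument written out. Routing through (2) in both directions, rather than the paper's cycle (2)$\to$(1)$\to$(3)$\to$(2), also lets you check explicitly two things the paper's (1)$\to$(3) step leaves implicit: that the hom-set bijection holds, and that the correspondence is a bijection of structures.
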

\begin{proof}
  (2) $\to$ (1) is straightforward by
  defining $l_{(x,f)} := l (x,f)$.
  The equation $\id_{F / \CC} = h_F \circ l$ ensures that
  $l$ produces correct lifts to lifting problems supplied by $F / \CC$.
  % Since $h_F \circ l = \id$,
  % it suffices to construct a counit $\epsilon : \id \to l \circ h_F$,
  % and show that $h_F \epsilon = \id_{h_F}$ and $\epsilon l = \id_l$.

  (3) $\to$ (2).
  The counit of the adjunction supplies the unique map out of
  the lift given by $l$,
  ensuring that $l$ produces opcartesian maps.
  
  (1) $\to$ (3).
  Let $l : F / \CC \to \DD^\two$ be given by taking
  $(x, f) \mapsto l_{(x,f)}$ on objects.
  For a morphism $(\psi : x \to x', \phi : y \to y') : (x,f : F x \to y) \to (x',g : F x' \to y')$,
  we define $l (\psi,\phi) : l_{(x,f)} \to l_{(x',f')}$
  by lifting the commuting square for $(\psi,\phi)$
  using the opcartesian property of $l_{(x,f)}$.
  \[\begin{tikzcd}
    Fx & y & x & {y_0} \\
    {Fx'} & {y'} & {x'} & {y_0'}
    \arrow["f", from=1-1, to=1-2]
    \arrow["{F\psi}"', from=1-1, to=2-1]
    \arrow["\phi", from=1-2, to=2-2]
    \arrow["{l_{(x,f)}}", from=1-3, to=1-4]
    \arrow["\psi"', from=1-3, to=2-3]
    \arrow["{\phi_0}", dashed, from=1-4, to=2-4]
    \arrow["{f'}"', from=2-1, to=2-2]
    \arrow["{l_{(x',f')}}"', from=2-3, to=2-4]
  \end{tikzcd}\]
  More precisely:
  \[l (\psi,\phi) := (\psi,\phi_0) : l_{(x,f)} \to l_{(x',f')}\]
\end{proof}
% Condition $2.$ can be viewed as an extension of the ``Hurewicz fibration'' condition
% used for the semantics of path types \cite{awodey2026}.
% Condition $3.$ is just a more concise rephrasing of $2.$
% Conditions $4.$ and $5.$ will replace $2.$ and $3.$ in our
% axiomatisation of directed path types in \Cref{sec:directed-types}.

\begin{definition}\label{def:split-opfibration}
  A cloven opfibration consists of a pair $(F,l)$,
  where $F : \DD \to \CC$ is a functor
  and $l$ is left-adjoint-right-inverse to
  the pullback-hom $h_F : \DD^\two \to F / \CC$
  (or one of the other equivalent structures of
  \Cref{prop:opfibration-equiv-defs}).

  A cloven opfibration $(F : \DD \to \CC,l)$
  is \emph{normal} if 
  for any $x \in \DD$ the lift of the identity
  $\id_{F x} : F x \to F x$ is the identity
  \[l_{(x,\id_{F x})} = \id_x : x \to x\]
  In particular, the endpoint of the lift satisfies $(F x)' = x$.
  We say $(F,l)$ is a \emph{normal opfibration} for short.

  A normal opfibration $(F : \DD \to \CC, l)$ is \emph{split} if
  for any $x \in \DD$, $y$ and $z$ in $\CC$ and
  morphisms $f : F x \to y$ and $g : y \to z$,
  the lift of the composition is the composition of the lifts
  \[l_{(x,g \circ f)} = l_{(x,g)} \circ l_{(y',f)} : x \to z' \]
  In particular, the endpoints $z'$ of the two lifts agree.
  We call $(F,l)$ a \emph{split opfibration} for short.
\end{definition}

We can of course phrase these conditions more diagrammatically;
we do so for the normality condition.
\begin{proposition}\label{prop:normal-opfibration-equiv-defs}
  Let $(F : \DD \to \CC, l)$ be a cloven opfibration.
  The following conditions are equivalent
  \begin{enumerate}
    \item $(F,l)$ is normal.
    \item The constant functor $\Delta : \DD \to \DD^\two$
      factors as $\Delta = l \circ L_F$, where
      $L_F := h_F \circ \Delta$.
      \[ \begin{tikzcd}
        \DD & {\DD^\two} \\
        {\DD^\two} & {F / \CC}
        \arrow["\Delta", from=1-1, to=1-2]
        \arrow["\Delta"', from=1-1, to=2-1]
        \arrow["{L_F}"{description}, dashed, from=1-1, to=2-2]
        \arrow["{{h_F}}"', from=2-1, to=2-2]
        \arrow["l"', from=2-2, to=1-2]
      \end{tikzcd} \]
    % \item The functor $\rfl : \Core{\DD} \to \Tw{\DD}$ that takes
    %   an object $d$ to the identity on $d$ and an isomorphism
    %   $f : d \to d'$ to the twisted morphism
    %   $(f^{-1}, f) : \id_d \to \id_d$
    %   factors as $\rfl = l \circ t_F \circ \rfl$.
    %   \[ \begin{tikzcd}
    %     d & {d'} && {\Core{\DD}} & {\Tw{\DD}} \\
    %     d & {d'} && {\Tw{\DD}} & {F \tcomma \CC}
    %     \arrow["{=}"', from=1-1, to=2-1]
    %     \arrow["{f^{-1}}"', from=1-2, to=1-1]
    %     \arrow["{=}", from=1-2, to=2-2]
    %     \arrow["\rfl", from=1-4, to=1-5]
    %     \arrow["\rfl"', from=1-4, to=2-4]
    %     \arrow["f"', from=2-1, to=2-2]
    %     \arrow["{t_F}"', from=2-4, to=2-5]
    %     \arrow["l"', from=2-5, to=1-5]
    %   \end{tikzcd} \]
  \end{enumerate}
\end{proposition}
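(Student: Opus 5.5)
The plan is to compute both composites in the square on objects and on morphisms and compare them directly. Fix $x \in \DD$.

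First I will unfold the definitions. The constant functor $\Delta : \DD \to \DD^\two$ sends $x$ to $\id_x$, and sends a morphism $\psi : x \to x'$ to the square $(\psi,\psi) : \id_x \to \id_{x'}$. The pullback-hom $h_F$ sends an arrow $f : x \to y$ to $(x, F f)$. Hence
\[
L_F(x) = h_F(\id_x) = (x, \id_{F x}), \qquad L_F(\psi) = (\psi, F\psi).
\]
On objects, $l \circ L_F(x) = l_{(x,\id_{F x})}$. So the object part of the factorisation $\Delta = l \circ L_F$ says precisely that $l_{(x,\id_{Fx})} = \id_x$ for every $x$, which is the normality condition of \Cref{def:split-opfibration}. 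This gives (2) $\Rightarrow$ (1) at once.

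For (1) $\Rightarrow$ (2), it remains to check agreement on morphisms. Here I use the description of $l$ on morphisms from the proof of \Cref{prop:opfibration-equiv-defs} (1) $\to$ (3). For $\psi : x \to x'$, the value $l(\psi, F\psi)$ is the pair $(\psi, \phi_0)$, where $\phi_0$ is the unique lift of $F\psi$ satisfying
\[
\phi_0 \circ l_{(x,\id_{Fx})} = l_{(x',\id_{Fx'})} \circ \psi .
\]
By normality both lifts are identities, so this condition reads $\phi_0 = \psi$, and $\psi$ does lie over $F\psi$. Uniqueness in the opcartesian property of $\id_x$ then forces $\phi_0 = \psi$. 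Therefore $l(L_F\psi) = (\psi,\psi) = \Delta(\psi)$.

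The only point needing care is the morphism step. If $l$ is given in the lari form (3) of \Cref{prop:opfibration-equiv-defs} rather than as the functor built in that proof, its action on morphisms is determined by the adjunction. I will argue that the adjunction's action agrees with the opcartesian-lift description, because $h_F\, l = \id$ and morphisms out of an opcartesian arrow are determined by their image under $h_F$. So for either presentation of the structure, $l(\psi,F\psi)$ is the unique morphism over $(\psi,F\psi)$ with domain fixed, and the argument above applies.
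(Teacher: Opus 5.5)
Your proof is correct and follows the paper's own argument: you evaluate $l \circ h_F \circ \Delta$ on objects, where normality gives $l_{(x,\id_{Fx})} = \id_x$, and on morphisms, where the induced square has identity lifts as its rows and so forces $l(\psi, F\psi) = (\psi,\psi)$; the direction (2) $\Rightarrow$ (1) is then read off from the object part. Your extra remark, that the lari presentation gives the same action on morphisms because $l(m)$ is the unique map whose $h_F$-image is $m$, is correct and fills in a detail the paper leaves implicit.
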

\begin{proof}
  (1) $\implies$ (2).
  For an object $x \in \DD$,
  \[l \circ h_F \circ \Delta (x) = l \circ h_F (x,\id_x) = l (x, \id_{F x}) = \id_x = \Delta (x)\]  
  For a morphism $f : x \to y$ in $\DD$,
  \[l \circ h_F \circ \Delta (f) = l \circ h_F (f, f) = l (f, F f) = (f,f) = \Delta (f)\]
  \[ \begin{tikzcd}
    x & x \\
    y & y
    \arrow["{l_{(x,\id)}}", equals, from=1-1, to=1-2]
    \arrow["f"', from=1-1, to=2-1]
    \arrow["f", dashed, from=1-2, to=2-2]
    \arrow["{l_{(y,\id)}}"', equals, from=2-1, to=2-2]
  \end{tikzcd}\]
  (2) $\implies$ (1) is clear.
\end{proof}

\section*{The split opfibration factorisation system}
% \label{sec:awfs}

There are algebraic formulations of normal and split opfibrations,
which we make use of in our type theoretic analysis.
These considerations lead us to
an algebraic weak factorisation system on $\Cat$,
where the algebras are precisely split opfibrations.

Any functor $F : \DD \to \CC$ factors through the
comma category as follows.
\[ \begin{tikzcd}
  \DD && \CC \\
  & {\FF / \CC}
  \arrow["F", from=1-1, to=1-3]
  \arrow["{L_F}"', from=1-1, to=2-2]
  \arrow["{R_F}"', from=2-2, to=1-3]
\end{tikzcd} \]
The factorisation of $F$ can be given by extending the pullback-hom
diagram (\cref{eq:pullback-hom})
\[
\begin{tikzcd}
  \DD & {\DD^\two} && \\
  && {F / \CC} & \DD \\
  \CC && {\CC^\two} & \CC
  \arrow["\Delta", from=1-1, to=1-2]
  % \arrow["{L_F}"{description, pos=0.3}, bend right, dashed, from=1-1, to=2-3]
  \arrow["F"', from=1-1, to=3-1]
  \arrow["{h_F}"{description}, from=1-2, to=2-3]
  \arrow["\src", bend left, from=1-2, to=2-4]
  \arrow["\pi_\DD", from=2-3, to=2-4]
  % \arrow["{R_F}"{description}, dashed, from=2-3, to=3-1]
  \arrow["{\pi_{\CC^\two}}"', from=2-3, to=3-3]
  \arrow["\lrcorner"{anchor=center, pos=0.125}, draw=none, from=2-3, to=3-4]
  \arrow["F", from=2-4, to=3-4]
  \arrow["\trg", from=3-3, to=3-1]
  \arrow["\src"', from=3-3, to=3-4]
  \arrow["F^\two"{description}, bend right, from=1-2, to=3-3]
\end{tikzcd}
\]
so that $L_F = h_F \circ \Delta : \DD \to F / \CC$ is as defined in
\Cref{prop:normal-opfibration-equiv-defs}
and $R_F = \trg \circ \pi_{\CC^\two} : F / \CC \to \CC$ is the codomain projection.
It follows that $R_F$ and $L_F$ factor $F$.
\[R_F \circ L_F = \trg \circ F^\two \circ \Delta = \trg \circ \Delta \circ F = F\]

\begin{proposition}\label{prop:normal-opfibration-algebra}
  Let $F : \DD \to \CC$ be a functor.
  The following are equivalent (structures):
  \begin{itemize}
    \item A normal opfibration $(F,l)$.
    \item A chosen left adjoint $\alpha \dashv L_F : \DD \to F / \CC$
    such that the counit of the adjunction is the identity
    \[\alpha \circ L_F = \id_\DD\]
    (making $\alpha$ ``left-adjoint-left-inverse'' or ``lali'')
    and $F \circ \alpha = R_F$, i.e. the adjunction is over $\CC$.
  \end{itemize}
\end{proposition}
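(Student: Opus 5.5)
The plan is to pass through the lari characterisation of \Cref{prop:opfibration-equiv-defs}(3), combined with the diagrammatic normality condition of \Cref{prop:normal-opfibration-equiv-defs}. Given a normal opfibration $(F,l)$, I define $\alpha := \trg \circ l : F / \CC \to \DD$, sending a lifting problem $(x, f : Fx \to y)$ to the codomain $y'$ of the chosen opcartesian lift. In the other direction, given a lali $\alpha \dashv L_F$ over $\CC$, I define $l(x,f)$ to be the component of the unit of $\alpha \dashv L_F$ at $(x,f)$, transported along the counit $\alpha L_F = \id$. This gives a morphism $x = \alpha L_F x \to \alpha(x,f)$. More concretely, the unit $(x,f) \to L_F\alpha(x,f) = (\alpha(x,f), \id)$ is a morphism in $F/\CC$, i.e.\ a pair $(\psi : x \to \alpha(x,f), \phi : y \to F\alpha(x,f))$. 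Since the adjunction lies over $\CC$, we have $F\alpha(x,f) = R_F(x,f) = y$, and the triangle identity forces $\phi = \id$. Hence $\psi$ is a lift of $f$, and I set $l_{(x,f)} := \psi$.

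The key steps, in order, are as follows.

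(i) From $(F,l)$, check that $F\alpha = R_F$, which holds because the lift lies over $f$. Check that $\alpha L_F = \id_\DD$: on objects $\alpha(x,\id_{Fx}) = x$ by normality, and on morphisms this is exactly the factorisation $\Delta = l\circ L_F$ from \Cref{prop:normal-opfibration-equiv-defs}, postcomposed with $\trg$.

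(ii) Produce the adjunction $\alpha \dashv L_F$ with identity counit. For the unit at $(x,f)$ I take $(l_{(x,f)}, \id_y) : (x,f) \to (y',\id_{y})$. For the hom-set bijection I need, for each $z \in \DD$, that morphisms $(x,f) \to L_F z = (z,\id_{Fz})$ in $F/\CC$ correspond to maps $y' \to z$. Such a morphism is a pair $(\psi : x \to z, \phi : y \to Fz)$ with $F\psi = \phi f$, and opcartesianness of $l_{(x,f)}$ gives a unique $g : y' \to z$ over $\phi$ with $g\, l_{(x,f)} = \psi$. The inverse sends $g$ to $(g\, l_{(x,f)}, Fg)$. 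Naturality follows from uniqueness.

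(iii) Conversely, from the lali, show that the $\psi$ constructed above is opcartesian, using the universal property of the unit applied to the objects $L_F z$ in the same way as in (ii), read backwards. Normality follows because the unit at $L_F x$ must be the identity when the counit is the identity (a triangle identity). Finally, check that the two constructions are mutually inverse: the lift is recovered as the unit component, and $\alpha$ is recovered as its codomain.

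I expect the main obstacle to be step (iii), specifically the bookkeeping showing that the second component of the unit is forced to be $\id_y$, and that the unit's universal property only against objects of the form $L_F z$ is enough to give opcartesianness against arbitrary $g : y \to w$ with a lift $h'$. I would handle this by rewriting a test datum $(h' : x \to z,\ g)$ with $F h' = g f$ as the morphism $(h', g) : (x,f) \to L_F z$ in $F/\CC$, and then factoring it uniquely through the unit. The uniqueness clause of the adjunction is exactly uniqueness of the induced lift.
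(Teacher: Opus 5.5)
Your proposal is correct. In the converse direction it is essentially the paper's construction, but your forward direction takes a different route. The paper never writes down the hom-set bijection. It notes that $\trg \dashv \Delta : \DD \to \DD^\two$ and $l \dashv h_F$, so $\alpha = \trg \circ l$ is automatically left adjoint to $h_F \circ \Delta = L_F$. Normality is then exactly what makes the composite counit the identity, and $F \circ \alpha = \trg \circ F^\two \circ l = R_F \circ h_F \circ l = R_F$. You instead build the unit $(l_{(x,f)}, \id_y)$ and the bijection by hand from opcartesianness. This is more elementary, but it re-derives what is already packaged in the adjunction $l \dashv h_F$ of \Cref{prop:opfibration-equiv-defs}. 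In your version normality plays the same role: the transpose of $\id_{L_F z}$ is the unique $g$ over $\id_{Fz}$ with $g \circ l_{(z,\id_{Fz})} = \id_z$, so the counit is the identity precisely because $l_{(z,\id_{Fz})} = \id_z$. For the converse, the paper uses the auxiliary lari adjunction $L_F \dashv \pi_\DD$ and sets $l(X) := \alpha(\epsilon_X)$ with $\epsilon_X = (\id_x, f) : L_F x \to X$. Your first component $\psi$ of the unit $\eta_X$ is the same morphism: naturality of $\eta$ at $\epsilon_X$, together with $\eta L_F = \id$, gives $\eta_X \circ \epsilon_X = L_F \alpha(\epsilon_X)$, and comparing first components yields $\psi = \alpha(\epsilon_X)$. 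The paper only sketches this direction, while your argument supplies the missing checks. The second component of $\eta_X$ is $\id_y$ (apply $F$ to the triangle identity $\alpha\eta = \id$ and use $F\alpha = R_F$). Opcartesianness follows by factoring $(h',g) : (x,f) \to L_F z$ through the unit, normality comes from $\eta L_F = \id$, and the two constructions are mutually inverse. So the paper's forward direction is shorter, while your converse is the more complete one.
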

\begin{proof}
  Assuming a normal opfibration $(F,l)$,
  consider the following diagram
  \[ \begin{tikzcd}
    \DD & {\DD^\two} & {F / \CC}
    \arrow[""{name=0, anchor=center, inner sep=0}, "\Delta", shift left=2, from=1-1, to=1-2]
    \arrow["{L_F}"{description}, shift left=3, bend left, from=1-1, to=1-3]
    \arrow[""{name=1, anchor=center, inner sep=0}, "\trg", shift left=2, from=1-2, to=1-1]
    \arrow[""{name=2, anchor=center, inner sep=0}, "{h_F}", shift left=2, from=1-2, to=1-3]
    \arrow["\alpha"{description}, shift left=3, bend left, from=1-3, to=1-1]
    \arrow[""{name=3, anchor=center, inner sep=0}, "l", shift left=2, from=1-3, to=1-2]
    \arrow["\dashv"{anchor=center, rotate=90}, draw=none, from=1, to=0]
    \arrow["\dashv"{anchor=center, rotate=90}, draw=none, from=3, to=2]
  \end{tikzcd}\]
  The composition of two left adjoints provides $\alpha = \trg \circ l$.
  The counit $\eta''_d : \alpha L_F d \to d$,
  for some $d \in \DD$,
  can be computed in terms of the respective counits $\eta$ and $\eta'$ of
  $\trg \dashv \Delta$ and $l \dashv h_F$.
  \[ \eta''_d = \eta_d \circ \trg (\eta'_{\Delta d}) : \alpha L_F d \to \trg \Delta d \to d \]
  It follows from normality of $l$ that
  $\trg (\eta'_{\Delta d})$ is the identity.
  On the other hand, $\eta_d$ is always the identity by a straightforward computation.
  Hence $\alpha$ is lali.
  Since $l$ is lari, 
  \[F \circ \alpha = F \circ \trg \circ l = \trg \circ F^\two \circ l = R_F \circ h_F \circ l = R_F\]

  To sketch the converse,
  suppose $\alpha$ is left-adjoint-left-inverse to $L_F$ such that
  $F \circ \alpha = R_F$.
  We want to construct a functor $l : F / \CC \to \DD^\two$ 
  fitting into the diagram above.
  % \[\begin{tikzcd}
  %   \DD & {\DD^\two} & {F / \CC}
  %   \arrow[""{name=0, anchor=center, inner sep=0}, "\Delta", shift left=2, from=1-1, to=1-2]
  %   \arrow["{L_F}"{description}, shift left=3, bend left, from=1-1, to=1-3]
  %   \arrow[""{name=1, anchor=center, inner sep=0}, "\trg", shift left=2, from=1-2, to=1-1]
  %   \arrow[""{name=2, anchor=center, inner sep=0}, "{h_F}", shift left=2, from=1-2, to=1-3]
  %   \arrow["\alpha"{description}, shift left=3, bend left, from=1-3, to=1-1]
  %   \arrow[""{name=3, anchor=center, inner sep=0}, "l", shift left=2, dashed, from=1-3, to=1-2]
  %   \arrow["\dashv"{anchor=center, rotate=90}, draw=none, from=1, to=0]
  %   \arrow["\dashv"{anchor=center, rotate=90}, draw=none, from=3, to=2]
  % \end{tikzcd}\]
  Note that $L_F$ also has a right adjoint,
  the projection $\pi_\DD : F / \CC \to \DD$ (in fact $L_F$ is lari).
  For an object $X = (x,y,f : Fx \to y)$ in $F / \CC$,
  let us write $\epsilon_X : L_F x \to X$
  for the counit of the adjunction $L_F \dashv \pi_\DD : F / \CC \to \DD$.
  To define $l : F / \CC \to \DD^\two$ on objects,
  % we apply $\alpha$ to this counit:
  take $X = (x,y,f : Fx \to y)$ in $F / \CC$ to 
  the object (an arrow in $\DD$) $\alpha \epsilon_X$ in $\DD^\two$.
\end{proof}
In fact, there is a monad with its underlying functor
$R : \Cat^\two \to \Cat^\two$
taking $F \mapsto R_F$.
The second condition of \Cref{prop:normal-opfibration-algebra}
implies that $F$ has a
pointed endofunctor algebra structure.
(The converse is not true;
a pointed endofunctor algebra provides normal lifts,
but those lifts may not be opcartesian.
In this sense, the pointed endofunctor algebras are equivalent to
normal Hurewicz maps \Cref{def:hurewicz-defs}.)
In the following diagram,
the unit is given by the left square,
and the algebra map is given by the right square.
\[
\begin{tikzcd}
  \DD & {F / \CC} & \DD \\
  \CC & \CC & \CC
  \arrow["{L_F}", from=1-1, to=1-2]
  \arrow["F"', from=1-1, to=2-1]
  \arrow["\alpha", from=1-2, to=1-3]
  \arrow["{R_F}"', from=1-2, to=2-2]
  \arrow["F", from=1-3, to=2-3]
  \arrow[equals, from=2-1, to=2-2]
  \arrow[equals, from=2-2, to=2-3]
\end{tikzcd}
\]
Next, we consider actual monad algebras and split opfibrations.

The following theorem can be found in \cite[2.13]{garner2008}. % and \cite{bourke2016}
A version of the theorem using pseudoalgebras rather than
algebras can be found in \cite{street1974},
where cloven opfibrations replace split opfibrations.
Recall the definition of an algebraic weak factorisation system (AWFS),
or ``natural weak factorisation system'' in \cite{garner2008}.
\begin{theorem}\label{thm:lari-opfib-awfs}
  The mapping $F \mapsto L_F$ defines a comonad
  $L : \Cat^\two \to \Cat^\two$
  and $F \mapsto R_F$ defines a monad $R : \Cat^\two \to \Cat^\two$.
  This forms the basis of
  an AWFS on $\Cat$.
\end{theorem}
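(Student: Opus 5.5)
We construct $L$ and $R$ explicitly on $\Cat^\two$, verify the comonad and monad laws directly, and check the distributive law relating them. Garner's criterion gives the AWFS once that law holds.

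For the monad $R$, the underlying functor sends a commuting square $(u : \DD \to \DD', v : \CC \to \CC')$ from $F$ to $F'$ to the square $(u / v : F / \CC \to F' / \CC', v)$ from $R_F$ to $R_{F'}$. Here $u/v$ is the evident functor on comma categories, obtained from the universal property of the pullback in \cref{eq:pullback-hom}. The unit $F \to R_F$ is the square $(L_F, \id_\CC)$; this commutes because $R_F \circ L_F = F$.

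The multiplication $R_{R_F} \to R_F$ is given on $R_F / \CC$ by composition in $\CC$. An object of $R_F / \CC$ is $(x, f : Fx \to y, g : y \to z)$, and we send it to $(x, g \circ f)$.

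The comonad $L$ is dual. Its counit is $L_F \to F$, given by the square $(\id_\DD, R_F)$. Its comultiplication $L_F \to L_{L_F}$ uses the same data $(x, f, g)$ read the other way: the object $(x, f)$ of $F / \CC$ is sent to $((x, \id_{Fx}), (\id_x, f))$, the second component being a morphism $L_F x \to (x, f)$ in $F / \CC$. This is the Kleisli-style lifting $\DD \to F/\CC \to L_F / (F/\CC)$.

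The unit, associativity and coassociativity laws hold strictly because composition and identities in $\CC$ are strictly associative and unital. The two compositions of triple composable strings give equal objects.

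The step I expect to be the main obstacle is the distributive law $LR \Rightarrow RL$ required by Garner's definition. Equivalently, one must check that the comultiplication of $L$ is a morphism of $R$-algebras, or that $\delta$ and $\mu$ are compatible on the free factorisation. I would write both sides explicitly on objects $(x, f, g)$ of the iterated comma categories and check that they agree, again using only strict associativity. Naturality on morphisms of comma categories follows componentwise.

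Finally, I would identify the algebras. By \Cref{prop:normal-opfibration-algebra}, pointed-endofunctor algebras for $R$ are lalis $\alpha$ of $L_F$ over $\CC$. Compatibility with $\mu$ says that $\alpha(x, g \circ f) = \alpha(\alpha(x, f), g)$. Unwinding $\alpha = \trg \circ l$, this is exactly the splitting condition of \Cref{def:split-opfibration}. This identification is not strictly needed for the statement, but it justifies calling the result the split opfibration AWFS. For details of the distributive law I would defer to \cite[2.13]{garner2008}.
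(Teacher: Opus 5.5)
The paper gives no argument for this theorem. It simply cites \cite[2.13]{garner2008}, and \cite{street1974} for the pseudo-algebra version. Your proposal is correct and more informative than the paper, since you actually construct the structure. Write objects of $L_F/(F/\CC)$ as triples $(x,\psi\colon x\to x',f'\colon Fx'\to y')$. Your formulas $\eta_F=(L_F,\id_\CC)$, $\mu_F(x,f,g)=(x,g\circ f)$, $\epsilon_F=(\id_\DD,R_F)$ and $\delta_F(x,f)=(x,\id_x,f)$ are the right ones, and the (co)unit and (co)associativity laws hold on the nose.

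Two presentational corrections. $L$ is not literally ``dual'' to $R$, and its data is not ``$(x,f,g)$ read the other way''. Also drop the reformulation ``$\delta$ is a morphism of $R$-algebras'': $L_F$ is not an $R$-algebra in general.

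The step you defer, distributivity, is also what the paper outsources, but here you can simply do it. Take $\Delta_F=(\delta_F,\mu_F)\colon L_{R_F}\to R_{L_F}$.
\begin{itemize}
\item The two unit/counit-type axioms are automatic: they reduce to the squares $\delta_F$ and $\mu_F$ commuting plus the (co)unit laws.
\item One component of each of the two remaining axioms is just (co)associativity.
\item The other component of each reduces to the single equation $\delta_F\circ\mu_F=\mu_{L_F}\circ(\delta_F/\mu_F)\circ\delta_{R_F}$ of functors $R_F/\CC\to L_F/(F/\CC)$. Both sides send $(x,f,g)$ to $(x,\id_x,g\circ f)$, and the check on morphisms is similar.
\end{itemize}
Adding this makes your argument self-contained, which the paper's is not.

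Your final paragraph is not needed for the statement, but it is wrong as written. \Cref{prop:normal-opfibration-algebra} identifies \emph{normal opfibrations} with lalis over $\CC$. It does not say that pointed-endofunctor algebras are lalis, and the paper explicitly notes right after it that they are not. A retraction $\alpha$ of $L_F$ over $\CC$ gives normal lifts $\ell_{(x,f)}=\alpha(\id_x,f)$ that need not be opcartesian.

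What makes monad algebras into split opfibrations is associativity on \emph{morphisms}, not only the object-level equation $\alpha(x,g\circ f)=\alpha(\alpha(x,f),g)$. Apply $\alpha\circ\mu_F=\alpha\circ(\alpha/\id_\CC)$ to the morphism $((\id_x,f),\id_y)\colon((x,\id_{Fx}),f)\to((x,f),\id_y)$ of $R_F/\CC$. This gives $\alpha(\ell_{(x,f)},\id_y)=\id_{\alpha(x,f)}$, which is precisely the uniqueness half of opcartesianness of $\ell_{(x,f)}$.
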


\begin{definition}\label{def:morphism-split-opfibration}
A morphism of split opfibrations
(also known as an opcartesian functor) between
$(F : \DD \to \CC,l)$ and $(F' : \DD' \to \CC',l')$
consists of a functor $H : \CC \to \CC'$,
a functor $G : \DD \to \DD'$,
such that $F' \circ G = H \circ F$
and $G^\two \circ l = l' \circ G / H$.
\[ 
  \begin{tikzcd}
    \DD & {\DD'} \\
    \CC & \CC'
    \arrow["G", from=1-1, to=1-2]
    \arrow["F"', from=1-1, to=2-1]
    \arrow["H"', from=2-1, to=2-2]
    \arrow["{F'}", from=1-2, to=2-2]
  \end{tikzcd}
  \quad
  \quad \begin{tikzcd}
	{\DD^\two} & {\DD'^\two} \\
	{F / \CC} & {F' / \CC'}
	\arrow["{G^\two}", from=1-1, to=1-2]
	\arrow["l", from=2-1, to=1-1]
	\arrow["{G / H}"', from=2-1, to=2-2]
	\arrow["{l'}"', from=2-2, to=1-2]
\end{tikzcd} \]
Since $\FF$-opcartesian maps in $\DD$ are
isomorphic to the lifts of their images given by $l$,
the second condition implies that
$G$ takes all opcartesian maps over $\CC$ to opcartesian maps over $\CC'$.
\end{definition}

Let us write $\opfibcart$ for the category with
split opfibrations as objects,
and morphisms of split opfibrations as morphisms.

We also write $\lari$ for the category that has lari adjunctions $L \dashv R$
as objects -- meaning that the unit is the identity --
with morphisms of lari adjunctions defined as follows.
Suppose $L : A \to B$ and $L' : A' \to B'$ are lari,
with rights adjoints $R$ and $R'$ respectively,
a morphism $(F,G) : (L \dashv R) \to (L' \dashv R')$
consists of functors $F : A \to A'$ and $G : B \to B'$
such that $L' F = G L$, $R' G = F R$,
and for any object $b$ in $B$,
the two counits $\epsilon$ and $\epsilon'$ satisfy
$G \epsilon_b = \epsilon'_{G b}$.
\[
  \begin{tikzcd}
	B & {B'} \\
	A & {A'} \\
	B & {B'}
	\arrow["G", from=1-1, to=1-2]
	\arrow["R"', from=1-1, to=2-1]
	\arrow[""{name=0, anchor=center, inner sep=0}, "{=}"', shift right = 5, bend right = 60, from=1-1, to=3-1]
	\arrow["{R'}", from=1-2, to=2-2]
	\arrow[""{name=1, anchor=center, inner sep=0}, "{=}", shift left = 5, bend left = 60, from=1-2, to=3-2]
	\arrow["F"', from=2-1, to=2-2]
	\arrow["L"', from=2-1, to=3-1]
	\arrow["{L'}", from=2-2, to=3-2]
	\arrow["G"', from=3-1, to=3-2]
	\arrow["\epsilon", Rightarrow, from=2-1, to=0, shorten >=0.5ex]
	\arrow["{\epsilon'}"', Rightarrow, from=2-2, to=1, shorten >=0.5ex]
\end{tikzcd}
\]

\begin{theorem}
  For a functor $F : \DD \to \CC$,
  the category of algebras of the monad $R : \Cat^\two \to \Cat^\two$
  (from \Cref{thm:lari-opfib-awfs})
  is equivalent to the category of split opfibrations.
  The category of coalgebras of the comonad $L : \Cat^\two \to \Cat^\two$
  is equivalent to the category of lari adjunctions.
  \[ \alg{R} \simeq \opfibcart
  \quad \quad \quad \quad
  \coalg{L} \simeq \lari\]
  \[
  (F, \alpha : R_F \to F) \mapsto (F,l)
  \quad \quad
  (F, \alpha : F \to L_F) \mapsto F \dashv A
  \]

\end{theorem}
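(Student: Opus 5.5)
The plan is to treat the two equivalences separately, in each case making the algebra (resp. coalgebra) structure explicit and matching it with the structure in \Cref{prop:normal-opfibration-algebra} (resp. the lari data).

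\textbf{Algebras.} An $R$-algebra on $F : \DD \to \CC$ is a map $\alpha : R_F \to F$ in $\Cat^\two$ whose codomain component is the identity on $\CC$, together with the unit law $\alpha \circ L_F = \id_\DD$ and the associativity law $\alpha \circ \mu_F = \alpha \circ R\alpha$. Concretely this is a functor $\alpha : F/\CC \to \DD$ with $F\alpha = R_F$, a retraction of $L_F$, satisfying one extra equation.

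The first step is to note that the unit law plus $F\alpha = R_F$ is exactly a pointed-endofunctor algebra. I will show that the extra fact that $\alpha \dashv L_F$ is lali, as in \Cref{prop:normal-opfibration-algebra}, is forced by associativity. The standard argument (Garner, \cite[2.13]{garner2008}) applies the multiplication to the 2-cell given by the counit of $L_F \dashv \pi_\DD$ to produce the counit of $\alpha \dashv L_F$. So algebras give normal opfibrations.

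It remains to check that associativity is equivalent to splitness. Unwinding $\mu_F : R_{R_F} \to R_F$ as composition of the two arrows in $\CC$, associativity says that $\alpha(x, g\circ f) = \alpha(\alpha(x,f), g)$. On arrows this becomes $l_{(x,g\circ f)} = l_{(y',g)}\circ l_{(x,f)}$, since $l$ is recovered as $\alpha\epsilon_X$. Conversely, for a split opfibration, define $\alpha = \trg\circ l$ and verify these equations directly.

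Morphisms of algebras are squares $(G,H)$ with $G\alpha = \alpha'(G/H)$. Since $l$ is determined by $\alpha$ through $\alpha\epsilon_X$, and $G/H$ preserves the counits $\epsilon$, this condition is equivalent to $G^\two l = l'(G/H)$, which is \Cref{def:morphism-split-opfibration}. The two assignments are mutually inverse on the nose, so they give an isomorphism and hence an equivalence.

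\textbf{Coalgebras.} An $L$-coalgebra on $F$ is $s : \DD \to F/\CC$ with $\pi_\DD s = \id$ and $R_F s = F$, i.e. $s(d) = (d, Fd \to Ad)$ for a functor $A := R_F s : \DD \to \CC$ equipped with a natural transformation $\eta : \id \to AF$-type data, subject to coassociativity. I would show that counit and coassociativity say precisely that $F \dashv A$ with identity unit $AF = \id$; the components of $s$ supply the counit $FA \to \id$, and coassociativity gives the triangle identities. Morphisms of coalgebras then match morphisms of lari adjunctions, with the counit condition $G\epsilon_b=\epsilon'_{Gb}$ coming from commutation with $s$.

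I expect the main obstacle to be the bookkeeping in identifying associativity (resp. coassociativity) with splitness (resp. the triangle identities). In particular, computing $\mu$ and $\delta$ of the Garner AWFS explicitly on comma categories needs care, and I would do that computation first, on objects and then on arrows.
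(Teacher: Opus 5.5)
Your algebra half is correct and follows the route the paper indicates through \Cref{prop:normal-opfibration-algebra}. The unit law together with $F\alpha=R_F$ gives a pointed-endofunctor algebra. Associativity upgrades it to the lali adjunction $\alpha\dashv L_F$ with $l_{(x,f)}=\alpha(\epsilon_X)$, and associativity again gives splitness.

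Two imprecisions:
\begin{itemize}
\item The nontrivial datum is the \emph{unit} $(\alpha(\epsilon_X),\id_y)\colon X\to L_F\alpha X$ of $\alpha\dashv L_F$, not the counit, which is the identity. The triangle identity $\alpha(\alpha(\epsilon_X),\id_y)=\id_{\alpha X}$, which gives the uniqueness half of opcartesianness, is associativity evaluated on the arrow $((\id_x,f),\id_y)\colon((x,Fx,\id),y,f)\to((x,y,f),y,\id)$ of $R_F/\CC$.
\item Splitness does not follow from the object-level equation alone. It comes from associativity on the arrow $((\id_x,f),\id_z)\colon((x,Fx,\id),z,gf)\to((x,y,f),z,g)$. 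This yields $\alpha(l_{(x,f)},\id_z)=\id$, and hence $l_{(y',g)}\circ l_{(x,f)}=\alpha(l_{(x,f)},gf)=l_{(x,gf)}$.
\end{itemize}

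The genuine gap is in the coalgebra half, which starts from the wrong data. An $L$-coalgebra structure on $F\colon\DD\to\CC$ is a morphism $F\to L_F$ in $\Cat^\two$. That is a commutative square whose top component is $\DD\to\DD$ and whose bottom component is $s\colon\CC\to F/\CC$. The counit law against $(\id_{\DD},R_F)\colon L_F\to F$ forces the top component to be $\id_{\DD}$ and $R_F s=\id_{\CC}$. Commutativity of the square then says $sF=L_F$.

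Your $s\colon\DD\to F/\CC$ with $R_F s=F$ is not this structure and cannot produce an adjoint. It forces $A:=R_Fs=F\colon\DD\to\CC$, so neither $F\dashv A$ nor $AF=\id$ even typechecks.

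With the correct data, set $A:=\pi_{\DD}s\colon\CC\to\DD$ and write $s(c)=(Ac,c,\epsilon_c\colon FAc\to c)$. Then:
\begin{itemize}
\item $sF=L_F$ gives $AF=\id_{\DD}$ (identity unit) and $\epsilon_{Fd}=\id$, which is the triangle identity $\epsilon F=\id$.
\item Naturality of $\epsilon$ is automatic, because $s$ is a functor into the comma category.
\item Coassociativity uses the fact that $\delta_F$ has bottom component $(x,y,f)\mapsto(x,(x,y,f),(\id_x,f))$. It compares $(Ac,s(c),(A\epsilon_c,\epsilon_c))$ with $(Ac,s(c),(\id_{Ac},\epsilon_c))$, so it says exactly $A\epsilon=\id$, the remaining triangle identity.
\item A coalgebra morphism $(P,Q)$ must satisfy $(P/Q)\circ s=s'\circ Q$, i.e.\ $PA=A'Q$ and $Q\epsilon=\epsilon'Q$. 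These are precisely the conditions defining a morphism in $\lari$.
\end{itemize}
So this half is repairable along these lines, but the step you propose does not go through as written.
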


% \begin{theorem}
%   For a functor $F : \DD \to \CC$,
%   an algebra $(F, \alpha : R_F \to F)$ of the monad $R : \Cat^\two \to \Cat^\two$
%   of \Cref{thm:lari-opfib-awfs}
%   is equivalent (as structure) to a split opfibration $(F,l)$.
%   A coalgebra $(F, \alpha : F \to L_F)$ of the comonad $L : \Cat^\two \to \Cat^\two$
%   is equivalent to an adjunction $F \dashv A$
%   in which $F$ is lari.
%   \[ \alg{R} \simeq \opfibcart
%   \quad \quad
%   \coalg{L} \simeq \lari\]
% \end{theorem}
% \begin{proof}
  % We sketch the construction of a split opfibration from a monad algebra.
  % Given a monad algebra $(F : \DD \to \CC, {\alpha} : F / \CC \to \DD)$ of $R$,
  % we obtain a left adjoint $\alpha \dashv L_F$
  % of the left-map $L_F : \DD \to \CC$
  % that is also a pointed endofunctor algebra structure on $F$.
  % By \Cref{prop:normal-opfibration-algebra},
  % this provides a normal opfibration.
  % One can check that the lift $l$ defined in
  % the proposition is also split.
% \end{proof}

\section*{Straightening and unstraightening}

\Cref{thm:opfibration-main} is the standard straightening-unstraightening result
for opfibrations.
In preparation for the theorem,
we define a category consisting of
split opfibrations over a category $\CC$.
We write $\opfib(\CC)$ for the full subcategory of the slice
$\Cat / \CC$ consisting of split opfibrations.
We write $\opfibcart(\CC)$ for
the wide subcategory of $\opfib(\CC)$
consisting of morphisms of split opfibrations in the slice.
% One candidate for this is the full subcategory of the slice,
% i.e. the objects are split opfibrations,
% with no restrictions on the morphisms.
% It turns out that a better behaved category
% is given when we restrict to morphisms
% of split opfibrations in the slice,
That is, morphisms
\[(G,H) : (F : \DD \to \CC, l) \to (F' : \DD' \to \CC, l')\]
such that $H$ is the identity functor on $\CC$
(see \cite[Definition 3.1]{streicher2018}).
% We write $\opfibcart(\CC)$ for
% the wide subcategory of $\opfib(\CC)$
% consisting of morphisms of split opfibrations in the slice.
% \[ \smlopfibcart(\CC) \to \smlopfib(\CC)\]

% \begin{example}
%   We provide an example of a morphism in $\opfib(\Set)$
%   that is not in $\opfibcart(\Set)$.
%   That is, a morphism that does not preserve lifts.
%   Since $\Set$ has all pushouts,
%   the domain map $\src : \Set^\two \to \Set$ is an opfibration,
%   we can even make explicit choices of pushouts to make it split.

% \end{example}

\begin{theorem}\label{thm:opfibration-main}
  For any category $\CC$,
  the category of functors and natural transformations
  $[\CC,\Cat]$ is equivalent to $\opfibcart(\CC)$.
  \[[\CC,\Cat] \simeq \opfibcart(\CC)\] 
\end{theorem}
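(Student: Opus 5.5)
We construct an explicit pair of functors $\int : [\CC,\Cat] \to \opfibcart(\CC)$ (the covariant Grothendieck construction) and $\mathrm{Fib} : \opfibcart(\CC) \to [\CC,\Cat]$ (taking fibres), and check that they are mutually inverse up to natural isomorphism.

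For the Grothendieck construction, given $A : \CC \to \Cat$ we let $\int A$ have objects $(c, a)$ with $a \in A c$. A morphism $(c,a) \to (c',a')$ is a pair $(f : c \to c', \phi : (Af)(a) \to a')$, composed via functoriality of $A$. The projection $F_A : \int A \to \CC$ carries the chosen lift $l_{((c,a),f)} := (f, \id_{(Af)a})$. We verify that these lifts are opcartesian, which is the uniqueness of the factorisation $\phi = \phi' \circ (Ag)(\id)$. Normality follows from $A(\id) = \id$, and splitness from strict functoriality $A(g \circ f) = Ag \circ Af$. A natural transformation $\theta : A \to B$ induces $(f,\phi) \mapsto (f, \theta_{c'}\phi)$, which is a functor over $\CC$. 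It sends chosen lifts to chosen lifts precisely by naturality of $\theta$, so it is a morphism in $\opfibcart(\CC)$ as in \Cref{def:morphism-split-opfibration}.

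Conversely, given a split opfibration $(F : \DD \to \CC, l)$, we define $\mathrm{Fib}(F)(c) := F^{-1}(c)$, the strict fibre. On $f : c \to c'$ we take $x \mapsto$ the codomain of $l_{(x,f)}$. On a vertical map $v : x \to y$ in $F^{-1}(c)$ we take the unique vertical factorisation of $l_{(y,f)} \circ v$ through the opcartesian arrow $l_{(x,f)}$. Normality and splitness of $l$ give strict functoriality of $\mathrm{Fib}(F)$ in $f$, rather than mere pseudofunctoriality; this is exactly where splitness is used. A morphism $G$ of split opfibrations over $\id_\CC$ restricts to fibres. The condition $G^\two \circ l = l' \circ G/\id$ says $G$ preserves chosen lifts on the nose, which makes the induced transformation strictly natural.

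Finally we exhibit the two natural isomorphisms. The composite $\mathrm{Fib}(\int A)(c)$ is the subcategory of $\int A$ on objects $(c,a)$ and maps $(\id_c,\phi)$, and this is isomorphic to $Ac$ naturally in $c$ and $A$. For the other composite, we send $\int \mathrm{Fib}(F) \to \DD$ by $(c,x) \mapsto x$ and $(f,\phi) \mapsto \phi \circ l_{(x,f)}$. Every map $g : x \to y$ in $\DD$ factors uniquely as a vertical map after the chosen lift $l_{(x,Fg)}$, by opcartesianness, so this functor is bijective on objects and fully faithful, hence an isomorphism over $\CC$. We must also check that it preserves chosen lifts, which holds since $(f,\id)$ is sent to $l_{(x,f)}$, and that it is natural in morphisms of split opfibrations.

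The main obstacle is bookkeeping rather than any conceptual step. The key point is ensuring that functoriality of $\mathrm{Fib}(F)$ holds strictly, including on vertical morphisms. This uses the uniqueness clause of opcartesianness together with splitness: the lift of $g \circ f$ equals the composite of the lifts. The same uniqueness argument also gives fully faithfulness of the comparison functor above.
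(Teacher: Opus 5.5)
Your proposal is correct and follows the same route as the paper. In one direction you use the Grothendieck construction $\int A$ with its canonical split opfibration structure, and in the other the fibres functor, with transport given by the chosen lifts. The paper only sketches these two functors, so your checks supply the details it leaves implicit: strict functoriality from normality and splitness, preservation of chosen lifts, and the two comparison isomorphisms.
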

\begin{proof}
  % See for example \cite{streicher2018}.
  We introduce some notation,
  and sketch the functors that make up the equivalence.
   
  Recall the \emph{Grothendieck construction} on $A$,
  denoted $\int A$:
  objects of $\int A$ consist of pairs $(x, a)$,
  where $x \in \CC$ and $a \in A(x)$;
  morphisms $(x,a) \to (y,b)$ consist of pairs $(f,\phi)$,
  where $f : x \to y$ is a morphism in $\CC$
  and $\phi : A f a \to b$ is a morphism in $A(y)$.
  There is an associated forgetful functor $u_A : \int A \to \CC$,
  which takes the first component on objects and morphisms.
  The forgetful functor $u_A : \int A \to \CC$
  has a canonical split opfibration structure.
  Overall, this makes up functor $\int : [\CC,\Cat] \to \opfibcart(\CC)$.

  Conversely, for a split opfibration $(F : \DD \to \CC, l)$ over $\CC$,
  we can construct a functor $F^* : \CC \to \Cat$ by taking
  $x$ in $\CC$ to the fibre $F^* x$.
  Then the lifting operation $l$ provides, for each $f : x \to y$ in $\CC$,
  a functor $F^* f : F^* x \to F^* y$.
\end{proof}

A version of the following lemma can be found, for example,
in \cite[Theorem 4.1]{streicher2018}.
Recall that a discrete opfibration is an opfibration
with fibres that are sets.
\begin{lemma}\label{lem:split-opfib-between-split-opfibs}
  Let $(A \to X, \alpha)$ and $(B \to X, \beta)$ be two
  split opfibrations and $F : A \to B$
  a morphism of split opfibrations over $X$.
  Then $F$ is a discrete opfibration
  if and only if 
  over each $x$ in $X$,
  the functor between fibres $F^* x : A^* x \to B^* x$
  is a discrete opfibration.

  More generally,
  suppose that $F^* : X \to \Cat^\two$ factors through the
  category of split fibrations in $\Cat$.
  \[
    \begin{tikzcd}
      & \opfibcart \\
      X & {\Cat^\two}
      \arrow[from=1-2, to=2-2]
      \arrow["{(F,\phi)}", dashed, from=2-1, to=1-2]
      \arrow["F"', from=2-1, to=2-2]
    \end{tikzcd}
  \]
  This means that for each $x$ in $X$,
  the functor between fibres $F^* x : A^* x \to B^* x$
  is equipped with the structure of a split opfibration
  $(F^* x, \phi_x)$,
  and for each $t : x \to y$ in $X$,
  transporting along $t$ forms
  a morphism of split opfibrations:
  \[(A^* t, B^* t) : (F^* x,\phi_x) \to (F^* y,\phi_y)
    \quad \quad \quad
    \begin{tikzcd}
      {A^* x} & {A^* y} \\
      {B^* x} & {B^* y}
      \arrow["{A^* t}", from=1-1, to=1-2]
      \arrow["{(F^* x,\phi_x)}"', from=1-1, to=2-1]
      \arrow["{(F^* y,\phi_y)}", from=1-2, to=2-2]
      \arrow["{B^* t}"', from=2-1, to=2-2]
    \end{tikzcd} \]
  Then we have a split opfibration structure on $F$.
\end{lemma}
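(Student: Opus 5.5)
We work throughout with the Grothendieck construction of \Cref{thm:opfibration-main}: write $A \simeq \int A^*$ and $B \simeq \int B^*$ over $X$, with $F$ acting as $(x,a) \mapsto (x, F^*x\, a)$ on objects and $(t,\varphi) \mapsto (t, F^*y\,\varphi)$ on morphisms. That this is a valid description uses that $F$ is a morphism of split opfibrations over $X$: transport then commutes strictly with $F$, so $B^* t \circ F^* x = F^* y \circ A^* t$ and the square in the statement commutes. Objects of $A$ are pairs $(x,a)$, and a morphism $(x,b) \to (y,b')$ in $B$ is a pair $(t,\psi)$ with $\psi : B^*t\, b \to b'$ in $B^* y$. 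It suffices to prove the general statement; the discrete case follows from it, and we return to the converse at the end.

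The lift is defined as follows. Given $(x,a) \in A$ and $(t,\psi) : F(x,a) = (x, F^*x\,a) \to (y,b')$, put $a_1 := A^*t\, a \in A^* y$. Naturality of transport gives $F^*y\, a_1 = B^* t\,(F^* x\, a)$, so $\psi$ is a morphism $F^* y\, a_1 \to b'$ in $B^* y$. Apply the split opfibration $(F^*y,\phi_y)$ to get a chosen lift $\phi_y(a_1,\psi) : a_1 \to a'$. Define
\[ l_{((x,a),(t,\psi))} := (t, \phi_y(a_1,\psi)) : (x,a) \to (y,a'). \]
By construction $F$ sends this morphism to $(t,\psi)$.

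The argument then has three checks.

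\begin{enumerate}
\item \emph{Opcartesianness.} Take $(s,\chi) : (y,b') \to (z,b'')$ and a lift $(st,\omega) : (x,a) \to (z,a'')$ of the composite. Factoring $\omega$ through $A^* s$ applied to our lift reduces to opcartesianness of $\phi_z$-lifts in the fibre $A^* z$. The key input is that $A^*s$ carries $\phi_y$-lifts to $\phi_z$-lifts, since $(A^*s, B^*s)$ is a morphism of split opfibrations. So $A^* s\,\phi_y(a_1,\psi)$ is the $\phi_z$-lift of $B^* s\,\psi$ at $A^*(st)\,a$. Composing this with the fibrewise opcartesian property yields the unique factorisation.
\item \emph{Normality.} Lifting $(\id, \id)$ gives $(\id, \phi_x(a,\id)) = \id$, using normality of $\phi_x$ together with strictness $A^*\id = \id$.
\item \emph{Splitting.} For composable $(t,\psi)$ and $(s,\chi)$, the composite in $\int B^*$ is $(st,\ \chi \circ B^*s\,\psi)$. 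Its lift has second component $\phi_z(A^*(st)a,\ \chi \circ B^*s\,\psi)$. By splitness of $\phi_z$ this equals $\phi_z(\ldots,\chi) \circ \phi_z(\ldots, B^*s\,\psi)$. The second factor equals $A^* s\,\phi_y(a_1,\psi)$, again because $(A^*s,B^*s)$ preserves chosen lifts. This is precisely the second component of the composite of the two lifts in $\int A^*$.
\end{enumerate}

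The main obstacle is the bookkeeping in steps 1 and 3. Everything hinges on the equality $A^* s\,\phi_y(a_1,\psi) = \phi_z(A^*s\,a_1, B^*s\,\psi)$, which is exactly the condition $G^\two \circ l = l' \circ G/H$ of \Cref{def:morphism-split-opfibration}. We must also keep track that the endpoints agree strictly, which relies on $A^*$ and $B^*$ being strict functors (splitness).

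For the discrete lemma, the direction "fibrewise discrete opfibrations implies $F$ is a discrete opfibration" is the above, since in the discrete case lifts are unique and the structure is automatic. The fibres of $F$ over $(x,b)$ are the fibres of $F^*x$ over $b$, so they are discrete. Conversely, if $F$ is a discrete opfibration, restrict to morphisms $(\id_x,\psi)$ in $B$. Their unique lifts have first component $\id_x$, and hence give unique lifts in $A^*x$. So each $F^*x$ is a discrete opfibration.
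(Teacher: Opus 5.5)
Your proposal is correct and is essentially the paper's proof written in Grothendieck coordinates. Your lift $(t,\phi_y(A^*t\,a,\psi))$ is exactly the paper's $\phi_{(a_y,u)}\circ\alpha_{(a,t)}$, and the normality and splitting computations are the same. Your opcartesianness check, which reduces directly to fibrewise opcartesianness via $A^*s\,\phi_y(a_1,\psi)=\phi_z(A^*s\,a_1,B^*s\,\psi)$, is a somewhat more direct version of the paper's argument.

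In the discrete case, your claim that $(A^*t,B^*t)$ automatically preserves lifts needs its one-line justification: it follows from uniqueness of lifts in $F^*y$, which the paper checks explicitly. Your proof of the converse direction is an addition, since the paper only shows that fibrewise discrete opfibrations give a discrete opfibration.
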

\begin{proof}
  We define a lifting operation $\psi$ for $F$,
  suppose $a \in A$ and $f : F a \to b$.
  Let us write $t : x \to y$ for the image of $f : F a \to b$ in $X$.
  We can lift $t : x \to y$ using $\alpha$ and $\beta$ to get 
  $\alpha_{(a,t)} : a \to a_y$ and $\beta_{(F a, t)} : F a \to F a_y$;
  we have $F \alpha_{(a,t)} = \beta_{(F a, t)}$,
  since $F$ is a morphism of split opfibrations.
  Since $\beta_{(F a, t)}$ is opcartesian, 
  there is a unique ``vertical'' map $u : F a \to b$ --
  meaning that it is in the fibre over $y$ --
  such that $f = u \circ \beta_{(F a, t)}$.
  We lift $u$, using $\phi_y$ to obtain a morphism
  $\phi_{(a_y,u)} : a_y \to a_y'$ in the fibre of $A$ over $y$.
  \[\begin{tikzcd}
    a & {a_y} & {a_y'} \\
    \\
    Fa & {F a_y} & b \\
    \\
    x & y & y
    \arrow["{\alpha_{(a,t)}}", from=1-1, to=1-2]
    \arrow["{\psi_{(a,f)}}"', bend right, from=1-1, to=1-3]
    \arrow["{\phi_{(a_y,u)}}", from=1-2, to=1-3]
    \arrow["{\beta_{(F a, t)}}", from=3-1, to=3-2]
    \arrow["f"', bend right, from=3-1, to=3-3]
    \arrow["u", from=3-2, to=3-3]
    \arrow["t", from=5-1, to=5-2]
    \arrow[equals, from=5-2, to=5-3]
  \end{tikzcd}\]
  We define $\psi_{(a,f)}$ to be the composition
  \[\psi_{(a,f)} = u \circ \alpha_{(a,t)} : a \to a_y'\]

  It follows from this definition
  and normality of $\alpha$, $\beta$ and $\phi_y$
  that if $f$ is the identity,
  then so are $t$, $\beta_{(F a, t)}$, $u$, $\alpha_{(a,t)}$,
  and ${\phi_{(a_y,u)}}$.
  Hence $\psi$ is normal.

  To check that $\psi$ is split,
  suppose additionally $g : b \to b'$.
  Let us denote the image of $g$ in $X$ as $s : y \to z$.
  We can construct a lift $\psi_{(a'_y,g)} : a'_y \to a''_z$
  of $g$ along $F$,
  just as we constructed the lift of $f$,
  as the composition of the lift $\alpha_{(a'_y,s)} : a'_y \to a'_z$
  of $s$ followed by the lift $\phi_{(a_z,v)} : a'_z \to a''_z$ of 
  $v : F a'_z \to F a''_z$, 
  the unique map over $z$ satisfying
  $v \circ \beta_{(b,s)} = g$
  (refer to the diagram below, where the situation is similar).
  We write $\alpha_{(a_y,s)} : a_y \to a_z$,
  which is sent by $F$ to $\beta_{(F a_y,s)} : F a_y \to F a_z$ and
  $u' : F a_z \to F a'_z$ for the unique map over $z$
  satisfying $u' \circ \beta_{(F a_y,s)} = \beta_{(b,s)} \circ u$.
  The condition that
  $(A^* s, B^* s) : (F^* y,\phi_y) \to (F^* z,\phi_z)$
  is a morphism of split opfibrations can be described in
  more elementary terms as a coherence condition:
  \[\phi_{(a_z,u')} \circ \alpha_{(a_y,s)} = \alpha_{(a'_y,s)} \circ \phi_{(a_y,u)}\]
  \[\begin{tikzcd}
    &&& {a_y} & {a_z} \\
    {A^* y} & {A^* z} && {a_y'} & {a'_z} \\
    {B^* y} & {B^* z} && {F a_y} & {F a_z} \\
    y & z && b & {F a'_z}
    \arrow["{{{{\alpha_{(a_y,s)}}}}}", from=1-4, to=1-5]
    \arrow["{{{{\phi_{(a_y,u)}}}}}"', from=1-4, to=2-4]
    \arrow["{{{{\phi_{(a_z,u')}}}}}", from=1-5, to=2-5]
    \arrow["{{A^*s}}", from=2-1, to=2-2]
    \arrow["{{(F^* y,\phi_y)}}"', from=2-1, to=3-1]
    \arrow["{{(F^* z,\phi_z)}}", from=2-2, to=3-2]
    \arrow["{{{{\alpha_{(a'_y,s)}}}}}"', from=2-4, to=2-5]
    \arrow["{{B^*s}}"', from=3-1, to=3-2]
    \arrow["{{{{\beta_{(F a_y, s)}}}}}", from=3-4, to=3-5]
    \arrow["u"', from=3-4, to=4-4]
    \arrow["{{{u'}}}", from=3-5, to=4-5]
    \arrow["s", from=4-1, to=4-2]
    \arrow["{{{{\beta_{(b, s)}}}}}"', from=4-4, to=4-5]
  \end{tikzcd}\]
  Since $\phi_z$, $\alpha$ and $\beta$ are split,
  \begin{align*}
    & \psi_{(a, gf)} \\
  = \, & \phi_{(a_z,v u')} \circ \alpha_{(a,st)}\\
  = \, & \phi_{(a'_z,v)} \circ \phi_{(a_z,u')} \circ \alpha_{(a_y,s)} \circ \alpha_{(a,t)} \\
  = \, & \phi_{(a'_z,v)} \circ \alpha_{(a'_y,s)} \circ \phi_{(a_y,u)} \circ \alpha_{(a,t)} \\
  = \, & \psi_{(a'_y,g)} \circ \psi_{(a,f)}
  \end{align*}
  
  We show that $\psi_{(a,f)}$ is opcartesian over $B$.
  Suppose $h : a \to e$ is a morphism in $A$
  and $g : b \to F e$ such that $F h = g \circ f$.
  We want to show that there is a unique lift
  $g' : a'_y \to e$ of $g$ in $A$
  such that $g' \circ \psi_{(a,f)} = h$.
  Let us reuse the notation for maps $u$, $s$, $v$ and $u'$ from above,
  as well as the names for the endpoints of lifts.
  Since $\alpha_{(a,st)} : a \to a_z$ is opcartesian over $X$,
  there is a unique map $w : a_z \to e$
  over $z$ satisfying $w \circ \alpha_{(a,st)} = h$.  
  Since $\phi_{(a_z,vu')} : a_z \to a''_z$ is opcartesian over $B^* z$,
  there is a unique map $w' : a''_z \to e$
  over $F e$ satisfying $w' \circ \phi_{(a_z,vu')} = w$.
  Clearly, $F (w' \circ \psi_{(a'_y,g)}) = g$ gives us a lift.
  \[\begin{tikzcd}[column sep = large]
	&& {a_z} &&&& {F a_z} \\
	& {a_y} & {a'_z} &&& {F a_y} & {F a'_z} \\
	a & {a_y'} & {a''_z} && Fa & b & {F a''_z} \\
	&& e &&&& {F e} \\
	x & y & z && x & y & z
	\arrow["{{\phi_{(a_z,u')}}}", from=1-3, to=2-3]
	\arrow["{u'}", dashed, from=1-7, to=2-7]
	\arrow["{{\alpha_{(a_y,s)}}}", from=2-2, to=1-3]
	\arrow[from=2-2, to=2-3]
	\arrow["{{\phi_{(a_y,u)}}}"{description}, from=2-2, to=3-2]
	\arrow["{{\phi_{(a'_z,v)}}}", from=2-3, to=3-3]
	\arrow["{{\beta_{(F a_y, s)}}}", from=2-6, to=1-7]
	\arrow[from=2-6, to=2-7]
	\arrow["u"{description}, dashed, from=2-6, to=3-6]
	\arrow["v", dashed, from=2-7, to=3-7]
	\arrow["{{\alpha_{(a,t)}}}", from=3-1, to=2-2]
	\arrow["{\psi_{(a,f)}}"', from=3-1, to=3-2]
	\arrow["h"', bend right, from=3-1, to=4-3]
	\arrow["{{\alpha_{(a'_y,s)}}}"{description}, from=3-2, to=2-3]
	\arrow["{\psi_{(a'_y,g)}}"', from=3-2, to=3-3]
	\arrow["{g'}"', bend right = 10, dashed, from=3-2, to=4-3]
	\arrow["w'", dashed, from=3-3, to=4-3]
	\arrow["{{\beta_{(F a, t)}}}", from=3-5, to=2-6]
	\arrow["f"{description}, from=3-5, to=3-6]
	\arrow["{F h}"', bend right, from=3-5, to=4-7]
	\arrow["{{\beta_{(b, s)}}}"{description}, from=3-6, to=2-7]
	\arrow["g"', from=3-6, to=3-7]
	\arrow["g"', bend right = 10, from=3-6, to=4-7]
	\arrow[equals, from=3-7, to=4-7]
	\arrow["t", from=5-1, to=5-2]
	\arrow["s", from=5-2, to=5-3]
	\arrow["t", from=5-5, to=5-6]
	\arrow["s", from=5-6, to=5-7]
  \arrow["w"{description}, shift left=3, bend left = 70, dashed, from=1-3, to=4-3]
  \end{tikzcd}\]
  Let $g' : a'_y \to e$ be such that $F g' = g$.
  It suffices to show that $g' \circ \psi_{(a,f)} = h$
  if and only if $g' = w' \circ \psi_{(a'_y,g)}$.
  It is easy to check that $\phi_{(a_y, u)}$ is not just
  opcartesian with respect to $F^* y : A^* x \to B^* x$,
  but also opcartesian with respect to all of $F : A \to B$.
  Also, $\alpha_{(a,t)}$ is opcartesian over $X$.
  Thus, 
  \begin{align*}
    & g' = w' \circ \psi_{(a'_y,g)} \\
    \iff \, & g' \circ \phi_{(a_y, u)} = w' \circ \psi_{(a'_y,g)} \circ \phi_{(a_y, u)} \\
    \iff \, & g' \circ \phi_{(a_y, u)} \circ \alpha_{(a,t)} = w' \circ \psi_{(a'_y,g)} \circ \phi_{(a_y, u)} \circ \alpha_{(a,t)} \\
    \iff \, & g' \circ \psi_{(a,f)} = w' \circ \psi_{(a'_y,g)} \circ \psi_{(a,f)} \\
  \end{align*}
  Finally,
  \[ w' \circ \psi_{(a'_y,g)} \circ \psi_{(a,f)} = w' \circ \psi_{(a,gf)}
  = w' \circ \phi_{(a_z,vu')} \circ \alpha_{(a,st)} = w \circ \alpha_{(a,st)} = h \]

  Now suppose instead that each $F^* x : A^* x \to B^* x$ is a discrete opfibration.
  This is the same as saying $F^* x : A^* x \to B^* x$ is a split opfibration
  and for each $b \in B^* x$ the fibre
  $(F^* x)^* b \iso F^* b$ is a set.
  Let us first assume the coherence condition for transports holds.
  Then we have that $F$ is a split opfibration such that for any $b \in B$,
  $F^* b$ is a set.
  In other words, $F$ is a discrete opfibration.

  Finally, we verify the coherence condition for transports.
  Suppose $s : y \to z$ in $X$, $a_y \in A$ is over $y$, and $u : F a_y \to b$ in $B$.
  We generate a similar diagram as before:
  \[
  \begin{tikzcd}
    {a_y} & {a_z} & \\
    {a_y'} & {a'_z} & {a''_z} \\
    {F a_y} & {F a_z} \\
    b & {F a'_z} \\
    y & z
    \arrow["{{{{\alpha_{(a_y,s)}}}}}", from=1-1, to=1-2]
    \arrow["{{{{\phi_{(a_y,u)}}}}}"', from=1-1, to=2-1]
    \arrow["v", dashed, from=1-2, to=2-2]
    \arrow["{{{{\phi_{(a_z,u')}}}}}", from=1-2, to=2-3]
    \arrow["{{{{\alpha_{(a'_y,s)}}}}}"', from=2-1, to=2-2]
    \arrow["{{{{\beta_{(F a_y, s)}}}}}", from=3-1, to=3-2]
    \arrow["u"', from=3-1, to=4-1]
    \arrow["{{{u'}}}", from=3-2, to=4-2]
    \arrow["{{{{\beta_{(b, s)}}}}}"', from=4-1, to=4-2]
    \arrow["s", from=5-1, to=5-2]
  \end{tikzcd}
  \]
  where $u' : F a_z \to F a'_z$ is the unique map over $z$ such that
  \[u' \circ \beta_{(F a_y, s)} = \beta_{(b,s)} \circ u\]
  and $v : a_z \to a'_z$ is the unique map over $z$ such that
  \[v \circ \alpha_{(a_y,s)} = \alpha_{(a'_y,s)} \circ \phi_{(a_y,u)}\]
  It suffices to show that $v = \phi_{(a_z,u')}$.
  Since $F^* z$ is a discrete opfibration
  and $v$ and $\phi_{(a_z,u')}$ are both in the fibre over $z$,
  this amounts to showing that $F v = u'$.
  By definition of $u'$, we simply need to check the following.
  \[
    F v \circ \beta_{(F a_y, s)}
    = F (v \circ \alpha_{(a_y,s)})
    = F (\alpha_{(a'_y,s)} \circ \phi_{(a_y,u)})
    = \beta_{(b,s)} \circ u
  \]
\end{proof}

There is a similar result for lari functors between split opfibrations.
\begin{lemma} \label{lem:lari-between-split-opfibs}
  Let $(A \to X, \alpha)$ and $(B \to X, \beta)$ be two
  split opfibrations and $F : A \to B$
  a morphism of split opfibrations over $X$.
  Suppose that $F^* : X \to \Cat^\two$ factors through the
  category of lari adjunctions in $\Cat$.
  \[
    \begin{tikzcd}
      & \lari \\
      X & {\Cat^\two}
      \arrow[from=1-2, to=2-2]
      \arrow["{(F,R)}", dashed, from=2-1, to=1-2]
      \arrow["F^*"', from=2-1, to=2-2]
    \end{tikzcd}
  \]
  Then we have a lari adjunction $F \dashv R$ such that
  the right adjoint $R$ is a morphism of split opfibrations over $X$.
\end{lemma}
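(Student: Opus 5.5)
The plan is to construct the right adjoint $R : B \to A$ fibrewise and glue the pieces using the split opfibration structures $\alpha$ and $\beta$, in the same way that \Cref{lem:split-opfib-between-split-opfibs} glued the fibrewise lifts $\phi_x$. For each $x \in X$ the hypothesis gives a lari adjunction $F^* x \dashv R_x : B^* x \to A^* x$, with identity unit and counit $\epsilon_x$. For each $t : x \to y$ it says that $(A^* t, B^* t)$ is a morphism of lari adjunctions. Concretely this means
\[ F^*y \circ A^* t = B^* t \circ F^* x, \qquad R_y \circ B^* t = A^* t \circ R_x, \qquad B^* t (\epsilon_{x,b}) = \epsilon_{y, B^* t\, b}. \]
The second equation says exactly that the family $(R_x)_x$ is a natural transformation between the straightened functors $B^*, A^* : X \to \Cat$. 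So by \Cref{thm:opfibration-main}, applying $\int$ produces a morphism of split opfibrations $R : B \to A$ over $X$. On objects it is $b \mapsto R_x b$ for $b$ over $x$. On a morphism $(t, \phi : B^* t\, b \to b')$ it is $(t, R_y \phi)$, using $A^* t\, R_x b = R_y B^* t\, b$. It therefore remains only to exhibit $F \dashv R$ as a lari adjunction.

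Next I will define the unit and counit globally. Since each unit is the identity, $R_x F^*x = \id$ for every $x$, and with the first equation above this gives $R \circ F = \id_A$; the fibrewise identities assemble because both $F$ and $R$ act on the $X$-component by the identity. Let the counit $\epsilon : F R \to \id_B$ have component $\epsilon_b := \epsilon_{x,b}$ for $b$ over $x$. This is a vertical morphism, i.e. it lies in the fibre over $x$.

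The main step is to check that $\epsilon$ is natural with respect to non-vertical morphisms of $B$. I will use \Cref{thm:opfibration-main} to write a general morphism of $B$ as an opcartesian lift $\beta_{(b,t)} : b \to B^*t\, b$ followed by a vertical map. Naturality along vertical maps is fibrewise naturality of $\epsilon_x$. Naturality along $\beta_{(b,t)}$ reduces to the identity $\beta_{(b,t)} \circ \epsilon_{x,b} = \epsilon_{y,B^*t\, b} \circ F R \beta_{(b,t)}$. Here $FR\beta_{(b,t)} = \beta_{(FR b, t)}$, because $F$ and $R$ are morphisms of split opfibrations. Both sides then lie over $t$ with the same source, so by the unique factorisation through $\beta_{(FRb,t)}$ it suffices to compare their vertical parts. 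The left side has vertical part $B^* t(\epsilon_{x,b})$ and the right side has vertical part $\epsilon_{y,B^*t\, b}$, and these agree by the third equation above. I expect this to be the main obstacle, but only as bookkeeping with the Grothendieck description; no new idea is needed.

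Finally I will verify the triangle identities. Both hold fibrewise by the fibrewise lari adjunctions, and since $R F = \id$ is strict, $\epsilon F = \id_F$ and $R\epsilon = \id_R$ can be checked componentwise, where they are exactly the fibrewise identities. Hence $F \dashv R$ is lari, and $R$ is a morphism of split opfibrations over $X$ by construction.
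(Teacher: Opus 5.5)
Your proposal is correct and follows essentially the same route as the paper. Both define $R$ fibrewise and glue along the opcartesian--vertical factorisation $f = u \circ \beta_{(b,t)}$, take the counit componentwise from the fibrewise counits, and derive naturality along $\beta_{(b,t)}$ from the coherence $B^*t(\epsilon_b) = \epsilon_{B^*t\,b}$, with the triangle identities holding fibrewise. The only cosmetic difference is that you obtain $R$ by applying $\int$ to the strictly natural family $(R_x)_x$, so functoriality and being a morphism of split opfibrations come for free, whereas the paper writes $Rf := R_y u \circ \alpha_{(R_x b,t)}$ directly and leaves functoriality as an easy check.
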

\begin{proof}
  We first unfold the assumption that $F^*$ factors through $\lari$.
  This means that for each $x$ in $X$,
  the functor between fibres $F^* x : A^* x \to B^* x$
  is equipped with a right adjoint $R_x : B^* x \to A^* x$,
  such that the unit is the identity.
  Also, for each $t : x \to y$ in $X$,
  % transporting along $t$ is a morphism of lari functors
  % $(A^* t, B^* t) : (F^* x \dashv R_x) \to (F^* y \dashv R_y)$.
  % Unfolding this further means that
  $F^* y \circ A^* t = B^* t \circ F^* x$, $R_y \circ B^* t = A^* t \circ R_x$,
  and for any object $b$ in $B^* x$,
  the counits satisfy
  $B^* t \epsilon_b = \epsilon'_{B^* t b}$.
  
  To define the functor $R : B \to A$,
  let $R$ take an object $b \in B$ over $x \in X$ to $R_x b$.
  For a morphism $f: b \to b'$ over $t : x \to y$,
  we take the canonical opcartesian-vertical factorisation
  \[ f = u \circ \beta_{(b,t)}\]
  for $\beta_{(b,t)} : b \to b_y$
  and $u : b_y \to b'$ over $y$.
  We take $R f := R_y u \circ \alpha_{(R_x b,t)}$,
  checking that the domain of $R_y u$
  is equal to the codomain of $\alpha_{(R_x b,t)}$,
  \[R_y b_y = R_y (B^* t (b)) = A^* t (R_x b)\]
  \[\begin{tikzcd}
    & {R_x b} && b & x \\
    {R_y b_y} & {R_y b'} & {b_y} & {b'} & y
    \arrow["{\alpha_{(R_x b,t)}}"', from=1-2, to=2-1]
    \arrow["{R f}", dashed, from=1-2, to=2-2]
    \arrow["{\beta_{(b,t)}}"', from=1-4, to=2-3]
    \arrow["f", from=1-4, to=2-4]
    \arrow["t"', from=1-5, to=2-5]
    \arrow["{R_y u}"', from=2-1, to=2-2]
    \arrow["u"', from=2-3, to=2-4]
  \end{tikzcd}\]
  It is easy to check that $R$ is functorial.
  Furthermore, it is evidently a morphism of split opfibrations over $X$,
  so that for $b \in B$ over $x \in X$ and $t : x \to y$,
  \[R \beta_{(b,t)} = \alpha{(R b,t)}\]
  and for $u : b \to b'$ vertical over $x$,
  \[R u = R_x u\]

  To make a lari adjunction $L \dashv R$,
  we check that $\id_A = R \circ F$ and construct a counit
  $\epsilon : F \circ R \to \id_B$ satisfying two equations
  $\epsilon F = \id$ and $R \epsilon = \id$.
  Firstly, for an object $a \in A$ over $x \in X$,
  \[R F a = R_x F_x a = a\]
  For morphisms, by taking opcartesian-vertical factorisations,
  it suffices to check that
  \[R F \alpha_{(a,t)} = R \beta_{(F a, t)} = \alpha_{(R F a,t)} = \alpha_{(a,t)}\]
  and for $u$ vertical over $y \in X$,
  \[R F u = R_y F_y u = u\]
  For an object $b \in B$ over $x \in X$,
  we define $\epsilon_b : F R b \to b$
  as the component of the counit of $F^* x \dashv R_x$ at $b$.
  To prove naturality of $\eta$ at a morphism
  $f : b \to b'$ over $t : x \to y$,
  factor $f = u \circ \beta_{(b,t)}$ as usual,
  where $\beta_{(b,t)} : b \to b_y$.
  Using the coherence equation, we have 
  \[B^* t \epsilon_b = \epsilon_{B^* t b} = \epsilon_{b_y}\]
  so that the following diagram commutes
  \[ \begin{tikzcd}
    {F_x R_x b} & b \\
    {F_y R_y b} & {b_y} \\
    {F_y R_y b} & {b'}
    \arrow["{\epsilon_{b}}", from=1-1, to=1-2]
    \arrow["{\beta_{(F R b,t)}}"', from=1-1, to=2-1]
    \arrow["{F R f}"', shift right=10, bend right = 40, from=1-1, to=3-1]
    \arrow["{\beta_{(b,t)}}", from=1-2, to=2-2]
    \arrow["f", shift left=5, bend left = 40, from=1-2, to=3-2]
    \arrow["{\epsilon_{b_y}}"', from=2-1, to=2-2]
    \arrow["{B^* t (\epsilon_b)}", draw=none, from=2-1, to=2-2]
    \arrow["{F_y R_y u}"', from=2-1, to=3-1]
    \arrow["u", from=2-2, to=3-2]
    \arrow["{\epsilon_{b'}}"', from=3-1, to=3-2]
  \end{tikzcd} \]
  Since each $\eta_b$ is defined as the counit of
  the adjunction $F^* x \dashv R_x$ over some $x$,
  it is vertical.
  The two remaining equations for $a \in A$ and $b \in B$ 
  $\epsilon_{F a} = \id_a$ and $R \epsilon_b = \id_{R b}$
  then follow automatically
  from each $F^* x \dashv R_x$ being lari.
\end{proof}

\section*{The universal split opfibration}
% \label{sec:universes-in-cat}

In this section, we describe the ``universal split opfibration'',
classifying \emph{small} split opfibrations in $\Cat$,
i.e. split opfibrations with small categories as fibres.
This appears, for example, in \cite{hess2007}
as the ``tautological bundle of categories''.

% We denote by $\cat$ the category of small categories,
% which is an object in $\Cat$.
We use $\smlopfibcart(\CC)$ to denote the category
of small split opfibrations over $\CC$
and morphisms of split opfibrations over $\CC$.
Let $\CC$ be a large category and
$A : \CC \to \cat$ be a functor.
Then \Cref{thm:opfibration-main}
restricts to the following equivalence.
\[[\CC,\cat] \simeq \smlopfibcart(\CC)\] 

Hess calls the following definition the
``tautological bundle of categories'' \cite{hess2007}.
Elementarily,
it is the forgetful functor from the 1-category of small pointed categories
to the 1-category of small categories.
We formulate the definition as a Grothendieck construction for convenience.

\begin{definition}\label{def:universal-small-split-opfibration}
  Consider the identity functor $\id_\cat : \cat \to \cat$.
  We write \[\pcat := \smallint \id_\cat\] for the Grothendieck construction
  on the identity functor
  and $\ulow := u_{\id_\cat} : \pcat \to \cat$ for the forgetful functor to the base.
  We call 
  % $\pcat$ the
  % \emph{(opfibrant) category of small pointed categories},
  % and
  $\ulow$ the \emph{universal small split opfibration}
  (or just the \emph{universal split opfibration}).
  This name will be justified by \Cref{prop:universal-opfibration-classifies}.
  We will also refer to $\ulow : \pcat \to \cat$ as a \emph{universe},
  as it will act like one in our model.
\end{definition}

The object $\pcat$ itself can be computed as follows:
it is the category where objects are pairs $(\CC,x)$ such that
$\CC$ is a small category and $x$ is an object in $\CC$,
and morphisms are pairs $(\FF,\phi) : (\CC,x) \to (\DD,y)$
such that $\FF : \CC \to \DD$ is a functor
and $\phi : \FF x \to y$ is a morphism in $\DD$.

For categories $\DD$ and $\CC$ and functors
$F : \DD \to \CC$ and $A : \CC \to \cat$,
there is a functor
$\int F : \int A \circ F \to \int A$
such that the following square is a pullback in $\Cat$.
\[
\begin{tikzcd}
  {\int A F} & {\int A} \\
  \DD & \CC 
  \arrow["{\int F}", from=1-1, to=1-2]
  \arrow["{u_{A F}}"', from=1-1, to=2-1]
  \arrow["{u_A}", from=1-2, to=2-2]
  \arrow["F"', from=2-1, to=2-2]
\end{tikzcd}
\]
It follows that the Grothendieck construction
is always a pullback of the universal one:
for any category $\CC$ and functor $A : \CC \to \cat$,
there is a pullback square in $\Cat$
\[
  \begin{tikzcd}
    {\int A} & {\pcat} \\
    \CC & \cat
    \arrow[from=1-1, to=1-2]
    \arrow["u_A"', from=1-1, to=2-1]
    \arrow["u_\cat", from=1-2, to=2-2]
    \arrow["A"', from=2-1, to=2-2]
  \end{tikzcd}
\]

Combining this with the equivalence
$[\CC,\cat] \simeq \smlopfibcart(\CC)$,
we have the following proposition.
\begin{proposition}\label{prop:universal-opfibration-classifies}
  Let $\CC$ and $\DD$ be large categories.
  Suppose $(F : \DD \to \CC, l)$ is a small split opfibration.
  Then $F$ is a pullback of the universal split opfibration
  $\ulow : \pcat \to \cat$ along its fibres functor
  $F^* : \CC \to \cat$.
\end{proposition}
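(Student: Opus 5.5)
The plan is to chain the straightening equivalence with the observation that Grothendieck constructions are pullbacks of the universal one. First, since $(F,l)$ is a small split opfibration, \Cref{thm:opfibration-main}, restricted to $[\CC,\cat] \simeq \smlopfibcart(\CC)$, gives its fibres functor $F^* : \CC \to \cat$. It sends $x$ to the (small) fibre $F^*x$ and an arrow $t : x \to y$ to the transport functor $F^*t$ determined by the chosen lifts $l$. Normality and splitness of $l$ are exactly what make $F^*$ a strict functor, so it is a genuine object of $[\CC,\cat]$.

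Second, the unit/counit of that equivalence gives an isomorphism $\DD \iso \int F^*$ over $\CC$, commuting with the projections $F$ and $u_{F^*}$. Concretely, send $d$ to $(Fd, d)$. Send a morphism $g : d \to d'$ to $(Fg, \phi)$, where $g = \phi \circ l_{(d,Fg)}$ is the canonical opcartesian–vertical factorisation. Functoriality of this assignment uses splitness: composites of chosen lifts are chosen lifts, and vertical parts transport correctly. I expect this to be the step needing the most care, because we want a strict isomorphism in the 1-category $\Cat$, not merely an equivalence. Fortunately, the splitness and normality equations make both directions strictly inverse on the nose.

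Third, apply the pullback square established just before the statement with $A := F^*$. This exhibits $u_{F^*} : \int F^* \to \CC$ as the pullback of $\ulow = u_{\id_\cat}$ along $F^*$, since $\id_\cat \circ F^* = F^*$ and $\int$ of a precomposition is the pullback. Composing this pullback square with the isomorphism $\DD \iso \int F^*$ over $\CC$ yields a pullback square whose left leg is $F$ and whose bottom leg is $F^*$. Pullbacks are only determined up to isomorphism, and here the isomorphism is over $\CC$, so $F$ is a pullback of $\ulow$ along $F^*$, as claimed. The top map $\DD \to \pcat$ is explicitly $d \mapsto (F^*(Fd), d)$.
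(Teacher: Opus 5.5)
Your proposal is correct and follows the paper's own argument: the paper combines the straightening equivalence $[\CC,\cat] \simeq \smlopfibcart(\CC)$, which identifies $\DD$ with $\int F^*$ over $\CC$, with the fact that $\int (\id_\cat \circ F^*)$ is the pullback of $\ulow = u_{\id_\cat}$ along $F^*$. Your explicit check that $d \mapsto (Fd,d)$ is a strict isomorphism of categories over $\CC$ only spells out what the paper leaves implicit. In fact it is automatic, since an isomorphism in $\smlopfibcart(\CC)$ is already an isomorphism in the 1-category $\Cat/\CC$.
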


The universal split opfibration
$\ulow : \pcat \to \cat$ is unique up to isomorphism,
but it is useful to distinguish its isomorphic ``vertical opposite''
which will be useful for our type theory.
Namely, we can consider the Grothendieck construction
on the opposite functor $\op : \cat \to \cat$.
\[ \catp := \smallint \op 
  \quad \quad \uupp := u_{\op{}} : \catp \to \cat \]
One can think of $\catp$ as $\pcat$,
except that we invert the
morphisms in the fibres.
% Since $\op{}$ is self-inverse, 
% these two universes are isomorphic.
\[\begin{tikzcd}
    \catp & \pcat \\
    \cat & \cat
    \arrow["\sim", from=1-1, to=1-2]
    \arrow["{\uupp}"', from=1-1, to=2-1]
    \arrow["\lrcorner"{anchor=center, pos=0.125}, draw=none, from=1-1, to=2-2]
    \arrow["{\ulow}", from=1-2, to=2-2]
    \arrow["\op{}"', from=2-1, to=2-2]
    \arrow["\sim", draw=none, from=2-1, to=2-2]
  \end{tikzcd} \]
More generally, viewing a split opfibration over $\CC$
as an \emph{indexed category} $A : \CC \to \cat$,
we can construct its ``vertical opposite'' by composing with the opposite functor
and taking the Grothendieck construction
$\int \op{} \circ A$,
which is to take the pullback of $\uupp$ along $A$.

Recall that a functor is a split fibration precisely when
its opposite is a split opfibration.
It follows that both
\[\vlow := {\uupp}^\op : {\catp}^\op \to {\cat}^\op
\quad \text{and} \quad
\vupp := {\ulow}^\op : {\pcat}^\op \to {\cat}^\op
\]
classify small split fibrations (they are isomorphic).
The universes $\vlow$ and $\ulow$ have opposite morphisms in their bases,
but parallel morphisms in their fibres,
because $\op$ reverses arrows in both the base and fibre.
Similarly $\vupp$ and $\uupp$ have opposite morphisms in their bases,
but parallel morphisms in their fibres.
We can also view $\vlow$ and $\vupp$ as classifying
\emph{contravariant indexed categories},
or \emph{$\cat$-valued presheaves} $A : \CC \to \cat^\op$.

We will call $\vlow : {\catp}^\op \to {\cat}^\op$
the \emph{universal split fibration},
since if we take the fiber of a category $\CC : 1 \to {\cat}^\op$
in $\vlow : {\pcat}^\op \to {\cat}^\op$,
we obtain $\CC$ itself,
whereas the fiber in $\vupp : {\pcat}^\op \to {\cat}^\op$ is $\CC^\op$.

\section*{Clans and exponentiability}

Let us write $\opfib$ for the class of split opfibrations
in $\Cat$ and $\smlopfib$ for the class of small split opfibrations.
We recall some standard facts about the two classes,
in preparation for type theoretic axiomatisation.

Both $\opfib$ and $\smlopfib$ are stable under pullback.
That $\smlopfib$ is stable under pullback follows from
\Cref{prop:universal-opfibration-classifies}.
One can check that both
$\opfib$ and $\smlopfib$ contain all isomorphisms
and are closed under composition.
Furthermore, for any category $\CC$,
the unique map $\CC \to 1$ to the terminal category
is a split opfibration,
which is small precisely when $\CC$ is.
Recalling the definition of a (pre)clan from \Cref{def:preclan},
we summarise these observations as follows.

\begin{proposition}\label{prop:preclans-in-cat}
  The category of large categories with the class of split opfibrations
  $(\Cat, \opfib)$ forms a clan.
  $\Cat$ with the class of small split opfibrations
  $(\Cat, \smlopfib)$ forms a preclan.
  
  Dually,
  $\Cat$ with the class of split fibrations
  $(\Cat, \fib)$ forms a clan
  and $\Cat$ with the class of small split fibrations
  $(\Cat, \smlfib)$ forms a preclan.
\end{proposition}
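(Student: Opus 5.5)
The plan is to check the preclan axioms of \Cref{def:preclan} directly for $\opfib$ and $\smlopfib$, and then to obtain the fibration statements by duality. Throughout, the split structure $l$ is treated as part of the data of a map in $\opfib$, but membership of the class is taken to mean ``admits such a structure''.

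\emph{Pullback stability.} Let $(F:\DD\to\CC,l)$ be a split opfibration and $G:\CC'\to\CC$ any functor. I would use the strict pullback $\DD\times_\CC\CC'$, whose objects are pairs $(d,c')$ with $Fd=Gc'$. Lifts are defined componentwise: given $g:c'\to c''$ in $\CC'$, set
\[
l'_{((d,c'),g)} := \bigl(l_{(d,Gg)},\, g\bigr).
\]
Three things then need checking.
\begin{enumerate}
\item Opcartesianness: a factorisation in the pullback is a pair of compatible factorisations, so it is unique because it is unique in $\DD$.
\item Normality: this is inherited directly from $l$.
\item Splitting: this is inherited directly from $l$, because $G$ preserves identities and composites.
\end{enumerate}
For the small case I would not redo this. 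Instead I would invoke \Cref{thm:opfibration-main} and \Cref{prop:universal-opfibration-classifies}. A small split opfibration is, up to isomorphism over the base, $u_A$ for some $A:\CC\to\cat$. The pullback of $u_A$ along $G$ is $u_{AG}$, and $u_{AG}$ again has small fibres. Since the pullback is only determined up to isomorphism, I also need to know that any isomorphic copy is again a split opfibration. This follows from transporting the structure $l$ along the isomorphism.

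\emph{Isomorphisms and composition.} An isomorphism $F$ is opfibred by taking the lift of $f:Fx\to y$ to be $F^{-1}f$. This choice is unique, hence normal and split. The fibres are singletons, so it is small.

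For a composite $\EE\xrightarrow{G}\DD\xrightarrow{F}\CC$ with split structures $l^G$ and $l^F$, I would define the lift of $f:FGe\to c$ as
\[
l^G_{(e,\,l^F_{(Ge,f)})}.
\]
Composites of opcartesian arrows are opcartesian, so this lift is opcartesian. Normality and splitting follow by rewriting with the normality and splitting equations of $l^F$ and $l^G$, using that $G$ maps $l^G$-lifts to the given arrows.

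For smallness of the composite, the fibre of $FG$ over $c$ is itself opfibred over the fibre $F^*c$ via $G$. Its objects form $\coprod_{d\in F^*c} G^*d$ and its morphisms are similarly bounded. This is a small-indexed union of small sets, so the fibre is small.

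\emph{Terminal object.} The map $\CC\to 1$ is split, with lifts given by identities. This makes $(\Cat,\opfib)$ a clan. For $\smlopfib$ we only claim a preclan, because $\CC\to 1$ is small exactly when $\CC$ is.

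\emph{Duality.} The involution $(-)^\op:\Cat\to\Cat$ is an isomorphism of categories. It preserves pullbacks, isomorphisms, composition and the terminal object, and it exchanges split fibrations with split opfibrations. So it transports both preclan structures, giving the statements for $\fib$ and $\smlfib$. Smallness of fibres is unaffected, since $(\CC^\op)$ is small iff $\CC$ is.

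\emph{Main obstacle.} I expect the split condition for composites to need the most care. The endpoint objects of the iterated lifts must agree on the nose before the equation can even be stated. The argument would show this using $G\,l^G_{(e,h)}=h$ together with the splitting equation of $l^F$, applied before the splitting of $l^G$.
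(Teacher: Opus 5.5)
Your proposal is correct and follows the paper's route: you verify the preclan axioms directly, get pullback-stability of the small class from \Cref{prop:universal-opfibration-classifies}, use the terminal map $\CC \to 1$ for the clan condition, and transport along $(-)^{\op}$ for the fibration case. You are supplying the details the paper leaves as ``one can check''; one wording fix is that in the composition step the fact you need is not that composites of opcartesian arrows are opcartesian, but that a $G$-opcartesian arrow whose image under $G$ is $F$-opcartesian is $FG$-opcartesian, which follows immediately from the two unique-factorisation properties.
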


We turn our attention to the issue of exponentiability.
The failure of $\opfib$ and $\fib$ to be $\pi$-clans
distinguishes the type theory of categories from that of groupoids.
It is well known that opfibrations and fibrations are
exponentiable with respect to all maps (Conduch\'e)
\cite{giraud1964, conduche1972}.
Furthermore, pushforwards in $\Cat$ satisfy several closure conditions;
for the following proposition,
recall the definition of a $\tau$-preclan (\Cref{def:double-preclan}).

\begin{proposition}\label{prop:pushforward-in-cat}
  The opfibrations and fibrations in $\Cat$ form a $\tau$-clan
  $(\Cat,\opfib,\fib)$,
  and small opfibrations and small fibrations form a $\tau$-preclan
  $(\Cat,\smlopfib,\smlfib)$.
\end{proposition}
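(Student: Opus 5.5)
We must show that split opfibrations are exponentiable with respect to split fibrations (i.e.\ $\opfib \subseteq \Exp_{\fib}$), and dually $\fib \subseteq \Exp_{\opfib}$. We also need the small analogue for $(\Cat,\smlopfib,\smlfib)$. The preclan parts are \Cref{prop:preclans-in-cat}, and the two directions are dual under $(-)^\op$, so it suffices to treat one.

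The tool is \Cref{lem:stable-class-of-exponentiable-maps}. Take $\HH = \opfib$, which is stable under pullback. For a split opfibration $f : Y \to X$ and a split fibration $g : Z \to Y$, it suffices to produce a split fibration $p : P \to X$ with
\[ \Cat/Y(f^* h, g) \iso \Cat/X(h, p) \]
naturally in all $h \in \Cat/X$.

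Since split opfibrations are Conduch\'e, hence exponentiable, $\Pi_f g$ exists in $\Cat/X$. So the real content is showing that $\Pi_f g$ is a (split) fibration. I would do this through straightening (\Cref{thm:opfibration-main} and its dual).

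Write $f$ as $\int F$ for $F : X \to \Cat$, and $g$ restricted to each fibre as a fibration. The fibre of $\Pi_f g$ over $x$ is the category of sections $\Gamma(F x, g|_{F x})$: functors $s : F x \to Z$ over $F x \hookrightarrow Y$, with vertical natural transformations. For $t : x \to x'$, I would define the reindexing
\[ t^* : \Gamma(F x', g) \to \Gamma(F x, g) \]
sending $s$ to the section $a \mapsto (\text{cartesian lift along } g \text{ of } l_{(a,t)} : a \to (F t) a) \text{ applied to } s((F t) a)$. Here $l$ is the opcartesian lift in $Y$. The variances fit: an opcartesian arrow $a \to (Ft)a$ in $Y$ lets us pull $Z$-data contravariantly, so $P$ becomes contravariant in $X$, i.e.\ a fibration. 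Splitness of $l$ and of the cleavage of $g$ gives strict functoriality $X^\op \to \Cat$, and unstraightening yields a split fibration $p$.

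Next comes the universal property. A map $h \to p$ over $X$ should correspond to a map $f^* h \to g$ over $Y$. I would check this directly by unfolding: a morphism $(e \xrightarrow{} e')$ in $h$ over $t$, together with a point of $f^* h$, must be sent to an arrow of $Z$. Factor arrows of $f^* h = h \times_X Y$ as opcartesian-then-vertical using $l$. Then use the cartesian property of $g$ to identify the $Z$-components with morphisms in the Grothendieck construction of $P$. Naturality in $h$ is routine.

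I expect the main obstacle to be this step: verifying that composites match. The interplay of the opcartesian factorisation in $Y$ and the cartesian factorisation in $Z$ is Beck--Chevalley-like bookkeeping. If it becomes messy, I would instead argue that $\Pi_f g$ (which exists) is a fibration by exhibiting cartesian lifts explicitly: given a section $s$ over $x'$ and $t : x \to x'$, take $t^* s$ as above, and show it is cartesian via the adjunction $f^* \dashv \Pi_f$ applied to the inclusion of $\two$. Splitness follows from the strict formula.

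Finally, smallness. If $g$ has small fibres and $f$ is small, then the fibres $\Gamma(F x, g)$ are small categories of sections, so $p$ is small. This requires $f$ to be a small opfibration, since the section category of a large $F x$ may be large. Hence $\smlopfib \subseteq \Exp_{\smlfib}$ and dually, giving the $\tau$-preclan. I am uncertain whether smallness of $g$ alone would suffice, but the statement only requires the small-small pairing.
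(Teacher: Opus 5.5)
Your proposal is correct and follows the same outline as the paper. Both apply \Cref{lem:stable-class-of-exponentiable-maps}, get existence of the pushforward from the Conduch\'e property of (op)fibrations, and then show that the pushforward carries a split (op)fibration structure. The paper treats the dual inclusion $\fib \subseteq \Exp_\opfib$ and only asserts that Vidmar's construction extends to a split opfibration, whereas you give the explicit construction (section categories as fibres, reindexing via opcartesian lifts in $Y$ and cartesian lifts in $Z$) and spell out the smallness argument, which indeed needs $f$ small and which the paper leaves implicit.
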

\begin{proof}
  By \Cref{lem:stable-class-of-exponentiable-maps},
  to show that $\fib \subseteq \Exp_\opfib$,
  it suffices to check that for any split opfibration $G : \EE \to \DD$
  and any split fibration $F : \DD \to \CC$,
  there is a split opfibration $P : \PP \to \CC$
  satisfying the universal property of the pushforward
  so that $F_* G = P$.
  Indeed, a pushforward can be constructed for
  any Conduch\'e functor \cite[Proposition 1.5.1]{vidmar2018},
  and fibrations are Conduch\'e \cite[Lemma 1.2.10]{vidmar2018}.
  In this case,
  since $G$ is a split opfibration and $F$ is a split fibration,
  one can extend the cited construction to produce a split opfibration
  $(F_* G, l)$.
\end{proof}

\begin{remark}\label{rmk:pi-preserve-opfib-mor}
  Given a fibration $f : Y \to X$,
  \Cref{prop:preclans-in-cat} and
  \Cref{prop:pushforward-in-cat} imply that there are functors
  between the full subcategories of the slice $\opfib(\star) \subseteq \Cat / (\star)$.
  \[
  \begin{tikzcd}[column sep = huge]
    {\opfib(Y)} & {\opfib(X)}
    \arrow[""{name=0, anchor=center, inner sep=0}, "{f_*}"', shift right=5, bend right, from=1-1, to=1-2]
    \arrow[""{name=1, anchor=center, inner sep=0}, "{f_!}", shift left=5, bend left, from=1-1, to=1-2]
    \arrow[""{name=2, anchor=center, inner sep=0}, "{f^*}"{description}, from=1-2, to=1-1]
    \arrow["\dashv"{anchor=center, rotate=-90}, draw=none, from=1, to=2]
    \arrow["\dashv"{anchor=center, rotate=-90}, draw=none, from=2, to=0]
  \end{tikzcd}
  \]
  In fact, it is also true that these functors restrict to the
  (non-full) subcategories of opfibrations and morphisms of opfibrations.
  \[
  \begin{tikzcd}
    {\opfibcart(Y)} & {\opfibcart(X)} \\
    {\opfib(Y)} & {\opfib(X)}
    \arrow["{f_!}", dashed, from=1-1, to=1-2]
    \arrow[from=1-1, to=2-1]
    \arrow[from=1-2, to=2-2]
    \arrow["{{f_!}}"', from=2-1, to=2-2]
  \end{tikzcd}
  \quad \quad 
  \begin{tikzcd}
    {\opfibcart(Y)} & {\opfibcart(X)} \\
    {\opfib(Y)} & {\opfib(X)}
    \arrow["{f_*}", dashed, from=1-1, to=1-2]
    \arrow[from=1-1, to=2-1]
    \arrow[from=1-2, to=2-2]
    \arrow["{{f_*}}"', from=2-1, to=2-2]
  \end{tikzcd}
  \quad \quad 
  \begin{tikzcd}
    {\opfibcart(Y)} & {\opfibcart(X)} \\
    {\opfib(Y)} & {\opfib(X)}
    \arrow["{f^*}"', dashed, from=1-2, to=1-1]
    \arrow[from=1-1, to=2-1]
    \arrow[from=1-2, to=2-2]
    \arrow["{{f^*}}", from=2-2, to=2-1]
  \end{tikzcd}
  \] 
  \[
  \begin{tikzcd}[column sep = huge]
    {\opfibcart(Y)} & {\opfibcart(X)}
    \arrow[""{name=0, anchor=center, inner sep=0}, "{f_*}"', shift right=5, bend right, from=1-1, to=1-2]
    \arrow[""{name=1, anchor=center, inner sep=0}, "{f_!}", shift left=5, bend left, from=1-1, to=1-2]
    \arrow[""{name=2, anchor=center, inner sep=0}, "{f^*}"{description}, from=1-2, to=1-1]
    \arrow["\dashv"{anchor=center, rotate=-90}, draw=none, from=1, to=2]
    \arrow["\dashv"{anchor=center, rotate=-90}, draw=none, from=2, to=0]
\end{tikzcd}\]
Unfortunately, the theory of exponentiability presented in \Cref{sec:exponentiable}
is inadequate for capturing this aspect of the model in $\Cat$.
One could naively throw the diagrams drawn above into the axiomatisation,
but this will not be enough,
as there should also be some extra coherence equations between
the 2-cells from the various related adjunctions.
A similar case is worked out in the language of comprehension categories in \cite{jacobs1993},
which could possibly be extended with the ideas from \Cref{sec:exponentiable}.
Related to this,
Gambino and Larrea construct models of Martin-L\"of Type Theory
from ``type theoretic'' AWFSs \cite{gambino2023},
which one could potentially modify to suit the case here.
Najmaei et al. present a type theory that internalises
the morphisms of algebras in an AWFS (as is the case here)
\cite{najmaei2026}.
\end{remark}

\section*{Twisted path factorisations}
% \label{sec:twisted-path-factorisation}
Motivated from the groupoid model \cite{hofmann1995},
and the algebraic semantics of identity types \cite{awodey2025}
and path types \cite{awodey2026}, 
we seek to produce a similar axiomatisation of
\emph{hom-types} in $\Cat$,
using Paige North's idea of two elimination rules \cite{north2019}
instead of the single one for identity types.
Recall from \cite{awodey2026} that in the category of groupoids $\Grpd$,
a (closed) type $A$ is interpreted as an isofibration (over $1$),
and the identity type for elements of $A$
is modelled by exponentiating $A$ by the bipointed
``walking isomorphism''.
\[ \begin{tikzcd}
	1 & I & A & {A^I} \\
	& {1+1} && {A \times A}
	\arrow["{!}"', from=1-2, to=1-1]
	\arrow[from=1-3, to=1-4]
	\arrow[from=1-3, to=2-4]
	\arrow["{{(\src, \trg)}}", from=1-4, to=2-4]
	\arrow["{!}", from=2-2, to=1-1]
	\arrow["{{(0,1)}}"', from=2-2, to=1-2]
\end{tikzcd} \]
For this to model a type,
we must ensure that the map $A^I \to A \times A$ is an isofibration,
which it is.

In $\Cat$, consider an opfibration over $1$, i.e. a category $A$.
It is natural to exponentiate by the
bipointed ``walking arrow'' $\two$, instead of $I$.
\[
\begin{tikzcd}
	1 & \two & A & {A^\two} \\
	& {1+1} && {A \times A}
	\arrow["{!}"', from=1-2, to=1-1]
	\arrow[from=1-3, to=1-4]
	\arrow[from=1-3, to=2-4]
	\arrow["{{(\src, \trg)}}", from=1-4, to=2-4]
	\arrow["{!}", from=2-2, to=1-1]
	\arrow["{{(0,1)}}"', from=2-2, to=1-2]
\end{tikzcd}
\]
Unfortunately, the map $(\src,\trg)$
is not as well-behaved as it was in the category of groupoids:
$\src : A^\two \to A$ is a fibration
whereas $\trg : A^\two \to A$ is an opfibration,
making $A^\two \to A \times A$
a two-sided fibration (or ``bifibration'' in \cite{street1974}).
% Two-sided fibrations are only classified bicategorically,
% adding a significant layer of complexity to the theory.
An approach to directed type theory using two-sided fibrations for
is explored in \cite{rivera2026}.

We consider the following alternative approach to the problem.
Recall the following definition of a twisted arrow category \cite{linton1965}.

\begin{definition}\label{def:twisted-arrow}
  For a category $A$,
  the \emph{twisted arrow category} $\Tw A$
  has morphisms of $A$ as its objects,
  and for two objects $(f : X \to Y)$ and $(g : W \to Z)$,
  a morphism $\phi : f \to g$ in $\Tw A$ consists of a pair of maps
  $\phi_0 : W \to X$ and $\phi_1 : Y \to Z$ such that
  $\phi_1 \circ f \circ \phi_0 = g$.
  \[ \begin{tikzcd}
    X & W \\
    Y & Z
    \arrow[""{name=0, anchor=center, inner sep=0}, "f"', from=1-1, to=2-1]
    \arrow["{\phi_0}"', from=1-2, to=1-1]
    \arrow[""{name=1, anchor=center, inner sep=0}, "g", from=1-2, to=2-2]
    \arrow["{\phi_1}"', from=2-1, to=2-2]
  \end{tikzcd}\]
  This is equipped with two endpoint projections
  $\src : \Tw{A} \to {A}^\op$,
  which takes an arrow to its source,
  and $\trg : \Tw{A} \to A$ which takes the target.
  There is also a functor
  $\rfl : \Core{A} \to \Tw{A}$ that takes
  an object $x$ in the groupoid core of $A$ to the identity on $x$
  and an isomorphism $f : x \to y$ to the twisted morphism
  $(f^{-1}, f) : \id_x \to \id_y$.
  \[\begin{tikzcd}
    x & {y} \\
    x & {y}
    \arrow["{=}"', from=1-1, to=2-1]
    \arrow["{f^{-1}}"', from=1-2, to=1-1]
    \arrow["{=}", from=1-2, to=2-2]
    \arrow["f"', from=2-1, to=2-2]
  \end{tikzcd}\]
\end{definition}

\begin{proposition}\label{prop:src-trg-discrete-opfibration}
  The pair of endpoint projections
  \[(\src,\trg) : \Tw{A} \to A^\op \times A\]
  is a discrete opfibration.
\end{proposition}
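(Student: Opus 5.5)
We prove the claim directly: we exhibit the lifts and check that they are unique. Write $F := (\src,\trg) : \Tw{A} \to A^\op \times A$. An object of $\Tw{A}$ is an arrow $f : X \to Y$, and $F f = (X,Y)$. A morphism $(X,Y) \to (W,Z)$ in $A^\op \times A$ is a pair $(\phi_0 : W \to X, \phi_1 : Y \to Z)$ of morphisms of $A$. By the definition of $\Tw{A}$, a morphism $f \to g$ lying over $(\phi_0,\phi_1)$ is exactly the same pair $(\phi_0,\phi_1)$, subject to the equation $g = \phi_1 \circ f \circ \phi_0$. So $F$ is faithful, and the codomain of such a morphism is forced to be $\phi_1 \circ f \circ \phi_0$.

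The first step is to define the lifting operation. Given an object $f : X \to Y$ and a morphism $(\phi_0,\phi_1) : F f \to (W,Z)$, set
\[ l_{(f,(\phi_0,\phi_1))} := (\phi_0,\phi_1) : f \to \phi_1 \circ f \circ \phi_0 . \]
This is a morphism of $\Tw{A}$ by construction, and it lies over $(\phi_0,\phi_1)$. By the remark above, it is the \emph{unique} lift of $(\phi_0,\phi_1)$ with domain $f$. Uniqueness of lifts with a fixed domain is precisely the discreteness condition.

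The second step is to check that these lifts are opcartesian, which I expect to be the only point needing care. Suppose we have a further morphism $(\psi_0,\psi_1) : (W,Z) \to (V,U)$ and a lift $h'$ of the composite starting at $f$. By uniqueness, $h'$ is the pair $(\phi_0\circ\psi_0, \psi_1\circ\phi_1)$, where composition in $A^\op$ is written in $A$. Its codomain is $\psi_1 \phi_1 f \phi_0 \psi_0$. We need a unique lift of $(\psi_0,\psi_1)$ starting at $\phi_1 f \phi_0$ which composes to $h'$. The pair $(\psi_0,\psi_1)$ itself is such a lift: it has the right codomain, and composition in $\Tw{A}$ is componentwise, so it composes correctly. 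It is unique by faithfulness.

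Finally, normality and splitness follow immediately from the formula for the lift. Lifting an identity gives the identity pair, and lifting a composite gives the composite of the lifts, since composition in $\Tw{A}$ is computed componentwise. For any $(W,Z)$, the fibre $F^*(W,Z)$ is the set $A(W,Z)$, and its only morphisms are identities, because a vertical morphism is $(\id,\id)$. Hence $F$ is a split opfibration with discrete fibres, that is, a discrete opfibration.
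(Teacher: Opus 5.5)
Your proof is correct and follows the same approach as the paper, which also observes that $(\phi_0,\phi_1) : f \to \phi_1 \circ f \circ \phi_0$ is the unique lift of $(\phi_0,\phi_1)$ with domain $f$. Your extra checks (opcartesianness, normality, splitness and discreteness of the fibres) spell out what the paper leaves implicit, and all of them follow from uniqueness of lifts.
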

\begin{proof}
  Consider an object $(f : X \to Y)$ in $\Tw{A}$
  and a morphism $(\phi_0, \phi_1) : (X,W) \to (Y,Z)$ in $A^\op \times A$.
  The arrow
  $(\phi_0, \phi_1) : f \to \phi_1 \circ f \circ \phi_0$ in $\Tw{A}$
  is the unique lift of $(\phi_0, \phi_1)$.
  \[ \begin{tikzcd}
    X & W \\
    Y & Z
    \arrow[""{name=0, anchor=center, inner sep=0}, "f"', from=1-1, to=2-1]
    \arrow["{\phi_0}"', from=1-2, to=1-1]
    \arrow[""{name=1, anchor=center, inner sep=0}, "\phi_1 \circ f \circ \phi_0", from=1-2, to=2-2]
    \arrow["{\phi_1}"', from=2-1, to=2-2]
  \end{tikzcd}\]
\end{proof}

With the twisted arrow category $\Tw{A}$ replacing the arrow category $A^\two$,
we obtain an opfibration $\Tw{A} \to {A}^\op \times A$
factorising the \emph{twisted diagonal}
$(\inv, \inc) : \Core{A} \to {A}^\op \times A$,
which takes an object $x$ in the groupoid core of $A$ to
$(x,x)$ and an isomorphism $f : x \to y$
to a pair of maps $(f^{-1}, f) : (x,x) \to (y,y)$.
\[
\begin{tikzcd}
	{\Core A} & {\Tw{A}} \\
	& {{A}^\op \times A}
	\arrow["\rfl", from=1-1, to=1-2]
	\arrow["{(\inv,\inc)}"', from=1-1, to=2-2]
	\arrow["{{(\src,\trg)}}", from=1-2, to=2-2]
\end{tikzcd}
\]
We will call this the \emph{twisted pathobject factorisation}.

% In \Cref{def:twisted-arrow},
% we described the twisted arrow category on a category $A$,
% and the twisted pathobject factorisation of the map
% $(\inv,\inc) : \Core{A} \to {A}^\op \times A$.
% \[
% \begin{tikzcd}
% 	{\Core A} & {\Tw{A}} \\
% 	& {{A}^\op \times A}
% 	\arrow["\rfl", from=1-1, to=1-2]
% 	\arrow["{(\inv,\inc)}"', from=1-1, to=2-2]
% 	\arrow["{{(\src,\trg)}}", from=1-2, to=2-2]
% \end{tikzcd}
% \]

% Recall that a left adjoint $F$ in $F \dashv G : \CC \to \DD$
% is a ``left-adjoint-right-inverse'' or ``lari'' if
% the unit of the adjunction is the identity.

It will also be useful to define $\rTw{A}$,
the ``right twisted arrow category'' on $A$,
by the following pullback
\[ \begin{tikzcd}[column sep = large]
	{\rTw{A}} & {\Tw{A}} \\
	{(\Core{A}) \times A} & {{A}^\op \times A}
	\arrow[from=1-1, to=1-2]
	\arrow["{(\src,\trg)}"', from=1-1, to=2-1]
	\arrow["\lrcorner"{anchor=center, pos=0.125}, draw=none, from=1-1, to=2-2]
	\arrow["{(\src,\trg)}", from=1-2, to=2-2]
	\arrow["{\inv \times A}"', from=2-1, to=2-2]
\end{tikzcd} \]
Explicitly,
$\rTw{A}$ is the wide subcategory of the twisted arrow category $\Tw{A}$
such that morphisms $(\phi_0,\phi_1) : f \to g$ are in $\rTw{A}$
if and only if $\phi_0$ is invertible.
\[
\begin{tikzcd}
	x & w \\
	y & z
	\arrow["f"', from=1-1, to=2-1]
	\arrow["{\phi_0}"', from=1-2, to=1-1]
	\arrow["\sim", draw=none, from=1-2, to=1-1]
	\arrow["g", from=1-2, to=2-2]
	\arrow["{\phi_1}"', from=2-1, to=2-2]
\end{tikzcd}
\]
Then the twisted path factorisation can be pulled back, as follows,
to produce a \emph{right twisted pathobject factorisation}.
\begin{equation} \label{eq:global-right-twisted}
\begin{tikzcd}
	{\Core{A}} && \\
	& {\rTw{A}} & {\Tw{A}} \\
	& {(\Core{A}) \times A} & {{A}^\op \times A}
	\arrow["\rfl"{description}, dashed, from=1-1, to=2-2]
	\arrow["\rfl", bend left, from=1-1, to=2-3]
	\arrow["{\Delta := (\id, \inc)}"', bend right, from=1-1, to=3-2]
	\arrow[from=2-2, to=2-3]
	\arrow["{(\src,\trg)}"', from=2-2, to=3-2]
	\arrow["\lrcorner"{anchor=center, pos=0.125}, draw=none, from=2-2, to=3-3]
	\arrow["{(\src,\trg)}", from=2-3, to=3-3]
	\arrow["{\inv \times A}"', from=3-2, to=3-3]
\end{tikzcd}
\end{equation}
% Consider the functor
% $(\id_A, \inc) : \Core A \to \Core A \times A$ for a category $A$,
% where $\inc : \Core A \to A$ is the inclusion.
This factorisation is in fact equivalent to the canonical (lari, split opfibration)
factorisation given in \Cref{thm:lari-opfib-awfs}.
\[
\begin{tikzcd}
	& {\Delta / (\Core{A}) \times A} & \\
	{\Core{A}} && {(\Core{A}) \times A} \\
	& {\rTw{A}}
	\arrow["{R_\Delta}", from=1-2, to=2-3]
	\arrow["\simeq"{description, pos=0.3}, from=1-2, to=3-2]
	\arrow["{L_\Delta}", from=2-1, to=1-2]
	\arrow["{{\Delta}}"{description, pos=0.3}, from=2-1, to=2-3]
	\arrow["{{\text{lari}}}"{description}, from=2-1, to=3-2]
	\arrow["{{{\text{split opfibration}}}}"{description}, from=3-2, to=2-3]
\end{tikzcd}
\]

Furthermore, in this case, the second map
$\rTw{A} \to (\Core{A}) \times A$ is a discrete fibration,
as it is a pullback of one (\Cref{prop:src-trg-discrete-opfibration}).
Any lari functor is also initial (any left adjoint is initial),
so this coincides with the orthogonal
\emph{comprehensive factorisation}
of \cite{street1973}.

There is also a ``left twisted arrow category'' $\lTw{A}$,
defined similarly.
It is also an example of the (lari, split opfibration)
factorisation.
\[
\begin{tikzcd}
	{\lTw{A}} & {\Tw{A}} \\
	{{A}^\op \times \Core{A}} & {{A}^\op \times A}
	\arrow[from=1-1, to=1-2]
	\arrow["{(\src,\trg)}"', from=1-1, to=2-1]
	\arrow["\lrcorner"{anchor=center, pos=0.125}, draw=none, from=1-1, to=2-2]
	\arrow["{(\src,\trg)}", from=1-2, to=2-2]
	\arrow["{{A}^\op \times \inc}"', from=2-1, to=2-2]
\end{tikzcd}
\quad
\begin{tikzcd}
	{\Core{A}} && {{A}^\op \times \Core{A}} \\
	& {\lTw{A}}
	\arrow["{(\inv, \id)}", from=1-1, to=1-3]
	\arrow["{\text{lari}}"{description}, from=1-1, to=2-2]
	\arrow["{{\text{split opfibration}}}"{description}, from=2-2, to=1-3]
\end{tikzcd}
\]

We record these facts in the following proposition.
\begin{proposition}\label{prop:right-twisted-factorisation-equiv}
  The right twisted pathobject factorisation of the
  ``right diagonal'' \[\Core A \to (\Core{A}) \times A\] is equivalent
  to the (lari,split opfibration) factorisation (\Cref{thm:lari-opfib-awfs}),
  as well as the (initial, discrete opfibration) factorisation \cite{street1973}.
  % In particular, the left factor $\rfl : \Core A \to \rTw A$
  % is lari (hence initial),
  % and the right factor $(\src,\trg) : \rTw A \to \Core A \times A$
  % is a discrete opfibration (hence a split opfibration).
  The right adjoint of the lari functor $\rfl : \Core A \to \rTw A$
  is given by $\src : \rTw A \to \Core A$.

  Similarly, the left twisted pathobject factorisation of
  the map \[\Core A \to A^\op \times \Core A\] coincides with
  the (lari,split opfibration) factorisation,
  as well as the comprehensive factorisation.
  The right adjoint of the lari functor $\rfl : \Core A \to \lTw A$
  is given by $\trg : \lTw A \to \Core A$.
\end{proposition}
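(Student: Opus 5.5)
I will prove the right-twisted statement in full and obtain the left-twisted one by duality. The plan is to exhibit, by hand, the adjunction $\rfl \dashv \src$ with identity unit; this shows $\rfl$ is lari. I will then check that $(\src,\trg) : \rTw A \to (\Core A)\times A$ is a split (indeed discrete) opfibration. Finally I will identify the factorisation with $\Delta/((\Core A)\times A)$, and read off the comprehensive factorisation from orthogonality.

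\textbf{Step 1: the lari adjunction.} Objects of $\rTw A$ are arrows $f : x \to y$ of $A$. A morphism $f \to g$ (with $g : w \to z$) is a pair $(\phi_0 : w \to x$ invertible, $\phi_1 : y \to z)$ with $\phi_1 f \phi_0 = g$.
\begin{itemize}
\item The composite $\src \circ \rfl = \id_{\Core A}$ holds strictly: $\rfl(f) = (f^{-1},f)$ and $\src$ returns the first component, viewed in $\Core A$ and inverted appropriately.
\item For $c \in \Core A$ and $g : w \to z$ in $\rTw A$, I compute the hom-set $\rTw A(\id_c, g)$. It consists of pairs $(\phi_0 : w \xrightarrow{\sim} c,\ \phi_1 : c \to z)$ with $\phi_1 \phi_0 = g$. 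Such a pair is uniquely determined by $\phi_0$, since $\phi_1 = g\phi_0^{-1}$.
\item So this hom-set is in natural bijection with $\Core A(c, w) = \Core A(c,\src g)$.
\end{itemize}
The counit at $g$ is $(\id_w, g) : \id_w \to g$, and the triangle identities are immediate.

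\textbf{Step 2: the right factor and the comparison.} $(\src,\trg)$ on $\rTw A$ is a pullback of the discrete opfibration of \Cref{prop:src-trg-discrete-opfibration}. Hence it is a discrete opfibration, with the canonical (split) lifts $(\phi_0,\phi_1) : f \mapsto \phi_1 f \phi_0$. To match the factorisation with $\Delta/((\Core A)\times A)$ from \Cref{thm:lari-opfib-awfs}, I write down the comparison functor explicitly.
\begin{itemize}
\item An object of the comma category is $(c, (w,z), (\alpha : c \xrightarrow{\sim} w, \beta : c \to z))$.
\item Send it to the arrow $\beta\alpha^{-1} : w \to z$.
\item Conversely, send $g : w \to z$ to $(w,(w,z),(\id,g))$.
\end{itemize}
These two functors are an equivalence over $(\Core A)\times A$ under $\Core A$, not an isomorphism. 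The comma object records an extra choice of $c$, and they agree up to the canonical iso $\alpha$. This is what the $\simeq$ in the preceding diagram indicates. I will check that the equivalence commutes with $L_\Delta$ and $\rfl$ up to this iso, and that it transports the split opfibration structures.

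\textbf{Step 3: comprehensive factorisation and duality.} Every left adjoint is initial, so $\rfl$ is initial. Since $(\src,\trg)$ is a discrete opfibration, uniqueness of the (initial, discrete opfibration) factorisation of \cite{street1973} identifies the two factorisations up to unique isomorphism. For $\lTw A$, I repeat the argument with $\trg$ in place of $\src$: now $\phi_1$ is invertible and $\rTw A(\id_c, g)$ is replaced by the analogous hom-set determined by $\phi_1$. Equivalently, I apply the previous case to $A^\op$, using $\Tw(A^\op) \cong \Tw A$ with $\src$ and $\trg$ swapped.

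\textbf{Expected obstacle.} I expect the main difficulty to be Step 2's comparison. It is only an equivalence, so care is needed to state precisely in what sense the factorisations are ``equivalent'', namely as $R$-algebras and $L$-coalgebras up to equivalence. Verifying naturality of Step 1's bijection in $c$ and $g$, given the twisted variance, is routine but fiddly.
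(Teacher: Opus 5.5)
Your proposal is correct and follows essentially the paper's route. The paper justifies the statement by the remarks just before it: $(\src,\trg)$ is a pullback of the discrete opfibration $\Tw A \to A^\op \times A$; left adjoints are initial, so the comprehensive factorisation is recovered by uniqueness; and $\rTw A \simeq \Delta/((\Core A)\times A)$ compatibly with both factors. What you add is to make explicit what the paper leaves implicit, namely the counit $(\id_w, g)$ of $\rfl \dashv \src$ and the comparison functor $(c,\alpha,\beta)\mapsto \beta\alpha^{-1}$. That functor in fact commutes strictly with $L_\Delta$, $\rfl$ and the right factors, so your ``up to iso'' caveat is unnecessary: the only non-identity datum is the isomorphism witnessing the equivalence itself.
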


\section*{Relativised constructions} 
%  \label{sec:relativised-constructions}
Since we want to build a type-theoretic analysis of $\Cat$,
we are interested not just in constructions on objects $A$ of $\Cat$,
but on ``relativised objects'', i.e. (small) split opfibrations $A \to X$.
The analogue to this in the LCC setting would be working in a slice over $X$.
We will regard (covariant) type families over $X$ as split opfibrations over $X$.
Instead of considering the slice $\Cat / X$,
we will therefore be taking the category $\opfibcart(X)$ of split opfibrations over $X$
(for all type families),
or the category $\smlopfibcart(X)$ of small split opfibrations over $X$
(for small type families).
The lemmas in this section verify that the various constructions we
have considered so far can indeed be relativised in this way.

Taking the category of small split opfibrations and
opcartesian morphisms defines a contravariant pseudofunctor
from the 1-category of large categories
to the 2-category of extra-large categories,
\[\smlopfibcart(-) : \Cat^\op \to \tCat\]
where the action on morphisms $F : \DD \to \CC$ is given by
pullback along $F$.
It follows from \Cref{thm:opfibration-main} (or a small version thereof)
that $\smlopfibcart(-)$ is equivalent to the strict 2-functor
$[-,\cat] : \Cat^\op \to \tCat$,
using the fact that $\Cat$ is Cartesian closed\footnote{
  There are two subtle technicalities here.
  One is an issue of size.
  The category $\smlopfibcart(X)$ is extra-large,
  but is equivalent to $[X,\cat]$,
  which is just large.
  The second is an issue of dimension.
  Strictly speaking, the Cartesian closure provides a 1-functor
  $[-,\cat] : \Cat^\op \to \Cat$,
  which we compose with the inclusion $\Cat \subseteq \tCat$,
  which is a 2-functor.}.
\[\smlopfibcart(-) \simeq [-,\cat]\]
% \[
% \begin{tikzcd}
% 	{\Cat^\op} & \tCat \\
% 	& \Cat
% 	\arrow["{\smlopfibcart(-)}", from=1-1, to=1-2]
% 	\arrow["{[-,\cat]}"', from=1-1, to=2-2]
% 	\arrow["\simeq"', from=2-2, to=1-2]
% \end{tikzcd}
% \]
The twisted arrow structure can then be relativised --
constructed uniformly and locally over every object --
with $\smlopfibcart(X)$ playing the role of
a slice category $\CC / X$ in an LCC.
We outline this construction below.

Recall that there are endofunctors on $\cat$
that respectively take a category to its opposite,
a category to its core,
and a category to its twisted arrow category,
which are related by natural transformations,
variously taking source, target, and so on.
% (described in \Cref{sec:twisted-path-factorisation}).
Let us denote these functors and natural transformations as follows
(we add the overline annotation to reserve the
unannotated version for later).
\[
\overline{\vOp{}},\, \overline{\vCore{}}, \, \overline{\vTw{}}
: \cat \to \cat
\quad \quad 
\begin{tikzcd}
  & \overline{\vOp{}} \\
  \overline{\vCore{}} & \overline{\vTw{}} \\
  & {\id_{\cat}}
  \arrow["\overline\inv", from=2-1, to=1-2]
  \arrow["\overline\rfl"{description}, from=2-1, to=2-2]
  \arrow["\overline\inc"', from=2-1, to=3-2]
  \arrow["\overline\src"', from=2-2, to=1-2]
  \arrow["\overline\trg", from=2-2, to=3-2]
\end{tikzcd}
\]

Post-composition with $\overline{\vOp{}}$, $\overline{\vCore{}}$ and $\overline{\vTw{}}$
define the following (strict) 2-transformations
on $[-,\cat] : \Cat^\op \to \Cat$.
\[[-,\overline{\vOp{}}],\, [-,\overline{\vCore{}}], \, [-,\overline{\vTw{}}] : [-,\cat] \to [-,\cat]\]
% defines a (strict) 2-transformation
% \[[-,\overline{\vOp{}}] : [-,\cat] \to [-,\cat]\]
% Similarly, the core functor and twisted arrow functor
These are related by the following modifications.
\[
\begin{tikzcd}[column sep=huge]
  & {[-,\overline{\vOp{}}]} \\
  {[-,\overline{\vCore{}}]} & {[-,\overline{\vTw{}}]} \\
  & {\id}
  \arrow["{[-,\overline\inv]}", from=2-1, to=1-2]
  \arrow["{[-,\overline\rfl]}"{description}, from=2-1, to=2-2]
  \arrow["{[-,\overline\inc]}"', from=2-1, to=3-2]
  \arrow["{[-,\overline\src]}"', from=2-2, to=1-2]
  \arrow["{[-,\overline\trg]}", from=2-2, to=3-2]
\end{tikzcd}
\]

\begin{definition}\label{def:relativised-constructions}
Using the equivalence $[-,\cat] \simeq \smlopfibcart(-)$
we obtain three pseudotransformations
\[{\vOp{}}, {\vCore{}}, {\vTw{}}
: \smlopfibcart(-) \to \smlopfibcart(-) \]
which are related by the following modifications.
\[
\begin{tikzcd}
  & {\vOp{}} \\
  {\vCore{}} & {\vTw{}} \\
  & {\id_{\smlopfibcart(-)}}
  \arrow["\inv", from=2-1, to=1-2]
  \arrow["\rfl"{description}, from=2-1, to=2-2]
  \arrow["\inc"', from=2-1, to=3-2]
  \arrow["\src"', from=2-2, to=1-2]
  \arrow["\trg", from=2-2, to=3-2]
\end{tikzcd}
\]
\end{definition}
For each $X$ in $\Cat$,
the operation $\vOp[X]{} : \smlopfibcart(X) \to \smlopfibcart(X)$
applies $\vOp{}$ to each fibre of a split opfibration $A \to X$.
We therefore call $\vOp[X]{A}$ the \emph{vertical opposite} of $A$ (over $X$).
Similarly, we have the \emph{vertical core} $\vCore[X]{A}$ and \emph{vertical twisted arrows} $\vTw[X]{A}$.
\begin{definition}
  We apply the vertical core to the universe (over $\cat$) to produce
  \[ \usim := \vCore[\cat]{\ulow} : \cat_\sim \to \cat \]
\end{definition}
 
\begin{proposition}\label{prop:local-src-trg-discrete-opfibration}
  For a split opfibration $A \to X$,
  the pathobject map
  \[(\src,\trg) : \vTw[X] A \to \vOp[X]{A} \times_X A\]
  in the relativised twisted pathobject factorisation over $X$
  is a discrete opfibration,
  which is in particular a split opfibration.  
  When $A \to X$ is a small, so is $(\src,\trg)$.
\end{proposition}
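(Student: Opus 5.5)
We plan to reduce to a fibrewise statement and then invoke \Cref{lem:split-opfib-between-split-opfibs}. By \Cref{def:relativised-constructions}, the objects $\vTw[X] A$, $\vOp[X]{A}$ and $A$ of $\smlopfibcart(X)$ correspond, under $\smlopfibcart(X)\simeq[X,\cat]$, to the functors $\overline{\vTw{}}\circ A^*$, $\overline{\vOp{}}\circ A^*$ and $A^*$, where $A^* : X \to \cat$ is the fibres functor (\Cref{thm:opfibration-main}). The fibre product $\vOp[X]{A}\times_X A$ is the Grothendieck construction of the pointwise product $x \mapsto (A^*x)^\op \times A^*x$, because pullbacks over $X$ of Grothendieck constructions correspond to pointwise products of the indexing functors. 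The map $(\src,\trg)$ is then $\int$ applied to the natural transformation $x\mapsto (\overline\src,\overline\trg)_{A^*x}$. In particular it is a morphism of split opfibrations over $X$, since $\int$ lands in $\opfibcart(X)$.

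Next we apply the first part of \Cref{lem:split-opfib-between-split-opfibs} to $F := (\src,\trg)$ with $\alpha,\beta$ the canonical cleavages. That lemma states that $F$ is a discrete opfibration exactly when each fibre functor $F^*x$ is one. Here $F^*x$ is $(\src,\trg) : \Tw(A^*x) \to (A^*x)^\op\times A^*x$, which is a discrete opfibration by \Cref{prop:src-trg-discrete-opfibration}, so $F$ is a discrete opfibration. A discrete opfibration has unique lifts, and we take these as the cleavage. Uniqueness forces normality and splitness, since identities and composites of lifts are themselves lifts. Hence $F$ is a split opfibration.

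The step that needs the most care is the coherence hypothesis concealed in \Cref{lem:split-opfib-between-split-opfibs}: the transport functors $A^*t$ must give morphisms of fibrewise split opfibrations. In the discrete case the lemma's proof establishes this coherence automatically. We would nevertheless check it directly, because it also follows from naturality of $(\overline\src,\overline\trg)$. For $t:x\to y$, the functor $\overline{\vTw{}}(A^*t)$ sends the unique lift $(\phi_0,\phi_1): f\to \phi_1 f\phi_0$ to $(A^*t\,\phi_0, A^*t\,\phi_1)$, and this is again the unique lift because $A^*t$ preserves composition. This is the one place where functoriality of $\overline{\vTw{}}$ is actually used.

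For smallness, the fibre of $F$ over an object $(x,a,b)$ is the set $\Hom_{A^*x}(a,b)$. When $A\to X$ is small this is a small set, so $F$ has small fibres. Equivalently, $F$ is classified by the functor sending $(x,a,b)$ to that hom-set, viewed as a discrete small category. By \Cref{prop:universal-opfibration-classifies}, $F$ is therefore a pullback of $\ulow$, and so lies in $\smlopfib$.
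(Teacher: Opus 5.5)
Your proposal is correct and follows the paper's proof: you apply \Cref{prop:src-trg-discrete-opfibration} fibrewise and then the discrete case of \Cref{lem:split-opfib-between-split-opfibs}, whose proof already supplies the transport coherence that you verify separately. The only difference is in the size claim: the paper argues that $\vTw[X] A \to X$ is small and factors through $(\src,\trg)$, whereas you compute the fibres of $(\src,\trg)$ directly as the hom-sets $\Hom_{A^*x}(a,b)$, which is equally valid and slightly more explicit.
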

\begin{proof}
  From \Cref{prop:src-trg-discrete-opfibration},
  \[(\src, \trg) : \Tw{(A^* x)} \to (A^* x)^\op \times A^* x\]
  is a discrete opfibration, for each $x$ in $X$.
  By \Cref{lem:split-opfib-between-split-opfibs},
  it follows that
  \[(\src,\trg) : \vTw[X] A \to \vOp[X]{A} \times_X A\]
  is a discrete opfibration.
  
  Suppose $A \to X$ is a small split opfibration.
  Then $\vTw[X] A \to X$ is small and
  factors as
  \[\vTw[X] A \to \vOp[X] A \times_X A \to X\]
  It follows that $\vTw[X] A \to \vOp[X] A \times_X A$ is small.
\end{proof}

What follows is a relativised version of the factorisation given in
\Cref{thm:lari-opfib-awfs}.
An explicit construction of this factorisation
is given in \cite[Lemma 4.33.14]{stacks-project}.
We provide an alternative construction,
by using straightening and unstraightening (\Cref{thm:opfibration-main})
and the global AWFS (\Cref{thm:lari-opfib-awfs}).
\begin{theorem}\label{thm:relativised-AWFS}
  Let $(A \to X,l)$ and $(B \to X,l)$
  be small split opfibrations over a category $X$.
  Any morphism of split opfibrations $F : A \to B$ over $X$
  factors in $\opfibcart(X)$ as follows.
  \[ \begin{tikzcd}
    A && B \\
    & {K_X F}
    \arrow["F", from=1-1, to=1-3]
    \arrow["{L_X F}"', from=1-1, to=2-2]
    \arrow["{R_X F}"', from=2-2, to=1-3]
  \end{tikzcd} \]
  The factorisation $(L_X F, R_X F)$
  forms the basis of an AWFS on $\opfibcart(X)$.

  Furthermore, these factorisations are stable under pullback
  along maps $G : Y \to X$.
  In particular, $K_F$ is the component of a modification
  $K : \opfibcart(-) \to \opfibcart(-)$.
  \[
  \begin{tikzcd}
    {\opfibcart(X)} & {\opfibcart(X)} \\
    {\opfibcart(Y)} & {\opfibcart(Y)}
    \arrow["K", from=1-1, to=1-2]
    \arrow["{G^*}"', from=1-1, to=2-1]
    \arrow["{G^*}", from=1-2, to=2-2]
    \arrow["K"', from=2-1, to=2-2]
  \end{tikzcd}
  \]
\end{theorem}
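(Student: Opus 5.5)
We would prove \Cref{thm:relativised-AWFS} by straightening, so that the statement reduces to the global AWFS of \Cref{thm:lari-opfib-awfs} applied pointwise and functorially.

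First, use \Cref{thm:opfibration-main} to identify $\opfibcart(X)$ with $[X,\Cat]$ (or $[X,\cat]$ in the small case). Under this identification, a morphism of split opfibrations $F : A \to B$ over $X$ becomes a natural transformation $F^* : A^* \Rightarrow B^*$, that is, a functor $X \to \Cat^\two$. The global factorisation $G \mapsto (L_G, R_G)$ is functorial on $\Cat^\two$, with $L$ a comonad and $R$ a monad. Define the factorisation in $[X,\Cat]$ by postcomposition: $X \to \Cat^\two \to \Cat^\three$ sends $x$ to $A^* x \to F^*x / B^* x \to B^* x$. Unstraightening with $\int$ then gives $K_X F := \int (x \mapsto F^*x / B^*x)$ together with morphisms of split opfibrations $L_X F$ and $R_X F$ over $X$. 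Their composite is $F$, because the global factorisation composes to the original map and $\int$ is a functor.

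Second, transfer the AWFS structure. Postcomposition with a functorial factorisation, a comonad, a monad, and a distributive law on $\Cat^\two$ yields the same data on $[X,\Cat^\two] \simeq [X,\Cat]^\two$. The AWFS axioms are equations between natural transformations, and they hold pointwise. Transporting along the equivalence $[X,\Cat] \simeq \opfibcart(X)$ then gives an AWFS on $\opfibcart(X)$. The main point needing care is that the equivalence of \Cref{thm:opfibration-main} identifies arrow categories, which holds because an equivalence $\EE \simeq \EE'$ induces $\EE^\two \simeq \EE'^\two$. As an optional check, algebras can be identified as maps that are fibrewise split opfibrations with coherent transport; \Cref{lem:split-opfib-between-split-opfibs} shows these are split opfibrations over $B$, and \Cref{lem:lari-between-split-opfibs} handles coalgebras. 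This identification is not needed for the bare statement.

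Third, prove pullback stability. For $G : Y \to X$, pullback $G^*$ on $\opfibcart(-)$ corresponds to precomposition $- \circ G$ on $[-,\Cat]$. Precomposition strictly commutes with postcomposition by the global factorisation, so $K_Y(G^*F)$ is the Grothendieck construction of $(y \mapsto F^*(Gy)/B^*(Gy))$, which is $G^* K_X F$ up to the canonical comparison between $\int(H \circ G)$ and the pullback $G^*\int H$. That comparison was recorded earlier as a pullback square, so we get the square $K \circ G^* \cong G^* \circ K$. These isomorphisms are coherent because they come from the pseudonaturality of $\smlopfibcart(-) \simeq [-,\cat]$.

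The main obstacle is this last coherence bookkeeping: the equivalence is only pseudonatural, so $K$ commutes with $G^*$ only up to specified isomorphism. One must check that these isomorphisms satisfy the modification axioms and respect the comonad and monad structure. I would handle this by working entirely on the strict 2-functor $[-,\Cat]$, where everything commutes strictly, and transporting the result once through the pseudonatural equivalence.
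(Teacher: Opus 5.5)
Your proposal is correct and follows the same route as the paper. Both straighten via \Cref{thm:opfibration-main} to $[X,\Cat^\two]$, postcompose with the global comonad $L$ and monad $R$ of \Cref{thm:lari-opfib-awfs} (whiskering their (co)units and (co)multiplications), unstraighten, and handle pullback stability through precomposition with $G$ commuting strictly with postcomposition, together with the recorded square $\int (H \circ G) \cong G^* \int H$ — though your pullback-stability argument spells out a step the paper's proof leaves implicit.
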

\begin{proof}
  To construct the monad
  $R_X : \opfibcart(X)^\two \to \opfibcart(X)^\two$
  we can use straightening and unstraightening
  (\Cref{thm:opfibration-main}),
  and the unrelativised AWFS on $\Cat$
  (\Cref{thm:lari-opfib-awfs}):
  \[ \begin{tikzcd}
    {\opfibcart(X)^\two} & {[X,\Cat]^\two} & {[X,\Cat^\two]} \\
    {\opfibcart(X)^\two} & {[X,\Cat]^\two} & {[X,\Cat^\two]}
    \arrow["\simeq"{description}, draw=none, from=1-1, to=1-2]
    \arrow["{R_X}"', dashed, from=1-1, to=2-1]
    \arrow["\simeq"{description}, draw=none, from=1-2, to=1-3]
    \arrow["{R \circ (-)}", from=1-3, to=2-3]
    \arrow["\simeq"{description}, draw=none, from=2-2, to=2-1]
    \arrow["\simeq"{description}, draw=none, from=2-3, to=2-2]
  \end{tikzcd} \]
  The unit and multiplication for $R_X$
  can be constructed by whiskering
  with the unit and multiplication for $R : \Cat^\two \to \Cat^\two$.
  The construction of $L_X$ is dual.
  % Briefly: by reindexing,
  % $F : \two \to [X,\cat]$ corresponds to a functor
  % \[ \tilde{F} : X \to \cat^\two\]
  % The comonad $L : \cat^\two \to \cat^\two$ and monad
  % $R : \cat^\two \to \cat^\two$
  % can be composed with $\tilde{F}$ to give
  % \[L \circ \tilde{F} : X \to \cat^\two \quad \text{and} \quad 
  %   R \circ \tilde{F} : X \to \cat^\two \]
  % These correspond to natural tranformations $L_F : A \to K_F$
  % and $R_F : K_F \to B$, when reindexing back to $\two \to [X,\cat]$.
\end{proof}

\begin{theorem}\label{thm:relativised-algebras}
  The coalgebras for the comonad $L_X : \opfibcart(X) \to \opfibcart(X)$
  and the algebras for the monad $R_X : \opfibcart(X) \to \opfibcart(X)$
  can be computed pointwise:
  \[
    \coalg{L_X} \simeq [X,\coalg{L}] \simeq [X,\lari]
    \quad \quad
    \alg{R_X} \simeq [X,\alg{R}] \simeq [X,\opfibcart]
  \]
  It follows from \Cref{lem:split-opfib-between-split-opfibs}
  and \Cref{lem:lari-between-split-opfibs}
  that an $R_X$-algebra structure on $F : A \to B$ over $X$
  produces a split opfibration structure on $F : A \to B$,
  and an $L_X$-algebra on $F$ produces a lari adjunction $F \dashv R$,
  where $R$ is a morphism of split opfibrations over $X$.
\end{theorem}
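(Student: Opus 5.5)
The plan is to transport everything across straightening and unstraightening, \Cref{thm:opfibration-main}, which gives $\opfibcart(X) \simeq [X,\Cat]$. We then use the elementary fact that, for any monad $T$ on a category $\EE$, the algebras for the pointwise monad $T\circ(-)$ on $[X,\EE]$ are exactly functors $X \to \alg{T}$. Dually, coalgebras for $L\circ(-)$ are exactly functors $X \to \coalg{L}$.

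Concretely, an $R_X$-algebra on $F : A \to B$ is, under the equivalence, a natural transformation $\tilde F : \tilde A \Rightarrow \tilde B$ in $[X,\Cat]^\two \cong [X,\Cat^\two]$. It comes with a map $\alpha : R\circ \tilde F \to \tilde F$ in $[X,\Cat^\two]$, satisfying unit and associativity laws. Since $R_X$ was built in \Cref{thm:relativised-AWFS} by whiskering the unit and multiplication of $R$, both the components and the laws are checked objectwise. This gives an $R$-algebra $\alpha_x$ on each $\tilde F x$. Naturality of $\alpha$ in $x$ says precisely that each $(\tilde A t,\tilde B t)$ is a morphism of $R$-algebras. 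Hence we obtain $\alg{R_X}\simeq[X,\alg{R}]$, and the comonad case is identical. We then compose with the equivalences $\alg{R}\simeq\opfibcart$ and $\coalg{L}\simeq\lari$ stated earlier. The one point needing care is that these equivalences are over $\Cat^\two$, i.e.\ compatible with the forgetful functors. This is what lets us post-compose with them inside $[X,-]$ without changing the underlying functor $\tilde F$.

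For the final sentence, an $R_X$-algebra on $F$ now yields a factorisation of $F^* = \tilde F : X \to \Cat^\two$ through $\opfibcart$. Equivalently, each fibre map $F^*x$ is a split opfibration $(F^*x,\phi_x)$ and transports are morphisms of split opfibrations. This is exactly the hypothesis of \Cref{lem:split-opfib-between-split-opfibs}, which then produces a split opfibration structure on $F$ itself. Likewise, an $L_X$-coalgebra gives a factorisation through $\lari$, and \Cref{lem:lari-between-split-opfibs} produces the lari adjunction $F\dashv R$ with $R$ a morphism of split opfibrations over $X$. Here we must be careful that the fibre functor of $F$ computed from the straightened data agrees with the $F^*$ used in those lemmas, which holds by construction of the equivalence. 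The statement says ``$L_X$-algebra'', which we read as coalgebra.

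The main obstacle is the size and strictness bookkeeping. The equivalence $\opfibcart(X)\simeq[X,\Cat]$ is only an equivalence, not an isomorphism, so we must check that transporting the monad $R_X$ along it does not alter its (co)algebras. We expect to handle this by noting that (co)algebra categories are invariant under transport of a (co)monad along an equivalence.
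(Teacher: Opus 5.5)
Your proposal is correct and takes the same route the paper intends. The paper gives no separate proof: it relies on $R_X$ and $L_X$ having been built in \Cref{thm:relativised-AWFS} by transporting $R\circ(-)$ and $L\circ(-)$ along straightening, so that (co)algebras are computed pointwise, and then invokes \Cref{lem:split-opfib-between-split-opfibs} and \Cref{lem:lari-between-split-opfibs} exactly as you do. Your extra care on three points fills in details the paper leaves implicit: that $\alg{R}\simeq\opfibcart$ and $\coalg{L}\simeq\lari$ lie over $\Cat^\two$, that (co)algebras are invariant under transport along an equivalence, and that ``$L_X$-algebra'' should read ``$L_X$-coalgebra''.
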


  % The category of coalgebras for the comonad $L_X : \opfibcart(X) \to \opfibcart(X)$
  % is equivalent to the category of functors $[X,\coalg{L}]$,
  % and the category of algebras for the monad $R_X : \opfibcart(X) \to \opfibcart(X)$
  % is equivalent to the category of $[X,\alg{R}]$.
  % $[X,\opfibcart]$
One can also construct a relativised right twisted arrow structure
\[\begin{tikzcd}
	& {\vCore{}} \\
	{\vCore{}} & {\rtw{}} \\
	& {\id_{\opfib(-)}}
	\arrow["{=}", from=2-1, to=1-2]
	\arrow["\rfl"{description}, from=2-1, to=2-2]
	\arrow["\inc"', from=2-1, to=3-2]
	\arrow["\src"', from=2-2, to=1-2]
	\arrow["\trg", from=2-2, to=3-2]
\end{tikzcd}\]

\begin{proposition}\label{prop:rtw-equiv-K}
  For a small opfibration $A \to X$,
  $\rtw[X]{A}$ and $K_\delta$ are equivalent as categories over $X$,
  where
  \[\delta = (\id,\inc) : \vCore[X]{A} \to \vCore[X]{A} \times_X A\] is the
  (right) diagonal and
  \[\begin{tikzcd}
    {\vCore[X]{A}} & {K_\delta} & {\vCore[X]{A} \times_X A}
    \arrow["{L_\delta}", from=1-1, to=1-2]
    \arrow["{R_\delta}", from=1-2, to=1-3]
  \end{tikzcd} \]
  is the functorial factorisation of $\delta$ given by the AWFS of
  \Cref{thm:relativised-AWFS}.
  Furthermore, this equivalence commutes with the two factorisations
  \[ \begin{tikzcd}
    & {\rtw[X]{A}} & \\
    {\vCore[X]{A}} && {\vCore[X]{A} \times_X A} \\
    & {K_\delta}
    \arrow["{(\src,\trg)}", from=1-2, to=2-3]
    \arrow["\simeq"{description}, from=1-2, to=3-2]
    \arrow["\rfl", from=2-1, to=1-2]
    \arrow["{L_\delta}"', from=2-1, to=3-2]
    \arrow["{R_\delta}"', from=3-2, to=2-3]
  \end{tikzcd}\]
  By \Cref{thm:relativised-algebras}, $L_\delta$ is lari.
  It follows that $\rfl : \vCore[X]{A} \to \rtw[X]{A}$
  has the left-lifting property with respect to
  split opfibrations.
\end{proposition}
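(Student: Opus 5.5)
The plan is to reduce to the unrelativised statement, \Cref{prop:right-twisted-factorisation-equiv}, by working fibrewise through straightening and unstraightening (\Cref{thm:opfibration-main}).

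First, I will straighten $A \to X$ into a functor $\tilde A : X \to \cat$. By \Cref{def:relativised-constructions}, $\vCore[X]{A}$ corresponds to $\overline{\vCore{}} \circ \tilde A$. The relativised right twisted arrows $\rtw[X]{A}$ correspond to $x \mapsto \rTw{(\tilde A x)}$. This is because $\rtw[X]{A}$ is the pullback of $\vTw[X]{A}$ along $\inv \times_X A$, and pullbacks in $\smlopfibcart(X) \simeq [X,\cat]$ are computed pointwise.

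Likewise, the right diagonal $\delta = (\id,\inc)$ corresponds to the natural transformation whose component at $x$ is the unrelativised right diagonal $\Core{(\tilde A x)} \to \Core{(\tilde A x)} \times \tilde A x$. By the construction in the proof of \Cref{thm:relativised-AWFS}, the factorisation $(L_\delta, R_\delta)$ is obtained by postcomposing $\tilde\delta : X \to \cat^\two$ with the global comonad $L$ and monad $R$. So $K_\delta$ straightens to $x \mapsto K_{\delta_x} = \delta_x / (\Core{(\tilde A x)} \times \tilde A x)$.

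The next step is to invoke \Cref{prop:right-twisted-factorisation-equiv} at each $x$. This gives an equivalence $\rTw{(\tilde A x)} \simeq K_{\delta_x}$ that commutes with both the left factors ($\rfl$ versus $L$) and the right factors ($(\src,\trg)$ versus $R$). Unstraightening then yields the equivalence over $X$, commuting with the two factorisations.

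The main obstacle is naturality in $x$. I need the fibrewise equivalences to assemble into a natural transformation of functors $X \to \cat$, which then unstraightens to a morphism in $\opfibcart(X)$. To get this, I will write the comparison explicitly. The functor $\rTw{B} \to \Delta/(\Core B \times B)$ sends $f : w \to y$ to $(w, (\id_w, f))$. On a morphism it is given by the same formula using the twisted-arrow data. These formulas involve only functorial operations, so they are strictly natural with respect to any functor $B \to B'$, in particular the transports $\tilde A t$. Since the equivalence is natural and pointwise an equivalence, it is an equivalence in $[X,\cat]$. A strict inverse is not needed; an isomorphism of categories is in fact expected here, because both sides have objects given by pairs $(w, f : w \to y)$.

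Finally, the left lifting claim follows as stated. By \Cref{thm:relativised-algebras}, $L_\delta$ carries an $L_X$-coalgebra structure, which makes it a lari. Transport along the equivalence commuting with the factorisations then gives $\rfl : \vCore[X]{A} \to \rtw[X]{A}$ an $L_X$-coalgebra structure. AWFS coalgebras lift against algebras, and by \Cref{thm:relativised-algebras} the $R_X$-algebras are split opfibrations over $X$, which gives the left lifting property.
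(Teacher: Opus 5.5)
Your route is the paper's. You straighten $A$ to $\tilde A : X \to \cat$, identify $\rtw[X]{A}$ with $\rTw{}\circ\tilde A$ and $K_\delta$ with $x \mapsto K_{\delta_x}$, and whisker the unrelativised equivalence of \Cref{prop:right-twisted-factorisation-equiv}. The paper simply asserts the needed naturality as ``an equivalence of 2-functors $K \simeq \rtw{}$''. Your unpacking of that naturality has two faulty steps.

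First, write $B = \tilde A x$. The objects of $K_{\delta_x} = \delta_x/(\Core{B} \times B)$ are not pairs $(w,f)$. They are tuples $(w,c,y,\phi_0,\phi_1)$ with $\phi_0 : w \to c$ an isomorphism and $\phi_1 : w \to y$. Your comparison $E : (f : w \to y) \mapsto (w,w,y,\id_w,f)$ hits only objects with $c = w$ and $\phi_0 = \id_w$, so there is no isomorphism of categories here.

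Second, ``strictly natural and pointwise an equivalence, hence an equivalence in $[X,\cat]$'' is false in general. View $\mathbb{Z}/2$ as a one-object category $X$ acting by the swap on the indiscrete category with two objects. The map to the constant functor at $1$ is strictly natural and pointwise an equivalence. But there is no strictly natural map back, since the swap fixes no object. (For the bare claim of an equivalence over $X$ you could instead cite that an opcartesian functor which is a fibrewise equivalence is an equivalence of opfibrations, though its pseudo-inverse need not preserve chosen lifts.)

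The clean fix is to write the inverse by equally natural formulas. Define $E'' : (w,c,y,\phi_0,\phi_1) \mapsto (\phi_1\phi_0^{-1} : c \to y)$. It satisfies $E''E = \id$, $E''L_{\delta_x} = \rfl$ and $(\src,\trg)E'' = R_{\delta_x}$. Moreover $(\phi_0,(\id_c,\id_y))$ is an isomorphism $\id \cong EE''$ over $\Core{B}\times B$. All of this is strictly natural in $B$, which is exactly the 2-natural equivalence the paper whiskers with $\tilde A$.

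Your last step also needs repair, for two reasons.
\begin{itemize}
\item Coalgebra structures for the comonad $L_X$ on the $1$-category $\opfibcart(X)^{\two}$ transfer only along isomorphisms of arrows, not along equivalences.
\item Lifting $L_X$-coalgebras against $R_X$-algebras only solves lifting problems inside $\opfibcart(X)$, and only against split opfibrations that carry an $R_X$-algebra structure. \Cref{thm:relativised-algebras} goes from $R_X$-algebras to split opfibrations, not back.
\end{itemize}
The lifting property used later is in $\Cat$: for example, in \Cref{ex:ACM-of-opfibrations} one lifts against $\ulow : \pcat \to \cat$ in a square that does not live over $L$.

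Argue instead as the statement indicates. The cofree coalgebra $L_\delta$ is lari in $\Cat$ by \Cref{thm:relativised-algebras} and \Cref{lem:lari-between-split-opfibs}. Hence it lifts against every split opfibration by the global AWFS of \Cref{thm:lari-opfib-awfs}. Writing $E$, $E''$ also for the unstraightened functors, we have $E\rfl = L_\delta$, $E''L_\delta = \rfl$ and $E''E = \id$. So $\rfl$ is a retract of $L_\delta$ in $\Cat^{\two}$, and left lifting properties are closed under retracts.
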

\begin{proof}
  Write $\tilde{A} : X \to \cat$ for the fibres functor corresponding to
  the small opfibration $A$ in $\smlopfib(X)$.
  Then $\rtw[X]{A}$ in $\smlopfib(X)$ corresponds to the functor
  $\rtw{} \circ \tilde{A}$,
  where $\rtw{} : \cat \to \cat$ is the small version of the
  right twisted arrow functor $\rTw{} : \Cat \to \Cat$
  defined in \Cref{eq:global-right-twisted}.
  The equivalence of $K_{(\id,\inc)} \simeq \rtw{X}$ for a category $X$
  extends to an equivalence of 2-functors $K \simeq \rtw{}$
  in the 2-category $\cat \to \cat$.
  Whiskering provides an equivalence of 2-functors
  $K \circ \tilde{A} \simeq \rtw{} \circ \tilde{A}$,
  which translates to the desired equivalence $K_\delta \simeq \rtw[X]{A}$.
\end{proof}
\section{Algebraic categorical models}\label{sec:type-theoretic-axioms}

In this section,
we introduce axioms for a model of directed type theory
in the style of algebraic type theory,
that could be satisfied in models other than
the one in $\Cat$ considered so far (\Cref{ex:ACM-of-opfibrations}).
Once the axioms have all been stated,
we will check that they are indeed satisfied in $\Cat$.

The algebraic style allows us to state our axioms in a concise manner
that automatically guarantees stability under substitution of each construction,
so that when a syntax is fully worked out,
one can easily define an interpretation.
We begin with an axiom for the
external fibration structure on a category.

\section*{Opfibrations and fibrations} 
% \label{sec:types-opfibration-and-fibration}
\begin{axiom}\label{ax:opfibration-preclan}
$\CC$ is a large category with a terminal object and a
class of maps $\opfib$ forming a preclan $(\CC,\opfib)$.
We call maps in $\opfib$ \emph{opfibrations}\footnote{
  Since we are now working in an axiomatic setting,
  $\opfib$ could consist of something other than the class of split opfibrations in $\Cat$,
  and so we drop the adjective ``split''.
  Indeed $\opfib$ could be interpreted in $\Cat$
  as the class of cloven opfibrations
  or discrete opfibrations.
  It could also be interpreted in $\Grpd$
  as the class of split or cloven isofibrations.
}.
\end{axiom}

As usual, objects in the category $\CC$ are regarded as contexts,
and morphisms as substitutions.

\begin{axiom}\label{ax:global-op}
  We have an endofunctor $\Op{} : \CC \to \CC$
  satisfying $\Op \circ \Op = \id_\CC$.
\end{axiom}

The functor $\Op : \CC \to \CC$ is self-inverse,
hence preserves all limits, including the terminal object.
\[ \Op{1} \iso 1\]

\begin{definition}
  We write $\fib$ for the class of maps
  $f : X \to Y$ such that $\Op{f} : \Op{X} \to \Op{Y}$
  is an opfibration.
  Maps in $\fib$ are called \emph{fibrations}.
\end{definition}

It follows from 
\Cref{ax:opfibration-preclan} and \Cref{ax:global-op}
that $(\CC,\fib)$ is also a preclan.

\begin{proposition}\label{prop:op-preclan-morphism}
  The functor $\Op : \CC \to \CC$
  extends to two variance-reversing preclan morphisms
  \[ \Op{} : (\CC, \opfib) \to (\CC, \fib)
  \quad \quad
  \Op{} : (\CC, \fib) \to (\CC, \opfib) \]
  which are inverses of each other.
\end{proposition}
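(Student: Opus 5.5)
The plan is to verify the two preclan-morphism conditions for $\Op{} : (\CC,\opfib) \to (\CC,\fib)$, and then get the other direction by symmetry using $\Op \circ \Op = \id_\CC$ (\Cref{ax:global-op}). Since $\Op$ is a strict involution, it is an isomorphism of categories. Hence it preserves, and reflects, all limits that exist in $\CC$.

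\textbf{First step: $\Op$ sends opfibrations to fibrations.} Let $f : X \to Y$ be in $\opfib$. By definition, $\Op f$ lies in $\fib$ exactly when $\Op \Op f$ is an opfibration. But $\Op\Op f = f$, so this holds.

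\textbf{Second step: $\Op$ preserves pullbacks of opfibrations.} Take an opfibration $A \to X$ and any map $Z \to X$. The pullback $A \times_X Z$ exists by \Cref{ax:opfibration-preclan}. Because $\Op$ is an isomorphism of categories, it sends this pullback square to a pullback square. So the comparison map $\Op(A \times_X Z) \to \Op A \times_{\Op X} \Op Z$ is an isomorphism.

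\textbf{Converse direction.} The same two steps work for $\Op : (\CC,\fib) \to (\CC,\opfib)$. If $g$ is in $\fib$, then $\Op g$ is in $\opfib$ directly by the definition of $\fib$. Pullbacks of fibrations exist in $\CC$: a fibration $g$ is of the form $\Op f'$ for the opfibration $f' = \Op g$, and the pullback of $f'$ transported back along $\Op$ is a pullback of $g$. This is the observation that makes $(\CC,\fib)$ a preclan, as stated before the proposition. These pullbacks are then preserved by the isomorphism $\Op$.

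\textbf{Mutual inverses.} On underlying functors the two morphisms compose to $\Op\circ\Op = \id_\CC$ in either order. So they are inverse preclan morphisms.

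There is no real obstacle here. The only point needing care is that pullbacks along fibrations must exist before preservation can even be stated. I would handle this by transporting them through the involution, exactly as in the argument that $(\CC,\fib)$ is a preclan.
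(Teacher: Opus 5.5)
Your proof is correct and takes essentially the same approach as the paper. The paper gives no written proof of this proposition; it relies on the surrounding remarks that $\Op$ is self-inverse, hence preserves all limits, and that $\fib$ is defined by transporting $\opfib$ along $\Op$, which is exactly your argument, including the transport of pullbacks of fibrations through the involution.
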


Recall the definitions of preclan-exponentiability
(\Cref{def:exponentiable})
and $\tau$-preclans (\Cref{def:double-preclan}).
The following proposition says that given half of the exponentiability conditions we require,
$\Op$ automatically generates the other half.

\begin{proposition}\label{prop:exponentiability-opposite}
  The following conditions are equivalent
  \begin{enumerate}
  \item Fibrations are opfibration-exponentiable, meaning
    \[ \fib \subseteq \Exp_\opfib \]
  \item Opfibrations are fibration-exponentiable.
    \[\opfib \subseteq \Exp_\fib\]
  \item Opfibrations $(\CC,\opfib)$ and fibrations $(\CC,\fib)$ form a $\tau$-preclan.
  \end{enumerate}
\end{proposition}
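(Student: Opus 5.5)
The plan is to transport everything along the self-inverse functor $\Op$, using \Cref{prop:op-preclan-morphism}. Since $\Op \circ \Op = \id_\CC$, $\Op$ is an isomorphism of categories. By \Cref{prop:op-preclan-morphism} it is an isomorphism of preclans $(\CC,\opfib) \iso (\CC,\fib)$ in both directions, with $\Op \circ \Op = \id$. The key lemma I would isolate is a transport principle.

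\emph{Transport principle:} if $\Phi : (\CC,\RR) \to (\CC',\RR')$ is an isomorphism of categories that maps $\RR$ bijectively onto $\RR'$, then a map $f$ is $\RR$-exponentiable if and only if $\Phi f$ is $\RR'$-exponentiable.

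To prove this, I would use condition (1) of \Cref{theorem:exponentiable}, the Beck--Chevalley condition at every pullback of $f$. This condition is phrased entirely in terms of the following data:
\begin{itemize}
\item pullback squares of $f$,
\item the full subcategories $\RR(X) \subseteq \CC/X$,
\item adjunctions $f'^* \dashv f'_*$, and
\item Beck--Chevalley mates being invertible.
\end{itemize}
An isomorphism of categories preserves and reflects pullback squares. It induces isomorphisms $\CC/X \iso \CC'/\Phi X$, which restrict to $\RR(X) \iso \RR'(\Phi X)$ because $\Phi$ maps $\RR$ bijectively onto $\RR'$ (with $\Phi^{-1}$ mapping $\RR'$ back into $\RR$). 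These isomorphisms commute with pullback functors up to canonical isomorphism. Consequently right adjoints correspond under conjugation by these isomorphisms, and so do the mates. Invertibility of the mates is therefore preserved in both directions. Alternatively one could use the partial right adjoint condition of \Cref{prop:beck-chevalley-defs}, which is visibly invariant, since a representing object for $\Hom_{\CC/Y'}(f'^*(-),A)$ is carried to a representing object under $\Phi$.

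With the principle in hand, (1)$\Leftrightarrow$(2) goes as follows. Suppose $\fib \subseteq \Exp_\opfib$, and let $g \in \opfib$. Then $\Op g \in \fib$, since $\Op\Op g = g$ is an opfibration. Hence $\Op g$ is $\opfib$-exponentiable. Applying the transport principle to $\Op : (\CC,\opfib) \to (\CC,\fib)$, which sends $\opfib$ onto $\fib$, the map $\Op\Op g = g$ is $\fib$-exponentiable. So $\opfib \subseteq \Exp_\fib$. The converse direction is symmetric, swapping the roles of $\opfib$ and $\fib$.

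For (3), recall \Cref{def:double-preclan}: a $\tau$-preclan requires that $(\CC,\opfib)$ and $(\CC,\fib)$ are preclans, which holds by \Cref{ax:opfibration-preclan} and the remark following the definition of $\fib$. It also requires both exponentiability inclusions. So (3) is exactly (1) and (2) together, and given (1)$\Leftrightarrow$(2) it is equivalent to either one.

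I expect the only real obstacle to be the transport principle. In particular one must check that the Beck--Chevalley transformation for $\Phi f$ is carried to the one for $f$, rather than merely to some isomorphic comparison. I would sidestep this by using the representability formulation from \Cref{prop:beck-chevalley-defs} at each pullback. Representability only asks for the existence of an $\RR$-object representing a functor, and that is manifestly preserved by an isomorphism of categories that respects the distinguished classes.
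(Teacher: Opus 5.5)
Your proposal is correct and is essentially the paper's argument. The paper also transports along the self-inverse $\Op$, using the slice isomorphisms $\CC/Y \iso \CC/\Op{Y}$ and $\fib(Y) \iso \opfib(\Op{Y})$ to turn the partial right adjoint of $(\Op f)^*$, which exists by (1) since $\Op f$ is a fibration, into one for $f^*$; it then concludes via \Cref{lem:stable-class-of-exponentiable-maps} rather than through your general invariance lemma for $\RR$-exponentiability under preclan isomorphisms, which is an equally valid packaging.
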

\begin{proof}
  (1) $\Rightarrow$ (2).
  Let $f : Y \to X$ be an opfibration.
  By \Cref{lem:stable-class-of-exponentiable-maps}
  it suffices to show that the pullback functor $f^* : \CC / X \to \CC / Y$
  has a partial right adjoint $f_* : \fib(Y) \to \fib(X)$.

  The self-inverse $\Op : \CC \to \CC$ induces isomorphisms between slices
  \[\Op / Y : \CC / Y \iso \CC / \Op{Y} 
  \quad \text{ and } \quad
  \Op / X : \CC / X \iso \CC / \Op{X} \]
  Since $\Op : \CC \to \CC$ preserves pullbacks,
  $f^* : \CC / X \to \CC / Y$
  is isomorphic under the above equivalences to
  $(\Op{f})^* : \CC / \Op{X} \to \CC / \Op{Y}$.
  \[
  \begin{tikzcd}
    {\CC / {Y}} & {\CC / \Op{Y}} & {\opfib(\Op{Y})} & {\fib(Y)} \\
    {\CC / {X}} & {\CC / \Op{X}} & {\opfib(\Op{X})} & {\fib(X)}
    \arrow["\iso"{description}, draw=none, from=1-1, to=1-2]
    \arrow[hook', from=1-3, to=1-2]
    \arrow["\iso"{description}, draw=none, from=1-3, to=1-4]
    \arrow[""{name=0, anchor=center, inner sep=0}, "{(\Op{f})_*}", from=1-3, to=2-3]
    \arrow["{f_*}", dashed, from=1-4, to=2-4]
    \arrow["{f^*}", from=2-1, to=1-1]
    \arrow["\iso"{description}, draw=none, from=2-1, to=2-2]
    \arrow[""{name=1, anchor=center, inner sep=0}, "{(\Op{f})^*}", from=2-2, to=1-2]
    \arrow[hook', from=2-3, to=2-2]
    \arrow["\iso"{description}, draw=none, from=2-3, to=2-4]
    \arrow["{\dashv_\partial}"{description}, draw=none, from=1, to=0]
  \end{tikzcd}
  \]
  The preclan isomorphism
  $\Op : (\CC, \fib) \to (\CC, \opfib)$
  also induces isomorphisms
  \[
  \Op{} / Y : \fib(Y) \iso \opfib(\Op{Y})
  \quad \text{ and } \quad
  \Op{} / X : \fib(X) \iso \opfib(\Op{X})
  \]
  Providing a partial right adjoint
  $f_* : \fib(Y) \to \fib(X)$ of $f^* : \CC / X \to \CC / Y$
  amounts to providing a partial right adjoint
  $(\Op{f})_*$ of $(\Op{f})^*$, which we have by (1).
  
  (2) $\Rightarrow$ (1) is similar.
  (1) $\Leftrightarrow$ (3) immediately follows from (1) $\Rightarrow$ (2).
\end{proof}

\begin{axiom}\label{ax:exponentiability}
  The equivalent conditions of \Cref{prop:exponentiability-opposite} hold.
\end{axiom}

% \begin{axiom}
% $\CC$ is a category with a terminal object with two preclans
% $(\CC,\opfib)$ and $(\CC,\fib)$
% that are exponentiable against each other (see \Cref{def:exponentiable}):
% \[\fib \subseteq \Exp_\opfib
%   \quad \text{ and } \quad
%   \opfib \subseteq \Exp_\fib\]
% \end{axiom}

\section*{Vertical opposites}

In the $\Cat$ model (\Cref{ex:ACM-of-opfibrations}), we need to be able to place restrictions
on the morphisms between opfibrations -- they should be
opcartesian functors.
Let us assume the axioms that have been introduced so far are available to us.
Consider the pseudofunctor $\opfib(-) : \CC^\op \to \tCat$,
that takes an object $X$ in $\CC$ to the category $\opfib(X)$
of opfibrant objects in the slice $\CC / X$
and a morphism $f : X \to Y$ to the pullback functor
$f^* : \opfib(Y) \to \opfib(X)$.
% Similarly, $\fib(-) : \CC^\op \to \Cat$.

\begin{axiom}\label{ax:opcartesian-map}
  For each $X \in \CC$,
  there is a wide subcategory $\opfibcart(X) \to \opfib(X)$,
  forming a pseudofunctor $\opfibcart(-) : \CC^\op \to \tCat$.
  We call the maps in $\opfibcart(X)$ \emph{opcartesian maps} over $X$.
  Furthermore, all maps between opfibrations over $1$ are opcartesian.
  \[\opfibcart(1) = \opfib(1)\]
\end{axiom}

\begin{axiom}\label{ax:local-op}
  The following structure is present in $\CC$.
  \begin{itemize}
    \item We have a pseudotransformation $\vOp{} : \opfibcart(-) \to \opfibcart(-)$ 
    \item For each object $X$ in $\CC$,
    $\vOp[X]{} : \opfibcart(X) \to \opfibcart(X)$ is self inverse.
    \[\vOp[X]{} \circ \vOp[X]{} \iso \id_{\opfibcart(X)}\]
    \item The functor $\vOp[1]{} : \opfibcart(1) \to \opfibcart(1)$
    % and $\vOp[1]{} : \fibcart(1) \to \fibcart(1)$ are both (isomorphic to)
    is (up to isomorphism) a restriction of the global opposite functor
    $\Op{} : \CC \to \CC$.
    \[ \begin{tikzcd}
        {\opfibcart(1)} & {\opfibcart(1)} \\
        \CC & \CC
        \arrow["{{{\vOp[1]{}}}}", from=1-1, to=1-2]
        \arrow[hook, from=1-1, to=2-1]
        \arrow[hook, from=1-2, to=2-2]
        \arrow["{\Op{}}"', from=2-1, to=2-2]
      \end{tikzcd} \]
    % \[\begin{tikzcd}
    %     {\opfibcart (1)} & {\opfibcart (1)} \\
    %     \CC & \CC \\
    %     {\fibcart(1)} & {\fibcart(1)}
    %     \arrow["{{\vOp[1]{}}}", from=1-1, to=1-2]
    %     \arrow[hook, from=1-1, to=2-1]
    %     \arrow[hook, from=1-2, to=2-2]
    %     \arrow["\Op{}", from=2-1, to=2-2]
    %     \arrow[hook, from=3-1, to=2-1]
    %     \arrow["{{\vOp[1]{}}}"', from=3-1, to=3-2]
    %     \arrow[hook, from=3-2, to=2-2]
    %   \end{tikzcd}\]
  \end{itemize}
\end{axiom}

For an opfibration $A \to X$, the opfibration $\vOp[X]{A} \to X$
is the \emph{vertical opposite} of $A$.
In $\Cat$, the vertical opposite of $A$ can be thought of as
the category where,
over each point $x$ in $X$,
paths in each fibre $A_x$ are reversed.
This is discussed in detail in \Cref{sec:preliminaries}.

We construct the duals to \Cref{ax:opcartesian-map} and \Cref{ax:local-op}.
\begin{definition}
  Define the pseudofunctor $\fibcart(-) : \CC^\op \to \tCat$ such that
  for each $X$ in $\CC$, $\fibcart(X)$ is the wide subcategory of $\fib(X)$ consisting of maps
  $F : A \to B$ over $X$ such that $\Op F : \Op A \to \Op B$ is
  an opcartesian in $\opfibcart(\Op X)$.
  We call maps in $\fibcart(X)$ \emph{cartesian maps} over $X$.

  Define the pseudotransformation $\vOp{} : \fibcart(-) \to \fibcart(-)$
  such that the component for each $X$ in $\CC$ is the composition
  \[ \begin{tikzcd}[column sep = large]
    {\fibcart(X)} & {\opfibcart (\Op X)} & {\opfibcart (\Op X)} & {\fibcart(X)}
    \arrow["{\Op / X}", from=1-1, to=1-2]
    \arrow["\sim"', draw=none, from=1-1, to=1-2]
    \arrow["{\vOp[\Op X]{}}", from=1-2, to=1-3]
    \arrow["{\Op / \Op X}", from=1-3, to=1-4]
    \arrow["\sim"', from=1-3, to=1-4]
  \end{tikzcd} \]
\end{definition}

\begin{proposition}
  For each object $X$ in $\CC$,
  \[\vOp[X]{} \circ \vOp[X]{} \iso \id_{\fibcart(X)} : \fibcart(X) \to \fibcart(X)\]
    % \item These operations agree up to isomorphism where their domains intersect.
    % \[
    %   \begin{tikzcd}
    %   {\opfibcart (X)} & {\opfibcart (X)} \\
    %   {\opfibcart  \cap \fib (X)} & {\opfibcart  \cap \fib(X)} \\
    %   {\fib(X)} & {\fib(X)}
    %   \arrow["{\vOp[X]{}}", from=1-1, to=1-2]
    %   \arrow[hook, from=2-1, to=1-1]
    %   \arrow[dashed, from=2-1, to=2-2]
    %   \arrow[hook, from=2-1, to=3-1]
    %   \arrow[hook, from=2-2, to=1-2]
    %   \arrow[hook, from=2-2, to=3-2]
    %   \arrow["{\vOp[X]{}}"', from=3-1, to=3-2]
    % \end{tikzcd}
    % \]
    and $\vOp[1]{} : \fibcart(1) \to \fibcart(1)$
    is (up to isomorphism) a restriction of the global opposite functor
    $\Op{} : \CC \to \CC$.
    \[ \begin{tikzcd}
        {\fibcart(1)} & {\fibcart(1)} \\
        \CC & \CC
        \arrow["{{{\vOp[1]{}}}}", from=1-1, to=1-2]
        \arrow[hook, from=1-1, to=2-1]
        \arrow[hook, from=1-2, to=2-2]
        \arrow["{\Op{}}"', from=2-1, to=2-2]
      \end{tikzcd} \]
\end{proposition}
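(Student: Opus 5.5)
The plan is to transport everything from the opfibration side to the fibration side along the isomorphism of slices induced by the global $\Op$. Write $\vOp[X]{}$ on $\fibcart(X)$ as the composite $(\Op/\Op X)\circ \vOp[\Op X]{}\circ(\Op/X)$ from the definition. Here $\Op/X : \fibcart(X)\to\opfibcart(\Op X)$ and $\Op/\Op X : \opfibcart(\Op X)\to\fibcart(\Op\Op X)=\fibcart(X)$ are mutually inverse isomorphisms of categories. This holds because $\Op\circ\Op=\id_\CC$ strictly (\Cref{ax:global-op}), and by the definition of cartesian maps $\Op/X$ restricts exactly to an isomorphism $\fibcart(X)\cong\opfibcart(\Op X)$ (using \Cref{prop:op-preclan-morphism} on objects).

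For the first claim, compute
\[\vOp[X]{}\circ\vOp[X]{} = (\Op/\Op X)\,\vOp[\Op X]{}\,(\Op/X)(\Op/\Op X)\,\vOp[\Op X]{}\,(\Op/X).\]
The middle factor $(\Op/X)(\Op/\Op X)$ is the identity on $\opfibcart(\Op X)$. This leaves $(\Op/\Op X)\,\vOp[\Op X]{}\vOp[\Op X]{}\,(\Op/X)$. By \Cref{ax:local-op} there is an isomorphism $\vOp[\Op X]{}\circ\vOp[\Op X]{}\iso\id$; whiskering it by the two slice functors gives an isomorphism to $(\Op/\Op X)(\Op/X)=\id_{\fibcart(X)}$.

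For the second claim, take $X=1$ and use $\Op 1\iso 1$. If this is only an isomorphism rather than an equality, first identify $\opfibcart(\Op 1)$ with $\opfibcart(1)$ via pullback along the isomorphism, using pseudofunctoriality of $\opfibcart(-)$ from \Cref{ax:opcartesian-map}. Under the inclusions into $\CC$ (forgetting the map to the terminal object), $\Op/1$ and $\Op/\Op 1$ are just $\Op$. By \Cref{ax:local-op}, $\vOp[1]{}$ on $\opfibcart(1)$ is isomorphic to the restriction of $\Op$. Hence the composite $\vOp[1]{}$ on $\fibcart(1)$ is isomorphic to the restriction of $\Op\circ\Op\circ\Op=\Op$, which is the required square.

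The main obstacle is bookkeeping rather than mathematics. One point is the identification of $\Op 1$ with $1$ in the second part. The other is checking that the whiskered isomorphisms stay inside the wide subcategory $\fibcart(X)$, i.e.\ that their components are cartesian maps. This follows because their $\Op$-images are components of isomorphisms in $\opfibcart(\Op X)$, hence opcartesian, which is exactly the definition of cartesian.
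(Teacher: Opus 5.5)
Your proof is correct and follows the paper's route. The paper's diagram factors $\vOp[1]{}$ on $\fibcart(1)$ exactly as you do: the definition via $\Op/1$ and $\Op/\Op 1$, transport along $1\iso\Op 1$, then \Cref{ax:local-op} and $\Op\circ\Op=\id_\CC$. The paper leaves the involution claim implicit, and your cancellation argument supplies it. One justification needs correcting: matching $\vOp[\Op 1]{}$ with $\vOp[1]{}$ under pullback along $1\iso\Op 1$ requires the pseudonaturality of the pseudotransformation $\vOp{}:\opfibcart(-)\to\opfibcart(-)$ from \Cref{ax:local-op} (the paper's second square). The pseudofunctoriality of $\opfibcart(-)$ that you cite only identifies the categories, not the two $\vOp{}$ functors.
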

\begin{proof}
  For the second isomorphism, consider the following diagram.
  \[ \begin{tikzcd}
    A & {\Op {A}} && {\Op {A}} & A \\
    {\fibcart(1)} & {\opfibcart (\Op{1})} & {\opfibcart (1)} & \CC & \CC \\
    {\fibcart(1)} & {\opfibcart (\Op{1})} & {\opfibcart (1)} & \CC & \CC
    \arrow[maps to, from=1-1, to=1-2]
    \arrow[maps to, from=1-2, to=1-4]
    \arrow[maps to, from=1-4, to=1-5]
    \arrow["\sim", from=2-1, to=2-2]
    \arrow["{\vOp[1]{}}"', from=2-1, to=3-1]
    \arrow["\sim", from=2-2, to=2-3]
    \arrow["{\vOp[\Op{1}]{}}", from=2-2, to=3-2]
    \arrow[hook, from=2-3, to=2-4]
    \arrow["{\vOp[1]{}}", from=2-3, to=3-3]
    \arrow["\Op{}", from=2-4, to=2-5]
    \arrow["{\Op{}}", from=2-4, to=3-4]
    \arrow["\Op{}", from=2-5, to=3-5]
    \arrow["\sim"', from=3-1, to=3-2]
    \arrow["\sim"', from=3-2, to=3-3]
    \arrow[hook, from=3-3, to=3-4]
    \arrow["\Op{}"', from=3-4, to=3-5]
  \end{tikzcd}\]
  The first square is the definition of $\vOp[] : \fibcart(-) \to \fibcart(-)$;
  the second is pseudonaturality of $\vOp[] : \opfibcart (-) \to \opfibcart (-)$
  applied to the isomorphism $1 \iso \Op 1$;
  the third square is from \Cref{ax:local-op}.
  A straightforward computation shows that the composition of the horizontal maps 
  is the inclusion $\fibcart(1) \hookrightarrow \CC$.
\end{proof}

\begin{proposition}
  Opfibrations over $1$ are the same as fibrations over $1$.
  All maps between fibrations over $1$ are cartesian.
  \[ \opfib(1) = \opfibcart(1) = \fibcart(1) = \fib(1) \]
\end{proposition}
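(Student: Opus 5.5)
We need to prove $\opfib(1) = \opfibcart(1) = \fibcart(1) = \fib(1)$. The plan is to establish the outer equality $\opfib(1) = \fib(1)$ first. Both are full subcategories of $\CC/1 \cong \CC$, so it suffices to show that for every object $A$, the map $A \to 1$ is an opfibration if and only if it is a fibration. By definition, $A \to 1$ is a fibration iff $\Op(A \to 1) = (\Op A \to \Op 1)$ is an opfibration. Since $\Op 1 \iso 1$, and opfibrations contain isomorphisms and are closed under composition (\Cref{ax:opfibration-preclan}), this holds iff $\Op A \to 1$ is an opfibration. So the real claim is that $\Op$ preserves opfibrant objects over $1$.

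For that step we use \Cref{ax:local-op}. If $A \in \opfib(1) = \opfibcart(1)$, then $\vOp[1]{A}$ lies in $\opfibcart(1)$. Moreover, $\vOp[1]{}$ agrees with $\Op$ up to isomorphism on $\opfibcart(1)$, so $\Op A \iso \vOp[1]{A}$ is opfibrant over $1$. Hence $A$ is a fibration. For the converse, suppose $A$ is fibrant, so that $\Op A$ is opfibrant. Applying the forward argument to $\Op A$ shows that $\Op\Op A = A$ (by \Cref{ax:global-op}) is fibrant. That does not yet give opfibrancy of $A$, so instead argue directly: $\Op A$ is opfibrant, hence $\Op A$ is also fibrant by the forward direction, meaning $\Op\Op A = A$ is opfibrant. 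This gives $\opfib(1) = \fib(1)$ as full subcategories of $\CC$.

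Next, $\opfibcart(1) = \opfib(1)$ is exactly part of \Cref{ax:opcartesian-map}. It remains to show $\fibcart(1) = \fib(1)$, namely that every map $F : A \to B$ between fibrations over $1$ is cartesian. By definition, this requires $\Op F$ to be a map in $\opfibcart(\Op 1)$. Transport along the isomorphism $1 \iso \Op 1$ using the pseudofunctoriality of $\opfibcart(-)$: pullback along an isomorphism is an equivalence, and it identifies $\opfibcart(\Op 1)$ with $\opfibcart(1)$ compatibly with the inclusions into $\opfib$. Then $\opfibcart(\Op 1) = \opfib(\Op 1)$ follows from the corresponding equality over $1$. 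Since $\Op A$ and $\Op B$ are opfibrant over $\Op 1$, the map $\Op F$ lies in $\opfib(\Op 1) = \opfibcart(\Op 1)$, as required.

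The main obstacle is the bookkeeping around the isomorphism $\Op 1 \iso 1$. Specifically, we must check that pulling back along it carries the wide subcategory $\opfibcart(1)$ onto all of $\opfibcart(\Op 1)$. Equivalently, wideness is invariant under the equivalence: a map of $\opfib(\Op 1)$ lies in $\opfibcart(\Op 1)$ iff its pullback lies in $\opfibcart(1)$. This step uses pseudofunctoriality together with fullness, and the fact that the pseudofunctor's action on the inverse isomorphism gives a quasi-inverse.
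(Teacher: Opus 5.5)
Your proposal is correct and rests on the same key step as the paper's proof: \Cref{ax:local-op} gives $\Op A \iso \vOp[1]{A} \in \opfibcart(1)$, and unwinding the definition of $\fib$ across $\Op 1 \iso 1$ then shows that $A \to 1$ is a fibration. The paper only writes out this inclusion $\opfib(1) \subseteq \fib(1)$, so your converse via $\Op \circ \Op = \id_\CC$ and your transport of $\opfibcart(1) = \opfib(1)$ along $1 \iso \Op 1$ correctly supply what it leaves implicit; the transport step needs $\opfibcart(-)$ to be a sub-pseudofunctor of $\opfib(-)$, which is the natural reading of \Cref{ax:opcartesian-map}. You can drop the first sentence of your converse paragraph (the forward argument applied to $\Op A$ does not show that $A$ is fibrant), since the corrected argument that follows it is the right one.
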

\begin{proof}
  Let $!_X : X \to 1$ be an opfibration.
  By \Cref{ax:local-op}, viewing $X$ as an object in $\opfibcart(1)$,
  we have $\Op{X} \iso \vOp[1]{X}$ is in $\opfibcart(1)$,
  making the unique map $\Op{X} \to 1$ an opfibration.
  It follows from \Cref{prop:op-preclan-morphism} that the unique map
  $\Op{\Op{X}} \to \Op{1}$ is a fibration.
  Since $\Op{} \circ \Op{} = \id_\CC$,
  this says that $X \to 1$ is a fibration.
\end{proof}

We will say that an object $X$ in $\CC$ is \emph{fibrant} if it belongs
to $\opfibcart(1)$.

\begin{definition}\label{def:ACM}
  An \emph{op structure} (for an algebraic categorical model) consists of the structures
  \[(\CC,\opfibcart,\Op,\vOp)\]
  satisfying the axioms
  \labelcref{ax:opfibration-preclan,ax:global-op,ax:exponentiability,,ax:opcartesian-map,ax:local-op}.
\end{definition}

\section*{Universes}
From now on, we fix an op structure $(\CC,\opfibcart,\Op,\vOp)$.
% Like in \Cref{def:elementary-universe},
\begin{definition}
We say that a morphism $u : U' \to U$ in $\CC$
is a \emph{universe} if for any map $A : \Gamma \to U$
there is a chosen pullback of $u$ along $A$
(cf. \Cref{def:elementary-universe}).
\end{definition}

\begin{axiom}\label{ax:universe}
  We have a universe $\ulow : \Ulow \to \U$
  such that $\U \to 1$ is fibrant,
  and $\ulow : \Ulow \to \U$ is an opfibration.
  We call $\ulow : \Ulow \to \U$ the \emph{universe of small opfibrations}.
\end{axiom}

Since $\U$ is fibrant (hence $\U \in \opfib(1) = \fib(1)$) and $\ulow$ is an opfibration,
$\Ulow$ is also fibrant.
Any opfibration is $\fib$-exponentiable
and $\U$ is fibrant,
so $(\ulow : \Ulow \to \U)$ is also a polynomial signature in the preclan
$(\CC, \fib)$.
\[
\begin{tikzcd}
	{\opfib(1) \ni} & \Ulow & \in \fib(1) \\
	{\opfib \ni} && {\in \Exp_\fib} \\
	{\opfib(1) \ni} & \U & {\in \fib(1)}
	\arrow["\ulow", from=1-2, to=3-2]
\end{tikzcd}
\]

For a map $A : \Gamma \to \U$ in $\CC$,
we denote the chosen pullback of $\ulow : \Ulow \to \U$ along
$A$ as $d_A : \Gamma \opext A \to \Gamma$, and refer to it as the 
\emph{opfibrant context extension}, or \emph{opextension} for short.
\[ \begin{tikzcd}
    {\Gamma \opext A} & \Ulow \\
    \Gamma & \U
    \arrow[from=1-1, to=1-2]
    \arrow["{d_A}"', from=1-1, to=2-1]
    \arrow["\lrcorner"{anchor=center, pos=0.125}, draw=none, from=1-1, to=2-2]
    \arrow["\ulow", from=1-2, to=2-2]
    \arrow["A"', from=2-1, to=2-2]
  \end{tikzcd}\]

\begin{definition}
  We write $\vupp : \Vupp \to \V$ for
  the image of $\ulow : \Ulow \to \U$ under the functor
  $\Op : \CC \to \CC$.
  \[
    \V := \Op{\U} \quad
    \Vupp := \Op{\Ulow} \quad
    \vupp := \Op{\ulow}
  \]
  % \[\vlow : \Vlow \to \V \quad \quad := \quad \quad
  % \Op{\ulow} : \Op{\Ulow} \to \Op{\U}\]
\end{definition}

It follows from \Cref{ax:global-op} and \Cref{ax:local-op} that
$\V$ is fibrant and $\vupp : \Vupp \to \V$ is an fibration.

As we will see in \Cref{ex:groupoidal-opfibrations,ex:discrete-opfibrations},
we might want several different universes at the same time,
and not just stacked up in a hierarchy.
To keep things simple, however,
we will only work with a single one 
(along with its various opposites, of course).

\section*{Small opfibrations and small fibrations}
Now that we are equipped with a universe of small opfibrations
$\ulow : \Ulow \to \U$,
we can consider the class of maps classified by the universe.
Consider the subclass of maps $\opfib_\U \subseteq \opfib$
consisting of those maps that are pullbacks of $\ulow$
along some map into $\U$.
Naturally, we call the maps in $\opfib_\U$ \emph{small opfibrations}.
\[
  \opfib_\U := \{ f : E \to B \st \begin{tikzcd}
	E & \Ulow \\
	B & \U
	\arrow["\exists", dashed, from=1-1, to=1-2]
	\arrow["f"', from=1-1, to=2-1]
	\arrow["\lrcorner"{anchor=center, pos=0.125}, draw=none, from=1-1, to=2-2]
	\arrow["\ulow", from=1-2, to=2-2]
	\arrow["\exists"', dashed, from=2-1, to=2-2]
\end{tikzcd}\}
\]
Dually, we have a class of \emph{small fibrations} $\fib_\V$,
which is the class of pullbacks of $\vupp : \Vupp \to \V$.

\begin{proposition}\label{prop:op-preclan-morphism-small}
  A map $f : E \to B$ is a small fibration if and only if
  $\Op{f} : \Op{E} \to \Op{B}$ is a small opfibration.
  Hence we have two preclan morphisms
  \[ \Op{} : (\CC, \opfib_\U) \to (\CC, \fib_\V)
  \quad \quad
  \Op{} : (\CC, \fib_\V) \to (\CC, \opfib_\U) \]
\end{proposition}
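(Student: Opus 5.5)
The argument rests on two facts. First, $\Op:\CC\to\CC$ is an involution, since $\Op\circ\Op=\id_\CC$ by \Cref{ax:global-op}. Second, $\Op$ preserves all limits, in particular pullback squares, because it is its own inverse. By definition $\vupp=\Op{\ulow}$, with $\V=\Op{\U}$ and $\Vupp=\Op{\Ulow}$.

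\emph{Forward direction.} Suppose $f:E\to B$ is a small fibration, i.e.\ a pullback of $\vupp$ along some $A:B\to\V$. I will apply $\Op$ to that pullback square. The result is a pullback square exhibiting $\Op{f}$ as a pullback of $\Op{\vupp}=\Op\Op{\ulow}=\ulow$ along $\Op{A}:\Op{B}\to\Op\V=\U$. Hence $\Op{f}\in\opfib_\U$.

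\emph{Converse.} Suppose $\Op{f}$ is a pullback of $\ulow$ along some $A':\Op{B}\to\U$. Applying $\Op$ again gives a pullback square exhibiting $\Op\Op{f}=f$ as a pullback of $\vupp$ along $\Op{A'}:B\to\V$. Hence $f\in\fib_\V$. The symmetric statement, that $f$ is a small opfibration iff $\Op f$ is a small fibration, follows by the same argument with the roles of $\ulow$ and $\vupp$ swapped, again using $\Op\Op=\id$.

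\emph{Preclan morphisms.} The first step is to note that $(\CC,\opfib_\U)$ and $(\CC,\fib_\V)$ are preclans. Pullback-stability is immediate from the definition as classes of pullbacks, and this is implicit in the statement, which speaks of preclans with these classes. If closure under composition and containment of isomorphisms are needed, I will take them as already established for the universe of small opfibrations and transport them along $\Op$.

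Granting this, the functor $\Op$ sends $\opfib_\U$-maps to $\fib_\V$-maps by the equivalence just proved. It preserves pullbacks of such maps because it preserves all pullbacks. So both functors are preclan morphisms, and they are mutually inverse since $\Op\circ\Op=\id$.

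The only delicate point is the strictness of $\Op\circ\Op=\id$, which is needed to identify $\Op\vupp$ with $\ulow$ on the nose. The axiom supplies exactly this equality, so I expect no real obstacle.
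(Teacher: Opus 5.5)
Your argument is correct and is the paper's proof written out. The paper just says that $\Op$ preserves pullbacks; combined with $\Op\circ\Op=\id_{\CC}$ (so $\Op{\vupp}=\ulow$ and $\Op{\V}=\U$), this transports classifying pullback squares in both directions exactly as you do. One correction: closure of $\opfib_{\U}$ under composition and isomorphisms is not yet established at this point (it is only postulated later, in \Cref{ax:small-preclan}). You do not need it, though, because the preclan-morphism conditions you check (sending class maps to class maps and preserving their pullbacks) make sense for any pullback-stable class, and your transport along $\Op$ shows that one class is a preclan exactly when the other is.
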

\begin{proof}
  This follows from the fact that $\Op$ preserves pullbacks.
\end{proof}

% The following axiom is trivial if one assumes the axiom of choice.
% when picking a pullback square when classifying a small (op)fibration.
In the $\Cat$ model (\Cref{ex:ACM-of-opfibrations}),
we could interpret $\opfib$ as the class of split opfibrations.
Unfortunately,
since we are treating $\opfib$ as a class of maps,
i.e. maps satisfying a property,
rather than maps equipped with structure, 
we will need to use the axiom of choice
to pick classifying maps of split opfibrations.
However, a careful setup of \Cref{sec:exponentiable}
and \Cref{sec:polynomials}
that replaces the class of maps $\opfib$ with a
(not necessarily full or faithful)
functor $\opfib \to \CC^\two$ should be possible.
We leave this technical solution to future work.
For now, let us use the following axiom to keep track of when we might need
the axiom of choice.

\begin{axiom}\label{ax:classifying-map}
  Every small opfibration $A : X \to \Gamma$ in $\opfib_\U$
  is equipped with a chosen \emph{classifying map}
  $\ulcorner A \urcorner : \Gamma \to \U$,
  so that $A : X \to \Gamma$ is isomorphic to the opextension
  $\Gamma \opext \ulcorner A \urcorner \to \Gamma$ in $\opfib(\Gamma)$.
  \[ \begin{tikzcd}
      X & {\Gamma \opext \ulcorner A \urcorner} & \Ulow \\
      & \Gamma & \U
      \arrow["\sim", from=1-1, to=1-2]
      \arrow["A"', from=1-1, to=2-2]
      \arrow[from=1-2, to=1-3]
      \arrow[from=1-2, to=2-2]
      \arrow["\lrcorner"{anchor=center, pos=0.125}, draw=none, from=1-2, to=2-3]
      \arrow["\ulow", from=1-3, to=2-3]
      \arrow["{\ulcorner A \urcorner}"', from=2-2, to=2-3]
    \end{tikzcd} \]
\end{axiom}

\section*{Vertically opposite universes}

% It is convenient to have notation for the vertical opposites of the two universes
% $\ulow : \Ulow \to \U$ and $\vlow : \Vlow \to \V$.
Viewing $\ulow : \Ulow \to \U$ as an object in $\opfibcart(\U)$,
we can apply $\vOp[\U]{} : \opfibcart(\U) \to \opfibcart(\U)$ to it,
obtaining an opfibration
\[\uupp : \Uupp \to \U\]
Dually, applying $\vOp[\V]{} : \fibcart(\V) \to \fibcart(\V)$ to the object
$\vupp : \Vupp \to \V$,
we have a fibration
% Viewing $\vlow : \Vlow \to \V$ as an object in $\fibcart(\V)$,
% we can apply  to it,
% obtaining a fibration 
\[\vlow : \Vlow \to \V\]

\begin{lemma}\label{lem:vupp-eq}
  $\Vlow = \Op{\Uupp}$ and $\vlow = \Op{\uupp} : \Vlow \to \V$.
\end{lemma}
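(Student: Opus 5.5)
The plan is to unfold both sides using the definitions just given and the defining composite of $\vOp{}$ on $\fibcart(-)$. By definition, $\vlow : \Vlow \to \V$ is $\vOp[\V]{}$ applied to $\vupp \in \fibcart(\V)$. The component $\vOp[\V]{} : \fibcart(\V) \to \fibcart(\V)$ is the composite
\[
\fibcart(\V) \xrightarrow{\Op / \V} \opfibcart(\Op \V) \xrightarrow{\vOp[\Op\V]{}} \opfibcart(\Op\V) \xrightarrow{\Op / \Op \V} \fibcart(\V).
\]
So I would compute the image of $\vupp$ stage by stage.

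First, $\Op/\V$ sends $\vupp = \Op{\ulow}$ to $\Op\Op{\ulow}$. By \Cref{ax:global-op}, $\Op\circ\Op = \id_\CC$ strictly, so this is $\ulow : \Ulow \to \U$. Here $\Op\V = \Op\Op\U = \U$ on the nose.

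Second, applying $\vOp[\Op\V]{} = \vOp[\U]{}$ gives, by definition, $\uupp : \Uupp \to \U$.

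Third, applying $\Op/\Op\V$ yields $\Op{\uupp} : \Op{\Uupp} \to \Op\U = \V$.

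Hence $\vlow = \Op{\uupp}$ and $\Vlow = \Op{\Uupp}$, as claimed.

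The only real subtlety is whether the equalities are strict or only up to isomorphism. The stated result uses $=$, which is legitimate because $\Op\circ\Op=\id_\CC$ holds strictly and the slice functors $\Op/X$ act by applying $\Op$ to underlying maps. The one place a non-strictness could creep in is the identification $\opfibcart(\Op\V)=\opfibcart(\U)$. This is an equality of categories since $\Op\V=\U$ literally, so no pseudofunctoriality isomorphism intervenes.

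I therefore expect no genuine obstacle; the proof is a direct unwinding of definitions. If strictness were doubted, the fallback is to read the statement as asserting that the defined object $\vlow$ is (by choice of representative) $\Op{\uupp}$, which is precisely what the computation above produces.
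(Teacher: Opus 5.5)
Your proposal is correct and is the same argument as the paper's proof. Both unfold $\vlow = \vOp[\V]{\vupp}$ through the three-stage definition of $\vOp{}$ on $\fibcart(-)$, use $\Op\circ\Op = \id_\CC$ to identify $\Op\vupp$ with $\ulow$ and $\Op\V$ with $\U$, and recognise $\vOp[\U]{\ulow}$ as $\uupp$. Your remarks on strictness agree with the paper, which also writes the chain as on-the-nose equalities.
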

\begin{proof}
  \[ \vlow = \vOp[\V]{\vupp}
  = \vOp[\V]{\Op \ulow}
  = \Op (\vOp[\U]{\Op \Op \ulow})
  = \Op (\vOp[\U]{\ulow})
  = \Op {\uupp}\]
\end{proof}

We write $\opfibcart_\U(X)$ for the wide subcategory of $\opfib_\U(X)$
consisting only of opcartesian maps.
\[ \begin{tikzcd}
	{\opfibcart_\U (X)} & {\opfibcart (X)} \\
	{\opfib_\U (X)} & {\opfib (X)}
	\arrow[hook, from=1-1, to=1-2]
	\arrow[from=1-1, to=2-1]
	\arrow["\lrcorner"{anchor=center, pos=0.125}, draw=none, from=1-1, to=2-2]
	\arrow[from=1-2, to=2-2]
	\arrow[hook, from=2-1, to=2-2]
\end{tikzcd} \]

\begin{proposition}\label{prop:vertical-opposite-small}
  The following are equivalent
  \begin{enumerate}
    \item $\vOp[X] : \opfibcart(X) \to \opfibcart(X)$ preserves $\opfibcart_\U(X)$,
    for every $X$ in $\CC$.
    \[ \begin{tikzcd}
      {\opfibcart_\U(X)} & {\opfibcart_\U(X)} \\
      {\opfibcart(X)} & {\opfibcart(X)}
      \arrow[dashed, from=1-1, to=1-2]
      \arrow[hook, from=1-1, to=2-1]
      \arrow[hook, from=1-2, to=2-2]
      \arrow["{\vOp[X]{}}"', from=2-1, to=2-2]
    \end{tikzcd} \]
    \item $\vOp[\U] : \opfibcart(\U) \to \opfibcart(\U)$ preserves $\opfibcart_\U(\U)$.
    \item $\uupp : \Uupp \to \U$ is a small opfibration.
    \item Each $\vOp[X] : \fibcart(X) \to \fibcart(X)$ preserves $\fibcart_\V(X)$.
    \item $\vOp[\V] : \fibcart(\V) \to \fibcart(\V)$ preserves $\fibcart_\V(\V)$.
    \item $\vupp : \Vupp \to \V$ is a small fibration.
  \end{enumerate}
\end{proposition}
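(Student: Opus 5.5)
I will prove the cycle $(1)\Rightarrow(2)\Rightarrow(3)\Rightarrow(1)$ on the opfibration side, then transport everything across $\Op$ to get $(1)\Leftrightarrow(4)$, $(2)\Leftrightarrow(5)$ and $(3)\Leftrightarrow(6)$.

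\emph{The cycle $(1)\Rightarrow(2)\Rightarrow(3)\Rightarrow(1)$.}
The implication $(1)\Rightarrow(2)$ is the special case $X = \U$.

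For $(2)\Rightarrow(3)$, the point is that $\ulow$ itself is a small opfibration: it is the pullback of itself along $\id_\U$. So $\ulow \in \opfibcart_\U(\U)$. By (2), $\vOp[\U]{\ulow} = \uupp$ also lies in $\opfibcart_\U(\U)$, which says exactly that $\uupp$ is a small opfibration.

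For $(3)\Rightarrow(1)$, let $A \to X$ be in $\opfibcart_\U(X)$. By \Cref{ax:classifying-map} it is isomorphic in $\opfib(X)$ to $X\opext\ulcorner A\urcorner = \ulcorner A\urcorner^*\ulow$. Two facts are needed here.
\begin{itemize}
\item The isomorphism lies in $\opfibcart(X)$. Since $\opfibcart(X)$ is a wide subcategory, I would argue that isomorphisms are always opcartesian; otherwise I would add this as a standing convention.
\item $\vOp{}$ is a pseudotransformation $\opfibcart(-)\to\opfibcart(-)$ (\Cref{ax:local-op}). Hence
\[\vOp[X]{A}\iso \vOp[X]{\ulcorner A\urcorner^*\ulow}\iso \ulcorner A\urcorner^*\vOp[\U]{\ulow} = \ulcorner A\urcorner^*\uupp.\]
\end{itemize}
By (3), $\uupp$ is a pullback of $\ulow$, so the pasted pullback square shows $\ulcorner A\urcorner^*\uupp$ is a small opfibration. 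Being small means being a pullback of $\ulow$, and that is invariant under isomorphism over $X$, so $\vOp[X]{A} \in \opfib_\U(X)$.

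This handles objects; on morphisms, $\opfibcart_\U(X)\to\opfibcart(X)$ is full by its definition as a pullback of wide subcategories. So the restriction of $\vOp[X]{}$ lands in $\opfibcart_\U(X)$, and preserving the full subcategory is purely a condition on objects.

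\emph{Transport to the fibration side.}
I would use the isomorphisms $\Op/X : \fibcart(X)\iso\opfibcart(\Op X)$, and the definition of $\vOp[X]{}$ on $\fibcart(X)$ as the conjugate of $\vOp[\Op X]{}$ by these isomorphisms.
\begin{itemize}
\item By \Cref{prop:op-preclan-morphism-small}, $\Op/X$ restricts to an isomorphism $\fibcart_\V(X)\iso\opfibcart_\U(\Op X)$. Since $\Op$ is bijective on objects, $(1)\Leftrightarrow(4)$ follows immediately.
\item Specialising to $X = \V$ with $\Op\V = \U$ gives $(2)\Leftrightarrow(5)$.
\item For $(3)\Leftrightarrow(6)$, I would use \Cref{lem:vupp-eq}, which gives $\vlow = \Op\uupp$, together with \Cref{prop:op-preclan-morphism-small}: $\uupp$ is a small opfibration iff $\Op\uupp = \vlow$ is a small fibration. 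Item (6) as printed says $\vupp$, which is trivially small, so I read it as $\vlow$, the natural dual of (3).
\end{itemize}

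The main obstacle is $(3)\Rightarrow(1)$: commuting $\vOp{}$ with reindexing only up to coherent isomorphism, and checking that every isomorphism used lies in the opcartesian subcategories.
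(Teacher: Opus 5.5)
Your proof is correct and follows the paper's argument: the cycle $(1)\Rightarrow(2)\Rightarrow(3)\Rightarrow(1)$ uses classifying maps and pseudonaturality of $\vOp{}$, and the bridge $(3)\Leftrightarrow(6)$ uses \Cref{lem:vupp-eq} and \Cref{prop:op-preclan-morphism-small}, and you are right that (6) must be read with $\vlow$ rather than the trivially small $\vupp$. The differences are minor: the paper obtains $(4)\Leftrightarrow(5)\Leftrightarrow(6)$ ``by duality'' instead of matching each item to its dual under $\Op/X$ as you do, and it silently assumes that the classifying isomorphism lies in $\opfibcart(X)$, a point you correctly flag.
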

\begin{proof}
  (1) $\Rightarrow$ (2) $\Rightarrow$ (3) is clear.
  We prove (3) $\Rightarrow$ (1)
  Suppose $A$ is an object in $\opfibcart(X)$.
  By pseudonaturality of $\vOp[X]{}$,
  \[ \vOp[X]{A} \iso \vOp[X]{(\ulcorner A \urcorner)^* \ulow}
    \iso (\ulcorner A \urcorner)^* \vOp[\U]{\ulow}
    = (\ulcorner A \urcorner)^* \uupp \]
  Hence $\vOp[X]{A}$ is a pullback of a small opfibration,
  which is small.

  % So far we have shown that $1. \Leftrightarrow 2. \Leftrightarrow 3.$
  By duality, we also have (4) $\Leftrightarrow$ (5) $\Leftrightarrow$ (6).
  It remains to check that (3) $\Leftrightarrow$ (6).
  Indeed, this follows from 
  \Cref{prop:op-preclan-morphism-small} and \Cref{lem:vupp-eq}.
\end{proof}

\begin{axiom}\label{ax:vertical-opposite-small}
  The equivalent conditions of
  \Cref{prop:vertical-opposite-small} hold.
\end{axiom}

We can view the pullbacks of the vertical opposite of a general small
opfibration as follows.
\[ \begin{tikzcd}
	{\vOp[\Gamma]{(\Gamma \opext A)}} & {\Gamma \opext A^\op} & \Uupp & \Ulow \\
	& \Gamma & \U & \U
  \arrow["\iso"{description}, draw=none, from=1-1, to=1-2]
	\arrow[from=1-2, to=1-3]
	\arrow[from=1-2, to=2-2]
	\arrow["\lrcorner"{anchor=center, pos=0.125}, draw=none, from=1-2, to=2-3]
	\arrow[from=1-3, to=1-4]
	\arrow["\uupp"', from=1-3, to=2-3]
	\arrow["\lrcorner"{anchor=center, pos=0.125}, draw=none, from=1-3, to=2-4]
	\arrow["\ulow", from=1-4, to=2-4]
	\arrow["A", from=2-2, to=2-3]
	\arrow["{A^\op}"', bend right, from=2-2, to=2-4]
	\arrow["{\ulcorner \uupp \urcorner}", from=2-3, to=2-4]
\end{tikzcd} \]

We call $\vlow : \Vlow \to \V$ the \emph{universal small fibration}.
For a map $A : \Gamma \to \V$,
we have a chosen pullback of $\vlow : \Vlow \to \V$ along $A$
given by
\[\Gamma \ext A := \Op{(\Op{\Gamma} \opext (\ulcorner \uupp \urcorner \circ \Op{A}))}\]
\[ \begin{tikzcd}
    {\Op{\Gamma} \opext (\ulcorner \uupp \urcorner \circ \Op{A})} & \Uupp & \Ulow \\
    {\Op{\Gamma}} & \U & \U
    \arrow[from=1-1, to=1-2]
    \arrow[from=1-1, to=2-1]
    \arrow["\lrcorner"{anchor=center, pos=0.125}, draw=none, from=1-1, to=2-2]
    \arrow[from=1-2, to=1-3]
    \arrow["\uupp"', from=1-2, to=2-2]
    \arrow["\lrcorner"{anchor=center, pos=0.125}, draw=none, from=1-2, to=2-3]
    \arrow["\ulow", from=1-3, to=2-3]
    \arrow["{\Op{A}}"', from=2-1, to=2-2]
    \arrow["{\ulcorner \uupp \urcorner}"', from=2-2, to=2-3]
  \end{tikzcd}
  \quad \rightsquigarrow \quad
  \begin{tikzcd}
    {\Gamma \ext A} & \Vlow & \Vupp \\
    \Gamma & \V & \V
    \arrow[from=1-1, to=1-2]
    \arrow["{d_A}"', from=1-1, to=2-1]
    \arrow["\lrcorner"{anchor=center, pos=0.125}, draw=none, from=1-1, to=2-2]
    \arrow[from=1-2, to=1-3]
    \arrow["\vlow"', from=1-2, to=2-2]
    \arrow["\lrcorner"{anchor=center, pos=0.125}, draw=none, from=1-2, to=2-3]
    \arrow["\vupp", from=1-3, to=2-3]
    \arrow["A"', from=2-1, to=2-2]
    \arrow[from=2-2, to=2-3]
  \end{tikzcd}\]
We refer to $\Gamma \ext A$ as the \emph{fibrant context extension}.

% We can view the pullbacks of the vertical opposite of a general small
% (op)fibration as follows.
% \[ \begin{tikzcd}
% 	{\vOp[\Gamma]{(\Gamma \opext A)}} & {\Gamma \opext A^\op} & \Uupp & \Ulow \\
% 	& \Gamma & \U & \U
%   \arrow["\iso"{description}, draw=none, from=1-1, to=1-2]
% 	\arrow[from=1-2, to=1-3]
% 	\arrow[from=1-2, to=2-2]
% 	\arrow["\lrcorner"{anchor=center, pos=0.125}, draw=none, from=1-2, to=2-3]
% 	\arrow[from=1-3, to=1-4]
% 	\arrow["\uupp"', from=1-3, to=2-3]
% 	\arrow["\lrcorner"{anchor=center, pos=0.125}, draw=none, from=1-3, to=2-4]
% 	\arrow["\ulow", from=1-4, to=2-4]
% 	\arrow["A", from=2-2, to=2-3]
% 	\arrow["{A^\op}"', bend right, from=2-2, to=2-4]
% 	\arrow["{\ulcorner \uupp \urcorner}", from=2-3, to=2-4]
% \end{tikzcd} \]

% \[\begin{tikzcd}
% 	{\vOp[\Gamma]{(\Gamma \ext A)}} & {\Gamma \ext A^\op} & \Vupp & \Vlow \\
% 	& \Gamma & \V & \V
% 	\arrow["\iso"{description}, draw=none, from=1-1, to=1-2]
% 	\arrow[from=1-2, to=1-3]
% 	\arrow[from=1-2, to=2-2]
% 	\arrow["\lrcorner"{anchor=center, pos=0.125}, draw=none, from=1-2, to=2-3]
% 	\arrow[from=1-3, to=1-4]
% 	\arrow["\vupp"', from=1-3, to=2-3]
% 	\arrow["\lrcorner"{anchor=center, pos=0.125}, draw=none, from=1-3, to=2-4]
% 	\arrow["\vlow", from=1-4, to=2-4]
% 	\arrow["A", from=2-2, to=2-3]
% 	\arrow["{{A^\op}}"', bend right, from=2-2, to=2-4]
% 	\arrow["{{\ulcorner \vupp \urcorner}}", from=2-3, to=2-4]
% \end{tikzcd}
% \]

Suppose $A : X \to \Gamma$ in $\fib_\V$ is a small fibration.
Then $\Op{A} : \Op{X} \to \Op{\Gamma}$ is a small opfibration
and hence has a classifying map 
\[
\ulcorner \Op{A} \urcorner : \Op{\Gamma \to \U}
\]
We define the classifying map for $A$ by
\[ \ulcorner A \urcorner := \Op{(\ulcorner \uupp \urcorner \circ \ulcorner \Op{A} \urcorner)} : \Gamma \to \V\]
It follows that $A : X \to \Gamma$ is isomorphic to the extension
$\Gamma \ext \ulcorner A \urcorner \to \Gamma$ in $\fib(\Gamma)$.
\[ \begin{tikzcd}
    X & {\Gamma \ext \ulcorner A \urcorner} & \Vlow \\
    & \Gamma & \V
    \arrow["\sim", from=1-1, to=1-2]
    \arrow["A"', from=1-1, to=2-2]
    \arrow[from=1-2, to=1-3]
    \arrow[from=1-2, to=2-2]
    \arrow["\lrcorner"{anchor=center, pos=0.125}, draw=none, from=1-2, to=2-3]
    \arrow["\vlow", from=1-3, to=2-3]
    \arrow["{\ulcorner A \urcorner}"', from=2-2, to=2-3]
  \end{tikzcd} \]

\begin{definition}\label{def:ACM-universe}
  An algebraic categorical model (ACM)
  \[(\CC,\opfibcart,\Op,\vOp,\ulow)\]
  consists of an op structure (\Cref{def:ACM})
  and a universe $\ulow : \Ulow \to \U$ satisfying
  Axioms \labelcref{ax:universe,ax:classifying-map,ax:vertical-opposite-small}.
\end{definition}

\section*{$\Sigma$-types}
We fix an ACM $(\CC,\opfibcart,\Op,\vOp,\ulow)$.
Recall the definition of polynomial composition from \Cref{def:pcomp}.
We consider the polynomial composition of $(\ulow : \Ulow \to \U)$
in $\Poly_{(\CC,\fib)}$ with itself,
noting that $\U$ is fibrant, and $\ulow$ is $\fib$-exponentiable.
For notational convenience, we write $Q$ for the domain of the composition.
\[\begin{tikzcd}
	\Ulow & Q & \\
	\U & \fstProj \times_\U \ulow & \Ulow \\
	& {P_\ulow \U} & \U
	\arrow["\ulow"', from=1-1, to=2-1]
	\arrow[from=1-2, to=1-1]
	\arrow["\lrcorner"{anchor=center, pos=0.125, rotate=-90}, draw=none, from=1-2, to=2-1]
	\arrow[from=1-2, to=2-2]
	\arrow["{{\ulow \pcomp \ulow}}"{description, pos=0.2}, shift left=5, bend left, from=1-2, to=3-2]
	\arrow["{{{\sndProj}}}", from=2-2, to=2-1]
	\arrow[from=2-2, to=2-3]
	\arrow[from=2-2, to=3-2]
	\arrow["\lrcorner"{anchor=center, pos=0.125}, draw=none, from=2-2, to=3-3]
	\arrow["\ulow", from=2-3, to=3-3]
	\arrow["{{{\fstProj}}}"', from=3-2, to=3-3]
\end{tikzcd}\]
Similarly, we have the composition $\vlow \pcomp \vlow : Q' \to P_\vlow \V$.

\begin{theorem}\label{thm:sigma-equiv-defs}
  The following are equivalent (propositions)
  \begin{enumerate}
    \item The class of maps $\opfib_\U$ is closed under composition.
    \item $\ulow \pcomp \ulow : Q \to P_\ulow \U$ is a small opfibration.
    \item The class of maps $\fib_\V$ is closed under composition.
    \item $\vlow \pcomp \vlow : Q' \to P_\vlow \V$ is a small fibration.
  \end{enumerate}
  When these conditions hold,
  \Cref{ax:classifying-map} provides maps
  $\Sigma : P_\ulow \U \to \U$
  and $\pair : Q \to \Ulow$ forming a pullback square as follows (and dually).
  \begin{equation}\label{eq:fib-algebraic-sigma}
    \begin{tikzcd}
      Q & \Ulow \\
      {P_\ulow \U} & \U
      \arrow["{\pair}", from=1-1, to=1-2]
      \arrow["{{\ulow \pcomp \ulow}}"', from=1-1, to=2-1]
      \arrow["\lrcorner"{anchor=center, pos=0.125}, draw=none, from=1-1, to=2-2]
      \arrow["\ulow", from=1-2, to=2-2]
      \arrow["{\Sigma}"', from=2-1, to=2-2]
    \end{tikzcd}
    \quad 
    \begin{tikzcd}
      Q' & \Vlow \\
      {P_\vlow \V} & \V
      \arrow["\pair", from=1-1, to=1-2]
      \arrow["{{\vlow \pcomp \vlow}}"', from=1-1, to=2-1]
      \arrow["\lrcorner"{anchor=center, pos=0.125}, draw=none, from=1-1, to=2-2]
      \arrow["\vlow", from=1-2, to=2-2]
      \arrow["\Sigma"', from=2-1, to=2-2]
    \end{tikzcd}
  \end{equation}
\end{theorem}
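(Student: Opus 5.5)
The plan is to prove (1) $\Leftrightarrow$ (2) directly, deduce (3) $\Leftrightarrow$ (4) by duality through $\Op$, and then connect the two halves with \Cref{prop:op-preclan-morphism-small}. The final clause about $\Sigma$ and $\pair$ will follow from \Cref{ax:classifying-map} alone.

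For (1) $\Rightarrow$ (2): unfolding \Cref{def:pcomp}, $\ulow \pcomp \ulow$ is the composite of two maps. The first is $Q \to \fstProj \times_\U \ulow$, a pullback of $\ulow$ along $\sndProj$. The second is $\fstProj \times_\U \ulow \to P_\ulow \U$, a pullback of $\ulow$ along $\fstProj$. Both are small opfibrations, since $\opfib_\U$ is by definition stable under pullback (pullbacks of pullbacks of $\ulow$ are pullbacks of $\ulow$). Closure under composition then makes the composite small.

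For (2) $\Rightarrow$ (1): take small opfibrations $g : Z \to Y$ and $f : Y \to X$. Using \Cref{ax:classifying-map}, $f$ has a classifying map $\ulcorner f \urcorner : X \to \U$, and $g$ has a classifying map $\ulcorner g \urcorner : Y \cong X \opext \ulcorner f \urcorner \to \U$. By the universal property of polynomials (\Cref{prop:poly-up}), applied to the signature $\ulow$ in $(\CC,\fib)$, the pair $(\ulcorner f \urcorner, \ulcorner g \urcorner)$ corresponds to a map $t : X \to P_\ulow \U$. The key check is that pulling $\ulow \pcomp \ulow$ back along $t$ recovers $f \circ g$. This should follow from the factorisation diagram in \Cref{prop:poly-up} together with the two-pullbacks lemma. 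Here $t \times_\U f$ identifies with $Y$ over $\fstProj \times_\U \ulow$, and pulling back further along $\sndProj$ recovers $Z$. Then $f \circ g$ is a pullback of a small opfibration, hence small. I expect this identification to be the main obstacle. One subtlety is that $X$ need not be fibrant, so \Cref{prop:poly-up} must be applied with $\Gamma = X$ arbitrary; this is allowed, since naturality there is in any $\Gamma \in \CC^\op$. One should also note that $P_\ulow$ is formed in the preclan $(\CC,\fib)$, whereas $\ulow$ is an $\opfib$-map. This is harmless, because the pullbacks involved exist by pullback-stability of $\opfib$.

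For the duality step, (3) $\Leftrightarrow$ (4) is the same argument with $\vlow$ in place of $\ulow$, working in $\Poly_{(\CC,\opfib)}$. For (1) $\Leftrightarrow$ (3), \Cref{prop:op-preclan-morphism-small} states that $\Op$ exchanges $\opfib_\U$ and $\fib_\V$. Since $\Op$ is a self-inverse functor, it preserves and reflects composites, so closure of one class under composition transfers to the other.

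Finally, assuming (2), \Cref{ax:classifying-map} gives $\Sigma := \ulcorner \ulow \pcomp \ulow \urcorner$. Composing the isomorphism $Q \cong P_\ulow\U \opext \Sigma$ with the projection to $\Ulow$ yields $\pair$, and the resulting square is a pullback by construction. The fibration case uses the classifying maps for small fibrations constructed just before \Cref{def:ACM-universe}.
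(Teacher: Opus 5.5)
Your proposal is correct and follows the paper's proof essentially step for step. You get (1)$\Rightarrow$(2) because $\ulow \pcomp \ulow$ is a composite of two pullbacks of $\ulow$, and (2)$\Rightarrow$(1) by pulling $\ulow \pcomp \ulow$ back along the map into $P_\ulow \U$ that \Cref{prop:poly-up} produces from the two classifying maps; (3)$\Leftrightarrow$(4) is the same argument for $\vlow$, (1)$\Leftrightarrow$(3) comes from \Cref{prop:op-preclan-morphism-small}, and your spelled-out two-pullbacks identification just fills in what the paper leaves implicit.
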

\begin{proof}
  (1) $\Rightarrow$ (2) is immediate from the definition of $\ulow \pcomp \ulow$.
  (1) $\Leftrightarrow$ (3) is an immediate corollary of
  \Cref{prop:op-preclan-morphism-small}.
  (3) $\Leftrightarrow$ (4) is dual to (1) $\Leftrightarrow$ (2).
  Thus it suffices to prove (2) $\Rightarrow$ (1).
  
  If $d_C : C \to B$ and $d_B : B \to A$
  are two small opfibrations
  then \Cref{ax:classifying-map} provides classifying maps
  \[ \begin{tikzcd}
    \Ulow & C & \\
    \U & B & \Ulow \\
    & A & \U
    \arrow["\ulow"', from=1-1, to=2-1]
    \arrow["c"', from=1-2, to=1-1]
    \arrow["\lrcorner"{anchor=center, pos=0.125, rotate=-90}, draw=none, from=1-2, to=2-1]
    \arrow[from=1-2, to=2-2]
    \arrow["{\ulcorner C \urcorner}", from=2-2, to=2-1]
    \arrow["b", from=2-2, to=2-3]
    \arrow[from=2-2, to=3-2]
    \arrow["\lrcorner"{anchor=center, pos=0.125}, draw=none, from=2-2, to=3-3]
    \arrow["\ulow", from=2-3, to=3-3]
    \arrow["{\ulcorner B \urcorner}"', from=3-2, to=3-3]
  \end{tikzcd} \]
  By applying the universal property of polynomials from
  \Cref{prop:poly-up},
  we have that the composition $d_B \circ d_A : C \to A$ 
  is a pullback of $\ulow \pcomp \ulow$.
  Hence $d_B \circ d_A$ is also a small opfibration.
  \[\begin{tikzcd}
    C & Q \\
    B & {\fstProj \times_\U \ulow} \\
    A & {P_\ulow \U}
    \arrow["{(d_C, c)}", from=1-1, to=1-2]
    \arrow["{d_C}"', from=1-1, to=2-1]
    \arrow["\lrcorner"{anchor=center, pos=0.125}, draw=none, from=1-1, to=2-2]
    \arrow[from=1-2, to=2-2]
    \arrow["{(d_B, b)}", from=2-1, to=2-2]
    \arrow["{d_B}"', from=2-1, to=3-1]
    \arrow["\lrcorner"{anchor=center, pos=0.125}, draw=none, from=2-1, to=3-2]
    \arrow[from=2-2, to=3-2]
    \arrow["{({\ulcorner B \urcorner},{\ulcorner C \urcorner})}"', from=3-1, to=3-2]
  \end{tikzcd}\]
  % Thus $\ulow \pcomp \ulow : Q \to P_\ulow \U$ is classified by
  % $\ulow : \Ulow \to \U$.
  % \Cref{ax:classifying-map} provides maps $\Sigma : P_\ulow \U \to \U$
  % and $\pair : Q \to \Ulow$ forming a pullback square as follows.
\end{proof}
% \Cref{def:vert-cart-ofs} describes the $(\vert,\cart)$ factorisation system on
% $\Poly_{(\CC,\fib)}$.
Like in the theory of Martin-L\"of algebras \cite{awodey2025},
\cref{eq:fib-algebraic-sigma}
is a cartesian morphism of polynomial signatures in ${(\CC,\fib)}$.
In this sense,
the conditions of the theorem say that
the universe $\ulow : \Ulow \to \U$ has $\fib$-algebraic $\Sigma$-types
and the universe $\vlow : \Vlow \to \V$ has $\opfib$-algebraic $\Sigma$-types.

A similar analysis of $\Unit$-types can be given,
which we do not spell out.
We are led to the following axiom.

\begin{axiom}\label{ax:small-preclan}
  The class of maps $\opfib_\U$ is a preclan.
\end{axiom}
It follows that the equivalent conditions of \Cref{thm:sigma-equiv-defs}
hold and the class of maps $\fib_\V$ is a preclan.

In the $\Cat$ model (\Cref{ex:ACM-of-opfibrations}),
we will see that the $\Sigma$-type formation rule
is exactly the \emph{Grothendieck construction} operation.

\begin{definition}\label{def:ACM-universe-sigma}
  When an ACM also satisfies \Cref{ax:small-preclan},
  then we say that the universe admits $\Unit$-types and $\Sigma$-types.
\end{definition}

\section*{$\Pi$-types} 
% \label{sec:cat-types-pi-types}
We fix an ACM $(\CC,\opfibcart,\Op,\vOp,\ulow)$.
Consider the polynomial functor $P_\vlow : \opfib(1) \to \opfib(1)$
for the signature $(\vlow : \Vlow \to \V)$
in $\Poly_{(\CC,\opfib)}$.
We apply $P_\vlow$ to the morphism $\ulow : \Ulow \to \U$ in $\opfib(1)$.
\[ P_\vlow \ulow : P_\vlow \Ulow \to P_\vlow \U\]

\begin{theorem}\label{thm:pi-equiv-defs}
  The following are equivalent (propositions)
  \begin{enumerate}
    \item Small fibrations are small-opfibration-exponentiable. \[\fib_\V \subseteq \Exp_{\opfib_\U}\]
    \item $P_\vlow \ulow : P_\vlow \Ulow \to P_\vlow \U$ is a small opfibration.
    \item Small opfibrations are small-fibration-exponentiable. \[\opfib_\U \subseteq \Exp_{\fib_\V}\]
    \item $P_\ulow \vlow : P_\ulow \Vlow \to P_\ulow \V$ is a small fibration.
    \item $(\Cat,\opfib_\U, \fib_\V)$ is a $\tau$-clan.
  \end{enumerate}
  When these conditions hold,
  \Cref{ax:classifying-map} provides maps
  $\Pi : P_\vlow \U \to \U$ and $\lam : P_\vlow \Ulow \to \Ulow$
  forming the following pullback square (and dually).
\begin{equation}\label{eq:fib-algebraic-pi}
\begin{tikzcd}
	{P_\vlow \Ulow} & \Ulow \\
	{P_\vlow \U} & \U
	\arrow["\lam", from=1-1, to=1-2]
	\arrow["{P_\vlow \ulow}"', from=1-1, to=2-1]
	\arrow["\lrcorner"{anchor=center, pos=0.125}, draw=none, from=1-1, to=2-2]
	\arrow["\ulow", from=1-2, to=2-2]
	\arrow["\Pi"', from=2-1, to=2-2]
\end{tikzcd}
\quad \quad
\begin{tikzcd}
	{P_\ulow \Vlow} & \Vlow \\
	{P_\ulow \V} & \V
	\arrow["\lam", from=1-1, to=1-2]
	\arrow["{P_\ulow \vlow}"', from=1-1, to=2-1]
	\arrow["\lrcorner"{anchor=center, pos=0.125}, draw=none, from=1-1, to=2-2]
	\arrow["\vlow", from=1-2, to=2-2]
	\arrow["\Pi"', from=2-1, to=2-2]
\end{tikzcd}
\end{equation}
\end{theorem}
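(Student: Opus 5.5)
I will mirror the proof of \Cref{thm:sigma-equiv-defs}: first the duality equivalences, then the generic-to-general step, and finally the pullback square from \Cref{ax:classifying-map}.

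\emph{Duality.} Since $\Op$ is a self-inverse functor preserving pullbacks, \Cref{prop:op-preclan-morphism-small} says it interchanges $\fib_\V$ and $\opfib_\U$. Exponentiability is defined by pullback functors and partial right adjoints, so as in \Cref{prop:exponentiability-opposite} the slice isomorphisms $\Op/X$ transport a partial right adjoint for $f^*$ on $\opfib_\U(-)$ to one for $(\Op f)^*$ on $\fib_\V(\Op -)$. This gives (1) $\Leftrightarrow$ (3). Both are equivalent to (5) by \Cref{def:double-preclan}, since \Cref{ax:small-preclan} makes both classes preclans. For (2) $\Leftrightarrow$ (4), I would compute $\Op(P_\vlow \ulow)$. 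By \Cref{lem:vupp-eq}, $\Op\vlow=\uupp$, and $\Op$ carries the polynomial construction to itself, so $\Op(P_\vlow \ulow)\iso P_{\uupp}\vupp$. By \Cref{ax:vertical-opposite-small}, $\uupp$ is a pullback of $\ulow$ along $\ulcorner\uupp\urcorner$, and $\vupp$ is a pullback of $\vlow$ (both classify the same small fibrations up to $\vOp$). Using the cartesian/vertical natural transformations of \Cref{def:cart-trans}, $P_{\uupp}\vupp$ is then a pullback of $P_\ulow\vlow$, and conversely. Hence (2) $\Leftrightarrow$ (4). If this symmetric bookkeeping gets awkward, I would instead prove (3) $\Leftrightarrow$ (4) directly by dualising the argument below.

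\emph{Main step, (1) $\Rightarrow$ (2) and (2) $\Rightarrow$ (1).} For (1) $\Rightarrow$ (2), take $\SS=\opfib_\U\subseteq\RR=\opfib$. Here $\vlow$ is $\opfib$-exponentiable by \Cref{ax:exponentiability}, and it is $\opfib_\U$-exponentiable by (1). \Cref{lem:polynomial-functor-preserves-maps} then shows that $P_\vlow$ preserves $\opfib_\U$-maps, so $P_\vlow\ulow$ is small.

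For (2) $\Rightarrow$ (1), let $f:Y\to X$ be a small fibration, and let $g:Z\to Y$ be a small opfibration. Using \Cref{ax:classifying-map}, write $f$ as the pullback of $\vlow$ along $\ulcorner f\urcorner:X\to\V$, and $g$ as the pullback of $\ulow$ along $\ulcorner g\urcorner$. By \Cref{prop:poly-up}, the pair $(\ulcorner f\urcorner,\ulcorner g\urcorner)$ gives a map $X\to P_\vlow\U$. I claim the pushforward $f_*g$ is the pullback of $P_\vlow\ulow$ along this map, so it is small.

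To verify the claim, compare universal properties. Maps over $X$ into that pullback are, by \Cref{prop:poly-up} applied to $P_\vlow\Ulow$, exactly lifts of $g$ along $f^*(-)$. So the pullback represents $\Hom_{\CC/Y}(f^*(-),g)$ on $\CC/X$, which is the partial right adjoint universal property. Since small fibrations are pullback-stable, \Cref{lem:stable-class-of-exponentiable-maps} then yields $\fib_\V\subseteq\Exp_{\opfib_\U}$.

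I expect this representability check to be the main obstacle. The universal property in \Cref{prop:poly-up} is stated naturally only for $\Gamma\in\CC$ and $X\in\RR(1)$, so I would need to extend it to generalized elements over $X$ using pullback stability (the Beck--Chevalley condition). I would also need to make sure the resulting map is a genuine $\opfib$-object.

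\emph{The pullback square.} Finally, given (2), \Cref{ax:classifying-map} supplies $\Pi:=\ulcorner P_\vlow\ulow\urcorner$ and the comparison map $\lam$, which together form \eqref{eq:fib-algebraic-pi}. The dual square follows from (4) in the same way.
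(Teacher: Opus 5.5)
Your proposal is correct and follows the paper's proof. The equivalences (1)$\Leftrightarrow$(3)$\Leftrightarrow$(5) come from the $\Op$-transport argument of \Cref{prop:exponentiability-opposite}, (1)$\Rightarrow$(2) from \Cref{lem:polynomial-functor-preserves-maps} with $\SS=\opfib_\U\subseteq\RR=\opfib$, and (2)$\Rightarrow$(1) from \Cref{lem:stable-class-of-exponentiable-maps}, realising the pushforward as the pullback $D$ of $P_\vlow\ulow$ along $(\ulcorner f\urcorner,\ulcorner g\urcorner)$. Your direct treatment of (2)$\Leftrightarrow$(4) is a harmless variant of the paper's appeal to duality for (3)$\Leftrightarrow$(4).

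The obstacle you anticipate does not arise, because \Cref{prop:poly-up} is already natural in an arbitrary $\Gamma\in\CC$. Taking $\Gamma=W$ for each $h:W\to X$ gives $\Hom_{\CC/X}(W,D)\iso\Hom_{\CC/Y}(f^*W,Z)$ naturally in $W$. Moreover, $D\to X$ is a small opfibration simply because it is a pullback of $P_\vlow\ulow$.
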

\begin{proof}
  (1) $\Leftrightarrow$ (3) $\Leftrightarrow$ (5) is the same as \Cref{prop:exponentiability-opposite}.
  (1) $\Leftrightarrow$ (2) and (3) $\Leftrightarrow$ (4) are dual,
  so we just show (1) $\Leftrightarrow$ (2).
  (1) $\Rightarrow$ (2) follows from viewing
  $(\vlow : \Vlow \to \V)$ as
  a signature in $\Poly_{(\CC,\opfib)}$
  and applying \Cref{lem:polynomial-functor-preserves-maps}
  with $\SS = \opfib_\U$ and $\RR = \opfib$.

  We sketch (2) $\Rightarrow$ (1).
  By \Cref{lem:stable-class-of-exponentiable-maps},
  it suffices to show that for a small opfibration $d_C : C \to B$ and
  a small fibration $d_B : B \to A$,
  there is a small opfibration $d_D : D \to A$
  satisfying the universal property of the pushforward of $d_C$ along $d_B$.
  Again, \Cref{ax:classifying-map} provides classifying maps
  \[\begin{tikzcd}
    C & \Vlow && B & \Ulow \\
    B & \V && A & \U
    \arrow[from=1-1, to=1-2]
    \arrow[from=1-1, to=2-1]
    \arrow["\lrcorner"{anchor=center, pos=0.125}, draw=none, from=1-1, to=2-2]
    \arrow["\vlow", from=1-2, to=2-2]
    \arrow[from=1-4, to=1-5]
    \arrow[from=1-4, to=2-4]
    \arrow["\lrcorner"{anchor=center, pos=0.125}, draw=none, from=1-4, to=2-5]
    \arrow["\ulow", from=1-5, to=2-5]
    \arrow["{{\ulcorner C \urcorner}}"', from=2-1, to=2-2]
    \arrow["{{\ulcorner B \urcorner}}"', from=2-4, to=2-5]
  \end{tikzcd}\]
  By applying the universal property of polynomials from \Cref{prop:poly-up},
  we have a map
  $(\ulcorner B \urcorner, \ulcorner C \urcorner) : A \to P_\vlow \U$.
  The map $d_D : D \to A$ is given by taking the pullback of
  $P_\vlow \ulow : P_\vlow \Ulow \to P_\vlow \U$ along this map.
  \[ \begin{tikzcd}[column sep = large]
      D & {P_\vlow \Ulow} \\
      A & {P_\vlow \U}
      \arrow[from=1-1, to=1-2]
      \arrow[from=1-1, to=2-1]
      \arrow["\lrcorner"{anchor=center, pos=0.125}, draw=none, from=1-1, to=2-2]
      \arrow["{P_\vlow \ulow}", from=1-2, to=2-2]
      \arrow["{(\ulcorner B \urcorner, \ulcorner C \urcorner)}"', from=2-1, to=2-2]
    \end{tikzcd} \]
\end{proof}

\begin{axiom}\label{ax:pi}
  The equivalent conditions of \Cref{thm:pi-equiv-defs} hold.
\end{axiom}

Since $P_\vlow \U$ is in $\opfib(1) = \fib(1)$,
the map $(P_\vlow \ulow : P_\vlow \Ulow \to P_\vlow \U)$ is also a
polynomial signature in $(\CC,\fib)$,
making \cref{eq:fib-algebraic-pi} a cartesian morphism of signatures.
Thus, the universe $\ulow : \Ulow \to \U$ has $\fib$-algebraic $\Pi$-types 
and the universe $\vlow : \Vlow \to \V$ has $\opfib$-algebraic $\Pi$-types.

\begin{definition}\label{def:ACM-universe-pi}
  When an ACM also satisfies \Cref{ax:pi},
  then we say that the universe admits $\Pi$-types.
\end{definition}

\section*{Hom-types}
% \label{sec:cat-types-hom-types}
We fix an ACM $(\CC,\opfibcart,\Op,\vOp,\ulow)$.
In this section,
we axiomatise a directed version of algebraic identity types
\cite{awodey2018, awodey2025},
also drawing from the analysis of algebraic path-types in
\cite{awodey2026}.
Before doing so,
we require additional external structure on $\CC$.

\begin{axiom} \label{ax:core}
  The pseudofunctor $\opfibcart(-) : \CC^\op \to \tCat$
  for opfibrations admits the following \emph{core} structure.
  \begin{itemize}
  \item We have a pseudotransformation
    \[\vCore{} : \opfibcart(-) \to \opfibcart(-)\]
  \item We have two modifications
    \[ \inv : \vCore{} \to \vOp{}
    \quad \text{and} \quad
    \inc : \vCore{} \to \id_{\opfibcart(-)} \]
  % \item The whiskerings
  % \[\inv \vCore{} : \vCore{} \vCore{} \to \vOp{} \vCore{}\]
  % and
  % \[\inc \vCore{} : \vCore{} \vCore{} \to \vCore{}\]
  % are isomorphisms.
  % We will write $\cml : \vCore{} \to \vCore{}\vCore{}$
  % for the inverse of $\inc \vCore{}$.
  \end{itemize}
\end{axiom}
For an opfibration $A \to X$,
the opfibration $\vCore[X]{A} \to X$
is the \emph{vertical core} of $A$ over $X$.
In $\Cat$, the vertical core of $A$ can be thought of as
the category where,
over each point $x$ in $X$,
the fibre is the groupoid core of $A_x$.

\begin{axiom} \label{ax:twist}
  The pseudofunctor $\opfibcart(-) : \CC^\op \to \tCat$
  for opfibrations admits the following \emph{twisted} structure.
  \begin{itemize}
  \item We have a pseudotransformation 
    \[\vTw{} : \opfibcart(-) \to \opfibcart(-)\]
  \item We have three modifications
    \[ \src : \vTw{} \to \vOp{}
      \quad \quad
      \trg : \vTw{} \to \id_{\opfibcart(-)}  \]
    \[ \rfl : \vCore{} \to \vTw{} \]
  \item These modifications satisfy two equations
    $\src \circ \rfl = \inv$ and $\trg \circ \rfl = \inc$.
    \[
      \begin{tikzcd}
        & {\vOp{}} \\
        {\vCore{}} & {\vTw{}} \\
        & {\id_{\opfibcart(-)}}
        \arrow["\inv", from=2-1, to=1-2]
        \arrow["\rfl"{description}, from=2-1, to=2-2]
        \arrow["\inc"', from=2-1, to=3-2]
        \arrow["\src"', from=2-2, to=1-2]
        \arrow["\trg", from=2-2, to=3-2]
      \end{tikzcd}
    \]
  \end{itemize}
  We call $\vTw[X]{A}$ the \emph{(vertical) twisted arrows} of $A$ over $X$.
\end{axiom}

For any object $X \in \CC$ and any opfibration $A \in \opfibcart(X)$ over it,
we have a \emph{twisted pathobject factorisation} of the (restricted) diagonal
$\Delta : \vCore[X]{A} \to \vOp[X]{A} \times_X A$.
\[\begin{tikzcd}
	{\vCore[X] A} & {\vTw{A}} \\
	& {\vOp[X]{A} \times_X A}
	\arrow["\rfl", from=1-1, to=1-2]
	\arrow["{\Delta := (\inv,\inc)}"', from=1-1, to=2-2]
	\arrow["{(\src,\trg)}", from=1-2, to=2-2]
\end{tikzcd}\]

\begin{axiom}\label{ax:src-trg-opfibration}
  For each $X \in \CC$ and $A \in \opfibcart(X)$,
  the map $(\src,\trg) : \vTw{A} \to \vOp[X]{A} \times_X A$
  is an opfibration.
\end{axiom}

Applying $\vCore[U]{}$ and $\vTw[U]{}$ to the universe of small opfibrations
$\ulow : \Ulow \to \U$, we have
\[ \begin{tikzcd}
	\Usim &&& \Utw \\
	\U &&& \U
	\arrow["{\usim \, := \, \vCore[U]{\ulow}}", from=1-1, to=2-1]
	\arrow["{\utw \, := \, \vTw[U]{\ulow}}", from=1-4, to=2-4]
\end{tikzcd} \]
% Then the modifications
% $\src : \Tw{} \to \vOp{}$ and $\trg : \Tw{} \to \id_{\opfibcart(-)}$
% By \Cref{ax:src-trg-opfibration}
% we have the following 
% universal small twisted arrow type.
We consider the universal small twisted pathobject factorisation.
\[\begin{tikzcd}
	\Usim & \Utw \\
	& {\Uupp \times_\U \Ulow}
	\arrow["\rfl", from=1-1, to=1-2]
	\arrow["\Delta"', from=1-1, to=2-2]
	\arrow["{{(\src,\trg)}}", from=1-2, to=2-2]
\end{tikzcd}\]

\begin{proposition}\label{prop:src-trg-small-opfibration}
  The following are equivalent
  \begin{itemize}
    \item For each $X \in \CC$ and small opfibration $A \in \opfibcart_\U(X)$,
      the map $(\src,\trg) : \vTw{A} \to \vOp[X]{A} \times_X A$
      is a small opfibration.
    \item $(\src,\trg) : \Utw \to \Uupp \times_\U \Ulow$
      is a small opfibration.
  \end{itemize}
  When these conditions hold,
  \Cref{ax:classifying-map} provides maps
  $\Dir : \Uupp \times_\U \Ulow \to \U$
  and $\dir : \Utw \to \Ulow$ such that
  the following square is a pullback.
  \[
    \begin{tikzcd}
      \Utw & \Ulow \\
      {\Uupp \times_\U \Ulow} & \U
      \arrow["\dir", from=1-1, to=1-2]
      \arrow["{(\src, \trg)}"', from=1-1, to=2-1]
      \arrow["\ulow", from=1-2, to=2-2]
      \arrow["\Dir"', from=2-1, to=2-2]
      \arrow["\lrcorner"{anchor=center, pos=0.125}, draw=none, from=1-1, to=2-2]
    \end{tikzcd}
  \]
\end{proposition}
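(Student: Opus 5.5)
The two directions are handled separately; they follow the pattern of \Cref{prop:vertical-opposite-small} and \Cref{prop:universal-pathobject}. The first condition implies the second by specialising to $X = \U$ and $A = \ulow \in \opfibcart_\U(\U)$. In that case $\vTw[\U]{\ulow} = \Utw$, $\vOp[\U]{\ulow} = \Uupp$, and $\vOp[\U]{\ulow} \times_\U \ulow = \Uupp \times_\U \Ulow$. So the universal map $(\src,\trg)$ is literally the instance of the first condition at the universe.

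For the converse, let $A \in \opfibcart_\U(X)$. By \Cref{ax:classifying-map} it has a classifying map $\ulcorner A \urcorner : X \to \U$ with $A \iso \ulcorner A \urcorner^* \ulow$ in $\opfib(X)$. The plan is to transport the universal factorisation along $\ulcorner A\urcorner^*$ using pseudonaturality, as follows.
\begin{itemize}
  \item $\vTw{}$ is a pseudotransformation (\Cref{ax:twist}), so $\vTw[X]{A} \iso \ulcorner A\urcorner^* \vTw[\U]{\ulow} = \ulcorner A\urcorner^*\Utw$.
  \item $\vOp{}$ is a pseudotransformation (\Cref{ax:local-op}), so $\vOp[X]{A} \iso \ulcorner A\urcorner^* \Uupp$.
  \item Pullback preserves fibre products over the base, so $\vOp[X]{A}\times_X A \iso \ulcorner A\urcorner^*(\Uupp\times_\U \Ulow)$.
  \item $\src$ and $\trg$ are modifications, so under these isomorphisms $(\src,\trg)$ for $A$ is identified with $\ulcorner A\urcorner^*$ of the universal $(\src,\trg)$.
\end{itemize}
Pasting these gives a pullback square exhibiting $\vTw[X]{A} \to \vOp[X]{A}\times_X A$ as a pullback of $\Utw \to \Uupp\times_\U\Ulow$ along the induced map $\vOp[X]{A}\times_X A \to \Uupp \times_\U \Ulow$. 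Since $\opfib_\U$ is closed under pullback (it is the class of pullbacks of $\ulow$), the first condition follows. One more point is needed: in the statement, $A$ ranges over $\opfibcart_\U(X)$, whereas $A \iso \ulcorner A \urcorner^*\ulow$ is only an isomorphism in $\opfib(X)$. Isomorphisms are opcartesian because $\opfibcart(X)$ is a wide subcategory containing identities, and functors preserve isomorphisms, so this causes no trouble.

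The main obstacle is the third bullet's sibling step: checking that the pseudonaturality isomorphisms for $\vTw{}$ and $\vOp{}$ are compatible with $\src$ and $\trg$, so that the comparison square actually commutes and is a pullback, rather than merely having isomorphic corners. This is exactly what the modification axiom supplies for $\src$ and $\trg$. The square is then a pullback by the two-pullbacks lemma, comparing with the pullback of $\Uupp\times_\U\Ulow \to \U$ along $\ulcorner A\urcorner$. I would write out this pasting diagram explicitly.

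Finally, once $(\src,\trg) : \Utw \to \Uupp\times_\U\Ulow$ is a small opfibration, \Cref{ax:classifying-map} gives its classifying map $\Dir := \ulcorner(\src,\trg)\urcorner$. The top map of the chosen pullback, composed with the isomorphism $\Utw \iso (\Uupp\times_\U\Ulow)\opext \Dir$, gives $\dir : \Utw \to \Ulow$, and the resulting square is a pullback by construction.
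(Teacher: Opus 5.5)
Your proposal is correct and is essentially the paper's argument: the paper's proof is the single remark that every construction in $\opfibcart_\U(X)$ is pulled back from the universal one in $\opfibcart_\U(\U)$. You have written out the pieces that remark leaves implicit, namely the specialisation to $A = \ulow$, the pseudonaturality and modification pasting along $\ulcorner A \urcorner$, and the final appeal to \Cref{ax:classifying-map} for $\Dir$ and $\dir$. One correction to your side remark: a wide subcategory contains all objects and identities but need not contain every isomorphism of $\opfib(X)$, so wideness does not show that the classifying isomorphism $A \iso \ulcorner A \urcorner^* \ulow$ lies in $\opfibcart(X)$ (where it must lie for $\vTw[X]{}$, $\src$ and $\trg$ to act on it). This has to be assumed instead, for instance by requiring that $\opfibcart(X)$ be closed under isomorphisms of $\opfib(X)$, or that \Cref{ax:classifying-map} supply the isomorphism in $\opfibcart(X)$; the paper also uses this step silently, e.g.\ in the proof of \Cref{prop:vertical-opposite-small}.
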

\begin{proof}
  As usual, every operation in $\opfibcart_\U(X)$
  is a pullback of a universal one in $\opfibcart_\U(\U)$.
\end{proof}

This gives us a generalisation of the condition (A1) in \cite{awodey2026},
captured in the following axiom.
\begin{axiom}\label{ax:hom-formation-introduction}
  The equivalent conditions of
  \Cref{prop:src-trg-small-opfibration} hold.
\end{axiom}

From \Cref{ax:hom-formation-introduction},
we obtain formation and introduction rules for hom-types.
We now turn our attention to the hom-type elimination rule.

\begin{definition}
  We define the pseudotransformation
  $\rtw{} : \opfibcart(-) \to \opfibcart(-)$
  such that for each $X$ in $\CC$
  and object $A$ in $\opfibcart(X)$,
  $\rtw[X]{A}$ in $\opfibcart(X)$
  is the pullback in the functor category.
  Indeed, by \Cref{ax:src-trg-opfibration}
  the pullback
  $(\src,\trg) : \rtw[X]{A} \to \vCore[X]{A} \times_X A$
  exists and is an opfibration.
  \[
    \begin{tikzcd}
      {\rtw[X]{A}} & {\vTw[X]{A}} \\
      {\vCore[X]{A} \times_X A} & {\vOp[X]{A} \times_X A}
      \arrow[dashed, from=1-1, to=1-2]
      \arrow["{(\src,\trg)}"', dashed, from=1-1, to=2-1]
      \arrow["\lrcorner"{anchor=center, pos=0.125}, draw=none, from=1-1, to=2-2]
      \arrow["{(\src,\trg)}", from=1-2, to=2-2]
      \arrow["{\inv \times_X A}"', from=2-1, to=2-2]
    \end{tikzcd}
  \]
We call $\rtw[X] A$ the \emph{right-twisted arrows} of $A$ over $X$.
\end{definition}
We also obtain a \emph{right-twisted pathobject factorisation},
by the pullback property of $\rtw[X]{A}$.
\[\begin{tikzcd}
	{\vCore[X] A} & {\rtw[X]{A}} \\
	& {\vCore[X]{A} \times_X A}
	\arrow["\rfl", from=1-1, to=1-2]
	\arrow["{\Delta := (\id,\inc)}"', from=1-1, to=2-2]
	\arrow["{(\src,\trg)}", from=1-2, to=2-2]
\end{tikzcd}\]
There is, of course, also $\ltw[X] A$ the \emph{left-twisted arrows} of $A$ over $X$,
given by instead taking the pullback along the map
\[\vOp[X]{A} \times_A \inc : \vOp[X]{A} \times_X \vCore[X] A \to {\vOp[X]{A} \times_X A}\]

Applying $\rtw[\U]$ to $\ulow : \Ulow \to \U$
produces a universal small right-twisted arrow type
$\Urtw := \rtw[\U]{\Ulow}$ in $\opfibcart(\U)$
with $\urtw : \Urtw \to \U$
and an opfibration $(\src,\trg) : \Urtw \to \Usim \times_\U \Ulow$.
  \[ \begin{tikzcd}[column sep=huge]
    	\Usim & \Urtw & \Utw \\
      & {\Usim \times_\U \Ulow} & {\Uupp \times_\U \Ulow}
      \arrow["\rfl", from=1-1, to=1-2]
      \arrow["\Delta"', from=1-1, to=2-2]
      \arrow[from=1-2, to=1-3]
      \arrow["{{(\src,\trg)}}"{description}, from=1-2, to=2-2]
      \arrow["\lrcorner"{anchor=center, pos=0.125}, draw=none, from=1-2, to=2-3]
      \arrow["{{(\src,\trg)}}", from=1-3, to=2-3]
      \arrow["{{\inv \times_\U \Ulow}}"', from=2-2, to=2-3]
    \end{tikzcd} \]
Furthermore, $(\src,\trg) : \Urtw \to \Usim \times_\U \Ulow$
is a small opfibration by \Cref{ax:hom-formation-introduction}.
Consider the following commuting triangle.
\[ \begin{tikzcd}
	\Usim & \Urtw \\
	& \U
	\arrow["\rfl", from=1-1, to=1-2]
	\arrow["{\usim}"', from=1-1, to=2-2]
	\arrow["\urtw", from=1-2, to=2-2]
\end{tikzcd} \]
Both $(\usim : \Usim \to \U)$ and $(\urtw : \Urtw \to \U)$ are
polynomial signatures in $\Poly_{(\CC,\FF)}$,
since they are both opfibrations.
Using \Cref{def:vert-trans},
this triangle gives rise to a vertical natural transformation
between the polynomial functors.
\[P_{\rfl} : P_{\urtw} \to P_{\usim}\]
We consider the naturality square of $P_{\rfl}$
for the morphism $\ulow : \Ulow \to \U$.
\Cref{ax:hom-elimination}
will make this square a ``weak pullback''.
\[ \begin{tikzcd}
	{ P_{\urtw} \Ulow} & {P_{\usim} \Ulow} \\
	{ P_{\urtw} \U} & {P_{\usim} \U}
	\arrow["{{P_{\rfl} \Ulow}}", from=1-1, to=1-2]
	\arrow["{{ P_{\urtw} \ulow}}"', from=1-1, to=2-1]
	\arrow["{{P_{\usim} \ulow}}", from=1-2, to=2-2]
	\arrow["{{P_{\rfl} \U}}"', from=2-1, to=2-2]
\end{tikzcd} \]
We would like to compare $P_{\urtw}$ with an actual pullback
of the cospan in question.
Unfortunately, the other axioms do not automatically provide
such a pullback, so we postulate one.

\begin{axiom}\label{ax:pullback-L}
  There is a pullback $L$,
  with maps $\phi$ and $\psi$ as depicted in the following diagram.
  \begin{equation}
    \label{eq:hom-elimination-2}
    \begin{tikzcd}
    { P_{\urtw} \Ulow} && \\
    & L & {P_{\usim} \Ulow} \\
    & { P_{\urtw} \U} & {P_{\usim} \U}
    \arrow["p", dashed, from=1-1, to=2-2]
    \arrow["{P_{\rfl} \Ulow}", bend left, from=1-1, to=2-3]
    \arrow["{P_{\urtw} \ulow}"', bend right, from=1-1, to=3-2]
    \arrow["\phi", from=2-2, to=2-3]
    \arrow["\psi"', from=2-2, to=3-2]
    \arrow["\lrcorner"{anchor=center, pos=0.125}, draw=none, from=2-2, to=3-3]
    \arrow["{P_{\usim} \ulow}", from=2-3, to=3-3]
    \arrow["{P_{\rfl} \U}"', from=3-2, to=3-3]
  \end{tikzcd}
  \end{equation}
  We denote the comparison map $p : P_{\urtw} \Ulow \to L$.  
\end{axiom}
By the universal property of polynomials
and commutativity of the square,
$\psi$ and $\phi$ are equivalent to the data of three maps
\[\psi_1 = \phi_1 : L \to \U
\quad \psi_2 : \rtw[L]{A} \to \U
\quad \phi_2 : \vCore[L]{A} \to \Ulow\]
where $A := L \opext \psi_1 \to L$ is the opfibration over $L$
given by pullback of $\ulow : \Ulow \to \U$ along $\psi_1 : L \to \U$.
The latter two maps fit into a commutative square
\begin{equation}\label{eq:hom-elimination-1}
  \begin{tikzcd}
	\Ulow & {\vCore[L]{A}} & \Usim \\
	\U & {\rtw[L]{A}} & \Urtw \\
	& L & \U
	\arrow["\ulow"', from=1-1, to=2-1]
	\arrow["{\phi_2}"', from=1-2, to=1-1]
	\arrow[from=1-2, to=1-3]
	\arrow["{\rfl}", from=1-2, to=2-2]
	\arrow["\lrcorner"{anchor=center, pos=0.125}, draw=none, from=1-2, to=2-3]
	\arrow["\rfl", from=1-3, to=2-3]
	\arrow["{\usim}", bend left = 50, from=1-3, to=3-3]
	\arrow["{\psi_2}", from=2-2, to=2-1]
	\arrow[from=2-2, to=2-3]
	\arrow[from=2-2, to=3-2]
	\arrow["\lrcorner"{anchor=center, pos=0.125}, draw=none, from=2-2, to=3-3]
	\arrow["\urtw", from=2-3, to=3-3]
	\arrow["{\psi_1}"', from=3-2, to=3-3]
\end{tikzcd}
\end{equation}

\begin{proposition}\label{prop:hom-elimination-equiv}
  The following are equivalent (structures)
  \begin{itemize}
    \item A section $j : L \to P_{\urtw} \Ulow$
    of the comparison map $p : P_{\urtw} \Ulow \to L$,
    from \cref{eq:hom-elimination-2}.
    \[ p \circ j = \id\]
    This is akin to \cite[eq. (12)]{awodey2025}
    used in the elimination principle for algebraic identity types.
    \item A diagonal lift $j : \rtw[L]{A} \to \Ulow$
    as indicated in the following diagram,
    extracted from \cref{eq:hom-elimination-1}.
    \[\begin{tikzcd}
        {\vCore[L]{A}} & \Ulow \\
        {\rtw[L]{A}} & \U
        \arrow["{{\phi_2}}", from=1-1, to=1-2]
        \arrow["{{\rfl}}"', from=1-1, to=2-1]
        \arrow["\ulow", from=1-2, to=2-2]
        \arrow["j"{description}, dashed, from=2-1, to=1-2]
        \arrow["{{\psi_2}}"', from=2-1, to=2-2]
      \end{tikzcd}\]
    \item A diagonal lift $j' : \rtw[L]{A} \to C$
    as indicated in the following diagram.
    \[\begin{tikzcd}
        {\vCore[L]{A}} & C & \Ulow \\
        {\rtw[L]{A}} & {\rtw[L]{A}} & \U
        \arrow["r", dashed, from=1-1, to=1-2]
        \arrow["{{{\phi_2}}}", bend left, from=1-1, to=1-3]
        \arrow["{{{\rfl}}}"', from=1-1, to=2-1]
        \arrow[from=1-2, to=1-3]
        \arrow[from=1-2, to=2-2]
        \arrow["\lrcorner"{anchor=center, pos=0.125}, draw=none, from=1-2, to=2-3]
        \arrow["\ulow", from=1-3, to=2-3]
        \arrow["{j'}"{description}, dashed, from=2-1, to=1-2]
        \arrow[equals, from=2-1, to=2-2]
        \arrow["{{{\psi_2}}}"', from=2-2, to=2-3]
      \end{tikzcd}\]
  \end{itemize}
\end{proposition}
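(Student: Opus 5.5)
The plan is to prove the two equivalences (first $\Leftrightarrow$ second, second $\Leftrightarrow$ third) separately, mirroring \Cref{prop:id-elim-equiv}. Both rest on the universal property of polynomial functors (\Cref{prop:poly-up}), applied to the signatures $\urtw$ and $\usim$ in $\Poly_{(\CC,\opfib)}$, and on the explicit form of the vertical transformation $P_\rfl$ (\Cref{def:vert-trans}).

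For first $\Leftrightarrow$ second, first read maps $L \to P_{\urtw}\Ulow$ through \Cref{prop:poly-up}. Such a map is a pair $(t_1, t_2)$ with $t_1 : L \to \U$ and $t_2 : t_1 \times_\U \urtw \to \Ulow$.

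The condition $p \circ j = \id$ splits into $\psi \circ p \circ j = \psi$ and $\phi \circ p \circ j = \phi$, because $L$ is a pullback (\Cref{ax:pullback-L}).
\begin{itemize}
\item Since $\psi_1 = \phi_1$, the first component of $j$ must be $\psi_1$. The domain of $t_2$ is then $\psi_1 \times_\U \urtw \iso \rtw[L]{A}$, using pseudonaturality of $\rtw{}$ together with \cref{eq:hom-elimination-1}. So $j$ amounts to a map $j_2 : \rtw[L]{A} \to \Ulow$.
\item The equation $P_{\urtw}\ulow \circ j = \psi$ becomes, by naturality of the bijection in $X$, the equation $\ulow \circ j_2 = \psi_2$.
\item For the other equation, I will check that postcomposition with $P_\rfl \Ulow$ acts on the second component as precomposition with the pulled-back map $\rfl : \vCore[L]{A} \to \rtw[L]{A}$. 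This holds because $P_\rfl$ is built from the Beck--Chevalley map along the triangle $\usim = \urtw \circ \rfl$. Then $P_\rfl\Ulow \circ j = \phi$ becomes $j_2 \circ \rfl = \phi_2$.
\end{itemize}
Together these are exactly a diagonal filler of the square in the second item, and the correspondence is bijective.

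For second $\Leftrightarrow$ third, let $C := \rtw[L]{A} \opext \psi_2$, the pullback of $\ulow$ along $\psi_2$. The pullback property of $C$ gives the map $r$, and gives a bijection between lifts $j : \rtw[L]{A} \to \Ulow$ with $\ulow j = \psi_2$ and sections $j'$ of $C \to \rtw[L]{A}$. The condition $j \circ \rfl = \phi_2$ corresponds to $j' \circ \rfl = r$, since both are determined by their two components into the pullback. This step is routine.

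The main obstacle is the step identifying the action of $P_\rfl\Ulow$ on generalized elements. This needs the Beck--Chevalley isomorphisms for pushforward (\Cref{def:beck-chevalley}) along the non-cartesian square coming from the triangle. We must also make sure the identification $\psi_1 \times_\U \urtw \iso \rtw[L]{A}$ is compatible with $\rfl$. That compatibility uses that $\rfl$ is a modification, hence stable under pullback along $\psi_1$. If the direct computation becomes messy, I would instead transcribe the proof of \Cref{prop:id-elim-equiv} (the case $\pp$, $I$) verbatim, replacing $\uu$ by $\usim$ and $\pp$ by $\urtw$, since no identity-type-specific fact is used there.
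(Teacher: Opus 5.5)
Your proposal is correct and is the paper's argument, just unpacked: the paper's proof says only that (1)$\Leftrightarrow$(2) follows from the universal property of polynomial functors and that (2)$\Leftrightarrow$(3) is clear, and your componentwise check, with $P_{\rfl}\Ulow$ acting as precomposition by the pulled-back $\rfl$, is what that first claim abbreviates. One slip, plus a note: $\usim$ and $\urtw$ are opfibrations, hence $\fib$-exponentiable by \Cref{ax:exponentiability} (with $\U$ fibrant), so they are signatures in $\Poly_{(\CC,\fib)}$ rather than $\Poly_{(\CC,\opfib)}$ (opfibrations need not be $\opfib$-exponentiable), though your argument is unaffected once relabelled; also, \Cref{prop:id-elim-equiv} is stated without proof in the paper, so your fallback of transcribing its proof is not available, but you do not need it.
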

\begin{proof}
  (1) $\Leftrightarrow$ (2) follows from the universal property of polynomial functors.
  (2) $\Leftrightarrow$ (3) is clear.
\end{proof}

The equivalent conditions of \Cref{prop:hom-elimination-equiv}
can be presented using an informal syntax as follows.

\begin{prooftree}
% \AxiomC{$\Gamma \vdash A : U$}
\AxiomC{$\Gamma \opext x : \vCore A \opext y : A \opext h : \Hom_A (\inv x, y) \vdash C(x,y,h) : U$}
\noLine
\UnaryInfC{$\Gamma \opext x : \vCore A \vdash r : C(x,\inc x, \rfl x)$}
\UnaryInfC{$\Gamma \opext x : \vCore A \opext y : A \opext h : \Hom_A (\inv x, y) \vdash j_{(C,r)}(x,y,h) : C$}
\noLine
\UnaryInfC{$\Gamma \opext x : \vCore A \vdash j_{(C,r)}(x,\inc x, \rfl x) \equiv r : C(x,\inc x, \rfl x)$}
\end{prooftree}

This is essentially the syntax from \cite{north2019},
slightly simplified due to the presence of $\Pi$-types.
Rather than providing a formal syntax,
we will reformulate \Cref{prop:hom-elimination-equiv}
in ``unalgebraic'' terms in \Cref{def:elementary-hom-type},
as was done in \Cref{sec:hottlean}.

\begin{axiom}\label{ax:hom-elimination}
  We have one of the equivalent structures from \Cref{prop:hom-elimination-equiv}.
  Also, we have the analogous structure for the left-twisted arrows $\ltw$,
  which we do not spell out.
\end{axiom}

% Like in \cite{hua2026},
% \Cref{ax:hom-elimination} could be unfolded into an ``elementary'' condition instead:
% for all objects $\Gamma$, all maps $A : \Gamma \to \U$,
% all maps $C : \Gamma \opext x : \vCore A \opext y : A \opext \Hom_A (\inv x, y) \to \U$
% and maps $r : \Gamma \opext x : \vCore A \to \Ulow$
% such that $\ulow \circ r = C \circ RRRRR$,
% there is a map $j : \Gamma \opext x : \vCore A \opext y : A \opext \Hom_A (\inv x, y) \to \Ulow$
% satisfying a typing rule $\ulow \circ j = C$,
% a computation rule $j \circ RRRRR = r$,
% as well as a substitution stability condition.

\begin{definition}\label{def:ACM-universe-hom}
  When an ACM also satisfies Axioms
  \labelcref{ax:core,,ax:twist,,ax:src-trg-opfibration,,ax:hom-formation-introduction,ax:pullback-L,ax:hom-elimination},
  then we say that the universe admits $\Hom$-types.
\end{definition}

\section{Unalgebraic categorical models}

% Rather than presenting a syntax to interpret into an ACM,
% which comes with its own set of difficulties,
% we provide a more syntax reformulation of an ACM,
% called an elementary categorical model (UCM) (following \cite{hua2026}).
% The idea is that when constructing a model,
% one would construct it as an ACM,
% whereas one would target an UCM when interpreting syntax into a model.
Rather than presenting a syntax to interpret into an ACM,
we provide an ``unalgebraic'' reformulation of the definitions in algebraic categorical models
that is more syntactic in style.
Like in \Cref{sec:hottlean},
we will introduce ``unalgebraic categorical models''.
The idea is that one would typically construct models as ACMs,
as ACMs are closer to how they appear ``in the wild''.
At the same time, defining an interpretation of the syntax into the model
is straightforward using an UCM.
An appropriate translation from one to the other is therefore useful.
This approach also avoids getting into the intricacies of designing a well-behaved syntax
for dependent type theory,
which is a challenge in and of itself.

\begin{definition}
  An unalgebraic categorical model (UCM) 
  \[(\CC,\U,\V,\ulow,\uupp,\usim,\vlow,\vupp,\vsim,\inc,\inv)\]
  consists of the following.
  \begin{itemize}
  \item A category $\CC$ with a terminal object.
  \item Three universes $\ulow : \Ulow \to \U, \uupp:\Uupp \to \U$ and $\usim : \Usim \to \U$ in $\CC$,
    in the sense of \Cref{def:elementary-universe}.
  \item Three universes $\vlow : \Vlow \to \V, \vupp:\Vupp \to \V$ and $\vsim : \Vsim \to \V$ in $\CC$.
  \item Morphisms $\inv : \usim \to \uupp$ and $\inc : \usim \to \ulow$ over $\U$, and
    $\inv : \vsim \to \vupp$ and $\inc : \vsim \to \vlow$ over $\V$.
    \[
    \begin{tikzcd}
      \Uupp & \Usim & \Ulow \\
      & \U
      \arrow["\uupp"', from=1-1, to=2-2]
      \arrow["\inv"', from=1-2, to=1-1]
      \arrow["\inc", from=1-2, to=1-3]
      \arrow["\usim"{description}, from=1-2, to=2-2]
      \arrow["\ulow", from=1-3, to=2-2]
    \end{tikzcd}
    \quad \quad 
    \begin{tikzcd}
      \Vupp & \Vsim & \Vlow \\
      & \V
      \arrow["\vupp"', from=1-1, to=2-2]
      \arrow["\inv"', from=1-2, to=1-1]
      \arrow["\inc", from=1-2, to=1-3]
      \arrow["\vsim"{description}, from=1-2, to=2-2]
      \arrow["\vlow", from=1-3, to=2-2]
    \end{tikzcd}
    \]
  \end{itemize}
\end{definition}
Like in \Cref{sec:UnstrSem},
an object of $\CC$ will be called a \emph{context} and a morphism will be called a substitution.
A map $A : \Gamma \to \U$ will be called a \emph{covariant type},
and a map $A : \Gamma \to \V$ will be called a \emph{contravariant type}.
Each universe admits its own kind of context extension.
For a covariant type $A : \Gamma \to \U$,
let us introduce the suggestive notation for context extensions
$\Gamma \opext (x : A)$ and $\Gamma \opext (x : \vOp A)$, and $\Gamma \opext (x : \vCore A)$
respectively for $\ulow, \uupp$ and $\usim$.
Similarly, we denote the context extensions for $\vlow, \vupp, \vsim$
respectively as $\Gamma \ext A$, $\Gamma \ext \vOp A$, and $\Gamma \ext \vCore A$
(or with explicit variables).

\begin{definition}[Unalgebraic $\Hom$-types] \label{def:elementary-hom-type}
  Let $(\CC,\U,\V,\dots)$ be an UCM.
  Then an \emph{unalgebraic $\Hom$-type structure}
  on $\U$ consists of the following.
  \begin{enumerate}
  \item
    For any context $\Gamma$, covariant type $A : \Gamma \to \U$,
    maps $a_0 : \Gamma \to \Uupp$ and
    $a_1 : \Gamma \to \Ulow$ such that
    $\uupp \circ a_0 = A$ and $\ulow \circ a_1 = A$,
    there is a type $\Hom_A (a_0,a_1) : \Gamma \to \U$.
  \item
    The construction $\Hom_A (a_0,a_1)$ is stable under substitution,
    meaning that for any $\sigma : \Delta \to \Gamma$ we have
    \[ \Hom_{A \circ \sigma} (\sigma \gg a_0, \sigma \gg a_1)
    = \sigma \gg \Hom_A (a_0,a_1) \]
  \item
    For any context $\Gamma$, covariant type $A : \Gamma \to \U$,
    and map $a : \Gamma \to \Usim$ such that
    $\usim \circ a = A$,
    there is a map $\rfl a : \Gamma \to \Ulow$
    satisfying
    $\ulow \circ \rfl a  = \Hom_A (\inv \circ \, a,\inc \circ \, a)$.
  \item The construction $\rfl$ is stable under substitution,
    meaning that for any $\sigma : \Delta \to \Gamma$ we have
    \[ \rfl (\sigma \circ a) = \sigma \circ \rfl a \]
  \item
    Given a covariant type $A : \Gamma \to \U$,
    one can construct the context 
    \[\Gamma \opext \vCore A \opext A \opext \Hom_A := \Gamma \opext (x : \vCore A) \opext (y : A) \opext \Hom_A(\inv \circ \, x,y)\]
    and the substitution
    \[\rho : \Gamma \opext (x : \vCore A) \to \Gamma \opext (x : \vCore A) \opext (y : A) \opext \Hom_A(\inv \circ \, x,y)\]
    \[\rho := \id_{(\Gamma \opext \vCore A)} \opext (\inc \circ \, x) \opext \rfl x\]
    For any \emph{motive} \[C : \Gamma \opext \vCore A \opext A \opext \Hom_A \to \U\]
    and map $r : \Gamma \opext \vCore A \to \Ulow$ satisfying
    \[ \ulow \circ r = C \circ \rho \]
    there is a map, called the \emph{right eliminator},
    $j (C, r) : \Gamma \opext \vCore A \opext A \opext \Hom_A \to \Ulow$
    satisfying $\ulow \circ j(C,r) = C$ and
    $j (C, r) \circ \rho  = r$
    \[\begin{tikzcd}
	   \Gamma \opext \vCore A & \Ulow \\
	   {\Gamma \opext \vCore A \opext A \opext \Hom_A} & \U
	   \arrow["{r}", from=1-1, to=1-2]
	   \arrow["{\rho}"', from=1-1, to=2-1]
	   \arrow["\ulow", from=1-2, to=2-2]
	   \arrow["{j(C,r)}"{description}, dashed, from=2-1, to=1-2]
	   \arrow["C"', from=2-1, to=2-2]
    \end{tikzcd}\]
  \item The construction $j$ is stable under substitution.
  \item
    Dually, given a covariant type $A : \Gamma \to \U$,
    one can construct the context 
    \[\Gamma \opext \vOp A \opext \vCore A \opext \Hom_A :=
    \Gamma \opext (x : \vOp A) \opext (y : \vCore A) \opext \Hom_A(x,\inc \circ \, y)\]
    and the substitution
    \[\rho' : \Gamma \opext (x : \vCore A) \to \Gamma \opext \vOp A \opext \vCore A \opext \Hom_A\]
    \[\rho' := \id_{\Gamma} \opext (\inv \circ \, x) \opext x \opext \rfl x\]
    For any \emph{motive} \[C : \Gamma \opext \vOp A \opext \vCore A \opext \Hom_A \to \U\]
    and map $r : \Gamma \opext \vCore A \to \Ulow$ satisfying
    \[ \ulow \circ r = C \circ \rho' \]
    there is a map, called the \emph{left eliminator},
    $j' (C, r) : \Gamma \opext \vOp A \opext \vCore A \opext \Hom_A \to \Ulow$
    satisfying $\ulow \circ j'(C,r) = C$ and
    $j' (C, r) \circ \rho'  = r$
    \[\begin{tikzcd}
	   \Gamma \opext \vCore A & \Ulow \\
	   {\Gamma \opext \vOp A \opext \vCore A \opext \Hom_A} & \U
	   \arrow["{r}", from=1-1, to=1-2]
	   \arrow["{\rho'}"', from=1-1, to=2-1]
	   \arrow["\ulow", from=1-2, to=2-2]
	   \arrow["{j'(C,r)}"{description}, dashed, from=2-1, to=1-2]
	   \arrow["C"', from=2-1, to=2-2]
    \end{tikzcd}\]
  \item The construction $j'$ is stable under substitution.
  \end{enumerate}
\end{definition}

\begin{theorem}
  Given an algebraic categorical model with a universe that has
  $\Unit$-types, $\Sigma$-types, $\Pi$-types, and $\Hom$-types,
  there is an unalgebraic categorical model $(\CC,\U,\V,\dots)$
  with a $\vlow$-indexed {unalgebraic $\Pi$-type structure} on $\ulow$ (\Cref{def:elementary-pi}),
  a $\ulow$-indexed {unalgebraic $\Pi$-type structure} on $\vlow$,
  an {unalgebraic $\Sigma$-type structure} on $\ulow$ (\Cref{def:elementary-sigma}),
  an {unalgebraic $\Sigma$-type structure} on $\vlow$,
  an {unalgebraic $\Hom$-type structure} on $\U$,
  and an {unalgebraic $\Hom$-type structure} on $\V$.
\end{theorem}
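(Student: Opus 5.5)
The plan is to assemble the UCM directly from the ACM data, and then to translate each algebraic pullback square into the corresponding unalgebraic operations via the universal property of polynomial functors (\Cref{prop:poly-up}), following the pattern of the routine translation in \Cref{sec:StrSem}.

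\textbf{Step 1: the universes.} Take $\CC$ as given, with $\ulow : \Ulow \to \U$ the universe of small opfibrations. Set $\uupp := \vOp[\U]{\ulow}$ and $\usim := \vCore[\U]{\ulow}$. Set $\vupp := \Op{\ulow}$ and $\vlow := \vOp[\V]{\vupp}$, and let $\vsim$ be the image of $\usim$ under $\Op$, using \Cref{lem:vupp-eq}. The maps $\inv,\inc$ over $\U$ are the components at $\ulow$ of the modifications of \Cref{ax:core}; applying $\Op$ gives the dual maps over $\V$. Each of these maps is a small (op)fibration by \Cref{ax:vertical-opposite-small} and its core analogue, so chosen pullbacks exist. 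Chosen pullbacks along $\uupp$ and $\usim$ are obtained by composing with their classifying maps (\Cref{ax:classifying-map}), exactly as was done above for $\Gamma\ext A$. Stability under substitution is then automatic, since every context extension is a chosen pullback of a fixed map.

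\textbf{Step 2: $\Sigma$- and $\Pi$-types.} These come from the squares \eqref{eq:fib-algebraic-sigma} and \eqref{eq:fib-algebraic-pi} (\Cref{thm:sigma-equiv-defs}, \Cref{thm:pi-equiv-defs}).

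For $\Pi$ on $\ulow$, indexed by $\vlow$: a pair $A:\Gamma\to\V$, $B:\Gamma\ext A\to\U$ is, by \Cref{prop:poly-up} applied to the signature $\vlow$ in $(\CC,\opfib)$, a map $\Gamma\to P_\vlow\U$. We set $\Pi_AB$ to be its composite with $\Pi$. A term $b$ with $\ulow b=B$ is likewise a lift $\Gamma\to P_\vlow\Ulow$ over it. We define $\lam b$ as its composite with $\lam$, and $\unlam$ via the pullback property; the $\beta$ and $\eta$ laws follow from uniqueness in the pullback. Substitution stability follows from naturality in $\Gamma$ in \Cref{prop:poly-up}.

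The dual $\Pi$ (with $P_\ulow\vlow$) and both $\Sigma$ structures are handled in the same way. For $\Sigma$, the pair, fst and snd maps come from the factorisation of $Q$ as an iterated pullback. $\Unit$-types come from the identity $1\to1$ being a small fibration. One point needs care here: \Cref{prop:poly-up} was stated in a preclan with terminal object where $B\to1$ is an $\RR$-map. Here $\V$ is fibrant, so it lies in $\opfib(1)$, and the hypothesis holds.

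\textbf{Step 3: $\Hom$-types (the main obstacle).} Formation and introduction come from \Cref{ax:hom-formation-introduction}. Given $a_0:\Gamma\to\Uupp$ and $a_1:\Gamma\to\Ulow$ over $A$, we obtain a map $\Gamma\to\Uupp\times_\U\Ulow$, and set $\Hom_A(a_0,a_1)$ to be its composite with $\Dir$. We set $\rfl a := \dir\circ\rfl\circ a$, where the inner $\rfl$ is the map $\Usim\to\Utw$. The needed typing equation then follows from $\src\circ\rfl=\inv$ and $\trg\circ\rfl=\inc$.

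For elimination, I would use the first form in \Cref{prop:hom-elimination-equiv}, namely the section $j:L\to P_{\urtw}\Ulow$. The hard part is exhibiting the context
\[\Gamma\opext\vCore A\opext A\opext\Hom_A\]
as the pullback of $\urtw:\Urtw\to\U$ along $A$. This should hold because $\Urtw$ is the pullback of $\Utw$ along $\inv\times_\U\Ulow$, and $\Utw$ is the extension of $\Uupp\times_\U\Ulow$ by $\Dir$ (the pullback square of \Cref{prop:src-trg-small-opfibration}). Under this identification, $\rho$ corresponds to the pullback of $\rfl:\Usim\to\Urtw$.

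With that in hand, a motive $C$ together with $r$ gives, via \Cref{prop:poly-up}, maps $\Gamma\to P_{\urtw}\U$ and $\Gamma\to P_{\usim}\Ulow$ that agree in $P_{\usim}\U$, using $\ulow r = C\rho$. Hence they determine a map $\Gamma\to L$ (\Cref{ax:pullback-L}). Composing with $j$ gives $\Gamma\to P_{\urtw}\Ulow$, which transposes back to $j(C,r)$. The equation $p\circ j=\id$ yields both $\ulow\circ j(C,r)=C$ and $j(C,r)\circ\rho=r$. Stability under substitution again follows from naturality of these correspondences in $\Gamma$.

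The left eliminator is built symmetrically from the $\ltw$ structure of \Cref{ax:hom-elimination}. The $\Hom$-type structure on $\V$ is obtained by transporting everything along the self-inverse $\Op$. The step I expect to be most delicate is the bookkeeping of isomorphisms versus equalities: the identification of iterated chosen extensions with pullbacks of $\urtw$, and the pseudonaturality of $\vTw$, hold only up to isomorphism. Strict equalities are recovered by always routing through the fixed maps $\Dir$, $\dir$ and the fixed chosen pullbacks.
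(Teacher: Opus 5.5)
Your proposal is correct and follows the route the paper intends. The paper only says the translation is routine, citing the natural-model literature. Your argument writes that translation out: you apply the universal property of polynomials to the $\Sigma$, $\Pi$ and $\Dir/\dir$ pullback squares, and to the section $j$ of $p$ after identifying $\Gamma\opext\vCore A\opext A\opext\Hom_A$ with $A^*\Urtw$.

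Two minor corrections. First, smallness of $\usim$ is not among the axioms, but you do not need it: $\usim$ is an opfibration, so its pullbacks exist anyway. Second, $\Op$ sends $\ulow$-terms to $\vupp$-terms, and it swaps $\inv/\inc$ as well as the right and left eliminators. So the $\Hom$-types you transport to $\V$ must be reclassified via $\ulcorner\uupp\urcorner$, as in the paper's definition of $\Gamma\ext A$.
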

\begin{proof}
  The proof of this is routine;
  similar proofs can be found in \cite{awodey2018,awodey2025,hua2026}.
\end{proof}

\section{Examples}

Models of Martin-L\"of type theory are degenerate categorical algebraic models.
\begin{example}[Models of Martin-L\"of type theory]
  Suppose $\CC$ is an algebraic model of Martin-L\"of type theory
  on a $\pi$-clan $(\CC,\RR)$.
  Let us take $\opfib := \RR$,
  and $\Op : \CC \to \CC$ to be the identity,
  so that we have $\RR = \opfib = \fib$.
  The $\tau$-clan condition in \Cref{ax:exponentiability}
  follows from $\RR$ being a $\pi$-preclan.
  We also set $\opfibcart := \RR$ so that all maps in the slice are
  morphisms of opfibrations.
  We then take $\vOp : \RR(-) \to \RR(-)$ to be the identity map.
  This results in an op structure, which we will call $\MM_\RR$.
  
  It follows that an ACM extending this op structure (\Cref{def:ACM-universe}) is the same thing as
  an ``$\RR$-algebraic universe'' in an algebraic model of MLTT (\Cref{def:algebraic-universe}).
  Let us assume now that we have such a universe $\uu : \Ulow \to \U$ in $\MM_\RR$.
  Then $\uu : \Ulow \to \U$ admits $\Unit$-types and $\Sigma$-types as an ACM (\Cref{def:ACM-universe-sigma})
  if and only if it does so in the MLTT model (\Cref{def:alg-unit-type-mltt,def:alg-sig-type-mltt}),
  and $\uu : \Ulow \to \U$ admits $\Pi$-types as an ACM (\Cref{def:ACM-universe-pi})
  if and only if it does so in the MLTT model (\Cref{def:alg-pi-type-mltt}).

  For $\Hom$-types,
  let us suppose that the universe in the MLTT model admits $\Path$-types
  (in the sense of \Cref{def:mltt-alg-path-types-2}).
  Take $\vCore : \RR(-) \to \RR(-)$ to be identity
  and the twisted arrows map $\vTw : \RR(-) \to \RR(-)$
  to be the path functor $\arr : \RR(-) \to \RR(-)$ (\Cref{def:path-functor}).
  Assuming that the universe admits a normal Hurewicz structure (\Cref{def:hurewicz-defs}),
  then the universe admits $\Id$-types (\Cref{thm:id-elim}),
  which coincide with $\Hom$-types.
\end{example}

There are several different algebraic categorical models in $\Cat$ with 
$\Unit$-types, $\Sigma$-types, $\Pi$-types, and $\Hom$-types.
One is with covariant types as split opfibrations.
In the following,
we will present this model in a way that uses the axiom of choice,
namely to satisfy \Cref{ax:classifying-map},
the axiom for having classifying maps.
It should be possible to reformulate the general theory of algebraic models
in such a way that avoids choice.
However, we will leave these constructive considerations for future work.

\begin{example}[$\Cat$ and the universe of opfibrations]\label{ex:ACM-of-opfibrations}
  \Cref{ax:opfibration-preclan}.
  Disregarding size issues that can be resolved by postulating
  more Grothendieck universes,
  we can take $\CC$ to be the 1-category of large categories $\Cat$.
  We take $\opfib$ to be the class of split opfibrations (\Cref{def:split-opfibration}).
  These together form a preclan (\Cref{prop:preclans-in-cat}).

  \Cref{ax:global-op} ($\Op$).
  Of course, taking opposite categories is an endofunctor on $\Cat$.
  The class of maps $\fib$ defined above is exactly the class of split fibrations.

  \Cref{ax:exponentiability}.
  We showed that $\Cat$ with opfibrations and fibrations forms a $\tau$-clan in \Cref{prop:pushforward-in-cat}.

  \Cref{ax:opcartesian-map} (morphisms of opfibrations).
  For each $X$, $\opfibcart(X)$ has morphisms as morphisms of
  split opfibrations over $\CC$ \Cref{def:morphism-split-opfibration}.
  For any category $\CC$, the map $\CC \to 1$ is a split opfibration,
  and the lift of the identity in $1$ is unique -
  since split opfibrations are also {normal}.
  It follows that any functor is also a
  split opcartesian map over $1$.

  \Cref{ax:local-op} ($\vOp$).
  Adjusting \Cref{thm:opfibration-main} for size,
  we have for any category $X$,
  \[\opfibcart(X) \simeq [X,\Cat]\]
  Postcomposing with the endofunctor $\Op : \Cat \to \Cat$
  yields a functor
  \[ \vOp[X] : \opfibcart(X) \to \opfibcart(X) \]
  as described in \Cref{def:relativised-constructions},
  which was for \emph{small} opfibrations.

  {\bf So far, we have constructed an op structure on $\Cat$.
  Let us call this op structure $\MM_\Cat$.}

  \Cref{ax:universe} (universe of small opfibrations).
  The universal small split opfibration in $\Cat$
  \[\ulow : \pcat \to \cat\]
  is an opfibration.
  This is defined in \Cref{def:universal-small-split-opfibration}.
  All objects are fibrant.

  \Cref{ax:classifying-map} (classifying maps).
  This follows from the axiom of choice.
  % Every small split opfibration $A \to X$ in $\Cat$ has a classifying map
  % $A^* : X \to \cat$, which takes a point in $X$ to its fibre, which is a category.
  % The mapping
  % \[A \mapsto A^* : \smlopfibcart(X) \to [X,\cat]\]
  % is one of the functors making up the equivalence
  % described in \Cref{thm:opfibration-main}.
  % \[[X,\cat] \simeq \smlopfibcart(X)\]

  \Cref{ax:vertical-opposite-small}.
  The first condition of \cref{prop:vertical-opposite-small} follows from
  the fact that straightening and unstraightening preserve size.
  \[
  \begin{tikzcd}
    {[A,\cat]} & {\smlopfibcart(A)} \\
    {[A,\Cat]} & {\opfibcart(A)}
    \arrow["\simeq"{description}, draw=none, from=1-1, to=1-2]
    \arrow[hook, from=1-1, to=2-1]
    \arrow[hook, from=1-2, to=2-2]
    \arrow["\simeq"{description}, draw=none, from=2-1, to=2-2]
  \end{tikzcd} 
  \]
  {\bf Thus $\MM_\Cat$ and $\ulow : \pcat \to \cat$ together form an ACM}.
  
  \Cref{ax:small-preclan} ($\Unit$-types and $\Sigma$-types).
  $\Cat$ with the class of small split opfibrations
  $(\Cat,\smlopfib)$
  is a preclan (\Cref{prop:preclans-in-cat}).
  In this case, given a small category $A$ and a functor $B : A \to \cat$,
  the point $(A,B)$ in $P_{\ulow} \U$ is taken by
  $\Sigma : P_{\ulow} \U \to \U$
  to the \emph{Grothendieck construction} $\int_A B$ in $\U$.

  \Cref{ax:pi} ($\Pi$-types).
  We showed that $\fib_V = \smlfib \subseteq \Exp_{\opfib_\U} = \Exp_{\smlopfib}$
  in \Cref{prop:pushforward-in-cat}.  
 
  \Cref{ax:core} ($\vCore$) and \Cref{ax:twist} ($\vTw$).
  The relativised twisted arrow structure for small split opfibrations
  is described in \Cref{def:relativised-constructions}.
  The same construction works for large split opfibrations.

  \Cref{ax:src-trg-opfibration} and \Cref{ax:hom-formation-introduction}
  ($\Hom$-type formation and introduction).
  We verified in \Cref{prop:local-src-trg-discrete-opfibration}
  that when $A \to X$ is a split opfibration,
  $\vTw[X] A \to \vOp[X] A \times_X A$ is a discrete opfibration,
  and therefore a split opfibration,
  and that this preserves size.

  \Cref{ax:pullback-L}.
  $\Cat$ has all 1-limits, giving us the required pullback $L$.

  \Cref{ax:hom-elimination} ($\Hom$ elimination).
  It is easiest to check the second condition in
  \Cref{prop:hom-elimination-equiv},
  namely that we have a lift $j : {\vCore[L]{A}} \to \Ulow$
  for the square
  \[\begin{tikzcd}
      {\vCore[L]{A}} & \Ulow \\
      {\rtw[L]{A}} & \U
      \arrow["{{\phi_2}}", from=1-1, to=1-2]
      \arrow["{{\rfl}}"', from=1-1, to=2-1]
      \arrow["\ulow", from=1-2, to=2-2]
      \arrow["{{\psi_2}}"', from=2-1, to=2-2]
    \end{tikzcd}\]
  Recall from \Cref{thm:lari-opfib-awfs} that
  we have an AWFS where the
  right maps are split opfibrations, and the left maps are lari.
  Of course, $\ulow : \Ulow \to \U$ is a split opfibration.
  By \Cref{prop:rtw-equiv-K},
  ${\rfl} : {\vCore[L]{A}} \to {\rtw[L]{A}}$ is lari.
  Hence, the AWFS provides a lift $j : {\vCore[L]{A}} \to \Ulow$, as required.

  {\bf In summary, the op structure $\MM_\Cat$ has a universe --
  the universe of split opfibrations -- with
  $\Unit$-types, $\Sigma$-types, $\Pi$-types, and $\Hom$-types.}
\end{example}

Next, we continue working with the op structure $\MM_\Cat$,
but we switch the universe of opfibrations
for the universe of \emph{groupoidal opfibrations}.
% the third has covariant types as discrete opfibrations

\begin{example}[$\Cat$ and the universe of groupoidal opfibrations]\label{ex:groupoidal-opfibrations}
  A split groupoidal opfibration is a
  split opfibration with fibres that are groupoids.
  Taking the pullback of $\ulow : \pcat \to \cat$ along the inclusion
  of the category of small groupoids
  $\grpd \to \cat$ produces the universes
  of \emph{small groupoidal opfibrations}.
  \[\quad \quad \pgrpd \to \grpd\]
  Although (large) groupoidal opfibrations also form preclans,
  these preclans are not suitable candidates for $\opfib$,
  as $\grpd$ is not itself a groupoid (so it is not in $\opfibcart(1)$).
  Hence, we still use the (large non-groupoidal)
  opfibrations as the ambient preclan $\opfib$.

  {\bf The universe of small split groupoidal opfibrations in the op structure $\MM_\Cat$ admits
  $\Unit$-types, $\Sigma$-types, $\Pi$-types, and $\Hom$-types.}
  For $\Hom$-types, the $\vOp[]{}$ and $\vCore[]{}$ operations can be taken to be the identity.
  \Cref{ax:hom-formation-introduction} holds for both of these universes;
  as noted above $\vTw[X] A \to \vOp[X] A \times_X A$
  is always a discrete opfibration.
  \Cref{ax:hom-elimination} for the universes of groupoidal opfibrations
  is a corollary of the case for general opfibrations.
  Moreover, as we will see in \Cref{prop:lari-orthogonal-groupoidal-opfibration},
  the diagonal filler $j$ in \Cref{prop:hom-elimination-equiv}
  is unique up to unique isomorphism.
\end{example}

\begin{example}[$\Cat$ and the universe of discrete opfibrations]\label{ex:discrete-opfibrations}
  Recall that a discrete opfibration is a
  split opfibration with fibres that are sets.
  Taking the pullback of $\ulow : \pcat \to \cat$ along the inclusion
  of the categories of small sets
  $\set \to \cat$ produces the universe
  of \emph{small discrete opfibrations}.
  \[\pset \to \set\]
  One can also check that the universe of small discrete opfibrations (in the op structure $\MM_\Cat$) admits
  $\Unit$-types, $\Sigma$-types, $\Pi$-types, and $\Hom$-types.
\end{example}

  % It is also straightforward to check that
  % $\Cat$ with the class of small groupoidal opfibrations
  % $(\Cat,\opfib_{\grpd})$
  % and the class of small discrete opfibrations
  % $(\Cat,\opfib_{\grpd})$
  % are preclans.
  % It is also straightforward to check that
  % discrete opfibrations are preserved under pushforward along
  % discrete fibrations
  % and groupoidal opfibrations are preserved under pushforward along
  % groupoidal fibrations.
  % \[ \fib_{\set} \subseteq \Exp_{\opfib_{\set}}
  %   \quad \quad \text{and} \quad \quad
  %   \fib_{\grpd} \subseteq \Exp_{\opfib_{\grpd}}\] 
\section{The Yoneda lemma}
We conclude our analysis of the model in $\Cat$ by
providing some further insight into identity elimination.
We saw in \Cref{prop:right-twisted-factorisation-equiv}
that the right twisted arrow factorisation 
\[\Core{A} \to \rTw{A} \to \Core{A} \times A\]
is the comprehensive factorisation of the restricted diagonal
$\Core{A} \to \Core{A} \times A$
into an initial functor followed by a discrete opfibration.
We also saw that it was the algebraic factorisation
of the restricted diagonal into a lari functor
followed by a split opfibration.
It turns out to also be determined by the $\Grpd$-enriched
comprehensive factorisation system on $\Cat$,
which lives ``in between'' the two.
\[ \begin{tikzcd}
	{\text{lari}} & \pitchfork & {\text{split opfibration}} \\
	{1\text{-initial}} & {\bot_{g}} & {\text{groupoidal opfibration}} \\
	{\text{initial}} & \bot & {\text{discrete opfibration}}
	\arrow[from=1-1, to=2-1]
	\arrow[from=2-1, to=3-1]
	\arrow[from=2-3, to=1-3]
	\arrow[from=3-3, to=2-3]
\end{tikzcd} \]
Here we use $\pitchfork$ to indicate that lifts are not unique,
$\bot_g$ when lifts are unique up to unique isomorphism
(unique up to a contractible groupoid),
and $\bot$ when lifts are unique.
The best reference we can find for this result is a comment by Kelly in
\cite[4.7]{kelly1982}.
However, we will only need the following weaker result,
which avoids an explicit description of the 1-initial functors.

\begin{proposition}\label{prop:lari-orthogonal-groupoidal-opfibration}
  Lari functors left-lift against groupoidal opfibrations,
  and those lifts are unique up to unique isomorphism.
\end{proposition}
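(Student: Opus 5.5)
Let $L : A \to B$ be lari with right adjoint $R$, identity unit ($RL = \id_A$), and counit $\epsilon : LR \to \id_B$ satisfying $\epsilon L = \id$ and $R\epsilon = \id$. Let $F : D \to C$ be a groupoidal opfibration, and take a commuting square $F a = c L$ with $a : A \to D$ and $c : B \to C$. The plan is to build the lift by hand and then analyse the space of all lifts.

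\emph{Existence.} This follows from \Cref{thm:lari-opfib-awfs}: lari functors are $L$-coalgebras, and every split opfibration is an $R$-algebra, so lifts exist. For a cloven (non-split) groupoidal opfibration we give an explicit candidate instead. Set $x_b := a R b$. The counit gives a morphism $c\epsilon_b : c L R b \to c b$ in $C$, and $cLRb = F a R b = F x_b$. Let $j(b)$ be the codomain of a chosen opcartesian lift $\ell_b : x_b \to j(b)$ of $c\epsilon_b$. A morphism $g : b \to b'$ is sent to the unique map $j(g)$ over $c g$ with $j(g)\circ \ell_b = \ell_{b'} \circ a R g$; this uses naturality of $\epsilon$ and the opcartesian property of $\ell_b$. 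Functoriality then follows from uniqueness.

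\emph{Commuting triangles.} We have $F j = c$ by construction. For $j L = a$: since $\epsilon_{Lx} = \id$, the map $\ell_{Lx}$ lifts an identity. In a cloven (not necessarily normal) opfibration it is only an isomorphism, not an identity. So either we choose normal cleavages, or we note that for a groupoidal opfibration a vertical opcartesian lift of an identity is a vertical isomorphism and then transport along it. For the split case, normality together with $RL = \id$ gives $jL = a$ on the nose.

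\emph{Uniqueness up to unique isomorphism.} Let $j'$ be any other lift. Applying $j'$ to $\epsilon_b : LRb \to b$ gives a morphism $j'(\epsilon_b) : j'LRb = aRb = x_b \to j'(b)$ lying over $c\epsilon_b$. Since $\ell_b$ is opcartesian, it factors uniquely as $j'(\epsilon_b) = \theta_b \circ \ell_b$ with $\theta_b : j(b) \to j'(b)$ vertical over $cb$. Because $F$ is groupoidal, $\theta_b$ is invertible. Naturality of $\theta$ follows from the uniqueness clause of opcartesianness, and the triangle identity $\epsilon L = \id$ shows that $\theta L = \id$, so $\theta$ is an isomorphism of lifts relative to the square.

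For uniqueness of $\theta$, let $\theta'$ be another vertical natural isomorphism $j \cong j'$ restricting to the identity on $L$. Naturality at $\epsilon_b$ gives $\theta'_b \circ j(\epsilon_b) = j'(\epsilon_b)\circ \theta'_{LRb} = j'(\epsilon_b)$. Here $j(\epsilon_b) = \ell_b$ up to the normalisation above. By the uniqueness part of opcartesianness, $\theta'_b = \theta_b$.

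The main obstacle is the bookkeeping for $\epsilon$: checking that $j(\epsilon_b)$ really equals $\ell_b$ (this uses $R\epsilon = \id$, so that $aR\epsilon_b = \id$), and handling the normality and identity-lift issue so that $jL = a$ holds strictly rather than only up to isomorphism.
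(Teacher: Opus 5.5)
Your proof is correct and follows the same route as the paper. Both build the filler by lifting $c\epsilon_b$ opcartesianly out of $aRb$, use normality together with $\epsilon L = \id$ for the upper triangle, and obtain the comparison isomorphism by factoring $j'(\epsilon_b)$ through the opcartesian lift, which is invertible because the fibres are groupoids. Your write-up is more careful than the paper's sketch, since it verifies naturality of $\theta$, the identity $j(\epsilon_b) = \ell_b$ via $R\epsilon = \id$, and uniqueness of $\theta$ among vertical isomorphisms that restrict to the identity along $L$.
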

\begin{proof}
  Consider a diagram of the form
  \[ \begin{tikzcd}
	\AA & \CC \\
	\BB & \DD
	\arrow["G", from=1-1, to=1-2]
	\arrow["L"', from=1-1, to=2-1]
	\arrow["F", from=1-2, to=2-2]
	\arrow["H"', from=2-1, to=2-2]
    \end{tikzcd}\]
  where $(F,l)$ is a groupoidal opfibration and
  $L \vdash R$ is lari.
  We construct a diagonal filler $\phi : \BB \to \CC$.
  Let $b$ be an object in $\BB$.
  Taking the counit of the adjunction $L \vdash R$ at $b$,
  we obtain a lifting problem in $\CC$ over $\DD$.
  \[\begin{tikzcd}
	GRb & {\phi(b)} \\
	HLRb & Hb
	\arrow["l", dashed, from=1-1, to=1-2]
	\arrow["{H \epsilon_b}"', from=2-1, to=2-2]
    \end{tikzcd}\]
  Lifting this using opfibrancy provides an object $\phi(b)$ over $H b$.
  The functorial action of $\phi$ is then determined by
  opcartesianness of the lift $l$.
  The triangle below commutes by definition of $\phi(b)$.
  \[ \begin{tikzcd}
	& \CC \\
	\BB & \DD
	\arrow["F", from=1-2, to=2-2]
	\arrow["\phi", from=2-1, to=1-2]
	\arrow["H"', from=2-1, to=2-2]
    \end{tikzcd}\]  
  To show that the upper triangle commutes,
  \[\begin{tikzcd}
	\AA & \CC \\
	\BB
	\arrow["G", from=1-1, to=1-2]
	\arrow["L"', from=1-1, to=2-1]
	\arrow["\phi"', from=2-1, to=1-2]
    \end{tikzcd}\]
  consider an object $a$ in $\AA$.
  The counit $\epsilon_{La} : L R L a \to L a$
  is the inverse of the unit,
  which is the identity.
  \[\begin{tikzcd}
	{L a} & {L R L a}
	\arrow["{=}", from=1-1, to=1-2]
	\end{tikzcd}\]
  Hence $\epsilon_{La}$ is also the identity.
  Since $(F,l)$ is normal, the lift is also the identity
  \[ \phi L a = GRLa = G a\]
  Hence $\phi$ is a lift for the diagram.
  Any other lift $\phi : \BB \to \CC$ applied to the counit of
  an object $b$ provides a map $\psi(\epsilon_b) : \psi L R b = G R b \to \psi b$
  which also lives over $H \epsilon_b$.
  By opcartesianness of the lift $l$,
  there is a unique morphism $u : \phi b \to \psi b$
  in the fibre over $H b$, which is a groupoid.
  Hence $\phi \iso \psi$ and that isomorphism is unique.
\end{proof}

Now consider a category $A$ with an object $x$.
Taking the pullback of the right twisted arrow category
along a point $x$ in the core,
we obtain the coslice under $x$.
\[
\begin{tikzcd}
	1 & {\Core{A}} \\
	{x /A} & {\rTw{A}} \\
	A & {\Core{A} \times A} \\
	1 & {\Core{A}}
	\arrow["x", from=1-1, to=1-2]
	\arrow["{\rfl x}"', from=1-1, to=2-1]
	\arrow["\rfl", from=1-2, to=2-2]
	\arrow[from=2-1, to=2-2]
	\arrow["\trg"', from=2-1, to=3-1]
	\arrow["\lrcorner"{anchor=center, pos=0.125}, draw=none, from=2-1, to=3-2]
	\arrow["{(\src,\trg)}", from=2-2, to=3-2]
	\arrow[from=3-1, to=3-2]
	\arrow[from=3-1, to=4-1]
	\arrow["\lrcorner"{anchor=center, pos=0.125}, draw=none, from=3-1, to=4-2]
	\arrow["{\pi_1}", from=3-2, to=4-2]
	\arrow["x"', from=4-1, to=4-2]
\end{tikzcd}
\]
Note that since $(\src, \trg) : \rTw A \to (\src, \trg)$
is a discrete opfibration,
so is $\trg : x / A \to A$.
Now suppose we have a groupoidal opfibration $d : C \to A$
with a point $r : 1 \to C$ making the following square commute.
\[ \begin{tikzcd}
	1 & C \\
	{x / A} & A
	\arrow["r", from=1-1, to=1-2]
	\arrow["x"{description}, from=1-1, to=2-2]
	\arrow["{\rfl x}"', from=1-1, to=2-1]
	\arrow["d", from=1-2, to=2-2]
	\arrow["\trg"', from=2-1, to=2-2]
\end{tikzcd} \]
By \Cref{prop:lari-orthogonal-groupoidal-opfibration},
since $\rfl x$ is lari and $d : C \to A$
is a groupoidal opfibration,
there is a diagonal filler $\phi : x / A \to C$.
In fact, since $x / A$ and $C$
are groupoidal fibrations over $A$
(and all functors between groupoidal fibrations
in the slice are morphisms of fibrations)
there is an equivalence of groupoids
\[ \opfibcart(A)(x / A, C) \simeq \{r : 1 \to C \st d r = x\} \]
where both sides have the obvious natural transformations as morphisms.
After straightening (\Cref{thm:opfibration-main}),
the more familiar Yoneda lemma emerges:

\begin{proposition}
  For a category $A$, an object $x \in A$,
  and a functor $C : A \to \Grpd$,
  there is an equivalence of categories
  \[ [A,\Grpd](y (x), C^*) \simeq C^*(x)\]
  where $y(x) : A \to \Grpd$ is given pointwise by the discrete groupoid
  $y(x)(a) := \Hom (x,a)$.
\end{proposition}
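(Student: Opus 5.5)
The plan is to deduce the statement from the discussion immediately preceding it, using straightening and unstraightening to translate between functors $C : A \to \Grpd$ and groupoidal opfibrations over $A$. By \Cref{thm:opfibration-main}, with size adjusted, the functor $C^* : A \to \Grpd$ corresponds to a split groupoidal opfibration $d : C \to A$ with $d^{-1}(a) \cong C^*(a)$. The representable $y(x)$, with discrete fibres $\Hom(x,a)$, corresponds to the discrete opfibration $\trg : x/A \to A$. Because straightening is an equivalence $[A,\Grpd] \simeq$ (split groupoidal opfibrations over $A$, opcartesian morphisms), we obtain
\[ [A,\Grpd](y(x), C^*) \simeq \opfibcart(A)(x/A, C). \]
Here natural transformations on the left match opcartesian functors over $A$ together with vertical natural transformations on the right. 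So it suffices to establish the equivalence $\opfibcart(A)(x/A, C) \simeq \{r : 1 \to C \st d r = x\}$ stated just above, and to note that the right-hand side is exactly the fibre $d^{-1}(x) \cong C^*(x)$.

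For that equivalence I will use the functor $\Phi$ given by precomposition with $\rfl x : 1 \to x/A$, that is, evaluation at the initial object $\id_x$. This lands in the fibre because $\trg \circ \rfl x = x$. To show $\Phi$ is essentially surjective, note that $\rfl x$ is lari: its right adjoint is $x/A \to 1$, and $\id_x$ is initial, which is the pulled-back instance of \Cref{prop:right-twisted-factorisation-equiv}. Then \Cref{prop:lari-orthogonal-groupoidal-opfibration} gives, for each $r$, a diagonal filler $\phi : x/A \to C$ over $A$ with $\phi(\id_x) = r$.

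For full faithfulness, a morphism between two functors $\phi, \psi$ is a vertical natural transformation. Its component at $\id_x$ determines every other component $\phi(f) \to \psi(f)$ for $f : x \to a$. The reason is that the map $\id_x \to f$ in $x/A$ is sent by $\phi$ and $\psi$ to maps over $f$, and both are opcartesian: $\phi$ is a lift built from opcartesian lifts, and any functor between groupoidal opfibrations over $A$ preserves opcartesian maps, since in a groupoidal opfibration every map over $f$ is opcartesian. So the component at $f$ is the unique vertical map making the naturality square commute. This gives a bijection on hom-sets, essentially the uniqueness half of \Cref{prop:lari-orthogonal-groupoidal-opfibration} applied to morphisms.

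I expect the main obstacle to be the claim above that every functor over $A$ between groupoidal opfibrations is opcartesian, which is needed so that the slice hom equals the opcartesian hom and so that the filler counts as a morphism in $\opfibcart(A)$. I would argue it by factoring any map $u$ in $C$ over $f$ as the canonical opcartesian lift followed by a vertical map. That vertical map lies in a groupoid fibre, so it is invertible, and hence $u$ is itself opcartesian. The remaining points, naturality in $x$ and the discreteness of $y(x)$, are routine bookkeeping through straightening.
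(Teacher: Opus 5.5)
Your proposal is correct and takes essentially the same route as the paper. You straighten $y(x)$ to $\trg : x/A \to A$ and $C^*$ to $d : C \to A$, then identify $\opfibcart(A)(x/A, C)$ with the fibre over $x$ using the lari filler of \Cref{prop:lari-orthogonal-groupoidal-opfibration}, and your full-faithfulness argument and every-arrow-is-opcartesian lemma fill in details the paper leaves implicit. One nuance, which the paper shares: under the strict \Cref{def:morphism-split-opfibration}, preserving opcartesian arrows does not imply preserving chosen lifts, so the slice hom and the opcartesian hom are equivalent but not equal; your filler lies in $\opfibcart(A)$ because, by splitness of $C$, it sends each chosen lift in $x/A$ to the corresponding chosen lift in $C$.
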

\begin{proof}
  Applying \Cref{thm:opfibration-main},
  and noting that $y(x) : A \to \Grpd$ corresponds to the coslice $x / A \in \opfibcart(A)$,
  we have 
  \[ [A,\Grpd](y (x), C^*) \simeq
  \opfibcart(A)(x / A, C) \simeq \{r : 1 \to C \st d r = x\}
  = C^*(x) \]
\end{proof}

\begin{comment}
  DONE
  - British spelling
  - Grammar
  - check Bibliography
  - check [?] references and citations
  - line breaks
  - overfull boxes/warnings in general

  TODO

  - dedication
  
  - Paul Taylor Partial products, what's going on there?
  - [cat] Pushforward opfibration by split opfibration
  - [exponentiable] Geometric morphism not BC
  - [pathtypes] twisted cylinder as a modality

  Give up
  - [cat] Gambino+Larrea prop 4.6 + lari stable under pullback -> pushforwards along fibrations preserve opfibrations
  - [exponentiable] Condensed example

  Unaddressed comments from Steve
  - Maybe the op actually generates everything, so it’s like adding inverses or negative values to an algebra? 
\end{comment}

\bibliography{main.bib}
\bibliographystyle{alpha}
\end{document}